\theoremstyle{plain}
\newtheorem{theorem}{Theorem}[section]
\newtheorem{assumption}{Assumption}[section]
\newtheorem{corollary}[theorem]{Corollary}
\newtheorem{lemma}[theorem]{Lemma}
\newtheorem{proposition}[theorem]{Proposition}
\newtheorem{definition}[theorem]{Definition}
\theoremstyle{remark}
\newtheorem{remark}[theorem]{Remark}
\newtheorem{example}[theorem]{Example}
\newcommand{\1}{\mathbbm{1}} 
\definecolor{bl}{rgb}{.0,.0,.999}
\definecolor{gr}{rgb}{.05,.5,.05}
\newcommand{\bbE}{\mathbb{E}}
\newcommand{\bbF}{\mathbb{F}}
\newcommand{\bbG}{\mathbb{G}}
\newcommand{\bbP}{\mathbb{P}}
\newcommand{\bbR}{\mathbb{R}}
\newcommand{\cA}{\mathcal{A}}
\newcommand{\cF}{\mathcal{F}}
\newcommand{\cG}{\mathcal{G}}
\newcommand{\cI}{\mathcal{I}}
\newcommand{\cL}{\mathcal{L}}
\newcommand{\cP}{\mathcal{P}}
\newcommand{\cT}{\mathcal{T}}
\newcommand{\cV}{\mathcal{V}}
\newcommand{\sC}{\mathscr{C}}
\newcommand{\sD}{\mathscr{D}}
\newcommand{\sU}{\mathscr{U}}
\newcommand{\bX}{\mathbf{X}}
\newcommand{\bW}{\mathbf{W}}
\newcommand{\bh}{\mathbf{h}}
\title{A Mean Field Game of Sequential Testing\thanks{This work is partially supported by the Natural Sciences and Engineering Research Council of Canada through an Alexander Graham Bell Canada Graduate Scholarship (CGSD3-535625-2019) and NSERC Discovery Grant (RGPIN-2020-06290). We would like to thank Tomoyuki Ichiba for his careful reading and helpful comments. We also thank Erik Ekstr\"om for sharing detailed notes related to his paper \cite{ekstrom2004properties}. The proof techniques in the notes were borrowed to prove continuity in the volatility for our setting.}
}
\author{Steven Campbell\thanks{Department of Statistics, Columbia University, New York, USA, \texttt{sc5314@columbia.edu}.} 
\and 
Yuchong Zhang\thanks{Department of Statistical Sciences, University of Toronto, Canada, \texttt{yuchong.zhang@utoronto.ca}. 
}
}
\date{\today}
\begin{document}

\maketitle

\begin{abstract}
We introduce a mean field game for a family of filtering problems related to the classic sequential testing of the drift of a Brownian motion. To the best of our knowledge this work presents the first treatment of mean field filtering games with stopping and an unobserved common noise in the literature. We show that the game is well-posed, characterize the solution, and establish the existence of an equilibrium under certain assumptions. We also perform numerical studies for several examples of interest.
\end{abstract}

{\small
\noindent \emph{Keywords} Mean field game; sequential testing; optimal stopping; filtering; common noise%

\noindent \emph{AMS 2020 Subject Classification}
%
91A16;
91A55; %
60G35;
60G40

\tableofcontents

\section{Introduction}

A classic continuous-time optimal stopping problem is the sequential testing of the drift of a Brownian motion \cite{gapeev2004wiener,shiryaev1967two,shiryaev2007optimal}, where an agent observes a noisy signal process of an unknown binary variable $\theta$:
\begin{equation*}
	X_t=\alpha\theta t+W_t,
\end{equation*}
and tests sequentially the hypotheses $H_0: \theta=0$ and $H_1: \theta=1$ given the information gleaned from the signal process. The agent's response consists of the time $\tau$ to stop the test and make a decision, and the decision $\mathsf{d}\in\{0, 1\}$ itself, i.e.\ which hypothesis to accept.
%
%

In the Bayesian formulation, the agent has a prior probability $\mathbb{P}(\theta=1) = \pi$, and
searches for a minimizing pair $(\tau^*,\mathsf{d}^*)$ to the Bayes risk:
\begin{equation*}
	\bbE_\pi\left[c\tau+a_1\1_{\{\mathsf{d}=1,\theta=0\}}+a_2\1_{\{\mathsf{d}=0,\theta=1\}}\right],
\end{equation*}
where $c$, $a_1$ (resp.\ $a_2$) are positive constants representing the cost of observation and the cost of making a type I (resp.\ type II) error. It can be thought of as a hard-classification problem since a binary decision $\mathsf{d}$ about the state of nature must be made at $\tau$ even if the agent is not certain.

It is well-known that the solution to the classic problem depends on the agent's posterior probability $\Pi_t=\mathbb{P}(\theta=1|\mathcal{F}^X_t)$. This motivates us to consider a soft-classification approach that uses the modified Bayes risk
\begin{equation*}
	\mathbb{E}_\pi \left[c\tau+\mathscr{L}(\theta,\Pi_{\tau})\right],
\end{equation*}
where $\mathscr{L}:\{0,1\}\times[0,1]\to\mathbb{R}_+$ is a loss function satisfying $\mathscr{L}(j,\pi)=0$ if and only if $j=\pi$. 
A primary example we have in mind is the cross-entropy loss $\mathscr{L}(j, \pi)=-j\log(\pi)-(1-j)\log(1-\pi)$.

In our study, we embed this ``soft" sequential testing problem in a game setting with a continuum of agents and a finite horizon $T>0$. The infinite player limit allows us to consider mean field type interactions through population statistics. 
We take $\mu=(\mu^0, \mu^1)$ to be the distribution of stopped agents in time if $\theta =0, 1$.
In the simplest one-dimensional setting, a representative agent $i$ observes the private signal 
\begin{equation*}
	X^i_t=\int_0^t h(s,\theta, F^\theta_\mu(s))ds+W_t^i,
\end{equation*}
for some function $h$ that depends on both the state of nature $\theta$ and the fraction of agents $F^j_\mu(s)$ that have stopped by time $s$ given $\theta=j$.
In economic contexts where agents take resources with them, the signal may get weaker as more agents decide to exit the game. On the other hand, if there is an accumulation of common knowledge or reduced competition for resources, the signal may get stronger as agents leave. Both cases can be modeled by choosing a suitable $h_\mu$; see Example~\ref{ex:h}.

Each agent's risk minimization can be cast as an optimal stopping problem (parameterized by the population measure $\mu$) in terms of the private posterior probability $\Pi^i_t:=\mathbb{P}(\theta=1|\mathcal{F}^{X^i}_t)$:
\begin{equation*}
	\cV_i(\mu)=\inf_{\tau^i} \bbE_\pi\left[c \tau^i+g (\Pi^i_{\tau^i})\right],
\end{equation*}
where $g$ is a concave penalty derived from the loss function $\mathscr{L}$. 
We then search for a pair of measures $\mu=(\mu^{0}, \mu^{1})$ such that for the smallest optimal stopping time $\tau^{\ast}_\mu$ of $\cV_\cdot(\mu)$ we have a fixed point of the mapping
\begin{equation}\label{eqn:fixed.pt.map.intro}
	\mu\mapsto \left(\mathcal{L}(\tau^{\ast}_\mu |\theta=0), \mathcal{L}(\tau^{\ast}_\mu |\theta=1)\right).
\end{equation}
Any such fixed point is a mean-field equilibrium in the sense of Nash by the Exact Law of Large Numbers \cite{sun2006exact}.

Mean field games of optimal stopping (also called mean field games of timing) are generally more difficult to solve than mean field games of optimal control. An additional challenge in our setting is the incorporation of a common noise and agent learning, the latter of which requires the preservation of the information structure. 
Nonetheless, under suitable assumptions we are able to show that a strong solution exists. 

The proof follows the standard application of Schauder's fixed point theorem. However, establishing the continuity of the mapping \eqref{eqn:fixed.pt.map.intro} turns out to be a difficult task, and is a major technical contribution of the present paper. It requires a deep understanding of the representative agent problem, particularly the shape of the continuation region and the regularity of the free boundaries. We characterize the free boundaries as the ``maximal" solutions to a pair of integral equations, and establish a uniform boundary regularity in the input measures. The analysis relies on new probabilistic techniques introduced in \cite{de2019lipschitz} alongside a time and space transformation of the problem.

\subsection{Literature}

Our problem lies squarely at the intersection of sequential analysis and mean field games (MFGs). The field of sequential analysis covers a broad class of statistical problems where data is collected and analyzed sequentially over time.
It has a long and rich history, and the decades following Wald's pioneering work \cite{wald1945sequential} are marked by many important developments \cite{chernoff1959sequential,kiefer1957some,wald1948optimum,wald1950bayes}. Of particular relevance to the present study is \cite{dvoretzky1953sequential} where the theory of sequential decision making for stochastic processes in continuous time was introduced. Our motivating sequential testing problem
for the drift of a signal process \cite{shiryaev1967two} is studied in the finite horizon setting in \cite{gapeev2004wiener}, extended to a signal process whose drift has a state dependence in \cite{gapeev2011sequential}, and studied for a general (not necessarily binary) unknown state $\theta$ and arbitrary prior distribution in \cite{ekstrom2015bayesian}.

Due to our game's (infinite) population design, the interaction structure is very similar to what arises in the mean field game literature. In the initial works \cite{Lasry2006a,Lasry2006b,Lasry2007mean,huang2007large,huang2006large}, mean field games were used to study Nash equilibria with a continuum of agents. There is now a broad extant literature in this area (see e.g. \cite{bensoussan2013mean,carmona2018probabilistic,cousin2011mean}) and mean field game formulations have found many applications in economics that include population dynamics \cite{cousin2011mean}, production models \cite{cousin2011mean}, systemic risk \cite{carmona2015mean}, and models for renewable energy certificate \cite{campbell2021deep,shrivats2022mean} and electricity markets \cite{aid2021entry,bagagiolo2014mean,carmona2022mean,elie2021mean,feron2020price,gomes2021mean}.

Theoretically, there are two main branches to this theory corresponding to the use of analytical \cite{huang2007large,huang2006large,Lasry2006a,Lasry2006b,Lasry2007mean} or probabilistic \cite{carmona2013probabilistic,carmona2014master,carmona2015forward} solution techniques. The former approach involves solving a coupled system of partial differential equations (PDEs) while the latter obtains a coupled system of forward-backward stochastic differential equations (FBSDEs).
Additional elements have been incorporated into the theory to accommodate a growing set of use cases. For instance, the literature has studied games with common noise \cite{ahuja2016wellposedness,carmona2016mean,carmona2015mean,firoozi2018mean}, and constructions that involve partially observed systems and tools from filtering theory \cite{casgrain2020mean,firoozi2018mean,csen2016mean,sen2019mean}. Both of these directions are relevant to our proposed study since agents will need to make an inference about the {\it unobserved} and {\it common} state of nature $\theta$.


Spurred on by the recent developments in the original works \cite{carmona2017mean,nutz2018mean}, mean field games of optimal stopping has been an area of increasing interest in the past few years. Both of these papers take a probabilistic approach and the mean field interaction occurs through the proportion of players that have already stopped. One remarkable feature of \cite{nutz2018mean} is the tractability of the introduced model under a monotonicity assumption which allows for explicitly solvable examples to be investigated.  On the other hand, the authors of \cite{carmona2017mean}, motivated by a bank run model, study mean field games of timing in a more general setting. In their work, strong and weak notions of equilibria are defined and the existence of the latter is established under a strategic complementarity assumption using monotonicity-based proofs. By comparison, the existence of weak equilibria is established using continuity-based arguments.

An alternative analytical approach to timing games is provided in \cite{bertucci2018optimal},  where a system of forward–backward obstacle problems associated to the mean field game is analyzed. In contrast with \cite{carmona2017mean,nutz2018mean}, the proposed agent interaction is through the density of the states of the players who have yet to stop. This type of interaction is similar to the one adopted in the emerging literature on the Linear Programming approach to mean field games with stopping \cite{dumitrescu2020control,dumitrescu2021control}. An interesting and encouraging recent extension of this approach to problems with common noise and partial observations in a discrete setting is given in \cite{dumitrescu2022energy}. Another direction, presented in \cite{talbi2023dynamic}, studies an optimal stopping problem for McKean–Vlasov diffusions where the interaction arises through the law of the stopped process.

A primary goal of our current work is to combine several of the previously discussed features into a tractable family of mean field game problems with optimal stopping. By tractable we mean that the existence of equilibria can be established and that they can be characterized in detail, even if exactly solvable examples like in \cite{nutz2018mean,nutz2022mean} are not obtained. The main elements we would like to incorporate are an unobserved common noise, agent learning, and a population interaction that can influence the information structure. To the best of our knowledge these features have not been incorporated together into a single problem in the existing literature. Sequential testing style games are a promising candidate for these objectives due to the natural occurrence and interpretation of these features, and the solvability in the classic setting without interaction.

The rest of this paper is organized as follows. In Section \ref{sec:setup}, we formulate the sequential testing game and state our main result on the existence of a mean-field equilibrium. Section \ref{sec:single.agent} provides an in-depth analysis of the representative agent problem, the solution of which is then used to complete the fixed point analysis in Section~\ref{sec:main.proof.existence}. In Section~\ref{sec:classic}, we extend our results to the special case of the classic hard classification loss function for ``preemption games". Finally, Section \ref{sec:numerical} showcases several numerical examples: we explore preemption games in both the soft-classification and classic frameworks, and illustrate war of attrition games exclusively within the soft-classification setting. We draw our conclusions in Section \ref{sec:mfg.conclusion}.
Auxiliary results and lengthy technical proofs are delegated to Appendices.

\subsection{Notation}
We adopt the following notational conventions throughout this paper, where $D$ is an arbitrary Polish space. 

\begin{itemize}[noitemsep,leftmargin=*]
	\item $\cP(D)$: The space of probability measures on $D$ equipped with the topology of weak convergence. 
	\item $C_b(D)$: The space of continuous bounded functions on $D$.
	\item $C^k(D)$: The space of $k$-times continuously differentiable functions on $D$.
	\item Lip$(D)$: The space of Lipschitz functions on $D$.
	\item $\norm{\cdot}$: The standard Euclidean norm.
	\item $\norm{\cdot}_\infty$: The supremum norm given by $\|\mathbf{f}\|_\infty:=\max_{i\in d}\sup_{x\in D}|f_i(x)|$ for $d$ dimensional vector-valued functions defined on $D$.
	\item $(\Omega,\mathcal{F}, \mathbb{P})$: The (complete) probability space that corresponds to the sample space of our problem.
	\item $\mathbb{F}^X=(\cF_t^X)_t$: The filtration generated by a stochastic process $X$ augmented by the $\mathbb{P}$-null sets in $\mathcal{F}$.
	\item $\Rightarrow$: The weak convergence of measures.
\end{itemize}
In addition, for a stochastic process $X=(X_t)_{t\geq0}$, if $(t,X)$ is Markovian we denote the expectation operator conditioned on $(t,X_t)=(t,x)$ by $\mathbb{E}_{t,x}[\cdot]$. When $X$ is defined to start at $t=0$ we may drop the time subscript. Where appropriate we also write $X^{t,x}=(X^{t,x}_s)_{s\geq t}$ to denote the stochastic process $X=(X_t)_{t\geq0}$ started at $X_t=x$. When it is unambiguous, we similarly drop both subscripts on $\mathbb{E}_{t,x}[\cdot]$ in favor of writing $X^{t,x}$ under the expectation.

\section{Sequential Testing Game}\label{sec:setup}


Let $(I, \cI, \lambda)$ be an atomless probability space, to be used as the agent space, and 
let $(\Omega, \cF, \bbP)$ be another complete probability space, to be used as the sample space. We will work with a rich Fubini extension 
of the product space $(I\times \Omega, \cI\otimes\cF, \lambda \otimes \bbP)$ which supports a family of {essentially pairwise independent} $d$-dimensional Brownian motions $\{\bW^i\}_{i\in I} = \{(\bW^i_t)_{t\in [0, T]}\}_{i\in I}$ and a binary random variable $\theta\in \{0, 1\}$ that is common to all agents, i.e.\ $\theta(i, \omega)= \theta(\omega)$, and independent of $\bW^i$ for all $i \in I$. Moreover, the Conditional Exact Law of Large Numbers holds in this space; see Appendix \ref{sec:CELLN} for details. It is implicitly understood that any family of random variables or stochastic processes indexed by $i\in I$ is measurable in the extended product space.

Let $(\bbG^i)_{i\in I}$ be a family of filtrations to be specified later, representing the information available to each agent. The control of Agent $i$ is modeled as a $\bbG^i$-stopping time $\tau^i$. The following lemma is a direct consequence of the Conditional Exact Law of Large Numbers (Proposition~\ref{prop:CELLN}).
It describes the population statistics that emerges from essentially pairwise conditionally independent controls: the distribution of stopped agents at different points in time is a random measure, say $\mathfrak{m}$, from $\{0,1\}$ to $\cP([0,T])$ where the randomness is completely determined by the common noise $\theta$. 


\begin{lemma}\label{lemma:ELLN}
 If the family of stopping times $(\tau^i)_{i\in I}$ is \emph{essentially pairwise conditionally independent} given $\theta$, then
\[\lambda\{i: \tau^i\le t\} = \int_\cI \bbP(\tau^i \le t| \theta)\lambda(di).\]
That is, the fraction of agents that have stopped by time $t$ equals the conditional probability that a randomly chosen representative agent will stop by time $t$.
\end{lemma}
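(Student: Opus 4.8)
The plan is to recognize this statement as a direct instance of the Conditional Exact Law of Large Numbers, Proposition~\ref{prop:CELLN}, applied to the indicator process attached to the stopping times.

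First I would fix $t\in[0,T]$ and introduce the process $f\colon I\times\Omega\to\{0,1\}$ defined by $f(i,\omega):=\mathbbm{1}_{\{\tau^i(\omega)\le t\}}$. By the standing convention that any family of random variables indexed by $i\in I$ is measurable on the rich Fubini extension, $(i,\omega)\mapsto\tau^i(\omega)$ is jointly measurable there, hence so is $f$ (being the composition with the measurable map $x\mapsto\mathbbm{1}_{\{x\le t\}}$). In particular, for $\bbP$-a.e.\ $\omega$ the section $i\mapsto f(i,\omega)$ is $\lambda$-measurable, so that $\lambda\{i:\tau^i\le t\}=\int_\cI f(i,\cdot)\,\lambda(di)$ is a well-defined random variable. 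Since essential pairwise conditional independence given $\theta$ is preserved under applying a common measurable function coordinatewise, the family $(f(i,\cdot))_{i\in I}$ inherits essential pairwise conditional independence given $\theta$ from $(\tau^i)_{i\in I}$; and $f$ is bounded, so the integrability hypothesis of Proposition~\ref{prop:CELLN} is trivially satisfied.

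Next I would invoke the Conditional Exact Law of Large Numbers: $\bbP$-almost surely,
\[
\int_\cI f(i,\cdot)\,\lambda(di)=\int_\cI \bbE\big[f(i,\cdot)\,\big|\,\theta\big]\,\lambda(di).
\]
It then remains only to identify the two sides. The left-hand side is $\int_\cI\mathbbm{1}_{\{\tau^i\le t\}}\,\lambda(di)=\lambda\{i:\tau^i\le t\}$, the fraction of agents who have stopped by time $t$. On the right-hand side, $\bbE[f(i,\cdot)\mid\theta]=\bbP(\tau^i\le t\mid\theta)$, which yields the asserted identity, understood to hold $\bbP$-a.s.\ (with the exceptional null set possibly depending on $t$).

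The only genuine point requiring care is the verification that $f$ meets the hypotheses of Proposition~\ref{prop:CELLN} — joint measurability on the Fubini extension and the inheritance of essential pairwise conditional independence from $(\tau^i)_{i\in I}$ — both of which are routine but worth stating explicitly; the rest is bookkeeping. If one wanted the identity to hold simultaneously for all $t$, one would apply it along a countable dense set of times and extend by monotonicity together with the right-continuity of $t\mapsto\lambda\{i:\tau^i\le t\}$ and $t\mapsto\bbP(\tau^i\le t\mid\theta)$; since the statement is for a fixed $t$, this refinement is not needed here.
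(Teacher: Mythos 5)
Your proposal is correct and follows exactly the route the paper intends: the paper gives no separate argument, stating the lemma as a direct consequence of Proposition~\ref{prop:CELLN}, and you simply spell out that application to the indicators $\mathds{1}_{\{\tau^i\le t\}}$ with the routine measurability, boundedness, and inherited conditional-independence checks. The only step you leave implicit is the identification of $\bbE^{\lambda\boxtimes\bbP}[X\mid\cI\otimes\sigma(\theta)](i,\cdot)$ with the sectionwise conditional expectation $\bbP(\tau^i\le t\mid\theta)$ for $\lambda$-a.e.\ $i$, which is a standard property of the rich Fubini extension and is likewise taken for granted in the paper.
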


In the sequel, we identify the random measure $\mathfrak{m}$ with a pair of deterministic measures $\mu=(\mu^0, \mu^1) \in \cP([0, T])^2$ such that $\mathfrak{m}=\mu^\theta$.

\subsection{Signal process}

Let $\mu = (\mu^0, \mu^1)\in \cP([0, T])^2$ be the distribution of stopped agents in time corresponding to the two states of $\theta$. We introduce the regularization
\begin{equation}\label{eq:F.def}
F^j_{\mu}(t) := \int_\bbR \mu^j[0, s] \varphi(t-s) ds,
\end{equation}
where $\varphi: \bbR \rightarrow \bbR$ is a positive mollifier, i.e.\ a smooth and compactly supported function which integrates to one. It is easy to see that $F^j_\mu(\cdot)$ is increasing\footnote{Throughout we interpret ``increasing" and ``decreasing" in the non-strict sense.} and $[0,1]$-valued; it is an approximation to the true fraction of agents that have stopped. We may choose $\varphi$ to be asymmetric about zero so that it puts mass in the recent past. Then, it can be seen as introducing a delayed interaction.

Each agent (say Agent $i$) observes a private signal $\bX^i$, the strength of which depends on the collective stopping behavior of other agents:
\begin{equation}\label{eqn:private.signal0}
\bX^i_t = \int_0^t \bh^i(s, \theta, F^\theta_\mu(s)) ds + \Sigma^i \bW^i_t.
\end{equation}
Here $\Sigma^i$ is an invertible $d\times d$ matrix and $\mathbf{h}^i: \bbR_+\times\{0,1\}\times[0,1]\rightarrow \bbR^d$ 
is a bounded (jointly) continuous function such that $(t, \rho)\rightarrow \bh^i(t, j, \rho)$ is continuously differentiable for each $j$.
The observation filtration $\bbG^i$ in this case is the one generated by $\bX^i$ and augmented by the $\bbP$-null sets in $\cF$;  we write it as $\bbF^{\bX^i}=(\cF^{\bX^i}_t)_{t\in [0, T]}$. 
Using the boundedness of $\bh^i$, it can be shown that $\bbF^{\bX^i}$ is right-continuous; see \cite[Theorem 2.35]{bain2008fundamentals}. The set of $[0, T]$-valued $\bbF^{\bX^i}$-stopping times will be denoted by $\cT^{\bX^i}$.

Note that the information generated by $\bX^i$ and $(\Sigma^i)^{-1}\bX^i$ are the same. By replacing $\bX^i$ with $(\Sigma^i)^{-1}\bX^i$ and $\bh^i$ with $(\Sigma^i)^{-1}\bh^i$ if needed, we may further assume that $\Sigma^i = \mathrm{Id}$ (the identity matrix) in the rest of the paper without loss of generality. 
To simplify notation, we will frequently write $\bh_\mu^i(t,j):=\bh^i(t,j,F^j_\mu(t))$ and $\Delta \bh^i_\mu(t):=\bh^i_\mu(t,1)-\bh^i_\mu(t,0)$. 
To avoid a degenerate formulation, we also assume that there is always some information content in the signal process for all agents. That is, we enforce
\begin{equation}\label{eqn:non.degenerate.signal}
\|\Delta\bh^i_\mu(t)\| \geq \mathfrak{h}_i \quad \forall (t, \mu)\in \bbR_+\times \cP([0,T])^2,\; \forall i\in\mathcal{I},
\end{equation}
where $\mathfrak{h}_i>0$ is a constant.
%
%
An example of $\bh^i$ that fits into our framework is given below.

\begin{example}\label{ex:h} Let $d=1$, $\lambda_0^i>0$, $\lambda_1^i > -\lambda_0^i$, and 
\[\bh^i(t, j, \rho)= (j - 1/2)(\lambda_0^i + \lambda_1^i \rho).\]
In this setting $\bh^i$ is positive when $j=1$ and negative when $j=0$, and 
\[\norm{\Delta \bh^i_\mu(t)}= \lambda_0^i+\lambda^i_1 (F^1_\mu(t) + F^0_\mu(t))\ge \lambda_0^i \wedge (\lambda_0^i+\lambda_1^i).\] 
One can interpret $\lambda^i_0$ as a baseline signal and $\lambda^1_i$ as the strength of interaction. If $\lambda^i_1>0$ (resp.\ $\lambda^i_1<0$), the signal strength increases (resp.\ decreases) as more agents make a decision and exit the game. In other words, the case when $\lambda^i_1>0$ for all $i\in I$ represents a ``\emph{war of attrition}" where it is beneficial to stay in the game longer than other agents; 
the case when $\lambda^i_1<0$ for all $i\in I$ is analogous to a ``\textit{preemption game}" where agents benefit from exiting before others.
\end{example}


\subsection{Bayes risk}

The objective of each agent is to make an inference as quickly as possible while minimizing the decision error.
Specifically, Agent $i$ solves the optimal stopping problem 
\[ \inf_{\tau^i\in \cT^{\bX^i}}\mathbb{E}_\pi \left[c_i\tau^i+\mathscr{L}_i(\theta,\Pi^i_{\tau^i})\right],\]
where $c_i>0$ is the cost of observation, $\pi = \bbP(\theta=1)$ is the prior probability, 
$\Pi^i_t:=\mathbb{P}(\theta=1|\mathcal{F}^{\mathbf{X}^i}_t)$ is the posterior probability, and $\mathscr{L}_i:\{0,1\}\times[0,1]\to\mathbb{R}_+$ is a loss function satisfying $\mathscr{L}_i(j,\pi)=0$ if and only if $j=\pi$. By iterated conditioning, we can remove the unknown variable $\theta$ in the objective and write
\begin{align*}
\mathbb{E}_\pi \left[\mathscr{L}_i(\theta,\Pi^i_{\tau^i})\right] 
&= \mathbb{E}_\pi \left[ \bbE _\pi\left[\mathscr{L}_i(0,\Pi^i_{\tau^i})\1_{\theta = 0}+ \mathscr{L}_i(1,\Pi^i_{\tau^i})\1_{\theta = 1}\big|\cF^{\bX^i}_{\tau^i}\right]\right] \\
&=\mathbb{E}_\pi[g_i(\Pi^i_{\tau^i})],
\end{align*}
where
\[g_i(\pi):=\pi\mathscr{L}_i(1,\pi)+(1-\pi)\mathscr{L}_i(0,\pi).\]
It is easy to see that $g_i(0)=g_i(1)=0$, i.e.\ certainty about the state of nature incurs no penalty. We take $g_i:[0, 1]\rightarrow \bbR$ as our starting point and refer to $\mathbb{E}_\pi[c_i\tau^i+g_i(\Pi^i_{\tau^i})]$ as the Bayes risk of $\tau^i$.
If there are multiple minimizers of the Bayes risk, we break the tie by assuming that the agent favors early stopping.

Define an operator $\cA$ by
\begin{equation}\label{eq:operatorA}
(\cA f)(\pi) :=\frac{1}{2}\pi^2(1-\pi)^2 f''(\pi), \quad f\in C^2(0,1).
\end{equation}
The following technical assumption on $g_i$ will be enforced for all $i\in I$ throughout Section \ref{sec:single.agent} and \ref{sec:main.proof.existence} of the paper.


\renewcommand{\theassumption}{G} 
\begin{assumption}\label{ass:g}\
	\begin{itemize}
		\itemsep = 0em
		\item[(G1)] $g_i\in C^3(0,1)$ is symmetric about $1/2$, concave, and  $g_i(0)=g_i(1)=0$.
		\item[(G2)] $\mathcal{A}g_i$ satisfies
		$\partial_\pi (\cA g_i)<0$ on $(0,1/2)$, and $\partial_\pi (\cA g_i) >0$ on $(1/2,1)$.
		\item[(G3)] $\cA g_i(1/2)< -c_i/\mathfrak{h}_i^2$. 
	\end{itemize}
\end{assumption}
\renewcommand{\theassumption}{\arabic{section}.\arabic{assumption}} 

\begin{remark}\label{rmk:consequences.of.assumptions}
	There are a few important consequences of the above assumptions that we highlight here for future reference. First, (G1) and (G2) imply that $\cA g_i$ is a continuous, bounded and unimodal function on $(0,1)$. Since $\cA g_i$ is a univariate function, this implies that it is quasiconvex\footnote{A real-valued function defined on a convex set is said to be quasiconvex if all sublevel sets are convex.}. It is also possible to check that (G1) and (G2) imply that the composition of $g_i$ with the sigmoid function $S(x)=1/(1+e^{-x})$ is Lipschitz. 
	Indeed, differentiating $g_i\circ S$ yields
	\[\left(g_i\circ S(x)\right)'=S'(x)g_i'\circ S(x) =S(x)(1-S(x))g_i'\circ S(x),\]
	where we have used that $S'(x)=S(x)(1-S(x))$. Since $S(x)\in (0,1)$ for all $x\in\mathbb{R}$ it suffices to bound $\pi(1-\pi)g_i'(\pi)$ for $\pi\in(0,1)$. To this end, observe under (G1) $g_i$ is concave, so $\mathcal{A}g_i\leq 0$. By continuity and (G2) there is a $\beta>0$ such that: \[\inf_{\pi\in(0,1)}\mathcal{A}g_i(\pi)=\mathcal{A}g_i(1/2)=-\beta>-\infty.\] Rearranging says $|g_i''(\pi)|\leq 2\beta/\pi^2(1-\pi)^2$ for all $\pi\in(0,1)$. Using $1/2$ as a reference point we have that for any $\pi\in [1/2,1)$:
	\begin{equation*}
		|g_i'(\pi)|\leq |g_i'(1/2)|+\int_{\frac{1}{2}}^\pi |g_i''(u)|du=|g_i'(1/2)|+4\beta\log(\frac{\pi}{1-\pi})+\beta\frac{4\pi-2}{\pi(1-\pi)},
	\end{equation*}
	and a similar bound holds for $\pi\in(0,1/2]$. It is then easy to verify that $|\pi(1-\pi)g'(\pi)|\leq M$ for some $M>0$, and so $g_i\circ S$ is $M$-Lipschitz.
\end{remark}

The concavity and endpoint constraints in Assumption (G1) are natural conditions motivated by the loss function formulation, while regularity is enforced to make certain stochastic analytic tools applicable in our analysis (see e.g. Appendix \ref{sec:time-change}). It is worth emphasizing that the assumed symmetry is \textit{not} necessary for most of the analysis to go through, and is solely used to simplify the proof of the regularity of the stopping boundaries (Proposition \ref{prop:bdy.unif.local.lip}). 
The interested reader may inspect how it arises in the proof if a relaxation of the assumptions on $g$ is desired. The explicit use of the point $1/2$ in Assumption (G2)-(G3) is similarly tied to the symmetry of $g$ and can likewise be modified if $g$ is asymmetric. These last two technical assumptions are imposed primarily to ensure that the stopping time solution for the representative agent problem is well-behaved.

It is to be expected that the analysis of strong mean field equilibria generally requires stronger regularity and additional structure. These assumptions are not overly restrictive and are satisfied by many popular loss functions, as the next two examples show. The index $i$ is omitted in the examples for simplicity.

\begin{example}[Cross-entropy loss]\label{ex:g-cross-entropy}
Suppose $\mathscr{L}(j,\pi)=-j\log(\pi)-(1-j)\log(1-\pi)$ which is the most commonly used loss function in machine learning for classification problems. The induced cost function is given by
 \[g(\pi)=-\pi\log(\pi)-(1-\pi)\log(1-\pi),\]
 where we extend the domain of $g$ to $[0, 1]$ by taking limits. We also find that
  \[(\cA g)(\pi) = -\frac{1}{2}\pi(1-\pi) \quad \text{and}\quad \partial_\pi (\cA g) = \pi - \frac{1}{2}.\]
Clearly, (G1) and (G2) hold; (G3) is satisfied if $\mathfrak{h} \ge \sqrt{8c}$. 
 \end{example}

\begin{example}[$L_1$ and $L_2$ losses]\label{ex:g-l1-l2}
For the $L_1$ loss $\mathscr{L}(j,\pi)=\left|j-\pi\right|$, the induced cost function is $g(\pi)=2\pi(1-\pi)$. For the $L_2$ loss $\mathscr{L}(j,\pi)=\left(j-\pi\right)^2$, the induced cost function is $g(\pi)=\pi(1-\pi)$. In both cases, $g$ is of the form $g(\pi)=\beta \pi(1-\pi)$ for some $\beta >0$. With
  \[(\cA g)(\pi) = -\beta \pi^2(1-\pi)^2 \quad \text{and}\quad \partial_\pi (\cA g) = -2\beta \pi (2\pi^2 -3\pi + 1),\]
it is straightforward to check that all assumptions are satisfied if $\mathfrak{h}\ge \sqrt{8c/\beta}$.
\end{example}

\begin{remark}
The cost function $g(\pi)=a_1\pi\wedge a_2(1-\pi)$ for the classical sequential testing problem contains a kink, and thus, does not satisfy the $C^3$ condition in Assumption~\ref{ass:g}. The kink poses technical difficulties to the proof of boundary regularity which is essential for the existence of a mean field equilibrium. Although our main analysis does not cover this setting, we can adapt the proof to handle a subclass of mean field interaction -- the case of ``preemption games"; see Section~\ref{sec:classic}.
\end{remark}

\subsection{Mean field equilibrium}
Having introduced the interaction (via the signal process) and the optimal stopping problem faced by each agent, we next define what it means to be a (strong) mean field equilibrium of the sequential testing game.


\begin{definition} \label{def:MFE}
Let $\pi\in(0,1)$. We say $\mu = (\mu^0, \mu^1) \in  \mathcal{P}([0,T])^2$ is a mean field equilibrium
of the sequential testing game if both of the following hold.
\begin{enumerate}
 \item (Optimality) For each $i\in I$, $\tau^{i, *}_\mu$ is the smallest optimal stopping time for the single-agent problem
\begin{equation}\label{eq:single-agent-problem}
\cV_i(\mu)=\inf_{\tau^i \in \cT^{\bX^i}} \bbE_\pi\left[c_i \tau^i+g_i (\Pi^i_{\tau^i})\right],
\end{equation}
where the signal process $\bX^i$ is given by \eqref{eqn:private.signal0} with drift $\bh^i_\mu(t, \theta)$, and $\Pi^i_t=\bbP(\theta=1| \cF^{\bX^i}_t)$ is the posterior probability.    
\item (Consistency) 
Let $\iota$ be the identification function on $(I, \cI, \lambda)$, representing a randomly chosen agent according to $\lambda$. 
Then $\mu$ is a fixed point of the mapping
\begin{equation}\label{eqn:fixed.pt.mu} \mu\mapsto \left(\cL(\tau^{\iota, \ast}_\mu |\theta=0), \cL(\tau^{\iota, \ast}_\mu |\theta=1)\right),
\end{equation}
i.e.
\begin{equation}\label{eq:agg_tau_over_i}
\mu^\theta[0, t] = \int_{i\in I} \bbP(\tau^{i, \ast}_\mu \le t | \theta)\lambda(di) \quad \forall t\in [0, T].
\end{equation}

\end{enumerate}
\end{definition}

\begin{remark}
The mean field equilibrium defined above is in the sense of Nash, i.e.\ $\tau^{i, \ast}$ is the best response to $(\tau^{j, \ast})_{j\in I\backslash\{i\}}$ for all $i\in I$. To see this, observe that the family of signal processes $(\bX^i)_{i\in I}$ is essentially pairwise conditionally independent given $\theta$.
Since each $\tau^{i, \ast}_\mu$ is an $\bbF^{\bX^i}$-stopping time, the same conditional independence holds for $(\tau^{i, \ast}_\mu)_{i\in I}$. By Lemma~\ref{lemma:ELLN} and the atomless property of the agent space, we see that when all but one agents use their respective $\tau_\mu^{i, \ast}$, the fraction of agents that have stopped by time $t$ is precisely the right hand side of \eqref{eq:agg_tau_over_i}. Definition~\ref{def:MFE} then says that each $\tau^{i, \ast}_\mu$ is the best response to $\mu$ which is the population statistics that emerges from $(\tau^{j, \ast})_{j\in I\backslash\{i\}}$.

\end{remark}

We are ready to state our main result whose proof we defer to Section \ref{sec:main.proof.existence}.

\begin{theorem}\label{thm:existence}
Under Assumption \ref{ass:g}, there exists a mean field equilibrium for the sequential testing game.
\end{theorem}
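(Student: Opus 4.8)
The plan is to prove Theorem~\ref{thm:existence} by a Schauder fixed point argument applied to the consistency map \eqref{eqn:fixed.pt.mu}. First I would identify a suitable convex, compact domain in which to look for an equilibrium. Since each $\mu^j$ is a probability measure on the compact interval $[0,T]$, the product space $\cP([0,T])^2$ with the topology of weak convergence is itself convex and compact (it is metrizable, e.g.\ by a product of Lévy--Prokhorov metrics), and the candidate fixed point map sends it into itself by construction. So the only real issue is continuity of the map $\Phi: \mu \mapsto \left(\cL(\tau^{\iota,\ast}_\mu\mid\theta=0), \cL(\tau^{\iota,\ast}_\mu\mid\theta=1)\right)$; Schauder then yields a fixed point, which by the remark following Definition~\ref{def:MFE} (using the Conditional Exact Law of Large Numbers, Lemma~\ref{lemma:ELLN}, and the atomless agent space) is a mean field equilibrium in the Nash sense.

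The argument for continuity, which is where essentially all the work lies, I would organize around the single-agent analysis promised in Section~\ref{sec:single.agent}. Fix a representative agent $i$. Under Assumption~\ref{ass:g} the posterior process $\Pi^i$ driven by the signal \eqref{eqn:private.signal0} is, after a time-and-space change, a nice diffusion, and the optimal stopping problem \eqref{eq:single-agent-problem} has a solution of threshold type: there is a continuation region bounded by two free boundaries (a lower boundary $b^-_\mu(\cdot)$ and an upper boundary $b^+_\mu(\cdot)$, symmetric about $1/2$ by the symmetry in (G1)), and the smallest optimal stopping time $\tau^{i,\ast}_\mu$ is the first exit time of $(t,\Pi^i_t)$ from this region. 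Assumptions (G2)--(G3) are what guarantee the region is well-behaved (non-empty, with the $1/2$-level genuinely inside the stopping set near $T$, etc.). I would then need two ingredients: (a) the free boundaries $b^\pm_\mu$ depend continuously on $\mu$ in a suitable (e.g.\ locally uniform, or uniform-on-compacts-away-from-$T$) sense — this is presumably obtained by characterizing $b^\pm_\mu$ as the maximal solutions of a pair of nonlinear integral equations, showing the drift data $\bh^i_\mu$ depend continuously on $\mu$ (through $F^j_\mu$, which by the mollification \eqref{eq:F.def} is even continuous in $\mu$ in a strong sense), and proving a uniform-in-$\mu$ boundary regularity estimate (the "uniform local Lipschitz" property, Proposition~\ref{prop:bdy.unif.local.lip}) so one can pass to the limit in the integral equations; and (b) a stability result for first-exit times: if $\mu_n \to \mu$, then $b^\pm_{\mu_n} \to b^\pm_\mu$ and the laws of the exit times converge weakly, conditionally on each value of $\theta$. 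For (b) the key point is that the exit time does not charge any atom and the boundary is not "flat" in a degenerate way, so that exit happens transversally; the uniform regularity from (a) plus the non-degeneracy \eqref{eqn:non.degenerate.signal} should rule out pathologies.

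Concretely, the steps in order would be: (1) Record that $\cP([0,T])^2$ is convex and compact and that $\Phi$ maps it to itself. (2) Invoke the single-agent solution from Section~\ref{sec:single.agent}: existence of threshold-type optimal stopping, the free-boundary characterization via integral equations, and the uniform boundary regularity. (3) Show $\mu\mapsto (F^0_\mu, F^1_\mu)$ and hence $\mu\mapsto \bh^i_\mu$ is continuous. (4) Deduce continuity $\mu \mapsto b^\pm_\mu$ from the integral-equation characterization together with the uniform regularity (a normal-families / Arzelà--Ascoli argument: extract a convergent subsequence of boundaries, pass to the limit in the integral equation, and use maximality/uniqueness to identify the limit). (5) Conclude that $\mu_n \to \mu$ implies $\cL(\tau^{\iota,\ast}_{\mu_n}\mid\theta=j) \Rightarrow \cL(\tau^{\iota,\ast}_{\mu}\mid\theta=j)$ for $j=0,1$, i.e.\ $\Phi(\mu_n) \to \Phi(\mu)$; then a careful handling of the measurability/aggregation over $i\in I$ (the map is really the $\lambda$-average over agents, but the estimates are uniform in $i$ because $c_i, \mathfrak{h}_i, \|\bh^i\|_\infty$ enter only through bounds that can be taken uniform on the relevant class, or one simply works agent by agent and integrates). (6) Apply Schauder's theorem to get a fixed point $\mu^\ast$, and cite the remark after Definition~\ref{def:MFE} to conclude $\mu^\ast$ is a mean field equilibrium.

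The main obstacle, as the introduction itself flags, is step (4): proving the free boundaries depend continuously on the input measure $\mu$. This is delicate because free boundaries of optimal stopping problems are generically only as regular as the problem data forces them to be, and one needs a \emph{uniform} modulus of continuity as $\mu$ varies in order to extract limits and identify them via the integral equations. The expected route is the time/space transformation that turns the problem into one to which the probabilistic Lipschitz-regularity techniques of \cite{de2019lipschitz} apply, yielding a Lipschitz bound on $b^\pm_\mu$ on compact subintervals of $[0,T)$ with a constant independent of $\mu$; the behavior near the terminal time $T$ (where the boundary may be non-Lipschitz) has to be handled separately, exploiting that $\tau^{i,\ast}_\mu \le T$ and that the penalty $g_i$ is bounded, so that the contribution of times near $T$ to the law of the exit time is uniformly small. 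Everything else — compactness of the domain, self-mapping, continuity of $\mu \mapsto F^j_\mu$ — is routine by comparison.
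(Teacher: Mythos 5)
Your proposal follows essentially the same route as the paper: Schauder--Tychonoff on the compact convex set $\cP([0,T])^2$, continuity of the value function in $\mu$ (through the volatility $\norm{\Delta\bh_\mu}$), locally uniform convergence of the free boundaries obtained from the uniform-in-$\mu$ local Lipschitz estimate together with identification of limits as the maximal solution of the integral equations, convergence of the conditional laws of the optimal stopping times, and agent-by-agent integration via bounded convergence. The only step you leave at a heuristic level — that non-degeneracy and boundary regularity make the exit "transversal" so the exit-time laws converge — is exactly what the paper makes precise by passing to the conditionally Gaussian log-likelihood process $L=S^{-1}(\Pi)$ and proving a hitting-time stability lemma for Gaussian processes (Lemma \ref{lem:conv.stop.time}), so this is a matter of execution rather than a different approach.
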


As in other games with optimal stopping, the involvement of the law of the stopping times makes this problem difficult to solve. A further complication in our formulation is that the agents must learn about a common noise which couples their posterior beliefs and forces us to preserve the information structure. As a result, alternative weak solution approaches are unlikely to be directly applicable.

That said, the overarching idea for establishing Theorem \ref{thm:existence} is straightforward and standard in the mean field game literature. Namely, we equip $\mathcal{P}([0,T])^2$ with the topology of weak convergence and  apply Schauder's fixed point theorem to the map in \eqref{eqn:fixed.pt.mu}. 
Demonstrating the compactness of the underlying space is trivial, however, due to our ``strong" notion of equilibrium, proving the continuity of the mapping requires a detailed and protracted analysis. 

Indeed, in the process we show that for a weakly converging sequence of measures, the associated value functions converge, the stopping boundaries converge, and the conditional laws of the optimal stopping times converge. Many of the results and technical difficulties of the following section are undertaken for the primary purpose of ultimately proving continuity in Section \ref{sec:main.proof.existence}. Crucial challenges include deriving a representation of the stopping boundaries as the ``maximal" solutions to a pair of integral equations, and establishing a uniform regularity of the boundaries in the input measures.


\section{Analysis of the Single-Agent Problem}\label{sec:single.agent}

In this section, we fix $\mu = (\mu^0, \mu^1) \in \mathcal{P}([0,T])^2$ and study the single-agent optimal stopping problem \eqref{eq:single-agent-problem}. The index $i\in I$ is suppressed with the understanding that the analysis holds for all agents. From Section \ref{sec:value_fun} to \ref{sec:free.bdy}, we assume that Assumption \ref{ass:g} is in force.

\subsection{Posterior probability and the log-likelihood ratio}\label{sec:post.prob.and.log.likelihood}

We begin by describing the evolution of the posterior probability $\Pi_t$.

\begin{lemma}\label{lemma:Pi}
The posterior probability $\Pi=(\Pi_t)_{t\in [0, T]}$ satisfies the SDE
\begin{equation}\label{eq:Pi}
d\Pi_t = \Pi_t(1-\Pi_t) \Delta \bh^\top_\mu(t) d\overline{\mathbf{W}}_t,
\end{equation}
where 
\begin{equation}\label{eq:innovation_process}
\overline{\mathbf{W}}_t:=\mathbf{X}_t-\int_0^t [\bh_\mu(s,0) + \Delta\bh_\mu(s)\Pi_s]ds
\end{equation}
is a $d$-dimensional $\mathbb{F}^{\mathbf{X}}$-Brownian Motion known as the innovation process.
\end{lemma}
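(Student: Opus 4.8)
\emph{Proof plan.} The plan is to obtain an explicit formula for $\Pi_t$ via a Bayesian change of measure, differentiate it with It\^o's formula, and then recognize the coefficient using L\'evy's characterization of Brownian motion. Since $F^j_\mu$ is deterministic, conditional on $\{\theta=j\}$ the signal $\mathbf{X}$ in \eqref{eqn:private.signal0} is a Brownian motion with the \emph{deterministic} drift $\mathbf{h}_\mu(\cdot,j)$; boundedness of $\mathbf{h}$ makes Novikov's condition trivial, so Cameron--Martin/Girsanov gives that the Radon--Nikodym density of $\mathcal{L}(\mathbf{X}|_{[0,t]}\mid\theta=j)$ with respect to Wiener measure is
\[
L^j_t := \exp\Bigl(\int_0^t \mathbf{h}_\mu^\top(s,j)\,d\mathbf{X}_s - \tfrac12\int_0^t \|\mathbf{h}_\mu(s,j)\|^2\,ds\Bigr),\qquad j\in\{0,1\}.
\]
Working on the enlarged filtration $\mathbb{G}_t := \mathcal{F}^{\mathbf{X}}_t\vee\sigma(\theta)$ and passing to the reference measure under which $\mathbf{X}$ is a Brownian motion independent of $\theta$ while $\theta$ keeps its prior, the Kallianpur--Striebel (Bayes) formula yields
\[
\Pi_t = \bbP(\theta=1\mid\mathcal{F}^{\mathbf{X}}_t) = \frac{\pi L^1_t}{\pi L^1_t + (1-\pi)L^0_t},\qquad\text{equivalently}\qquad \Phi_t:=\log\frac{\Pi_t}{1-\Pi_t} = \log\frac{\pi}{1-\pi}+\log\frac{L^1_t}{L^0_t}.
\]

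Next I would differentiate the explicit expression for the log-likelihood ratio $\Phi_t$. Using the identity $\|\mathbf{h}_\mu(t,1)\|^2-\|\mathbf{h}_\mu(t,0)\|^2 = \|\Delta\mathbf{h}_\mu(t)\|^2+2\,\Delta\mathbf{h}_\mu^\top(t)\mathbf{h}_\mu(t,0)$ one gets
\[
d\Phi_t = \Delta\mathbf{h}_\mu^\top(t)\bigl(d\mathbf{X}_t-\mathbf{h}_\mu(t,0)\,dt\bigr)-\tfrac12\|\Delta\mathbf{h}_\mu(t)\|^2\,dt,\qquad d\langle\Phi\rangle_t = \|\Delta\mathbf{h}_\mu(t)\|^2\,dt.
\]
Since $\Pi_t = S(\Phi_t)$ for the sigmoid $S$ of Remark~\ref{rmk:consequences.of.assumptions}, with $S'=S(1-S)$ and $S''=S(1-S)(1-2S)$, It\^o's formula gives
\[
d\Pi_t = \Pi_t(1-\Pi_t)\Delta\mathbf{h}_\mu^\top(t)\bigl(d\mathbf{X}_t-\mathbf{h}_\mu(t,0)\,dt\bigr)+\Bigl(-\tfrac12+\tfrac12(1-2\Pi_t)\Bigr)\Pi_t(1-\Pi_t)\|\Delta\mathbf{h}_\mu(t)\|^2\,dt,
\]
whose drift coefficient simplifies to $-\Pi_t^2(1-\Pi_t)\|\Delta\mathbf{h}_\mu(t)\|^2$. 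Factoring out $\Pi_t(1-\Pi_t)\Delta\mathbf{h}_\mu^\top(t)$ and absorbing $-\mathbf{h}_\mu(t,0)\,dt-\Pi_t\Delta\mathbf{h}_\mu(t)\,dt$ into $d\overline{\mathbf{W}}_t$ with $\overline{\mathbf{W}}$ as in \eqref{eq:innovation_process} produces \eqref{eq:Pi}. (Equivalently, one may skip the likelihood computation and quote the Fujisaki--Kallianpur--Kunita filtering equation for the constant ``signal'' $\theta$, using $\bbE[\theta\,\mathbf{h}_\mu(t,\theta)\mid\mathcal{F}^{\mathbf{X}}_t]=\Pi_t\mathbf{h}_\mu(t,1)$ and $\bbE[\mathbf{h}_\mu(t,\theta)\mid\mathcal{F}^{\mathbf{X}}_t]=\mathbf{h}_\mu(t,0)+\Delta\mathbf{h}_\mu(t)\Pi_t$.)

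It then remains to verify that $\overline{\mathbf{W}}$ is a $d$-dimensional $\bbF^{\mathbf{X}}$-Brownian motion, which I expect to be the only genuinely delicate point. Writing $\overline{\mathbf{W}}_t = \mathbf{W}_t+\int_0^t\bigl(\mathbf{h}_\mu(s,\theta)-\bbE[\mathbf{h}_\mu(s,\theta)\mid\mathcal{F}^{\mathbf{X}}_s]\bigr)ds$ (the conditional drift being $\mathbf{h}_\mu(s,0)+\Delta\mathbf{h}_\mu(s)\Pi_s$), I would show $\bbE[(\overline{\mathbf{W}}_t-\overline{\mathbf{W}}_s)\mathbf{1}_A]=0$ for $s<t$ and $A\in\mathcal{F}^{\mathbf{X}}_s$: the $\mathbf{W}$-increment contributes nothing because $\mathbf{W}_t-\mathbf{W}_s$ is independent of $\mathbb{G}_s\supseteq\mathcal{F}^{\mathbf{X}}_s$, while for the drift term Fubini and the tower property (valid since $A\in\mathcal{F}^{\mathbf{X}}_s\subseteq\mathcal{F}^{\mathbf{X}}_r$ for $r\ge s$) give $\bbE[\int_s^t\mathbf{h}_\mu(r,\theta)\,dr\,\mathbf{1}_A]=\bbE[\int_s^t\bbE[\mathbf{h}_\mu(r,\theta)\mid\mathcal{F}^{\mathbf{X}}_r]\,dr\,\mathbf{1}_A]$. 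Hence $\overline{\mathbf{W}}$ is an $\bbF^{\mathbf{X}}$-martingale (the filtration being right-continuous, as noted after \eqref{eqn:private.signal0}), and its matrix of quadratic covariations equals that of $\mathbf{X}$, namely $t\,\mathrm{Id}$, since subtracting a continuous finite-variation process does not change brackets; L\'evy's characterization then finishes the proof. The main obstacle throughout is not any single estimate but the careful bookkeeping of the two nested filtrations $\bbF^{\mathbf{X}}\subset\mathbb{G}$ in the Girsanov step and in this innovations argument.
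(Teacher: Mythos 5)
Your proposal is correct, and it takes a genuinely different route from the paper. The paper's proof is a two-line application of standard nonlinear filtering theory: it quotes the Kushner--Stratonovich equation from \cite[Theorem 3.35 and Proposition 2.30]{bain2008fundamentals} (which already contains the assertion that the innovation process is an $\bbF^{\bX}$-Brownian motion), plugs in $\phi(t,\theta)=\theta$, and computes $\Xi_t(\phi\bh_\mu)=\bh_\mu(t,1)\Pi_t$ and $\Xi_t(\bh_\mu)=\bh_\mu(t,0)+\Delta\bh_\mu(t)\Pi_t$ --- essentially the route you mention only in your closing parenthesis. You instead rebuild the result from scratch: Cameron--Martin/Girsanov plus the Kallianpur--Striebel (Bayes) formula for the binary parameter gives the explicit representation $\Pi_t=\pi L^1_t/(\pi L^1_t+(1-\pi)L^0_t)$, It\^o's formula applied to the logit yields the dynamics of $\Pi=S(\Phi)$, and the Brownian property of $\overline{\bW}$ is proved by the classical innovations argument (conditional-mean drift removal, tower property over $\cF^{\bX}_s\subseteq\cF^{\bX}_r$, unchanged brackets, L\'evy's characterization). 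Your computations check out: the drift $-\Pi_t^2(1-\Pi_t)\norm{\Delta\bh_\mu(t)}^2$ is exactly what is absorbed by the $-\Pi_t\Delta\bh_\mu(t)\,dt$ term in $\overline{\bW}$, and the independence of $\bW_t-\bW_s$ from $\cF^{\bX}_s\vee\sigma(\theta)$ (valid since $\cF^{\bX}_s\subseteq\sigma(\theta)\vee\cF^{\bW}_s$ and $\bW$ is independent of $\theta$) makes the martingale step sound. What each approach buys: the paper's is shorter and delegates the delicate innovation-process step to a cited theorem whose hypotheses hold by boundedness of $\bh$; yours is self-contained, needs no external filtering theorem, and the explicit likelihood-ratio formula you derive is exactly the logit representation the paper later exploits anyway (it reappears as \eqref{eq:defn-L}--\eqref{eq:L.dynamics} and the conditionally Gaussian structure of Lemma~\ref{lem:properties.of.L}), so your derivation also proves those downstream facts essentially for free.
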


\begin{proof}

For any bounded measurable function $\phi$, 
standard nonlinear filtering theory implies that $\Xi_t(\phi):=\bbE[\phi(t,\theta)|\cF^{\bX}_t ]$ satisfies the Kushner-Stratonovich equation
\begin{equation}\label{eq:Kushner}
d\Xi_t(\phi) = \left(\Xi_t(\phi \bh^\top_\mu)-\Xi_t(\phi)\Xi_t(\bh^\top_\mu)\right) d\overline{\bW}_t,
\end{equation}
where 
\begin{equation}\label{eq:innovation}
\overline{\mathbf{W}}_t=\mathbf{X}_t-\int_0^t\Xi_s(\mathbf{h}_\mu)ds
\end{equation}
is an $\mathbb{F}^{\mathbf{X}}$-Brownian Motion; see e.g.\ \cite[Theorem 3.35 and Proposition 2.30]{bain2008fundamentals}. Setting $\phi(t,\theta):= \theta =\1_{\{\theta = 1\}}$ leads to $\Xi_t(\phi)=\bbE[\1_{\{\theta=1\}}|\cF^\bX_t] =\Pi_t$ and
\[\Xi_t(\phi \bh_\mu)=\bbE[\1_{\{\theta = 1\}} \bh_\mu(t, 1) |\cF^\bX_t] = \bh_\mu(t,1)\Pi_t.\]
We also find that $\Xi_t(\bh_\mu)
=\bh_\mu(t,0) + \Delta\bh_\mu(t)\Pi_t$.
%
Substituting these expressions into \eqref{eq:Kushner} and \eqref{eq:innovation} yields the desired results.
\end{proof}

\begin{remark}\label{rmk:dist.pi.mu}
The distribution of the $\Pi$ depends on $\mu$ via the real-valued deterministic function $\norm{\Delta\bh_\mu}$, by L\'{e}vy's characterization of Brownian motion. We will refer to $\norm{\Delta\bh_\mu}$ as the ``volatility" of $\Pi$. By the non-degeneracy condition \eqref{eqn:non.degenerate.signal} and the boundedness of $\bh$, we have that 
\[\mathfrak{h}\le \norm{\Delta\bh_\mu} \le 2\sqrt{d}\norm{\bh}_\infty=:\mathfrak{H},\]
and these bounds are uniform in $\mu$. Hence $\Pi$ is the unique strong solution to \eqref{eq:Pi}. Since $\bh(\cdot,j, \cdot)$ is continuously differentiable and $F^j_\mu$ is the convolution with a mollifier, it is also straightforward to check that $\norm{\Delta\bh_\mu}\in C^1[0,T]\cap \text{Lip}([0,T])$, and the Lipschitz constant can be made independent of $\mu$.
\end{remark}

\begin{remark}\label{rmk:filtration}
The $\bbP$-augmentation of the natural filtration generated by $\overline{\mathbf{W}}$, denoted by $\bbF^{\overline{\mathbf{W}}}=(\cF_t^{\overline{\bW}})_{t\in [0,T]}$, coincides with $\bbF^\bX$. Indeed, it is clear from the definition \eqref{eq:innovation_process} that $\cF_t^{\overline{\mathbf{W}}}\subseteq \cF_t^\bX$. The reverse set inclusion follows from the fact that $(\bX, \Pi)$ is the unique strong solution of the system of SDEs
\[
\begin{cases}
d\bX_t = [\bh_\mu(t,0)+\Delta \bh_\mu(t)\Pi_t]dt + d\overline{\bW}_t, & \bX_0 = 0, \\
d\Pi_t = \Pi_t(1-\Pi_t) \Delta \bh_\mu^\top(t)d\overline{\bW}_t, & \Pi_0 = \pi. 
\end{cases}
\]

\end{remark}

As in Wald's sequential probability ratio test, it is often convenient to work with the log-likelihood ratio process
\begin{equation}\label{eq:defn-L}
L_t:= \log\left(\frac{\Pi_t}{1-\Pi_t}\right)=S^{-1}(\Pi_t),
\end{equation}
where $S^{-1}$ is the logit function, or the inverse of the sigmoid function $S$. It\^{o}'s formula implies
\begin{equation}\label{eq:L.dynamics}
d L_t=\frac{1}{2}\norm{\Delta \bh_\mu(t)}^2\frac{e^{L_t}-1}{e^{L_t}+1}dt+\Delta \bh^\top_\mu(t)d\overline{\mathbf{W}}_t.
\end{equation}
We see that the state-dependence in the diffusion coefficient of $\Pi_t$ is removed by such a spatial transformation. 
The next lemma shows that the log-likelihood ratio process is conditionally Gaussian given $\theta$; hence, it enjoys many nice properties.


\begin{lemma}\label{lem:properties.of.L}
\mbox{}
\begin{itemize}
\item[(i)] Conditioned on $\theta=1$,
\begin{equation}\label{eq:cond.dyn.L.1}
dL_t=\frac{1}{2}\norm{\Delta \bh_\mu(t)}^2dt+\Delta \bh^\top_\mu(t) d\bW_t, \ L_0=\log\left(\pi/(1-\pi)\right),
\end{equation}
and conditioned on $\theta=0$,
\begin{equation}\label{eq:cond.dyn.L.0}
dL_t=-\frac{1}{2}\norm{\Delta \bh_\mu(t)}^2dt+\Delta \bh^\top_\mu(t) d\bW_t, \ L_0=\log\left(\pi/(1-\pi)\right).
\end{equation}
\item[(ii)] For any $L_0 \in \bbR$ and $t>0$, $L_t$ admits a density on $\bbR$. Moreover, $L_t$ almost surely diverges to $\pm\infty$ as $t\to\infty$ but does not explode to $\pm\infty$ in finite time.
\end{itemize}
\end{lemma}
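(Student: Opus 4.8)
The plan is to derive both parts directly from the conditional dynamics, using Girsanov's theorem to pass from the innovation process $\overline{\bW}$ (which is a Brownian motion under $\bbP$, not under the conditional measures) to Brownian motions under $\bbP(\cdot\mid\theta=j)$. For part (i), I would recall that under the original measure $\bbP$, conditioning on $\theta = j$ and applying Bayes' rule for the change of measure shows that on $\{\theta=1\}$ the signal $\bX_t$ has drift $\bh_\mu(t,1)$ (and on $\{\theta=0\}$, drift $\bh_\mu(t,0)$), while $\bW^\bX_t := \bX_t - \int_0^t \bh_\mu(s,\theta)\,ds$ is a Brownian motion under $\bbP(\cdot\mid\theta=j)$. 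Plugging $\bX_t = \int_0^t \bh_\mu(s,\theta)\,ds + \bW_t$ into the expression \eqref{eq:innovation_process} for $\overline{\bW}_t$ gives, on $\{\theta = 1\}$,
\[
\overline{\bW}_t = \bW_t + \int_0^t \Delta\bh_\mu(s)(1-\Pi_s)\,ds,
\]
and substituting this into \eqref{eq:L.dynamics} and simplifying $\tfrac12\|\Delta\bh_\mu\|^2 \tfrac{e^{L_t}-1}{e^{L_t}+1} + \|\Delta\bh_\mu\|^2(1-\Pi_t) = \tfrac12\|\Delta\bh_\mu\|^2$ (using $\Pi_t = e^{L_t}/(1+e^{L_t})$, so $1-\Pi_t = 1/(1+e^{L_t})$ and $\tfrac{e^{L_t}-1}{e^{L_t}+1} + \tfrac{2}{e^{L_t}+1} = 1$) yields \eqref{eq:cond.dyn.L.1}; the $\theta=0$ case is symmetric. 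Equivalently and more cleanly, one can avoid the filtering SDE entirely: on $\{\theta=1\}$, $L_t = \log\tfrac{\bbP(\theta=1\mid\cF^\bX_t)}{\bbP(\theta=0\mid\cF^\bX_t)}$ equals the log-likelihood ratio $\log\tfrac{\pi}{1-\pi} + \log\tfrac{d\bbP_1}{d\bbP_0}\big|_{\cF^\bX_t}$, and by Girsanov $\log\tfrac{d\bbP_1}{d\bbP_0}\big|_{\cF^\bX_t} = \int_0^t \Delta\bh_\mu(s)^\top d\bW_s^\bX - \tfrac12\int_0^t\|\Delta\bh_\mu(s)\|^2 ds$ where $\bW^\bX$ is the $\bbP_0$-Brownian motion; rewriting in terms of the $\bbP_1$-Brownian motion $\bW$ (via $d\bW_s^\bX = d\bW_s + \Delta\bh_\mu(s)\,ds$ on $\{\theta=1\}$) gives exactly \eqref{eq:cond.dyn.L.1}. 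I expect this second route to be the shortest to write rigorously.

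For part (ii), given (i), conditioned on $\theta=j$ the process $L_t$ is a time-changed Brownian motion with a drift: writing $M_t := \int_0^t \Delta\bh_\mu(s)^\top d\bW_s$, this is a continuous martingale with quadratic variation $\langle M\rangle_t = \int_0^t \|\Delta\bh_\mu(s)\|^2\,ds$, so by the Dambis–Dubins–Schwarz theorem $M_t = B_{\langle M\rangle_t}$ for some Brownian motion $B$; and by Remark \ref{rmk:dist.pi.mu}, $\mathfrak{h}^2 t \le \langle M\rangle_t \le \mathfrak{H}^2 t$, so $\langle M\rangle_t$ is a strictly increasing, continuous, unbounded function of $t$. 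Since $L_t = L_0 \pm \tfrac12\langle M\rangle_t/\cdots$ — more precisely $L_t = L_0 + A_t + B_{\langle M\rangle_t}$ where $A_t = \pm\tfrac12\langle M\rangle_t$ is a (signed) strictly monotone absolutely continuous drift — we conclude: (a) for $t>0$, $L_t$ is Gaussian with variance $\langle M\rangle_t > 0$ conditional on $\theta$, hence has a density on $\bbR$, and unconditionally $L_t$ has a density as a $\pi$-mixture of two Gaussians; (b) no finite-time explosion because $\langle M\rangle_t < \infty$ for all finite $t$ and Brownian motion does not explode; (c) as $t\to\infty$, $\langle M\rangle_t\to\infty$, and by the law of large numbers for Brownian motion $B_u/u \to 0$ a.s., while $|A_t| = \tfrac12\langle M\rangle_t \ge \tfrac12\mathfrak{h}^2 t \to\infty$ dominates $|B_{\langle M\rangle_t}| = o(\langle M\rangle_t) = o(t)$, so $L_t \to +\infty$ a.s. on $\{\theta=1\}$ and $L_t\to-\infty$ a.s. on $\{\theta=0\}$, giving divergence to $\pm\infty$.

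The only mild subtlety — and the step I would be most careful about — is part (i): correctly identifying the conditional law of $\bX$ (equivalently, justifying the Girsanov change of measure between $\bbP(\cdot\mid\theta=0)$ and $\bbP(\cdot\mid\theta=1)$ and checking the Novikov/boundedness condition, which holds since $\bh_\mu$ is bounded) and then carefully reconciling the two SDE representations \eqref{eq:L.dynamics} and \eqref{eq:cond.dyn.L.1}–\eqref{eq:cond.dyn.L.0} via the algebraic identity relating $\overline{\bW}$, $\bW$, and $\Pi$. Everything in part (ii) is then a routine consequence of the conditional Gaussianity together with the two-sided bound $\mathfrak{h}^2 t \le \langle M\rangle_t \le \mathfrak{H}^2 t$ from Remark \ref{rmk:dist.pi.mu}; no further input is needed.
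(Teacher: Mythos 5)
Your proposal is correct, and your primary route is essentially the paper's proof: part (i) is obtained there by expanding $d\overline{\bW}_t$ via \eqref{eqn:private.signal0}, \eqref{eq:innovation_process} and \eqref{eq:defn-L} to get $dL_t=-\tfrac12\norm{\Delta\bh_\mu(t)}^2dt+\Delta\bh_\mu^\top(t)[\bh_\mu(t,\theta)-\bh_\mu(t,0)]dt+\Delta\bh_\mu^\top(t)d\bW_t$ and then conditioning on $\theta$, which is exactly your first computation (done on $\{\theta=j\}$ rather than before conditioning), and part (ii) is proved there by the same conditional Gaussianity, the time change $L_t=L_0\pm\tfrac12\alpha(t)+\widetilde W_{\alpha(t)}$ with $\mathfrak{h}^2 t\le\alpha(t)\le\mathfrak{H}^2 t$, and the law of large numbers for Brownian motion. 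One remark on emphasis: the paper needs no Girsanov or Bayes argument at all for (i) — since $\bW$ is a $\bbP$-Brownian motion independent of $\theta$, it remains a Brownian motion under $\bbP(\cdot\mid\theta=j)$, so the conditional SDEs follow purely from the algebraic substitution; your framing of this step via a change of measure between $\bbP(\cdot\mid\theta=0)$ and $\bbP(\cdot\mid\theta=1)$ is heavier than necessary. Your alternative likelihood-ratio route for (i) (identifying $L_t$ with $\log\tfrac{\pi}{1-\pi}+\log\tfrac{d\bbP_1}{d\bbP_0}\big|_{\cF^\bX_t}$ and applying Girsanov, with Novikov trivially satisfied by boundedness of $\bh$) is a genuinely different and classical derivation; it buys independence from the Kushner--Stratonovich computation in Lemma \ref{lemma:Pi}, but requires justifying the Bayes-formula representation of the posterior, which the paper's direct substitution avoids.
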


%
	
\begin{proof}
Expanding $d\overline{\bW}_t$ in \eqref{eq:L.dynamics} using \eqref{eqn:private.signal0}, \eqref{eq:innovation_process} and \eqref{eq:defn-L} yields
\[d L_t=-\frac{1}{2}\norm{\Delta \bh_\mu(t)}^2 dt + \Delta \bh_\mu^\top(t)[\bh_\mu(t,\theta)-\bh_\mu(t,0)]dt+\Delta \bh^\top_\mu(t)d\bW_t.
\]
Since $\bW$ is independent of $\theta$, we get \eqref{eq:cond.dyn.L.1} and \eqref{eq:cond.dyn.L.0} by conditioning.

Because $\Delta \bh_\mu(t)$ is deterministic, $L_t$ is conditionally Gaussian with nonzero variance (see \eqref{eqn:non.degenerate.signal}). Hence, it has a density of Gaussian mixture type. Conditioned on $\theta=1$, we can write by \eqref{eq:cond.dyn.L.1} and a time change that
\[L_t=L_0+\frac{1}{2}\alpha(t)+\widetilde{W}_{\alpha(t)}\]
for some Brownian motion $\widetilde{W}$ and ``clock" $\alpha(t):=\int_0^t\norm{\Delta \bh_\mu(s)}^2ds$ satisfying $\alpha(t)\in (0,\infty)$ for all $t>0$ and $\lim_{t\rightarrow \infty}\alpha(t)=\infty$ (recall $\mathfrak{h}\leq \norm{\Delta \bh_\mu}\leq \mathfrak{H}$). Clearly, $L$ does not explode in finite time. By the Law of Large Numbers for Brownian motion,
\[L_t=\alpha(t)\left(\frac{L_0}{\alpha(t)}+\frac{1}{2}+\frac{\widetilde{W}_{\alpha(t)}}{\alpha(t)}\right)\xrightarrow{\mbox{a.s.}}\infty \ \ \ \mbox{as}\ \ t\rightarrow \infty.\]
The case when we condition on $\theta=0$ is analogous.
\end{proof}


The following corollary is an immediate consequence of the bijection $\Pi_t=S(L_t)$.
\begin{corollary}\label{cor:properties.of.Pi}
For any $\Pi_0 \in (0,1)$ and $t>0$, $\Pi_t$ admits a density on $(0,1)$. Moreover, $\Pi_t$ almost surely converges to 0 or 1 as $t\to\infty$ but does not exit $(0,1)$ in finite time.
\end{corollary}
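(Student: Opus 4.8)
The statement is a routine transcription of Lemma~\ref{lem:properties.of.L}(ii) through the bijection $\Pi_t=S(L_t)$, where $S(x)=1/(1+e^{-x})$ is a $C^\infty$ diffeomorphism of $\bbR$ onto $(0,1)$ with inverse the logit $S^{-1}$ and strictly positive derivative $S'(x)=S(x)(1-S(x))$. The plan is simply to push each of the three properties of $L_t$ (existence of a density on $\bbR$, no finite-time explosion, a.s.\ divergence to $\pm\infty$) forward under $S$.

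\textbf{Key steps.} First, for the density: given $t>0$, $L_t$ has a density $f_{L_t}$ on $\bbR$, so for any Borel set $A\subseteq(0,1)$ we write $\bbP(\Pi_t\in A)=\bbP\bigl(L_t\in S^{-1}(A)\bigr)=\int_{S^{-1}(A)}f_{L_t}(x)\,dx$, and the substitution $x=S^{-1}(\pi)$ gives that $\Pi_t$ has density
\[
f_{\Pi_t}(\pi)=\frac{1}{\pi(1-\pi)}\,f_{L_t}\!\left(\log\frac{\pi}{1-\pi}\right),\qquad \pi\in(0,1),
\]
which is a genuine probability density on $(0,1)$; in particular $\bbP(\Pi_t\in\{0,1\})\le\bbP(L_t\in\{-\infty,+\infty\})=0$. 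Second, non-exit in finite time: on the a.s.\ event that $L$ does not explode, $L_t\in\bbR$ for every finite $t$, and since $S(\bbR)\subseteq(0,1)$ we get $\Pi_t=S(L_t)\in(0,1)$ for all finite $t$. Third, the limiting behaviour: on the a.s.\ event $\{L_t\to+\infty\}$ (resp.\ $\{L_t\to-\infty\}$), continuity of $S$ with $\lim_{x\to+\infty}S(x)=1$ (resp.\ $\lim_{x\to-\infty}S(x)=0$) forces $\Pi_t\to1$ (resp.\ $\Pi_t\to0$); since these two events together have full probability, $\Pi_t$ converges a.s.\ to a $\{0,1\}$-valued random variable.

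\textbf{Main obstacle.} There is essentially no obstacle: the only point that needs a word of justification is the change-of-variables (pushforward) step for the density, which is immediate because $S$ is a smooth strictly increasing bijection of $\bbR$ onto $(0,1)$. (As a sanity check, one could alternatively note that $\Pi$ is a $[0,1]$-valued martingale by Lemma~\ref{lemma:Pi}, hence converges a.s.\ by the martingale convergence theorem, with the $L$-argument above then identifying the limit as $\{0,1\}$-valued.)
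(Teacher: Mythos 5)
Your proof is correct and follows exactly the route the paper intends: the paper states that the corollary is an immediate consequence of the bijection $\Pi_t=S(L_t)$ with Lemma~\ref{lem:properties.of.L}(ii), and your argument simply makes the routine pushforward of the density, the non-explosion, and the divergence statements through the diffeomorphism $S$ explicit. No gaps; the change-of-variables formula and the limiting behaviour are handled correctly.
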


\subsection{Value function and optimal stopping time}\label{sec:value_fun}

We see in Lemma~\ref{lemma:Pi} that $\Pi_t$ is a diffusion in the observation filtration $\bbF^\bX$ which coincides with $\bbF^{\overline{\mathbf{W}}}$ by Remark~\ref{rmk:filtration}. This allows us to use a Markovian approach to the optimal stopping problem \eqref{eq:single-agent-problem}. For $(t,\pi)\in [0,T]\times [0,1]$, define the value function
\begin{equation}\label{eqn:value.function}
	V(t,\pi)=V(t,\pi;\mu):=\inf_{\tau\in \cT_t^{\bX}}\mathbb{E}\left[c(\tau-t)+g(\Pi_\tau^{t,\pi})\right],
\end{equation}
where $\Pi^{t,\pi}=(\Pi_s^{t,\pi})_{s\geq t}$ is given by \eqref{eq:Pi} with the initial condition $\Pi^{t,\pi}_t=\pi$, and $\cT^\bX_t$ is the set of $[t,T]$-valued $\bbF^\bX$-stopping times. 
One can also reparametrize the value function in terms of $L$:
\begin{equation}\label{eqn:value.function.L}
	\tilde V(t,l):=V(t,S(l))=\inf_{\tau\in \cT_t^{\bX}}\mathbb{E}\left[c(\tau-t)+g\circ S(L_\tau^{t,l})\right].
\end{equation}
Since $g\circ S$ is Lipschitz continuous by Remark \ref{rmk:consequences.of.assumptions} 
and the coefficients of $L$ satisfy the usual Lipchitz and linear growth conditions, we know from standard optimal stopping theory (see e.g.\ \cite[Proposition 4.7 and Remark 4.1]{touzi2012optimal}) that $\tilde V(t, l)$, and hence $V(t, \pi)=\tilde V(t, S^{-1}(\pi))$, is (jointly) continuous. 

Define the stopping region
\begin{equation}\label{eqn:stop.region}
	\mathscr{D}=\{(t, \pi)\in [0, T] \times [0,1]: V(t, \pi) = g(\pi)\}
\end{equation}
and the continuation region $\mathscr{C} = \mathscr{D}^c$.
Then
\begin{equation}\label{eqn:smallest.stop.time}\tau_{\mathscr{D}} = \inf\{s\ge t: (s, \Pi^{t,\pi}_s)\in \mathscr{D}\}
\end{equation}
is the smallest optimal stopping time for the problem \eqref{eqn:value.function}; see \cite[Theorem 2.4 and Corollary 2.9]{peskir2006optimal}. The optimality of $\tau_\mathscr{D}$ implies that \eqref{eqn:value.function} can equivalently be posed over $\bbF^{\Pi}$-stopping times, and that $V$ depends on $\mu$ only via $\norm{\Delta \bh_\mu}$. With a slight abuse of notation, in the next lemma we write $V(t, \pi; \eta)$ for the value function when the volatility $\norm{\Delta \bh_\mu}$ of $\Pi$ is replaced by a general measurable function $\eta: [0,T]\rightarrow \bbR_+$.

\begin{proposition}\label{prop:value.func}
	\mbox{}
	\begin{enumerate}
		\item[(i)] (Concavity) $V(t,\pi)$ is concave in $\pi$ for each fixed $t$.
		\item[(ii)] (Monotonicity in volatility) 
		If $\eta_1(t) \ge \eta_2(t) \ge 0 \, \forall\, t$, then $V(t,\pi; \eta_1)\leq V(t,\pi; \eta_2)$ $\forall \,(t,\pi) \in [0,T]\times [0,1]$. Consequently, for $\underline{V}:=V(\,\cdot\,; \mathfrak{H})$ and $\overline{V}:=V(\,\cdot\,;\mathfrak{h})$, we have that $\underline{V}\leq V\leq \overline{V}$.
	\end{enumerate}
\end{proposition}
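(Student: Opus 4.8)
The plan is to prove both parts by exploiting the representation \eqref{eqn:value.function.L} of the value function in terms of the log-likelihood ratio $L$, which has a \emph{linear} drift-free diffusion structure conditioned on $\theta$ and, more importantly, dynamics \eqref{eq:L.dynamics} whose volatility is the deterministic function $\norm{\Delta\bh_\mu}$. For part (i), the key observation is that $L_\tau^{t,l}$ is an affine function of the initial point $l$ when we freeze the driving Brownian path — more precisely, since the diffusion coefficient in \eqref{eq:L.dynamics} does not depend on the state, the map $l\mapsto L^{t,l}_s$ has constant (unit-slope after time change) sensitivity, so $\Pi^{t,\pi}_s = S(L^{t,S^{-1}(\pi)}_s)$ is built from the \emph{same} realized Gaussian increment regardless of $\pi$. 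The cleanest route, however, is to work directly with $\Pi$: recall from Lemma~\ref{lemma:Pi} that $\Pi$ solves \eqref{eq:Pi}, and a standard coupling/linearity argument shows that for two initial conditions $\pi_1<\pi_2$ the solutions satisfy $\Pi^{t,\alpha\pi_1+(1-\alpha)\pi_2}_s$ can be compared to the convex combination $\alpha\Pi^{t,\pi_1}_s + (1-\alpha)\Pi^{t,\pi_2}_s$; combined with the concavity of $g$ (Assumption (G1)) and the fact that $c(\tau-t)$ does not depend on $\pi$, one gets that for any fixed stopping time $\tau$ the functional $\pi\mapsto \mathbb{E}[c(\tau-t)+g(\Pi^{t,\pi}_\tau)]$ is concave, and the infimum of concave functions is concave. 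I expect the technically delicate point here to be justifying the concavity of $\pi\mapsto \Pi^{t,\pi}_\tau$ (or an appropriate substitute) despite the nonlinear SDE; the trick is that in $L$-coordinates the dependence on the initial condition is \emph{affine} (only the initial value of $L_0=S^{-1}(\pi)$ changes, and by \eqref{eq:cond.dyn.L.1}--\eqref{eq:cond.dyn.L.0} the conditional law of $L^{t,l}_s - l$ does not depend on $l$ at all), so $L^{t,l}_s = l + (\text{something independent of }l)$. Then $\Pi^{t,\pi}_s = S(S^{-1}(\pi) + Z_s)$ for a random variable $Z_s$ not depending on $\pi$, and one checks that $g\circ S$ concave plus $S^{-1}$ being the inverse sigmoid gives concavity of $\pi\mapsto g(S(S^{-1}(\pi)+z))$ for every fixed shift $z$ — this is exactly the statement that $g\circ S$ is concave, which follows from (G1)--(G2) as in Remark~\ref{rmk:consequences.of.assumptions} (indeed $(g\circ S)'' $ has a sign tied to $\mathcal{A}g$). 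Hence the integrand is concave in $\pi$ pathwise, so the expectation and then the infimum are concave.

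For part (ii), the strategy is a time-change comparison. Fix $t$ and $l$, and condition on $\theta$. Under $\theta=1$, by \eqref{eq:cond.dyn.L.1} and a Dambis–Dubins–Schwarz time change we may write $L^{t,l}_s = l + \tfrac12\alpha_\eta(s) + \widetilde W_{\alpha_\eta(s)}$ where $\alpha_\eta(s) = \int_t^s \eta(u)^2\,du$ is the clock associated with volatility $\eta$, and symmetrically with a minus sign under $\theta=0$. The point is that increasing $\eta$ pointwise means $\alpha_{\eta_1}\ge \alpha_{\eta_2}$, i.e.\ the process with larger volatility runs a \emph{faster} clock; intuitively it can reach informative regions (large $|L|$, where $g\circ S$ is small) sooner, so its value should be smaller. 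To make this rigorous I would use the monotonicity of the value function in a running-clock variable: reparametrize by the clock and observe that the optimal stopping problem with a faster clock is the problem with the slower clock but where the agent is additionally allowed to "skip ahead" — equivalently, any stopping strategy available under $\eta_2$ yields, after reparametrization, an admissible strategy under $\eta_1$ achieving no larger cost, because the cost $c(\tau-t)$ is increasing in real time and the terminal penalty depends only on the clock-position reached. Concretely: given an $\eta_1$-admissible $\tau_1$, one can build an $\eta_2$-admissible $\tau_2$ with $\alpha_{\eta_2}(\tau_2) = \alpha_{\eta_1}(\tau_1)$ (same information gained) and $\tau_2 \ge \tau_1$ pointwise (slower clock needs more real time), so $c(\tau_2-t)+g(\Pi_{\tau_2}) \ge c(\tau_1-t)+g(\Pi_{\tau_1})$; taking expectations and infima gives $V(\,\cdot\,;\eta_2)\ge V(\,\cdot\,;\eta_1)$. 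One must be slightly careful about the finite horizon $T$ and about measurability/admissibility of the time-changed stopping time with respect to the right filtration, but the bounds $\mathfrak h\le \eta\le \mathfrak H$ and Lemma~\ref{lem:properties.of.L}(ii) (no explosion, the clock stays finite and positive) keep everything well-posed.

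The consequence $\underline V\le V\le \overline V$ is then immediate: the non-degeneracy bound \eqref{eqn:non.degenerate.signal} and boundedness of $\bh$ (Remark~\ref{rmk:dist.pi.mu}) give $\mathfrak h \le \norm{\Delta\bh_\mu(t)} \le \mathfrak H$ for all $t$, so applying part (ii) with $\eta_1=\mathfrak H,\ \eta_2 = \norm{\Delta\bh_\mu}$ and then with $\eta_1 = \norm{\Delta\bh_\mu},\ \eta_2 = \mathfrak h$ yields $\underline V = V(\,\cdot\,;\mathfrak H)\le V\le V(\,\cdot\,;\mathfrak h) = \overline V$. The main obstacle I anticipate is part (ii): getting the time-change comparison to respect the admissibility constraint (the constructed $\tau_2$ must be a stopping time for the appropriate observation filtration, not just a measurable time) and handling the horizon truncation cleanly — one natural fix is to first prove the inequality for the infinite-horizon problem where the clock argument is transparent, then transfer to finite horizon, or alternatively to argue via the variational characterization / HJB variational inequality where larger volatility enlarges the generator $\tfrac12\eta^2\,\partial_{ll}$ acting on the (concave, hence this helps) value function. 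I would present the pathwise time-change version as the cleanest, flagging the filtration subtlety explicitly.
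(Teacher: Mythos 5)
Your part (i) has a genuine gap. The pathwise ``shift'' representation $\Pi^{t,\pi}_s=S(S^{-1}(\pi)+Z_s)$ with $Z_s$ independent of $\pi$ is only valid \emph{conditionally on $\theta$} (equations \eqref{eq:cond.dyn.L.1}--\eqref{eq:cond.dyn.L.0}); in the unconditional (innovation) formulation \eqref{eq:L.dynamics} the drift of $L$ is state-dependent, so the flow is not a shift, and if you condition on $\theta$ to recover the shift you must carry the mixture weights $\pi$ and $1-\pi$, which depend on the very variable you are proving concavity in. Moreover, even granting the shift, the asserted pathwise concavity fails: for fixed $z$ the map $\pi\mapsto g\bigl(S(S^{-1}(\pi)+z)\bigr)$ need not be concave (take $g(\pi)=\pi(1-\pi)$ and $z$ large; the map behaves like a positive multiple of $(1-\pi)/\pi$, which is convex), and it is \emph{not} ``exactly the statement that $g\circ S$ is concave'' --- nor is $g\circ S$ concave under (G1)--(G2) (for the cross-entropy loss, $g\circ S(l)\sim(1-l)e^{l}$ as $l\to-\infty$, which is convex there); Remark \ref{rmk:consequences.of.assumptions} only gives Lipschitz continuity. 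The missing ingredient is precisely the $\pi$-dependent weighting: writing the cost under a reference measure (say the law given $\theta=0$) with likelihood ratio $\Lambda_\tau$, the integrand becomes the perspective-type transform $(\pi\Lambda_\tau+1-\pi)\,g\bigl(\pi\Lambda_\tau/(\pi\Lambda_\tau+1-\pi)\bigr)$, which \emph{is} concave in $\pi$ for each fixed $\Lambda_\tau$ because the perspective of a concave function is concave and $(\pi\Lambda_\tau,1-\pi)$ is affine in $\pi$; one must also fix the class of stopping rules as functionals of the observation path so it does not vary with $\pi$. Alternatively, the paper's route (citing \cite{ekstrom2015bayesian}) discretizes the stopping times, uses the dynamic programming principle, and invokes preservation of concavity for one-dimensional martingale diffusions; either repair is fine, but your pathwise argument as written is false.

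Part (ii) is a genuinely different and viable route from the paper's. The paper again argues via discretization and the fact that European prices of \emph{concave} payoffs are non-increasing in volatility, whereas your deterministic time-change argument exploits that the volatility enters only through the clock $\alpha_\eta(t)=\int_0^t\eta^2$: in clock time the state dynamics \eqref{eq:L.hat.dynamics} are $\eta$-free, the clock horizon $\alpha_\eta(T)$ is larger for larger $\eta$, and the real time needed to reach a given clock value, $\alpha_\eta^{-1}(u)-t$, is smaller --- so the larger-volatility problem has a larger feasible set and pointwise smaller running cost, and no concavity of $g$ is needed. Two corrections: the construction must go from an $\eta_2$-admissible $\tau_2$ to an $\eta_1$-admissible $\tau_1$ with $\alpha_{\eta_1}(\tau_1)=\alpha_{\eta_2}(\tau_2)$ (your ``concretely'' sentence goes the other way, which does not compare the infima and also runs into the horizon, since $\alpha_{\eta_1}(\tau_1)$ may exceed $\alpha_{\eta_2}(T)$); and the admissibility/filtration identification you flag is exactly the correspondence between $\bbF^{\Pi}$-stopping times and stopping times of the time-changed filtration that the paper sets up in Appendix \ref{sec:time-change}, so it can be closed rather than merely flagged. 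The final deduction $\underline{V}\le V\le\overline{V}$ from \eqref{eqn:non.degenerate.signal} and Remark \ref{rmk:dist.pi.mu} is correct.
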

\begin{proof}
	See \cite[Proposition 4.1]{ekstrom2015bayesian} for (i) and an analogue of  \cite[Proposition 4.2]{ekstrom2015bayesian} for (ii). Both proofs are based on an approximation argument where the stopping time is restricted to take values in a discrete set, alongside with the dynamic programming principle and the fact that the price of a European option with a concave contract function is concave in the underlying martingale diffusion and non-increasing in the volatility.
\end{proof}

Let $\mathcal{L}:=\frac{1}{2}\|\Delta\bh_\mu(t)\|^2\pi^2(1-\pi)^2\partial_{\pi\pi}$
be the infinitesimal generator of $\Pi$. The following regularity result can also be obtained for $V$ by appealing to standard arguments (c.f. \cite[Theorem 2.7.7]{karatzas1998methods}). While \cite[Theorem 2.7.7]{karatzas1998methods} pertains to American options, the arguments can be consulted for our purposes and carry over naturally to this setting. We omit the details.

\begin{proposition}\label{prop:C1.interior.cont.reg.}
	The value function $V$ satisfies:
	\[\partial_tV(t,\pi)+\mathcal{L}V(t,\pi)=-c \ \ \ \ \forall (t,\pi)\in \mathscr{C}.\]
	In particular, $V$ is $C^{1,2}$ in $\mathscr{C}$.
\end{proposition}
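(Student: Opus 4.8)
The plan is to establish the PDE and the regularity locally, via the classical ``fill the cylinder and verify'' argument for optimal stopping problems. First I would record two structural facts about the regions. Since $g$ and $V$ are continuous, $\mathscr{D}$ is closed and $\mathscr{C}=\mathscr{D}^{c}$ is relatively open in $[0,T]\times[0,1]$. Moreover $\mathscr{C}\subseteq[0,T)\times(0,1)$: at $t=T$ the only admissible stopping time is $\tau\equiv T$, so $V(T,\pi)=g(\pi)$; and for $\pi\in\{0,1\}$, Corollary~\ref{cor:properties.of.Pi} together with strong uniqueness for \eqref{eq:Pi} forces $\Pi^{t,\pi}\equiv\pi$, hence $g(\Pi^{t,\pi}_\tau)=g(\pi)=0$ for every $\tau$ and $V(t,\pi)=0=g(\pi)$. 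Consequently, around any $(t_0,\pi_0)\in\mathscr{C}$ there is a parabolic cylinder $Q=\big((t_0-\delta)\vee 0,\ t_0+\delta\big)\times(\pi_0-r,\pi_0+r)$ with $\overline{Q}\subseteq\mathscr{C}$, on which the diffusion coefficient $\tfrac12\|\Delta\bh_\mu(t)\|^2\pi^2(1-\pi)^2$ of $\mathcal L$ is bounded, $C^1$ in $t$, smooth in $\pi$ (Remark~\ref{rmk:dist.pi.mu}), and uniformly elliptic --- bounded below using $\|\Delta\bh_\mu\|\ge\mathfrak{h}$ from \eqref{eqn:non.degenerate.signal} and the fact that $\pi$ stays away from $0$ and $1$ on $\overline{Q}$.

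Next I would invoke classical linear parabolic theory on $Q$. Because $V$ is continuous on the parabolic boundary $\partial_p Q$ and the coefficients of $\mathcal L$ are bounded, Hölder, and uniformly elliptic on $\overline{Q}$, the terminal/lateral boundary value problem
\[
\partial_t u+\mathcal L u=-c \ \text{ in }\ Q,\qquad u=V \ \text{ on }\ \partial_p Q,
\]
has a unique solution $u\in C^{1,2}(Q)\cap C(\overline{Q})$; this is the essential ingredient behind \cite[Theorem~2.7.7]{karatzas1998methods}. It then remains to identify $u$ with $V$ on $Q$.

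For the identification I would use the submartingale characterization of the value function. Fix $(t,\pi)\in Q$ and let $\rho$ be the first exit time of $s\mapsto(s,\Pi^{t,\pi}_s)$ from $Q$; since time is increasing, $(\rho,\Pi^{t,\pi}_\rho)\in\partial_p Q$, and since $Q\subseteq\mathscr{C}$ we have $\rho\le\tau_{\mathscr{D}}$. As $M_s:=c(s-t)+V(s,\Pi^{t,\pi}_s)$ is a submartingale that is a martingale on $[t,\tau_{\mathscr{D}}]$ (see \cite[Theorem~2.4]{peskir2006optimal}), optional stopping gives $V(t,\pi)=M_t=\bbE[M_\rho]=\bbE\big[c(\rho-t)+V(\rho,\Pi^{t,\pi}_\rho)\big]$. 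On the other hand, applying Itô's formula to $u(s,\Pi^{t,\pi}_s)$ on $[t,\rho]$ is legitimate since $u\in C^{1,2}(Q)$ and the path remains in $Q$ up to $\rho$; using $\partial_t u+\mathcal L u=-c$, the boundedness of the integrand of the martingale part on $\overline{Q}$ (so it is a true martingale), and $u=V$ on $\partial_p Q$, one obtains $u(t,\pi)=\bbE\big[c(\rho-t)+V(\rho,\Pi^{t,\pi}_\rho)\big]$. Comparing the two displays gives $u\equiv V$ on $Q$; in particular $V\in C^{1,2}$ near $(t_0,\pi_0)$ and $\partial_t V+\mathcal L V=-c$ there. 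Since $(t_0,\pi_0)\in\mathscr{C}$ was arbitrary, the proposition follows.

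The main obstacle is not conceptual but a matter of care with the localization: one must check that the parabolic Schauder machinery genuinely applies on compact sub-cylinders of $\mathscr{C}$ --- which is precisely why the inclusion $\mathscr{C}\subseteq[0,T)\times(0,1)$ is needed, ruling out both the degeneration of $\mathcal L$ at $\pi\in\{0,1\}$ and the trivialization of the problem at $t=T$ --- and that the optimal-stopping objects behave as claimed, namely $\rho\le\tau_{\mathscr{D}}$ and that the stochastic integral in the Itô expansion is a genuine (not merely local) martingale on $[t,\rho]$. As all of this is standard, we would, in agreement with the statement, content ourselves with citing \cite[Theorem~2.7.7]{karatzas1998methods} and omit the routine verifications.
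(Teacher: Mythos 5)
Your proof is correct and is essentially the paper's own approach: the paper simply appeals to the standard American-option argument behind \cite[Theorem 2.7.7]{karatzas1998methods} and omits the details, and your cylinder construction, parabolic Cauchy--Dirichlet solve, and submartingale/It\^o identification are exactly those details. (Only routine polish is needed: at points of $\mathscr{C}$ with $t_0=0$ your cylinder as written omits the point itself, and the It\^o/optional-sampling step is cleanest by stopping at slightly smaller subcylinders, where $\partial_\pi u$ is bounded, and passing to the limit, rather than asserting boundedness of the martingale integrand on $\overline{Q}$.)
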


\subsection{Structure of the continuation region}

In order to study our game, we will need a detailed characterization of the continuation region $\sC$. Unlike in the classic problem where $g(\pi)=a_1\pi\wedge a_2(1-\pi)$, we will see that $\sC$ may be empty for some specifications of $g$ and $c$. We show that under suitable conditions, $\sC$ is characterized by two boundaries which are bounded away from $0$ and $1$, and enclose a strip around $[0,T)\times\{1/2\}$.

The continuation region $\sC$ is closely related to the following set which can be computed a priori:
\[\sU:=\left\{ (t,\pi)\in[0,T)\times(0,1):\mathcal{L}g(t,\pi)<-c\right\}.\]
Let $\sC_t:=\sC\cap\left(\{t\}\times(0,1)\right)$ be the $t$-slice of $\sC$ and let $\sC_{[t,T)}:=\bigcup_{t\leq s<T}\sC_t$. Similarly, we define $\sU_t:=\sU\cap\left(\{t\}\times(0,1)\right)$ and $\sU_{[t,T)}:=\bigcup_{t\leq s<T}\sU_t$.

\begin{proposition}\label{prop:cont.region.nonempty}\mbox{}
	\begin{enumerate}
		\item[(i)] $\sU_t\subset \sC_t$.
		\item[(ii)] $\sC_{[t,T)}=\emptyset$ if and only if $\sU_{[t,T)}=\emptyset$.
		\item[(iii)] 
		There exist boundaries $b(\cdot)<1/2<B(\cdot)$ such that 
		\begin{equation}\label{eqn:cont.reg.bdy.repr}
			\sC=\{(t,\pi)\in [0,T)\times(0,1):b(t)<\pi<B(t)\}.
		\end{equation}  
		\end{enumerate}
	\end{proposition}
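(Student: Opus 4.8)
The plan is to leverage the concavity of $V$ in $\pi$ (Proposition \ref{prop:value.func}(i)), the concavity of $g$ and its symmetry about $1/2$ (Assumption (G1)), the quasiconvexity of $\cA g$ (Remark \ref{rmk:consequences.of.assumptions}), and the PDE characterization on $\sC$ (Proposition \ref{prop:C1.interior.cont.reg.}).

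\emph{Part (i).} Fix $(t,\pi)\in\sU_t$, so $\cL g(t,\pi)<-c$, i.e.\ $\tfrac12\|\Delta\bh_\mu(t)\|^2\pi^2(1-\pi)^2 g''(\pi)<-c<0$. Suppose toward a contradiction that $(t,\pi)\in\sD$, i.e.\ $V(t,\pi)=g(\pi)$. I would use the supermartingale characterization of the value function: for any $\bbF^\Pi$-stopping time $\sigma\geq t$, by optimality of $V$ and It\^o's formula applied to $g(\Pi^{t,\pi})$ on a short time interval (staying in a neighbourhood where $V$ is smooth, or using the general submartingale structure from \cite{peskir2006optimal}),
\[
V(t,\pi)\leq \bbE\!\left[c(\sigma-t)+g(\Pi^{t,\pi}_\sigma)\right]
= g(\pi) + \bbE\!\left[\int_t^\sigma\big(c+\cL g(s,\Pi^{t,\pi}_s)\big)\,ds\right].
\]
Since $\cL g$ is continuous and $\cL g(t,\pi)<-c$, by continuity $c+\cL g(s,\Pi_s)<0$ for $s$ slightly larger than $t$ and $\Pi_s$ near $\pi$; choosing $\sigma$ to be the exit time from a small space-time ball around $(t,\pi)$ (and using that $\Pi$ does not leave $(0,1)$ in finite time, Corollary \ref{cor:properties.of.Pi}) makes the integral strictly negative, giving $V(t,\pi)<g(\pi)$, a contradiction. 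Hence $(t,\pi)\in\sC_t$.

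\emph{Part (ii).} The ``if'' direction: if $\sU_{[t,T)}=\emptyset$, I claim it is never beneficial to continue past time $t$, so $\sC_{[t,T)}=\emptyset$. The point is that $\sU_{[t,T)}=\emptyset$ means $\cL g(s,\pi)\geq -c$ for all $(s,\pi)\in[t,T)\times(0,1)$, i.e.\ $s\mapsto c(s-t)+g(\Pi^{t,\pi}_s)$ is a submartingale (apply It\^o to $g(\Pi)$; its drift is $\cL g(s,\Pi_s)\geq -c$). Hence for any stopping time $\tau\geq t$, $\bbE[c(\tau-t)+g(\Pi_\tau)]\geq g(\pi)$, so $V(t,\pi)=g(\pi)$ and $(t,\pi)\in\sD$. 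The ``only if'' direction is exactly Part (i): $\sU_{[t,T)}\neq\emptyset$ gives a point in $\sU_s\subseteq\sC_s\subseteq\sC_{[t,T)}$ for some $s\in[t,T)$.

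\emph{Part (iii).} This is the main obstacle. I need to show each slice $\sC_t$ is an open interval (in $\pi$) symmetric-in-structure around $1/2$, strictly containing $1/2$ when nonempty, and bounded away from $0$ and $1$. The key fact is that $h(\pi):=V(t,\pi)-g(\pi)$ is, for fixed $t$, the difference of two concave functions, but more usefully I would argue that $\sC_t=\{\pi: h(\pi)<0\}$ is an \emph{interval}: this should follow from the fact that on $\sC$ the value $V$ solves $\partial_t V+\cL V=-c$, combined with $V\leq g$ everywhere and $V$ concave, so $V-g$ cannot have a strict interior local minimum structure forcing two separate components --- a standard argument showing continuation regions of such stopping problems are connected in the space variable when $g$ is concave and the ``obstacle gap'' $\cA g + c\|\cdot\|^2/2 \cdot(\dots)$ is unimodal. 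Concretely: by (G2)--(G3), $\sU_t$, when nonempty, is itself an interval around $1/2$ (since $\cA g$ is unimodal with minimum at $1/2$ and $\cA g(1/2)<-c/\mathfrak h^2 \leq -c/\|\Delta\bh_\mu(t)\|^2$, the sublevel set $\{\pi: \tfrac12\|\Delta\bh_\mu(t)\|^2\pi^2(1-\pi)^2 g''(\pi) < -c\}$ is a nonempty interval containing $1/2$). So $1/2\in\sU_t\subseteq\sC_t$ whenever $\sC_{[t,T)}\neq\emptyset$ hence whenever $\sC_t\neq\emptyset$ (using that $\sC$ being a continuation region is ``decreasing in $t$'' in the sense that $\sC_t\neq\emptyset\Rightarrow \sU_{[t,T)}\neq\emptyset\Rightarrow$ by a finer argument $\sU_t\neq\emptyset$; alternatively one shows directly $1/2\in\sC_t$ using symmetry and the fact that $g$ attains its max at $1/2$ while $V(t,1/2)<g(1/2)$ by the same short-stopping argument as in (i) applied at the point $1/2$ where $\cL g(t,1/2)<-c$ by (G3)). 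Connectedness of $\sC_t$: suppose $\pi_1<\pi_2$ are both in $\sC_t$; I want $(\pi_1,\pi_2)\subseteq\sC_t$. Since $V-g$ is continuous, negative at $\pi_1,\pi_2$, if it vanished at some $\pi^\ast\in(\pi_1,\pi_2)$ then $(t,\pi^\ast)\in\sD$; but an interior point of $\sD$ between two continuation points combined with $V$ concave and $g$ concave leads to a contradiction via the local PDE/variational inequality $\cL g(t,\pi^\ast)\geq -c$ together with a maximum-principle comparison on each of the two sub-intervals --- this is the delicate step and I would model it on \cite[Proposition 4.3 or similar]{ekstrom2015bayesian}, exploiting that the map $\pi\mapsto \cA g(\pi)$ being quasiconvex prevents the ``complement'' from being disconnected. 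Once $\sC_t$ is known to be an interval $(b(t),B(t))$ containing $1/2$, define $b(t):=\inf\sC_t$ and $B(t):=\sup\sC_t$ (set $b(t)=B(t)=1/2$, say, if $\sC_t=\emptyset$, or handle the empty slices separately), giving \eqref{eqn:cont.reg.bdy.repr}. Boundedness away from $0$ and $1$: near $\pi=0$ (resp.\ $\pi=1$) we have $g(\pi)\to 0$ and $\cL g(t,\pi)=\tfrac12\|\Delta\bh_\mu(t)\|^2\pi^2(1-\pi)^2 g''(\pi)\to 0$ (bounded since $|g''|\leq 2\beta/\pi^2(1-\pi)^2$ from Remark \ref{rmk:consequences.of.assumptions}, so $\cL g$ stays bounded), hence for $\pi$ close enough to $0$, $c+\cL g(t,\pi)>0$ on a neighbourhood, which by the submartingale argument of (ii) forces $(t,\pi)\in\sD$; the threshold can be taken uniform in $t$ and $\mu$ because $\|\Delta\bh_\mu\|$ is bounded in $[\mathfrak h,\mathfrak H]$ and $\cA g$, $g$ are fixed. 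Symmetry $b(t)=1-B(t)$ (though not asserted in the statement, it falls out of the symmetry of $g$ and the symmetry of the dynamics of $L$ under $L\mapsto -L$) can be recorded as a byproduct.

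The hard part is establishing the \textbf{connectedness of each $t$-slice $\sC_t$}: ruling out the possibility that the continuation region breaks into two or more intervals in the $\pi$ direction. This requires combining the concavity of $V$, the PDE it satisfies on $\sC$, the regularity $V\in C^{1,2}(\sC)$ from Proposition \ref{prop:C1.interior.cont.reg.}, and crucially the quasiconvexity/unimodality of $\cA g$ supplied by (G1)--(G2); this is where the symmetry assumption and the explicit role of the point $1/2$ are used, mirroring the structure of the argument in \cite{ekstrom2015bayesian}.
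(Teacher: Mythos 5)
Parts (i) and (ii) of your proposal are correct and essentially identical to the paper's argument (Dynkin's formula on a small space--time neighbourhood where $\cL g<-c$ for (i); the submartingale inequality $\mathbb{E}[c(\tau-s)+g(\Pi_\tau)]\ge g(\pi)$ when $\cL g\ge -c$ for (ii)), and your observation that $(t,1/2)\in\sU_t\subseteq\sC_t$ for every $t$ by (G3) and quasiconvexity of $\cA g$ is exactly how the paper gets $b(t)<1/2<B(t)$.

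The genuine gap is in part (iii): the connectedness of the $t$-slices, which you yourself flag as ``the hard part,'' is never actually proved. The one concrete mechanism you offer --- that an interior stopping point between two continuation points contradicts concavity of $V$ and $g$ --- does not work on its own, since $V=g-\varphi$ with $\varphi\ge 0$ concave-compatible and vanishing at an interior point is perfectly consistent with both concavities; and the alternative ``maximum-principle comparison on each of the two sub-intervals'' is left as a pointer to \cite{ekstrom2015bayesian} rather than an argument (it would in any case need regularity of the pocket's parabolic boundary, which is not available at this stage). The paper closes this step probabilistically, and the missing ingredient is worth spelling out: define $b(t)=\sup\{\pi\in[0,1/2]:V(t,\pi)=g(\pi)\}$ and $B(t)=\inf\{\pi\in[1/2,1]:V(t,\pi)=g(\pi)\}$ (nonempty because $V(t,0)=g(0)=0=g(1)=V(t,1)$, so $b(t),B(t)$ are attained stopping points); since $\sU_t$ is convex, contains $1/2$ and by (i) excludes $(t,b(t))$ and $(t,B(t))$, one gets $\sU_t\subseteq\{t\}\times(b(t),B(t))$, so $\cL g\ge -c$ on any putative pocket $O=\sC\cap\{0<\pi<b(t)\}$ (and symmetrically above $B$). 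Then, starting from a point of $O$, the first exit time $\tau_O$ of $(s,\Pi_s)$ from $O$ lands either on the graph of $b$, on $\{\pi=0\}$, or at $T$ --- all stopping points --- so $\tau_O$ coincides with the first exit from $\sC$ and is optimal, giving the exact identity $V(t_0,\pi_0)=\mathbb{E}\bigl[c(\tau_O-t_0)+g(\Pi^{t_0,\pi_0}_{\tau_O})\bigr]$; applying the part-(ii) Dynkin estimate with $\cL g\ge -c$ on $O$ then yields $V(t_0,\pi_0)\ge g(\pi_0)$, contradicting $(t_0,\pi_0)\in\sC$. It is this ``exit from the pocket is itself an optimal stopping time, so the value equals the expected payoff at exit'' step that your sketch is missing, and without it (or a fully executed PDE substitute) the representation \eqref{eqn:cont.reg.bdy.repr} is not established. (Your additional claims about bounds away from $0$ and $1$ and the symmetry $b=1-B$ are not needed for this proposition; they belong to Proposition \ref{prop:bounds.on.bdys}.)
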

	
	\begin{proof}
		(i) Let $(t,\pi)\in\sU_t$. By continuity there exists an $\epsilon>0$ such that $\mathcal{L}g<-c$ on $U:=[t,t+\epsilon)\times (\pi-\epsilon,\pi+\epsilon)\subseteq[0,T)\times(0,1)$. Let $\tau_U$ be the first exit time of $\Pi^{t,\pi}$ from $U$. As $\tau_U\in \cT^{\mathbf{X}}_{t}$, by Dynkin's formula,
		\begin{align*}
			V(t,\pi)&\leq \mathbb{E}\left[c(\tau_U-t)+g(\Pi_{\tau_U}^{t,\pi})\right]\\
			&=g(\pi)+\mathbb{E}\left[c(\tau_U-t)+\int_t^{\tau_U}\mathcal{L}g(s,\Pi_{s}^{t,\pi})ds\right]<g(\pi).
		\end{align*}
		It follows that $(t,\pi)\in \sC_t$.
		
		(ii) Necessity follows from (i). For sufficiency suppose $\mathcal{L}g(s,\pi)\geq -c$ for all $(s,\pi)\in [t,T)\times (0,1)$. For any stopping time $\tau\in\cT^{\mathbf{X}}_{s}$ with $s\ge t$, we have by Dynkin's formula that
		\[\mathbb{E}\left[g(\Pi^{s,\pi}_\tau)\right]=g(\pi)+\mathbb{E}\left[\int_{s}^\tau\mathcal{L}g(r,\Pi_{r}^{s,\pi})dr\right]\geq g(\pi)-\mathbb{E}\left[c(\tau-s)\right],\]
		and thus, $\mathbb{E}[c(\tau-s)+g(\Pi^{s,\pi}_\tau)]\geq g(\pi)$.
		Since $\tau$ and $(s,\pi)$ are arbitrary, the result $\sC_{[t,T)}=\emptyset$ is immediate.
		
		(iii) Assumption (G1) implies that $g\ge 0$, which further yields $V\geq0$. On the other hand, we always have $V\le g$. Therefore, it must hold that $V(t,0)=g(0)=0=g(1)=V(t,1)$. Define for $t\in [0,T)$, 
		\[\underline{\sD}_t:=\{\pi\in[0,1/2]:V(t,\pi)=g(\pi)\}, \quad b(t):=\sup \underline{\sD}_t,\] 
		\[\overline{\sD}_t:=\{\pi\in [1/2,1]:V(t,\pi)=g(\pi)\}, \quad B(t):=\inf \overline{\sD}_t.\]
		It is easy to see that $\underline{\sD}_t$ and $\overline{\sD}_t$ are non-empty and closed (by continuity). It follows that $b(t)\in \underline{\sD}_t$ and $B(t)\in \overline{\sD}_t$.
		Claim that $b(t) < 1/2 < B(t)$. Indeed, $\cL g(t, 1/2) =\|\Delta\bh_\mu(t)\|^2 \cA g(1/2) < -c$ by Assumption (G3), which implies $(t,1/2)\in \sU_t$. By (i), we then have $(t,1/2)\in \sC_t$. It remains to show that the curves $b$ and $B$ define $\sC$.
		
		From the definition of $\sC_t$, it is clear that $\{t\}\times(b(t),B(t))\subseteq \sC_t$. We want to show the reverse set inclusion, 
		i.e.\ there are no isolated ``pockets" of the continuation region below $b$ or above $B$.
		We will focus on the case $0< \pi < b(t)$ as the other case is analogous. Note that the endpoints $0$ and $b(t)$ have already been eliminated since they lie in $\underline{\sD}_t$.
		
		First, observe that if $0< \pi < b(t)$, then $\pi \notin \sU_t$. This is a consequence of the quasiconvexity of $\cA g$ (see Remark \ref{rmk:consequences.of.assumptions}) from which one can deduce that $\sU_t$ is convex (hence connected).
		Being a convex set which contains $(t,1/2)$ and which does not contain $(t, b(t))$, $(t, B(t))$ (by (i)), $\sU_t$ must be a subset of $\{t\}\times(b(t),B(t))$.
		
		Next, we show that $O:=\sC\cap \{(t,\pi)
		\in[0,T)\times(0,1):0< \pi < b(t)\}$ is empty. Suppose to the contrary $\exists (t_0,\pi_0)\in O$. Let $\tau_O$ be the first time $\Pi^{t_0,\pi_0}$ exits from $O$. It must coincide with the first exit time from $\sC$ when starting from $(t_0, \pi_0)$ because the boundaries of $\{(t,\pi)\in[0,T)\times(0,1):0< \pi < b(t)\}$ are not in $\sC$. 
		By optimality we can write:
		\[V(t_0,\pi_0)=\mathbb{E}\left[c(\tau_O-t)+g(\Pi^{t_0,\pi_0}_{\tau_O})\right].\]
		Since $\mathcal{L}g\geq-c$ on $O$ (as $O\cap\sU=\emptyset$), we find as in (ii) that $V(t_0,\pi_0)\geq g(\pi_0)$. This is a clear contradiction to $(t_0,\pi_0)\in \sC$.
	\end{proof}
	
	The next proposition says the boundaries $b$, $B$ are uniformly bounded away from $0$, $1/2$, and $1$. The proof makes use of the solution to the infinite horizon version of the problem with constant volatility which we address in Appendix  \ref{sec:inf.horizon} and may be of independent interest.
	
	
	\begin{proposition}\label{prop:bounds.on.bdys}
		There exist constants $b_*,b^*,B_*,B^*$, independent of $\mu$, such that the boundaries $b$ and $B$ defined in Proposition~\ref{prop:cont.region.nonempty}(iii) satisfy
		\[0<b_*\leq b(t)\leq b^*<1/2<B_*\leq B(t)\leq B^*<1, \quad \forall\, t\in [0, T).\]
	\end{proposition}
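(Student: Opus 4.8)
The plan is to exploit the symmetry of $g$ about $1/2$ (so that $b$ and $B$ are reflections of each other about $1/2$, and it suffices to bound $B$ from below by a $\mu$-independent $B_* > 1/2$ and from above by a $\mu$-independent $B^* < 1$) together with the volatility comparison in Proposition \ref{prop:value.func}(ii). For the lower bound $B(t) \ge B_*$, the key observation is that $B(t) \ge \inf\overline{\sD}_t$ where $\overline{\sD}_t$ is the stopping region, so I want to show that no point $(t,\pi)$ with $\pi$ close to $1/2$ can be a stopping point, uniformly in $\mu$. By Proposition \ref{prop:cont.region.nonempty}(i) it is enough to exhibit a $\mu$-independent neighborhood of $1/2$ contained in $\sU_t$; since $\cL g(t,\pi) = \|\Delta\bh_\mu(t)\|^2 \cA g(\pi)$ and $\mathfrak h \le \|\Delta\bh_\mu(t)\| \le \mathfrak H$ uniformly in $\mu$ (Remark \ref{rmk:dist.pi.mu}), and $\cA g$ is continuous with $\cA g(1/2) < -c/\mathfrak h^2$ by (G3), there is a $\delta > 0$ depending only on $g, c, \mathfrak h$ such that $\cA g(\pi) < -c/\mathfrak h^2 \le -c/\|\Delta\bh_\mu(t)\|^2$ for $|\pi - 1/2| < \delta$. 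Hence $\{t\}\times(1/2-\delta, 1/2+\delta) \subseteq \sU_t \subseteq \sC_t$ for all $t$ and all $\mu$, which gives $b^* \le 1/2 - \delta < 1/2 + \delta \le B_*$.

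For the upper bound $B(t) \le B^* < 1$, the idea is to compare with the infinite-horizon, constant-volatility problem analyzed in Appendix \ref{sec:inf.horizon}. By Proposition \ref{prop:value.func}(ii), with $\underline V := V(\,\cdot\,;\mathfrak H)$ one has $V \ge \underline V$; moreover the infinite-horizon value function $V_\infty(\,\cdot\,;\mathfrak H)$ (time-homogeneous, since the volatility is constant) satisfies $\underline V(t,\pi) \ge V_\infty(\pi;\mathfrak H) - c(T-t)$ by restricting to stopping times and shifting — or more directly, the finite-horizon value dominates the infinite-horizon one minus a linear-in-time term, so the finite-horizon continuation region at time $t$ is contained in the infinite-horizon one. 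Actually the cleanest route is: the stopping region of the finite-horizon problem \emph{contains} the stopping region of the infinite-horizon problem with the most favorable (largest) volatility $\mathfrak H$, because more time to stop can only enlarge the continuation region, while a larger volatility also enlarges it; taking $\mathfrak H$ as an upper bound on all $\|\Delta\bh_\mu\|$ makes this uniform in $\mu$. The infinite-horizon problem with constant volatility $\mathfrak H$ has, by the analysis in Appendix \ref{sec:inf.horizon}, a genuine stopping region that is nonempty near $0$ and near $1$ — i.e.\ there is a constant threshold $B^*_\infty < 1$ with $[B^*_\infty, 1) \subseteq$ (its stopping region) — and one then sets $B^* := B^*_\infty$. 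Symmetrically $b_* := 1 - B^*_\infty > 0$.

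The two bounds together give $0 < b_* \le b(t) \le b^* < 1/2 < B_* \le B(t) \le B^* < 1$ for all $t \in [0,T)$, with all four constants depending only on $g$, $c$, $\mathfrak h$, and $\mathfrak H$ — hence independent of $\mu$.

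\emph{Main obstacle.} The delicate point is the upper bound on $B$ (equivalently the lower bound on $b$): one must actually show that the infinite-horizon constant-volatility problem has a nonempty, non-degenerate stopping region reaching to within a fixed distance of the boundary points $0$ and $1$, and that the finite-horizon/variable-volatility problem's stopping region is pinned between the set $\sU$ and this infinite-horizon stopping region. This requires the comparison principle for the value function as a function of the time horizon (longer horizon $\Rightarrow$ smaller value $\Rightarrow$ possibly smaller stopping region) and the monotonicity in volatility from Proposition \ref{prop:value.func}(ii) to be chained carefully so that the resulting threshold truly does not depend on $\mu$; the rest is routine. I would organize it by first proving the easy inner bounds ($b^* < 1/2 < B_*$) as above, then citing the infinite-horizon results of Appendix \ref{sec:inf.horizon} for the outer bounds and verifying the two comparisons that transfer them to our setting.
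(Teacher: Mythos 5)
Your proposal is correct and follows essentially the same route as the paper's proof: the inner bounds $b^*<1/2<B_*$ come from Assumption (G3), continuity of $\cA g$, the lower bound $\mathfrak h$ on the volatility, and the inclusion $\sU_t\subseteq\sC_t$ of Proposition \ref{prop:cont.region.nonempty}(i), while the outer bounds come from the chain $V\ge \underline V\ge \underline V_\infty$ (volatility monotonicity of Proposition \ref{prop:value.func}(ii) plus enlarging the set of stopping times to the infinite horizon), which traps the continuation region inside the constant boundaries of Proposition \ref{prop:inf.horizon.bdy}. The only cosmetic difference is your use of the symmetry of $g$ to reduce to the upper boundary, which the paper does not need since the infinite-horizon result already provides both constants.
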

	
	\begin{proof}
		Let $\underline{V}$ be defined in Proposition~\ref{prop:value.func}(ii), and let $\underline{V}_\infty$ be the value function for the infinite horizon problem associated with the volatility $\mathfrak{H}$. By Proposition \ref{prop:value.func}(ii) and the nature of the infimum, we have that $V\geq \underline{V}\geq \underline{V}_\infty$. Consequently, the continuation region for $V$ must be contained in that for $\underline{V}_\infty$. The latter is defined by two constant boundaries $0<b_*<B^*<1$ (see Proposition~\ref{prop:inf.horizon.bdy}). Thus, we have shown $0< b_\ast \le b(t) < 1/2 < B(t) \le B^\ast < 1$ for $t\in [0, T)$.
		
		To get the inner bounds, we use Assumption (G3). By continuity, there exist constants $b^\ast < 1/2 < B_\ast$ such that $\cA g < -c/\mathfrak{h}^2$ on $[b^\ast, B_\ast]$. Again, using the relations $\cL g(t, \pi) = \norm{\Delta_\mu \bh(t)}^2 \cA g(\pi)$ and $\norm{\Delta_\mu \bh(t)} \ge \mathfrak{h}$, as well as Proposition \ref{prop:cont.region.nonempty}(i), we obtain $\{t\}\times[b^\ast, B_\ast]\subseteq \sU_t \subseteq \sC_t$ for all $t$.
	\end{proof}

\subsection{Regularity of the stopping boundaries}

With our characterization of the continuation region complete, we turn to establishing the regularity of the defining boundaries. 

\begin{proposition}\label{prop:bdy.unif.local.lip}
	The boundaries $b(\cdot)$ and $B(\cdot)$ are Lipschitz on any compact subset of [0,T) and the local Lipschitz constant can be made independent of $\mu$.
	
\end{proposition}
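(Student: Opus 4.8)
The plan is to reduce the problem to a setting where the boundary separates a "continue" region from a "stop" region for a diffusion with smooth, uniformly nondegenerate coefficients, and then apply the probabilistic Lipschitz-regularity machinery of de Angelis \cite{de2019lipschitz}. I would work with the log-likelihood reparametrization $\tilde V(t,l)$ from \eqref{eqn:value.function.L}, where $L_t$ solves \eqref{eq:L.dynamics} with deterministic, $C^1$, and (by Remark \ref{rmk:dist.pi.mu}) uniformly-in-$\mu$ bounded and uniformly Lipschitz volatility $\norm{\Delta\bh_\mu}$, bounded below by $\mathfrak h>0$. The sigmoid map is a smooth increasing bijection, so it transports the boundaries $b,B$ to boundaries $\ell(t):=S^{-1}(b(t))$, $L(t):=S^{-1}(B(t))$ of the continuation region in $(t,l)$-space; since $b,B$ are uniformly bounded away from $0,\tfrac12,1$ by Proposition \ref{prop:bounds.on.bdys}, the transformed boundaries live in a fixed compact $l$-interval, and a Lipschitz estimate for $\ell,L$ on a compact time set transfers back to $b,B$ with a constant controlled by the derivative of $S^{-1}$ on that fixed interval. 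So it suffices to prove the statement for $\ell$ and $L$.

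Next I would set up the local picture near a boundary point. Fix a compact $[0,T']\subset[0,T)$. The obstacle is $\psi(t,l):=g\circ S(l)$, which is $C^3$ away from the ``flat'' endpoints and, crucially, $C^{1,2}$ on the strip containing the (uniformly bounded) boundaries. Inside $\sC$, $\tilde V$ solves the parabolic PDE $\partial_t\tilde V+\tilde{\mathcal L}\tilde V=-c$ (Proposition \ref{prop:C1.interior.cont.reg.} transported through $S$), with $\tilde{\mathcal L}=\tfrac12\norm{\Delta\bh_\mu(t)}^2\partial_{ll}+(\text{bounded drift})\partial_l$. The key structural input is that the continuation region is a \emph{strip} $\{\ell(t)<l<L(t)\}$ (Proposition \ref{prop:cont.region.nonempty}(iii)) with $\ell<0<L$ strictly separated, so near, say, the upper boundary $L(t)$ the problem locally looks like a one-sided obstacle problem. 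Following \cite{de2019lipschitz}, I would (a) show the boundary has empty interior / is ``thin'' enough that the smooth-fit condition $\partial_l\tilde V=\partial_l\psi$ holds along it — this uses that $L_t$ immediately enters the interior of the stopping region, which follows from the regularity of all boundary points for the (time-space) diffusion, itself a consequence of uniform nondegeneracy $\norm{\Delta\bh_\mu}\ge\mathfrak h$; and (b) differentiate the value function. The function $w:=\partial_l\tilde V-\partial_l\psi$ vanishes on $\sD$ and, in $\sC$, satisfies a linear parabolic PDE with a source term $\partial_l(\tilde{\mathcal L}\psi+c)$ that is bounded uniformly in $\mu$ (here the $C^3$ hypothesis on $g$ and the uniform bounds on $\norm{\Delta\bh_\mu}$, $(\norm{\Delta\bh_\mu})'$, $\bh_\mu$ enter). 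One then obtains, via the probabilistic representation of $w$ and barrier/coupling estimates à la de Angelis, a uniform bound $|\partial_t\tilde V|\le C$ near the boundary, with $C$ depending only on the uniform data bounds, $\mathfrak h$, $T$, $\|g\|_{C^3}$ — not on $\mu$.

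Finally, I would convert the gradient bounds into Lipschitz continuity of the boundary by the standard implicit-function / comparison argument: at a boundary point $(t_0,L(t_0))$ one has $\tilde V=\psi$ and $\partial_l\tilde V=\partial_l\psi$, while $\partial_{ll}(\tilde V-\psi)$ is bounded below away from zero on the $\sC$-side (because, by Proposition \ref{prop:cont.region.nonempty}(i) and Assumption (G2)–(G3), $\tilde{\mathcal L}\psi+c<0$ strictly on a uniform neighbourhood of the boundaries, so the PDE forces $\partial_t\tilde V+\tilde{\mathcal L}\tilde V=-c$ to pull $\tilde V$ strictly below $\psi$ at a definite rate). Combining the lower bound on $\partial_{ll}(\tilde V-\psi)$ with the upper bound on $|\partial_t(\tilde V-\psi)|=|\partial_t\tilde V - 0|$ and a Taylor expansion of the identity $(\tilde V-\psi)(t,L(t))\equiv 0$ yields $|L(t)-L(s)|\le C|t-s|$ on $[0,T']$, with $C$ uniform in $\mu$; the same argument handles $\ell$.

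The main obstacle is step (b): obtaining the \emph{uniform-in-$\mu$} interior gradient bound $|\partial_t\tilde V|\le C$ near the boundary. This is exactly the point where the delicate probabilistic estimates of \cite{de2019lipschitz} are needed rather than soft arguments — one must control the exit-time behaviour of $L$ from thin domains near the boundary, and it is precisely here that the symmetry of $g$ is invoked (as the paper flags) to keep the two boundaries and the associated barrier constructions symmetric and the constants manageable. Verifying that every constant produced by that machinery depends on $\mu$ only through the already-established uniform quantities ($\mathfrak h\le\norm{\Delta\bh_\mu}\le\mathfrak H$, the uniform Lipschitz constant of $\norm{\Delta\bh_\mu}$, $\|\bh\|_\infty$, and the uniform boundary bounds $b_*,b^*,B_*,B^*$) is the crux of the argument; everything else is bookkeeping through the smooth change of variables $\Pi=S(L)$.
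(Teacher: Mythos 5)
There is a genuine gap, and it sits exactly where you place the crux. Your final step proposes to combine (a) a uniform bound $|\partial_t\tilde V|\le C$ near the boundary with (b) a nondegeneracy bound $\partial_{ll}(\tilde V-\psi)\le -c_0<0$ on the continuation side, and then Taylor-expand $(\tilde V-\psi)(t,L(t))\equiv 0$. Because of smooth fit, $\partial_l(\tilde V-\psi)$ vanishes \emph{at} the boundary, so this combination only shows that $\tilde V-\psi$ is of order $-(L(t)-l)^2$ inside and of order $|t-s|$ in time, which yields $|L(t)-L(s)|\le C|t-s|^{1/2}$, i.e.\ $1/2$-H\"older, not Lipschitz; equivalently, along the level curves $(\tilde V-\psi)=\delta$ your two bounds only give a slope estimate $|\partial_t/\partial_l|\lesssim \delta^{-1/2}$, which blows up as $\delta\uparrow 0$. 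To get Lipschitz one must show that the \emph{time} derivative vanishes at the boundary at the same linear rate as the spatial derivative, i.e.\ bound the ratio $\partial_t(V-g)/\partial_\pi(V-g)$ uniformly as one approaches the boundary. This is precisely what the paper does: it introduces the $\delta$-level boundaries $B_\delta$ strictly inside $\mathscr{C}$, applies the implicit function theorem there, and proves in Proposition \ref{prop:der.bd.for.bdy.reg} (via the time-and-space-changed problem and the probabilistic representations of Lemma \ref{lem:prob.representation}, where \emph{both} $\partial_u\widehat V$ and $\partial_l(\widehat V-\widehat g)$ are expressed through comparable expectations over $[u,\widehat\tau^{*}]$, the denominator being bounded below using Lemmas \ref{lem:sign.der.hat.V} and \ref{lem:strict.bd.mu.indep.der.Vhat}) that $|\partial_t J/\partial_\pi J|$ is bounded at $(s,B_\delta(s))$ uniformly in $\delta$ and $\mu$, and only then lets $\delta\uparrow 0$. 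A uniform bound on $|\partial_t\tilde V|$ alone cannot substitute for this ratio estimate.

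The justification you offer for (b) is also incorrect: you claim $\tilde{\mathcal L}\psi+c<0$ on a uniform neighbourhood of the boundaries, citing Proposition \ref{prop:cont.region.nonempty}(i) and (G2)--(G3). But $\{\mathcal Lg<-c\}=\mathscr{U}$ is the inner strip around $\pi=1/2$ and is \emph{strictly} contained between the boundaries; at a boundary point one necessarily has $\mathcal L g+c\ge 0$ (otherwise the point would lie in $\mathscr{U}\subset\mathscr{C}$), so no such neighbourhood exists. Moreover, in the soft-classification setting the volatility is not monotone in time, so there is no sign on $\partial_tV$, and the PDE does not deliver a uniform negative upper bound on $\partial_{ll}(\tilde V-\psi)$ near the boundary (the paper only obtains such a bound in the classic-loss preemption case, Proposition \ref{prop:mon.and.lip.V}(vi), where time-monotonicity is available). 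Your reduction to log-likelihood coordinates and the transfer of the Lipschitz property through $S^{-1}$ using Proposition \ref{prop:bounds.on.bdys} are fine, as is the identification of the exit-time control as the hard point; but the concrete mechanism you propose for converting derivative information into boundary regularity does not produce the Lipschitz claim.
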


\begin{proof}
	
	We will treat the upper boundary as the lower boundary is analogous. Define $J=V-g$ with the additional restriction that its domain is the upper region $(t,\pi)\in[0,T)\times[1/2,1)$ for all $t$.  The upper boundary $B(\cdot)$ is the minimum element of the zero level set of $J$, i.e. $J(t,B(t))=0$ for $t\in[0,T)$. The motivation for this proof is to use the derivatives of $J$ to obtain a local estimate on the regularity of the boundary. Since by Proposition \ref{prop:C1.interior.cont.reg.} we can only make use of the derivatives in the interior of the continuation region, we first step away from the boundary and attempt to characterize the regularity of a family of ``$\delta$-level" boundaries $B_\delta(t)$. By obtaining a uniform estimate in $\delta$ and showing that $B(t)=\lim_{\delta\to 0}B_\delta(t)$ we are able to extend the regularity to $B$. To obtain the regularity, the idea is to use a time and space transformation (see Appendix \ref{sec:time-change}) alongside the differentiable 
	flow and pathwise derivative techniques of \cite{de2019lipschitz}. We model much of our analysis after the approach of \cite[Theorem 4.3]{de2019lipschitz}.
	
	We begin by collecting some properties of $J$ and the continuation region. From Lemma \ref{lem:sign.der.hat.V} and the discussion preceding Proposition \ref{prop:der.bd.for.bdy.reg} we have $\partial_\pi J(t,\cdot)>0$ on $(1/2,B(t))$ so that $J(t,\cdot)$ is strictly increasing on $(1/2,B(t))$. Moreover, by definition $J\leq 0$, and  by continuity $\lim_{\pi\uparrow B(t)}J(t,\pi)=0$. By Proposition \ref{prop:bounds.on.bdys} we can pick a $\overline{\pi}\in(1/2,B_*)$ so that for all $t$, $(t,\overline{\pi})\in\mathscr{C}$. Evidently, $J(t,\overline{\pi})<0$ for all $t$.
	
	We now fix $t\in[0,T)$. To analyze the boundary regularity locally we choose an (arbitrary) interval of analysis that is independent of $\mu$. Pick $t_0,t_1\in [0,T)$ with $t_0<t<t_1$ so that $[t_0,t_1]\subset [0,T)$ (if $t=0$ then we can extend $J$ to $(-\epsilon,T)\times[1/2,1)$ so that this argument can be repeated without loss of generality). Next, we want to ensure that a given $\delta$-level boundary is well-defined for all $s\in[t_0,t_1]$. Define $\delta_0:=\sup_{s\in [t_0,t_1]}J(s,\overline{\pi})$. By continuity and compactness, the supremum is attained and consequently $\delta_0<0$. As $J(s,\cdot)$ is strictly increasing and continuous, we have that for fixed $s\in [t_0,t_1]$ that $J(s,\pi)$ attains every $\delta\in(\delta_0,0)$ on the spatial interval $(1/2,B(s))$. So, for $s\in[t_0,t_1]$ if $\delta\in(\delta_0,0)$ there is a single point defining the $\delta$-level set. 
	
	Since $J$ is continuously differentiable in the interior of the continuation region we have by the implicit function theorem that there is a continuously differentiable function $B_\delta(s)$ defined on $(t_0,t_1)$ such that
	\begin{equation}\label{eqn:delta.level}
		J(s,B_\delta(s))=\delta.
	\end{equation}
	Notice that we must have $B_\delta(s)<B(s)$ and $B_\delta(s)$ is increasing in $\delta$. By monotonicity and the boundedness of $B(s)$ there is a limit function $B_0(s)$ such that
	\[\lim_{\delta\uparrow0}B_\delta(s)=B_0(s)\leq B(s), \ \ \ s\in(t_0,t_1).\]
	On the other hand, by taking limits across the previous equality \eqref{eqn:delta.level} we have
	\[0=\lim_{\delta\uparrow0}J(s,B_\delta(s))=J(u,B_0(s))\]
	so that $B_0(s)\geq B(s)$ by definition of $B(s)$. We conclude $B_0(s)=B(s)$ and so $B_\delta(s)\uparrow B(s)$ on $(t_0,t_1)$ as $\delta\uparrow 0$.

	We want to use this convergence to show that $B(s)$ is Lipschitz on any compact subset of $(t_0,t_1)$ containing $t$. To obtain our desired result we will adapt the arguments of \cite[Theorem 4.3]{de2019lipschitz}. Let $\epsilon>0$ be arbitrarily small enough so that $t\in I_{\epsilon}:=[t_0+\frac{\epsilon}{2},t_1-\frac{\epsilon}{2}]\subset (t_0,t_1)$. Again by the implicit function theorem we have
	\[B_\delta'(s)=-\frac{\partial_t J(s,B_\delta(s))}{\partial_\pi J(s,B_\delta(s))}, \ \ \ s\in I_{\epsilon}.\]
	Then by Proposition \ref{prop:der.bd.for.bdy.reg} we find that there exists a $K_\epsilon$ \textit{independent} of $\delta$ and the measure $\mu$ such that:
	\[\left|\frac{\partial_t J(s,B_\delta(s))}{\partial_\pi J(s,B_\delta(s))}\right|\leq K_\epsilon, \ \ \ s\in I_\epsilon.\]
	The proof of this result may be of independent interest as it makes use of the time and space change discussed in Appendix \ref{sec:time-change} alongside pathwise derivative techniques inspired by \cite{de2019lipschitz}. As a result of this bound, by $B_\delta(s) \to B(s)$ and the compactness of uniformly bounded and uniformly Lipschitz functions (by the Arzel\`a-Ascoli Theorem) we have that $B(s)$ must also be $K_{\epsilon}$-Lipschitz on $I_{\epsilon}$. 
	
	As $ t_0<t<t_1$ and $\epsilon>0$ were arbitrary we have that for any $t\in[0,T)$ there exists an $\epsilon'>0$ and a constant $K_{\epsilon'}$, both independent of $\mu$, such that $B(s)$ is $K_{\epsilon'}$-Lipschitz on $[t-\epsilon',t+\epsilon']\cap[0,T)$. As $\mathbb{R}$ is locally compact, we get that there is a uniform (in $\mu$) Lipschitz constant for $B$ on any compact subset of $[0,T)$. This completes the proof.
\end{proof}

\subsection{Free-boundary problem and boundary integral equations}\label{sec:free.bdy}

Our final step in the analysis of the representative agent problem is to derive the free-boundary problem associated with the value function and obtain a characterization of the free boundaries as the solution to a pair of integral equations.

\begin{proposition}\label{prop:PDE}
	The value function $V$ and stopping boundaries $b(\cdot),B(\cdot)$ satisfy the free boundary problem
	\[\begin{cases}
		\partial_tV(t,\pi)+\mathcal{L}V(t,\pi)=-c & \pi\in (b(t),B(t)), \  t\in[0,T)\\
		V(t,\pi)=g(\pi) & \pi\in [0,b(t)]\cup[B(t),1], \ t\in[0,T)
	\end{cases}.\]
	Moreover, the smooth fit condition holds in the sense that the function $\pi\mapsto V(t,\pi)$ is $C^1[0,1]$ for all $t\in[0,T)$.
\end{proposition}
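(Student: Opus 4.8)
The plan is to establish the free-boundary PDE and then prove the $C^1$ smooth-fit across the boundaries $b(\cdot)$ and $B(\cdot)$. The PDE itself is essentially already recorded: the first line is exactly Proposition~\ref{prop:C1.interior.cont.reg.} applied in $\sC = \{(t,\pi): b(t)<\pi<B(t)\}$, whose geometric description is Proposition~\ref{prop:cont.region.nonempty}(iii), and the second line is the definition of the stopping region $\mathscr{D}$ in \eqref{eqn:stop.region} together with $\mathscr{D} = \{(t,\pi): \pi\in[0,b(t)]\cup[B(t),1]\}$. So the only real content is the smooth-fit statement, i.e.\ that $\pi\mapsto V(t,\pi)$ is $C^1$ at the free-boundary points $\pi=b(t)$ and $\pi=B(t)$ for each fixed $t\in[0,T)$ (away from these points, $C^1$-ness in $\pi$ is immediate from Proposition~\ref{prop:C1.interior.cont.reg.} inside $\sC$ and from $g\in C^3$ inside the stopping region).

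I would treat the upper boundary $B(t)$; the lower one is symmetric (indeed by the symmetry of $g$ about $1/2$, $b(t)=1-B(t)$, but I would not rely on this). Fix $t\in[0,T)$ and write $\beta:=B(t)\in(1/2,1)$. Since $V\le g$ everywhere and $V=g$ on $[\beta,1]$, the one-sided derivative from the right satisfies $\partial_\pi^+ V(t,\beta) = g'(\beta)$, and because $V-g\le 0$ with equality at $\beta$, the left derivative obeys $\partial_\pi^- V(t,\beta)\ge g'(\beta)$. The task is the reverse inequality $\partial_\pi^- V(t,\beta)\le g'(\beta)$, equivalently $\partial_\pi^-(V-g)(t,\beta)\le 0$; combined with $V-g\le0$ and $(V-g)(t,\beta)=0$ this forces $\partial_\pi^-(V-g)(t,\beta)=0$, giving $C^1$-fit. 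The standard route is a contradiction-and-probabilistic-comparison argument: suppose $\partial_\pi^-(V-g)(t,\beta) = -2\varepsilon <0$. Using the dynamic programming characterization of $V$ and Dynkin's formula applied to $g(\Pi^{t,\pi}_s)$ over the first exit time $\sigma_h$ of $\Pi^{t,\beta-h}$ from a small space-time box to the left of the boundary, one expands
\[
V(t,\beta-h)-g(\beta-h)\le \mathbb{E}\!\left[\int_t^{\sigma_h}\big(c + \mathcal{L}g(s,\Pi^{t,\beta-h}_s)\big)\,ds\right] + \mathbb{E}\big[(V-g)(\sigma_h,\Pi^{t,\beta-h}_{\sigma_h})\big].
\]
One shows the right-hand side is $o(h)$ (using $\mathcal{L}g$ bounded near the boundary, $\mathbb{E}[\sigma_h - t]=O(h^2)$ for a box of spatial half-width $\sim h$ and the uniform non-degeneracy of the volatility from Remark~\ref{rmk:dist.pi.mu}, and the local Lipschitz regularity of $B$ from Proposition~\ref{prop:bdy.unif.local.lip} to control the geometry of the box), while the assumed slope gives $V(t,\beta-h)-g(\beta-h)\ge 2\varepsilon h - o(h)$, a contradiction for small $h$. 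Alternatively — and this is cleaner given what the paper has already built — I would invoke the $L$-reparametrization $\tilde V(t,l)=V(t,S(l))$ from \eqref{eqn:value.function.L}: $L$ is a time-inhomogeneous diffusion with $C^1$, uniformly non-degenerate volatility and the stopping/continuation regions transform to regions with boundary $S^{-1}(B(\cdot))$, which inherits local Lipschitz regularity from Proposition~\ref{prop:bdy.unif.local.lip}; then the classical smooth-fit theorem for one-dimensional optimal stopping with regular (in the sense of diffusion hitting times) boundary points (e.g.\ \cite[Section 9]{peskir2006optimal}) applies, and $C^1$-fit for $\tilde V$ in $l$ transfers back to $C^1$-fit for $V$ in $\pi$ since $S$ is a $C^\infty$ diffeomorphism with non-vanishing derivative on $(0,1)$.

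The main obstacle is verifying that the boundary point $\beta$ is \emph{probabilistically regular} for the continuation region, i.e.\ that $\Pi$ started at $(t,\beta)$ enters $\sC$ immediately with positive probability (so that the local-time / excursion estimates used to kill the boundary term are valid) — this is where the non-degeneracy $\|\Delta\bh_\mu\|\ge\mathfrak h>0$ and the fact that the boundary is Lipschitz (hence not too wild) are essential; for a non-degenerate diffusion in one space dimension every point of a Lipschitz-graph boundary is regular, so this goes through, but it is the step that must be stated carefully. A secondary routine point is justifying differentiation under the expectation / the $o(h)$ bookkeeping uniformly near the fixed $t$, which uses standard SDE flow estimates together with the uniform bounds on the volatility and on $B$ already established. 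I expect no difficulty in the PDE part or in transferring regularity through the sigmoid change of variables.
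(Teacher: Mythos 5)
Your preferred (second) route is essentially the paper's own argument: the PDE part is exactly Proposition~\ref{prop:C1.interior.cont.reg.} together with the boundary description of $\sC$, and the smooth fit is obtained by passing to the log-likelihood coordinates, using the uniform non-degeneracy of the volatility and the local Lipschitz regularity of the boundaries (Proposition~\ref{prop:bdy.unif.local.lip}) to establish probabilistic regularity of the boundary points, and then invoking a known $C^1$-fit theorem before transferring back through the sigmoid. The only differences are cosmetic: the paper additionally applies the time change of Appendix~\ref{sec:time-change} so that the state process is time-homogeneous with unit diffusion, and it cites the global $C^1$ regularity theorem of \cite[Theorem 10]{de2020global} (which is tailored to the parabolic, time-dependent-boundary setting) rather than the classical smooth-fit results you point to.

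Two small corrections to how you state the key step. First, the regularity you must verify is that the boundary point is regular for the \emph{interior of the stopping region} (i.e.\ $\inf\{s>t:(s,\Pi^{t,\beta}_s)\in\mathrm{int}\,\mathscr{D}\}=t$ a.s.), not that the process ``enters $\sC$ immediately with positive probability'': it is immediate entry into $\mathscr{D}$ that forces the optimal stopping times from nearby continuation points to collapse and yields the $C^1$ fit; your justification (a non-degenerate one-dimensional diffusion crosses a Lipschitz graph immediately, plus Blumenthal's $0$--$1$ law) does deliver the correct statement, but as written you verify the wrong side. Second, the sign bookkeeping in your first (contradiction) route is off: since $V-g\le 0$ with equality at $\beta=B(t)$, the automatic inequality is $\partial_\pi^-(V-g)(t,\beta)\ge 0$, so the hypothesis to refute is a strictly \emph{positive} left slope, and the Dynkin/DPP display you write gives an upper bound on $V-g$, which is not the direction needed there; since you do not rely on that route this does not affect the proposal, but it would need to be redone if used.
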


\begin{proof}
	The form of the PDE when $t$ and $\pi$ satisfy $b(t)<\pi<B(t)$ follows from Proposition \ref{prop:C1.interior.cont.reg.}. The smooth fit condition follows from Proposition \ref{prop:smooth.fit} which exploits the time change techniques introduced in Appendix \ref{sec:time-change}.
\end{proof}

%

\begin{proposition}\label{prop:boundary.integral.equation}
	The pair $(b,B)$ solves:
	\begin{equation}\label{eq:integral.eq}
		\begin{cases}
		g(r(t))=\mathbb{E}\left[g( \Pi_{T}^{t,r(t)})\right]+c\int_t^{T}\mathbb{P}\left(\Pi_u^{t,r(t)}\in(r(u),R(u))\right)du\\
		\quad \quad \quad  -\int_t^{T}\mathbb{E}\left[\mathcal{L}g(u,\Pi_u^{t,r(t)})\mathds{1}_{\left\{\Pi_u^{t,r(t)}\in(0,r(u))\cup(R(u),1)\right\}}\right]du.\\
		g(R(t))=\mathbb{E}\left[g( \Pi_{T}^{t,R(t)})\right]+c\int_t^{T}\mathbb{P}\left(\Pi_u^{t,R(t)}\in(r(u),R(u))\right)du\\
		\quad \quad \quad  -\int_t^{T}\mathbb{E}\left[\mathcal{L}g(u,\Pi_u^{t,R(t)})\mathds{1}_{\left\{\Pi_u^{t,R(t)}\in(0,r(u))\cup(R(u),1)\right\}}\right]du.
		\end{cases}
	\end{equation}
	Moreover, it is the maximal continuous solution in the sense that if the pair $(r,R)$ is a continuous solution to the above integral equation with $r\leq b$ and $B\leq R$ (i.e. $b$ and $B$ are contained between $r$ and $R$), then $r=b$ and $B=R$.
\end{proposition}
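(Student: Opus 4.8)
The plan is to establish the two assertions separately: first that $(b,B)$ solves the integral system \eqref{eq:integral.eq}, and second that it is the maximal continuous solution in the stated sandwich sense. For the first part I would apply an Itô/Dynkin-type argument to the value function along the stopped process. Since $V$ is $C^{1,2}$ in the continuation region (Proposition \ref{prop:C1.interior.cont.reg.}), satisfies smooth fit (Proposition \ref{prop:PDE}), and equals $g$ on the stopping region, one can apply the change-of-variables formula for $V(s,\Pi_s^{t,\pi})$ on $[t,T]$ — the local-time terms on the boundaries $b,B$ vanish by the $C^1$ smooth fit, so only the absolutely continuous part survives. Inside $\sC$ one has $\partial_t V + \cL V = -c$; inside the stopping region $V=g$, so $\partial_t V + \cL V = \partial_t g + \cL g = \cL g$ (as $g$ has no time dependence). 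Taking expectations and using that the martingale part is a true martingale (the integrand $\partial_\pi V\cdot\Pi(1-\Pi)\|\Delta\bh_\mu\|$ is bounded, by Proposition \ref{prop:bounds.on.bdys} and $g\circ S$ Lipschitz), evaluating at $\pi = b(t)$ or $\pi=B(t)$ where $V(t,\pi)=g(\pi)$, and rearranging yields exactly \eqref{eq:integral.eq}. I would write this cleanly as: for $\pi\in\{b(t),B(t)\}$,
\[
g(\pi)=V(t,\pi)=\mathbb{E}\!\left[V(T,\Pi_T^{t,\pi})\right]+c\,\mathbb{E}\!\left[\textstyle\int_t^T \mathbf{1}_{\sC}(u,\Pi_u^{t,\pi})\,du\right]-\mathbb{E}\!\left[\textstyle\int_t^T \cL g(u,\Pi_u^{t,\pi})\mathbf{1}_{\sD}(u,\Pi_u^{t,\pi})\,du\right],
\]
then use $V(T,\cdot)=g(\cdot)$ and the explicit description of $\sC,\sD$ from Proposition \ref{prop:cont.region.nonempty}(iii) — noting that the $t$-slices below $b(u)$, above $B(u)$ constitute the stopping region — to turn the indicators into $\mathbf{1}_{\{\Pi_u\in(r(u),R(u))\}}$ and $\mathbf{1}_{\{\Pi_u\in(0,r(u))\cup(R(u),1)\}}$ with $(r,R)=(b,B)$.

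For the maximality part I would follow the classical Peskir-style uniqueness argument. Suppose $(r,R)$ is a continuous solution with $r\le b<1/2<B\le R$. Define the candidate function
\[
U^r(t,\pi):=\mathbb{E}\!\left[g(\Pi_T^{t,\pi})\right]+c\int_t^T\mathbb{P}\!\left(\Pi_u^{t,\pi}\in(r(u),R(u))\right)du-\int_t^T\mathbb{E}\!\left[\cL g(u,\Pi_u^{t,\pi})\mathbf{1}_{\{\Pi_u^{t,\pi}\in(0,r(u))\cup(R(u),1)\}}\right]du,
\]
so that $U^r=g$ on the graphs of $r$ and $R$ by hypothesis. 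The argument proceeds in the standard steps: (a) show $U^r$ solves $\partial_t U^r+\cL U^r=-c$ on the region $\{r(t)<\pi<R(t)\}$ and $U^r=g$ outside (by applying Dynkin to the three terms, the parabolic structure comes out); (b) show $U^r=g$ on the whole closed region $\{\pi\le r(t)\}\cup\{\pi\ge R(t)\}$ — here one uses that $\cL g\ge -c$ there, which holds because outside $(r,R)\supseteq(b,B)$ we are outside $\sU$ hence $\cL g\ge -c$ (as in the proof of Proposition \ref{prop:cont.region.nonempty}(ii)), giving a supermartingale comparison forcing $U^r\le g$, combined with the reverse inequality from the representation; (c) deduce $U^r\le V$ everywhere by an optimal-stopping/supermartingale argument: $c(s\wedge\sigma_r - t)+U^r(s\wedge\sigma_r,\Pi)$ is a submartingale up to the first entry $\sigma_r$ into $\{r<\pi<R\}$... actually one shows $U^r\le g$ and $U^r$ is superharmonic enough to dominate, yielding $U^r\le V$; (d) then on $\sC=\{b<\pi<B\}$ one has $V<g=U^r$ on the graphs of $r,R$ forcing, via a further comparison on the region between the boundaries, that $r\ge b$ and $R\le B$; combined with the assumed $r\le b$, $R\le B$... wait, the assumed inequalities are $r\le b$ and $B\le R$, so I instead derive $r\ge b$ and $B\ge R$ to conclude equality. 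The mechanism for (d): consider the region between the two boundary pairs, e.g. $\{b(t)\le\pi\le r(t)\}$ if it were nonempty; on it $V$ solves the PDE with source $-c$ while $U^r=g$ satisfies $\partial_t g+\cL g=\cL g\ge -c$ there (outside $\sU$), and $V=U^r=g$ on the common part of the boundary, so a maximum-principle comparison shows $V\ge U^r=g$ on that region, contradicting $V<g$ in the interior of $\sC$ — hence the region is empty, i.e. $b\ge r$, and symmetrically $R\ge B$.

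The main obstacle I expect is step (b)–(c) of the maximality argument: verifying that the candidate $U^r$ genuinely satisfies the PDE and the correct boundary behavior, and that the various Dynkin/Itô applications are justified (the process $\Pi$ does not exit $(0,1)$ in finite time by Corollary \ref{cor:properties.of.Pi}, but one must check integrability of $\cL g(u,\Pi_u)$ near the endpoints — this is where concavity of $g$ and the bound $|\cL g|\le \mathfrak{H}^2|\cA g|$ with $\cA g$ bounded, from Remark \ref{rmk:consequences.of.assumptions}, are used). The time-regularity of $U^r$ needed to apply Itô to $U^r(s,\Pi_s)$ up to and across the free boundaries also requires care, and is typically handled by smoothing / localizing away from the boundary and passing to the limit, exactly as in the boundary-regularity arguments already invoked in Proposition \ref{prop:bdy.unif.local.lip}. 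I would lean on those same techniques rather than reprove them.
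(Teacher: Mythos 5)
The first half of your proposal (showing that $(b,B)$ solves the system) is essentially the paper's argument: a change-of-variables/It\^o-type formula for $V(s,\Pi_s)$ in which the local-time terms vanish by smooth fit, the PDE gives $-c$ inside $\mathscr{C}$ and $\mathcal{L}g$ on the stopping region, followed by localization away from $\{0,1\}$ and $T$ and bounded convergence, then evaluation at $\pi=b(t),B(t)$. That part is fine.

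The maximality half, however, has genuine gaps. First, the comparison you need goes the other way: writing $U^r(t,\pi)$ via the exit time $\tau_{(r,R)}$ from $\{r(u)<\pi<R(u)\}$ gives $U^r(t,\pi)=\mathbb{E}[g(\Pi^{t,\pi}_{\tau_{(r,R)}})+c(\tau_{(r,R)}-t)]\geq V(t,\pi)$, since $V$ is an infimum; your claimed inequality $U^r\leq V$ is both unproved and not the one the argument uses. Second, in your step (d) the roles of $V$ and $U^r$ are swapped and the region is misidentified: under the hypothesis $r\leq b$ the possibly nonempty strip is $\{r(u)<\pi<b(u)\}$, and on it $V=g$ (these points lie in the stopping region, since $\pi<b(u)$), while it is the candidate $U^r$ that satisfies the parabolic equation there; consequently there is no contradiction with ``$V<g$ in the interior of $\mathscr{C}$'', because that strip is not in $\mathscr{C}$. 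Third, and most importantly, the actual contradiction requires a \emph{strict} sign: on $(r(u),b(u))\subset(0,1/2)$ one has $\mathcal{L}g(u,\cdot)>-c$ strictly, which follows from Assumption (G2) ($\partial_\pi\mathcal{A}g<0$ on $(0,1/2)$) together with $\mathcal{L}g(u,b(u))\geq -c$ (since $\mathscr{U}_u\subset(b(u),B(u))$) and boundary continuity. The paper exploits this probabilistically: starting at $\pi=r(t)$ (where $H=V=g$ because $r(t)<b(t)$ lies in the stopping region and $r$ solves the integral equation), it runs the two martingales built from $H$ and $V$ up to the hitting time of $b$, applies optional sampling, and uses $c+\mathcal{L}g>0$ on the strip plus $H\geq V$ to obtain $0>0$. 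Your sketch never invokes this strictness, and a maximum-principle comparison with only the non-strict inequality $\mathcal{L}g\geq -c$ cannot rule out a nonempty strip, so as written the argument does not close. Fixing it amounts to reproducing the paper's Part 1--Part 3 structure (candidate equals $g$ outside $(r,R)$; candidate dominates $V$; strict-sign contradiction on the strip), which is a martingale/optional-sampling argument rather than the PDE comparison you describe.
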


\begin{proof}
	To establish this result we will adapt the arguments of \cite[Theorem 5.1]{ekstrom2015bayesian}. Let $(t,\pi)\in [0,T)\times (0,1)$. For sufficiently small $\epsilon>0$ define
	\[\tau_\epsilon:=\inf\{s\geq t: \Pi_s^{t,\pi}\not\in(\epsilon,1-\epsilon)\} \wedge (T-\epsilon).\]
	Applying \cite[Theorem 3.1 and Remark 3.2]{peskir2005change}
	to $V(s\wedge \tau_\epsilon, \Pi_{s\wedge\tau_\epsilon}^{t,\pi})$ we obtain:
	\begin{align*}\mathbb{E}\left[V(\tau_\epsilon, \Pi_{\tau_\epsilon}^{t,\pi})\right]&=V(t,\pi)+\mathbb{E}\left[\int_t^{\tau_\epsilon}(\partial_t +\mathcal{L})V(u,\Pi_u^{t,\pi})\mathds{1}_{\left\{\Pi_u^{t,\pi}\not\in\{b(u),B(u)\}\right\}}du\right]\\
		&=V(t,\pi)-c\mathbb{E}\left[\int_t^{\tau_\epsilon}\mathds{1}_{\left\{\Pi_u^{t,\pi}\in(b(u),B(u))\right\}}du\right]\\
		&\quad +\mathbb{E}\left[\int_t^{\tau_\epsilon}\mathcal{L}g(u,\Pi_u^{t,\pi})\mathds{1}_{\left\{\Pi_u^{t,\pi}\in(0,b(u))\cup(B(u),1)\right\}}du\right]
	\end{align*}
	after taking expectations. The result applied here, \cite[Theorem 3.1 and Remark 3.2]{peskir2005change}, extends It\^o's formula and the sufficient conditions are readily verified in our setting by using the smooth fit property from Proposition \ref{prop:PDE}, the boundedness of $\mathcal{L}g$ (implied by the boundedness of $\mathcal{A}g$ given in Remark \ref{rmk:consequences.of.assumptions}), and the fact that the boundaries $(b,B)$ are bounded variation on $[t,T-\epsilon]$ by being locally Lipschitz on $[0,T)$ (see Proposition \ref{prop:bdy.unif.local.lip}). 
	
	Now, as $\epsilon\downarrow 0$ we have $\tau_\epsilon\to T$ a.s. and by bounded convergence we obtain after rearranging:
	\begin{align*}V(t,\pi)&=\mathbb{E}\left[g( \Pi_{T}^{t,\pi})\right]+c\int_t^{T}\mathbb{P}\left(\Pi_u^{t,\pi}\in(b(u),B(u))\right)du\\
		&\quad -\int_t^{T}\mathbb{E}\left[\mathcal{L}g(u,\Pi_u^{t,\pi})\mathds{1}_{\left\{\Pi_u^{t,\pi}\in(0,b(u))\cup(B(u),1)\right\}}\right]du.
	\end{align*}
	Setting $\pi=b(t)$ and $\pi=B(t)$ we obtain that $b(t)$ and $B(t)$ solve \eqref{eq:integral.eq}.
	
	We now show that $b(\cdot), B(\cdot)$ are the maximal continuous solutions to \eqref{eq:integral.eq}. Suppose that $(r,R)$ are continuous solutions to the integral equations with $r\leq b$ and $B\leq R$. We will show that we must have $r=b$ and $B=R$. As in \cite[Theorem 5.1]{ekstrom2015bayesian}, define
	\begin{align*}
		H(t,\pi)&:=\mathbb{E}\left[g( \Pi_{T}^{t,\pi})\right]+c\int_t^{T}\mathbb{P}\left(\Pi_u^{t,\pi}\in(r(u),R(u))\right)du\\
		&\quad -\int_t^{T}\mathbb{E}\left[\mathcal{L}g(u,\Pi_u^{t,\pi})\mathds{1}_{\left\{\Pi_u^{t,\pi}\in(0,r(u))\cup(R(u),1)\right\}}\right]du.
	\end{align*}
	We have $H(t,r(t))= g(r(t))$ and $H(t,R(t))= g(R(t))$ since $r$ and $R$ are solutions to the equations \eqref{eq:integral.eq}, and further that $H(T,\pi)=g(\pi)$. For any $(t,\pi)$ define for $s\geq t$ the two processes $(M^{t,\pi}_s)_{s\geq t}$ and $(\tilde{M}^{t,\pi}_s)_{s\geq t}$ by
	\begin{align*}M^{t,\pi}_s&:=
	H(s,\Pi_{s}^{t,\pi})
	+c\int_t^{s} \mathds{1}_{\left(r(u),R(u)\right)}(\Pi^{t,\pi}_{u})du\\
	&\quad \quad -\int_t^{s}\mathcal{L}g(u,\Pi_u^{t,\pi})\mathds{1}_{\left\{\Pi_u^{t,\pi}\in(0,r(u))\cup(R(u),1)\right\}}du
	\end{align*}
	and
	\begin{align*}
		\tilde{M}^{t,\pi}_s&:=
		V(s,\Pi_{s}^{t,\pi})
		+c\int_t^{s} \mathds{1}_{\left(b(u),B(u)\right)}(\Pi^{t,\pi}_{u})du\\
		&\quad \quad -\int_t^{s}\mathcal{L}g(u,\Pi_u^{t,\pi})\mathds{1}_{\left\{\Pi_u^{t,\pi}\in(0,b(u))\cup(B(u),1)\right\}}du
	\end{align*}
	As in \cite[Theorem 5.1]{ekstrom2015bayesian}, both processes can be shown to be martingales for any $(t,\pi)$. Specifically, these processes are of the same form as the process $Y^{t,\pi}=(Y^{t,\pi}_\ell)_{\ell\geq t}$ defined below (for a placeholder integrable function $f$). Since we are conditioning on a random starting point in the equation for $H$ and $V$, by the Markov property:
	
		\begin{align*}
			Y^{t,\pi}_{\ell}&:=\mathbb{E}\left[g(\Pi_T^{\ell,\Pi^{t,\pi}_\ell})\right]+\int_\ell^T\mathbb{E}\left[f(u,\Pi_u^{\ell,\Pi^{t,\pi}_\ell})\right]du+\int_t^{\ell}f(u,\Pi_u^{t,\pi})du\\
			&=\mathbb{E}_{\ell,\Pi^{t,\pi}_\ell}\left[g(\Pi_T^{t,\pi})\right]+\int_\ell^T\mathbb{E}_{\ell,\Pi^{t,\pi}_\ell}\left[f(u,\Pi^{t,\pi}_u)\right]du+\int_t^{\ell}f(u,\Pi_u^{t,\pi})du\\
			&=\mathbb{E}\left[g(\Pi_T)\big|\mathcal{F}_\ell^{\Pi^{t,\pi}}\right]+\int_\ell^T\mathbb{E}\left[f(u,\Pi_u)\big|\mathcal{F}_\ell^{\Pi^{t,\pi}}\right]du+\int_t^{\ell}f(u,\Pi_u^{t,\pi})du\\
			&=\mathbb{E}\left[g(\Pi_T)+\int_t^Tf(u,\Pi_u)du\bigg|\mathcal{F}_\ell^{\Pi^{t,\pi}}\right].
		\end{align*}
	The martingale property follows immediately from this representation by taking conditional expectations and applying the tower property.
	
	The remainder of the proof proceeds in parts.
	
	\noindent \textit{\textbf{Part 1:}} \textit{We verify that $H(t,\pi)=g(\pi)$ for $\pi\not\in (r(t),R(t))$.}
	
	Assume that $\pi\leq r(t)\leq b(t)$ as the other case is similar. Define
	\[\tau_r:=\inf\{s\geq t: \Pi_s^{t,\pi}\geq r(s)\}\wedge T.\]
	We see that:
	\begin{equation}\label{eq:M.pt.1}
		M_{\tau_r}^{t,\pi}=H(\tau_r,\Pi_{\tau_r}^{t,\pi})-\int_t^{\tau_r}\mathcal{L}g(u,\Pi_u^{t,\pi})du
	\end{equation}
	and
	\begin{equation}\label{eq:tilde.M.pt.1}
		\tilde{M}_{\tau_r}^{t,\pi}=V(\tau_r,\Pi_{\tau_r}^{t,\pi})-\int_t^{\tau_r}\mathcal{L}g(u,\Pi_u^{t,\pi})du
	\end{equation}
	by the definition of the stopping time and our assumption on the boundaries. Then
	\begin{align*}
		H(t,\pi)&=\mathbb{E}\left[M_{\tau_r}^{t,\pi}\right]=\mathbb{E}\left[H(\tau_r,\Pi_{\tau_r}^{t,\pi})-\int_t^{\tau_r}\mathcal{L}g(u,\Pi_u^{t,\pi})du\right]\\
		&=\mathbb{E}\left[g(\Pi_{\tau_r}^{t,\pi})-\int_t^{\tau_r}\mathcal{L}g(u,\Pi_u^{t,\pi})du\right]\\
		&=\mathbb{E}\left[V(\tau_r,\Pi_{\tau_r}^{t,\pi})-\int_t^{\tau_r}\mathcal{L}g(u,\Pi_u^{t,\pi})du\right]\\
		&=\mathbb{E}\left[\tilde{M}_{\tau_r}^{t,\pi}\right]=V(t,\pi)=g(\pi).
	\end{align*}

	The first equality is justified by optional sampling and the martingale property of $M$, the second by \eqref{eq:M.pt.1}, the third by $H(T,\pi)=g(\pi)$ and $H(t,r(t))=g(r(t))$, the fourth by the fact that $r\leq b$ so $V=g$ at the stopping time, the fifth and sixth again by \eqref{eq:tilde.M.pt.1}, optional sampling and the martingale property, and the seventh by the fact that $\pi\leq r(t)\leq b(t)$. 
	
	\noindent \textit{\textbf{Part 2:}} \textit{We verify that $H(t,\pi)\geq V(t,\pi)$ for all $\pi\in(0,1)$.}
	
	In view of Part 1, it suffices to check that $H(t,\pi)\geq V(t,\pi)$ for $\pi\in (r(t),R(t))$. Let 
	\[\tau_{(r,R)}:=\inf\{s\geq t:\Pi_s^{t,\pi}\not\in (r(s),R(s))\}\wedge T\]
	Then, arguing similarly to Part 1 we obtain:
	
	\begin{align*}
		H(t,\pi)&=\mathbb{E}\left[M_{\tau_{(r,R)}}^{t,\pi}\right]\\
		&=\mathbb{E}\left[H(\tau_{(r,R)},\Pi_{\tau_{(r,R)}}^{t,\pi})
		+c\int_t^{\tau_{(r,R)}} \mathds{1}_{\left(r(u),R(u)\right)}(\Pi^{t,\pi}_{u})du\right]\\
		&=\mathbb{E}\left[g(\Pi_{\tau_{(r,R)}}^{t,\pi})
		+c(\tau_{(r,R)}-t)\right]\\
		&\geq V(t,\pi).
	\end{align*}
	Here the last inequality follows by the definition of $V$.
	
	\noindent \textit{\textbf{Part 3:}} \textit{We verify that $r(t)=b(t)$ and $B(t)=R(t)$ for all $t\in[0,T)$.}
	
	Suppose that $b(t)>r(t)$ for some $t$. Take $\pi=r(t)$ and let
	\[\tau_b:=\inf\{s\geq t:\Pi_s^{t,r(t)}\geq b(s)\}\wedge T.\]
	Now 
	\begin{align*}
		0&=g(r(t))-g(r(t))=H(t,r(t))-V(t,r(t))\\
		&=\mathbb{E}\left[M_{\tau_b}^{t,r(t)}\right]-\mathbb{E}\left[\tilde{M}_{\tau_b}^{t,r(t)}\right]\\
		&=\mathbb{E}\left[H(\tau_b,\Pi_{\tau_b}^{t,r(t)})-V(\tau_b,\Pi_{\tau_b}^{t,r(t)})\right] + \mathbb{E}\left[c\int_t^{\tau_b}\mathds{1}_{\left\{\Pi_u^{t,r(t)}\in(r(u),b(u))\right\}}du\right]\\
		&\quad \quad \quad -\mathbb{E}\left[\int_t^{\tau_b}\mathcal{L}g(u,\Pi_u^{t,r(t)})\mathds{1}_{\left\{\Pi_u^{t,r(t)}\in(0,r(u))\right\}}du\right]\\
		& \quad \quad \quad +\mathbb{E}\left[\int_t^{\tau_b}\mathcal{L}g(u,\Pi_u^{t,r(t)})\mathds{1}_{\left\{\Pi_u^{t,r(t)}\in(0,b(u))\right\}}du \right]\\
		&=\mathbb{E}\left[H(\tau_b,\Pi_{\tau_b}^{t,r(t)})-V(\tau_b,\Pi_{\tau_b}^{t,r(t)})\right] + \mathbb{E}\left[c\int_t^{\tau_b}\mathds{1}_{\left\{\Pi_u^{t,r(t)}\in(r(u),b(u))\right\}}du\right]\\
		&\quad \quad \quad  +\mathbb{E}\left[\int_t^{\tau_b}\mathcal{L}g(u,\Pi_u^{t,r(t)})\mathds{1}_{\left\{\Pi_u^{t,r(t)}\in(r(u),b(u))\right\}}du \right]\\
		&> \mathbb{E}\left[H(\tau_b,\Pi_{\tau_b}^{t,r(t)})-V(\tau_b,\Pi_{\tau_b}^{t,r(t)})\right] + \mathbb{E}\left[c\int_t^{\tau_b}\mathds{1}_{\left\{\Pi_u^{t,r(t)}\in(r(u),b(u))\right\}}du\right]\\
		&\quad \quad \quad -\mathbb{E}\left[c\int_t^{\tau_b}\mathds{1}_{\left\{\Pi_u^{t,r(t)}\in(r(u),b(u))\right\}}du \right]\\
		&=\mathbb{E}\left[H(\tau_b,\Pi_{\tau_b}^{t,r(t)})-V(\tau_b,\Pi_{\tau_b}^{t,r(t)})\right]\geq0.
	\end{align*}
	This is a contradiction and so we conclude $r=b$. Here the \textit{strict} inequality follows from  Proposition \ref{prop:cont.region.nonempty} given the form of $\mathscr{U}$, the fact that $\partial_\pi \mathcal{L}g<0$ on $(0,1/2)$ under Assumption (G2), and boundary continuity. The second inequality follows from Part 2 where we obtained $H\geq V$.  The argument that $B=R$ is analogous and so this completes the proof.
\end{proof}

\section{Existence of a Mean Field Equilibrium}\label{sec:main.proof.existence}

With the analysis of the single agent problem in Section \ref{sec:single.agent} complete, we are now in a position to tackle the proof of our main result (Theorem \ref{thm:existence}). Our approach centers on an application of the Schauder-Tychonoff Fixed Point Theorem (see \cite[Corollary 17.56]{guide2006infinite}) to the mapping \eqref{eqn:fixed.pt.mu} which we denote by $\Phi: \mathcal{P}([0,T])^2\rightarrow \mathcal{P}([0,T])^2$ and recall here as
%
\begin{equation}\label{eq:fixed.pt.mapping}
\Phi(\mu)= \left(\cL(\tau^{\iota, \ast}_\mu |\theta=0), \cL(\tau^{\iota, \ast}_\mu |\theta=1)\right).
\end{equation} 
This typically requires verifying the compactness and convexity of the underlying space (when viewed as a subset of a suitable locally convex Hausdorff topological vector space), as well as, the continuity of the mapping.

The continuity of $\Phi$ can be broken down into several steps, as shown in the next three auxiliary lemmas. In this section, we add back the index $i\in I$ on all agent-dependent elements of the problem. 

\begin{lemma}[Continuity of the value function in $\mu$] \label{lem:cont.val.funct}
	Let Assumption \ref{ass:g} hold. If $\mu_n\Rightarrow \mu$, then the corresponding signals $\mathbf{h}^i_{\mu_n}(t,\theta)$, volatilities $\|\Delta\bh^i_{\mu_n}(t)\|$, and value functions converge pointwise:
	\[\bh^i_{\mu_n}(t,\cdot)\to \bh^i_{\mu}(t,\cdot), \ \ \ \|\Delta\bh^i_{\mu_n}(t)\|\to\|\Delta\bh^i_\mu(t)\|,\]
	\[ \ \ \mathrm{and} \ \  V_i(t,\pi;\mu_n)\to V_i(t,\pi;\mu), \ \ \ \forall(t,\pi)\in[0,T)\times[0,1].\]
\end{lemma}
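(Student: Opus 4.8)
The plan is to establish the three asserted convergences in turn: the signals $\bh^i_{\mu_n}(t,\cdot)$, the volatilities $\norm{\Delta\bh^i_{\mu_n}(t)}$, and the value functions. The first two are elementary. For the signals, I would first rewrite the regularization: by Fubini's theorem and the substitution $y=t-s$,
\[ F^j_\mu(t)=\int_\bbR \mu^j[0,s]\,\varphi(t-s)\,ds=\int_{[0,T]}\Phi(t-u)\,\mu^j(du),\qquad \Phi(x):=\int_{-\infty}^x \varphi(y)\,dy, \]
where $\Phi$ is smooth, bounded and non-decreasing. Hence $u\mapsto\Phi(t-u)$ is a bounded continuous function on $[0,T]$, so $\mu^j_n\Rightarrow\mu^j$ gives $F^j_{\mu_n}(t)\to F^j_\mu(t)$ for every $t$; since $\rho\mapsto\bh^i(t,j,\rho)$ is continuous, $\bh^i_{\mu_n}(t,j)=\bh^i(t,j,F^j_{\mu_n}(t))\to\bh^i_\mu(t,j)$, and applying $\norm{\cdot}$ to the difference of the two components gives $\norm{\Delta\bh^i_{\mu_n}(t)}\to\norm{\Delta\bh^i_\mu(t)}$. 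I keep the uniform bounds $\mathfrak{h}_i\le\norm{\Delta\bh^i_{\mu_n}}\le\mathfrak{H}_i$ of Remark~\ref{rmk:dist.pi.mu} in reserve.

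For the value functions, I would exploit the fact (from the discussion following Proposition~\ref{prop:value.func}) that $V_i(t,\pi;\mu)$ depends on $\mu$ only through $\eta:=\norm{\Delta\bh^i_\mu}$ and that the single-agent problem may be posed over stopping times of the filtration generated by $\Pi$. It then suffices to prove the stability statement: if $\eta_n\to\eta$ pointwise on $[0,T]$ with $\mathfrak{h}_i\le\eta_n,\eta\le\mathfrak{H}_i$, then $V_i(t,\pi;\eta_n)\to V_i(t,\pi;\eta)$. To compare the perturbed problems I would realize, on one probability space carrying a single one-dimensional Brownian motion $B$ with augmented filtration $\bbF^B$, the strong solutions $\Pi^{(n)}$, $\Pi$ of $d\Pi_s=\Pi_s(1-\Pi_s)\eta_n(s)\,dB_s$ and $d\Pi_s=\Pi_s(1-\Pi_s)\eta(s)\,dB_s$, both started from $(t,\pi)$; their laws are the ones entering $V_i$ by L\'evy's characterization (Remark~\ref{rmk:dist.pi.mu}). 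Since $\eta_n,\eta\ge\mathfrak{h}_i>0$ and the solutions stay in $(0,1)$ on $[t,T]$ (Corollary~\ref{cor:properties.of.Pi}), $B$ is recovered from each of these processes by stochastic integration, so they all generate $\bbF^B$; consequently $V_i(t,\pi;\eta_n)$ and $V_i(t,\pi;\eta)$ are infima over the \emph{same} class $\cT^B_t$ of $[t,T]$-valued $\bbF^B$-stopping times.

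The key estimate I would establish is
\[ \bbE\Big[\sup_{s\in[t,T]}|\Pi^{(n)}_s-\Pi_s|^2\Big]\ \le\ C_{t,T}\int_t^T|\eta_n(s)-\eta(s)|^2\,ds, \]
whose right side tends to $0$ by dominated convergence; the inequality follows from the Burkholder--Davis--Gundy inequality and Gr\"onwall's lemma after splitting the diffusion increment of $\Pi^{(n)}-\Pi$ as $[\Pi^{(n)}_s(1-\Pi^{(n)}_s)-\Pi_s(1-\Pi_s)]\eta_n(s)+\Pi_s(1-\Pi_s)[\eta_n(s)-\eta(s)]$ and using $|\Pi^{(n)}_s(1-\Pi^{(n)}_s)-\Pi_s(1-\Pi_s)|\le|\Pi^{(n)}_s-\Pi_s|$, $\Pi_s(1-\Pi_s)\le\tfrac14$ and $\eta_n\le\mathfrak{H}_i$. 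Since $g_i$ is continuous and bounded on $[0,1]$ (Remark~\ref{rmk:consequences.of.assumptions}), with modulus of continuity $\omega$, this yields a bound \emph{uniform over all} $\tau\in\cT^B_t$, namely $\kappa_n:=\sup_{\tau\in\cT^B_t}\big|\bbE[g_i(\Pi^{(n)}_\tau)]-\bbE[g_i(\Pi_\tau)]\big|\le\bbE\big[\omega(\sup_{s\in[t,T]}|\Pi^{(n)}_s-\Pi_s|)\big]\to 0$. I would then conclude by a sandwich: for $\epsilon>0$, choosing $\tau_\epsilon$ that is $\epsilon$-optimal for $V_i(t,\pi;\eta)$ gives $V_i(t,\pi;\eta_n)\le\bbE[c_i(\tau_\epsilon-t)+g_i(\Pi^{(n)}_{\tau_\epsilon})]\le\bbE[c_i(\tau_\epsilon-t)+g_i(\Pi_{\tau_\epsilon})]+\kappa_n$, so $\limsup_n V_i(t,\pi;\eta_n)\le V_i(t,\pi;\eta)+\epsilon$; choosing $\tau_n$ that is $\epsilon$-optimal for $V_i(t,\pi;\eta_n)$ and using $\bbE[c_i(\tau_n-t)+g_i(\Pi_{\tau_n})]\ge V_i(t,\pi;\eta)$ gives $V_i(t,\pi;\eta_n)\ge\bbE[c_i(\tau_n-t)+g_i(\Pi_{\tau_n})]-\kappa_n-\epsilon\ge V_i(t,\pi;\eta)-\kappa_n-\epsilon$, so $\liminf_n V_i(t,\pi;\eta_n)\ge V_i(t,\pi;\eta)-\epsilon$. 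Letting $\epsilon\downarrow0$ finishes the argument (the endpoints $\pi\in\{0,1\}$ are trivial since there $V_i\equiv0$).

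The step I expect to be the main obstacle is not the estimates but the reduction: one must carefully place the perturbed optimal stopping problems on a common probability space with a single driving Brownian motion so that they share one class of admissible stopping times, and then — crucially for the $\liminf$ inequality, where $\tau_n$ depends on $n$ — control the terminal reward \emph{uniformly over all stopping times}, which is precisely why the sup-norm $L^2$ estimate is needed rather than a bound at a fixed time. I also note that the monotonicity in volatility of Proposition~\ref{prop:value.func}(ii) does not by itself suffice, since $\eta_n\to\eta$ holds only pointwise and not monotonically.
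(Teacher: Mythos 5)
Your proof is correct, but it takes a genuinely different route from the paper's. The paper reduces, exactly as you do, to continuity of $V_i$ in the volatility $\eta=\norm{\Delta\bh^i_\mu}$ with the problem posed over $\bbF^{\Pi}$-stopping times, but then (Appendix~\ref{app:cont.val.funct}, following Ekstr\"om and Janson--Tysk) it represents $\Pi^{(n)}$ and $\Pi$ as time changes $W_{\xi_n(\cdot)}$, $W_{\xi(\cdot)}$ of a \emph{single} Brownian motion via volatility times, proves $\xi_n\to\xi$ a.s., and transfers $\epsilon$-optimal stopping times between the two natural filtrations through hitting-time constructions such as $\tau^\epsilon_n=\inf\{s:\xi_n(s)\ge\xi(\tau^\epsilon)\wedge\xi_n(T)\}$, concluding by dominated convergence. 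You instead couple the two problems on the same time scale by driving both SDEs with one Brownian motion $B$, note that since $\eta_n,\eta\ge\mathfrak{h}_i>0$ and the solutions never leave $(0,1)$ (Corollary~\ref{cor:properties.of.Pi}) each solution regenerates $B$ by stochastic integration, so the augmented filtrations $\bbF^{\Pi^{(n)}}$, $\bbF^{\Pi}$ and $\bbF^{B}$ all coincide and the two infima run over the common class $\cT^B_t$; the comparison is then a routine BDG--Gr\"onwall sup-norm $L^2$ estimate plus a payoff bound that is uniform over stopping times, which is precisely what the $\liminf$ direction requires (and you correctly observe that Proposition~\ref{prop:value.func}(ii) alone cannot substitute for this). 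Your argument is more elementary and quantitative -- it bounds $|V_i(t,\pi;\mu_n)-V_i(t,\pi;\mu)|$ by a modulus of $\int_t^T|\eta_n-\eta|^2\,ds$ -- at the price of leaning on the nondegeneracy $\eta\ge\mathfrak{h}_i$ and the strong-solution/filtration identification; the paper's volatility-time argument never needs the filtrations to coincide and is the formulation that survives when the diffusion coefficient may degenerate, which is the setting of the cited results. Both proofs rely, as yours does through the modulus $\omega$, on $g_i$ being uniformly continuous on $[0,1]$; the paper's Appendix~\ref{app:cont.val.funct} invokes the same fact, so you are not assuming more than the paper does.
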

\begin{proof}
It is easy to check that as $\mu_n\Rightarrow \mu$ we have that the regularized distributions given by \eqref{eq:F.def} satisfy:
$F_{\mu_n}^j(t)\to F_\mu^j(t)$ for $j\in\{0,1\}$ and all $t\in[0,T]$. By the continuity of $\bh^i_\mu(t,j):=\bh^i(t,j,F^j_\mu(t))$ in the argument corresponding to $F^j_\mu(t)$, we readily obtain the pointwise convergence of the signals $\bh^i_{\mu_n}\to\bh^i_{\mu}$ and volatilities $\|\Delta\bh^i_{\mu_{n}}\|\to \|\Delta\bh^i_{\mu}\|$. It remains to show the continuity of the value function in $\mu$.

As noted in Remark \ref{rmk:dist.pi.mu}, the dependence of the value function on $\mu$ is entirely through the influence of $\|\Delta \bh^i_{\mu}\|$ on the distribution of $\Pi^i$. Hence it suffices to show that the value function is continuous in the volatility.
The arguments are adapted from those of Erik Ekstr\"om who graciously provided detailed notes on the omitted proof of \cite[Theorem 4.5]{ekstrom2004properties} which asserts continuity in a related (but not identical) setting. The analysis relies on the idea of volatility times which correspond to the quadratic variation of continuous local martingales. Since, in our problem, we may equivalently work with $\mathbb{F}^\Pi$ stopping times (see Section \ref{sec:value_fun}) we can allow the posterior probability process to be driven by an \textit{arbitrary} Brownian motion. Then, using the work of Janson and Tysk \cite{janson2003volatility} we may write the sequence of problems indexed by $\mu_n$ in terms of the \textit{same} Brownian motion but multiple volatility times. This perspective is incredibly helpful for establishing the desired continuity. For completeness, we include the proof in full detail in Appendix \ref{app:cont.val.funct}. 
\end{proof}

\begin{lemma}[Continuity of the boundaries in $\mu$]\label{lem:conv.bdys}
Let Assumption \ref{ass:g} hold. If $\mu_n\Rightarrow \mu$, then the corresponding optimal stopping boundaries $B^i_n, b^i_n$ converge to $B^i, b^i$ locally uniformly on $[0, T)$.
\end{lemma}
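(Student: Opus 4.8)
The plan is to combine a compactness argument for the boundary curves with the maximality characterization from Proposition~\ref{prop:boundary.integral.equation}. Fix $i\in I$ and suppress it. By Proposition~\ref{prop:bounds.on.bdys} the curves $b_n,B_n$ are uniformly bounded, and by Proposition~\ref{prop:bdy.unif.local.lip} they are equi-Lipschitz on every compact subset of $[0,T)$ with a constant independent of $n$ (and of $\mu$). Hence, by Arzel\`a--Ascoli together with a diagonal extraction over the exhaustion $\{[0,T-1/k]\}_{k\ge 1}$, any subsequence of $\{(b_n,B_n)\}$ has a further subsequence converging locally uniformly on $[0,T)$ to some pair $(\tilde b,\tilde B)$; the limit is again locally Lipschitz, hence continuous, and satisfies $b_*\le\tilde b\le b^*<1/2<B_*\le\tilde B\le B^*$. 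By the usual subsequence principle it then suffices to show that every such subsequential limit coincides with $(b,B)$.

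One inclusion is immediate from the continuity of the value function. Fix $t\in[0,T)$ and $\pi\in(b(t),B(t))$, so that $V(t,\pi;\mu)<g(\pi)$; by Lemma~\ref{lem:cont.val.funct}, $V(t,\pi;\mu_n)<g(\pi)$ for all large $n$, i.e.\ $b_n(t)<\pi<B_n(t)$ eventually. Passing to the limit along the chosen subsequence gives $\tilde b(t)\le\pi\le\tilde B(t)$, and then letting $\pi\downarrow b(t)$ and $\pi\uparrow B(t)$ yields $\tilde b\le b$ and $B\le\tilde B$ on $[0,T)$.

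For the reverse inclusion I would pass to the limit in the integral equations \eqref{eq:integral.eq}. Each pair $(b_n,B_n)$ solves \eqref{eq:integral.eq} with volatility $\|\Delta\bh_{\mu_n}\|$, which also enters the operator via $\mathcal{L}g(u,\cdot)=\|\Delta\bh_{\mu_n}(u)\|^2\mathcal{A}g(\cdot)$. Since $\|\Delta\bh_{\mu_n}(u)\|^2\to\|\Delta\bh_\mu(u)\|^2$ pointwise and stays in $[\mathfrak{h}^2,\mathfrak{H}^2]$ (Lemma~\ref{lem:cont.val.funct}, Remark~\ref{rmk:dist.pi.mu}), the clocks $\alpha_n(u):=\int_t^u\|\Delta\bh_{\mu_n}(r)\|^2\,dr$ converge uniformly on $[t,T]$ to $\alpha(u)$; after the time change of Appendix~\ref{sec:time-change}, which turns $\Pi$ into a fixed time-homogeneous diffusion run along its clock, this yields, jointly with $b_n(t)\to\tilde b(t)$ and $B_n(t)\to\tilde B(t)$ for the starting points, convergence of the laws of the relevant $\Pi$-processes under $\mu_n$ to those under $\mu$. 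The functions $g$ and $\mathcal{A}g$ are bounded and continuous (Assumption (G1), Remark~\ref{rmk:consequences.of.assumptions}); the indicators $\mathds{1}_{\{\Pi_u\in(b_n(u),B_n(u))\}}$ and $\mathds{1}_{\{\Pi_u\in(0,b_n(u))\cup(B_n(u),1)\}}$ are discontinuous, but for each $u>t$ the limiting process $\Pi_u$ has a density on $(0,1)$ by Corollary~\ref{cor:properties.of.Pi}, hence puts no mass on the curves $\tilde b,\tilde B$; combined with $b_n\to\tilde b$, $B_n\to\tilde B$ locally uniformly and the portmanteau theorem, this lets me pass to the limit in each integrand pointwise in $u$, and then under $\int_t^T\!du$ by dominated convergence (the integrands are bounded by $1$ and by $\mathfrak{H}^2\|\mathcal{A}g\|_\infty$). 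Consequently $(\tilde b,\tilde B)$ is a continuous solution of \eqref{eq:integral.eq} for the limit problem. Since $\tilde b\le b$ and $B\le\tilde B$ by the previous paragraph, the maximality statement in Proposition~\ref{prop:boundary.integral.equation} forces $\tilde b=b$ and $\tilde B=B$, which identifies the subsequential limit and completes the proof.

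I expect the main obstacle to be this last step: passing to the limit in \eqref{eq:integral.eq} simultaneously in the starting point, the driving volatility (equivalently, the clock), and the discontinuous boundary indicators. The two ingredients that make it work are the uniform two-sided bound $\mathfrak{h}\le\|\Delta\bh_{\mu_n}\|\le\mathfrak{H}$, which delivers the weak convergence of the $\Pi$-laws through the clocks, and the absolute continuity of $\Pi^{t,\pi}_u$ from Corollary~\ref{cor:properties.of.Pi}, which removes any boundary effect in the limit; without the latter, the discontinuity of the indicators could leave a residual term and the identification of $(\tilde b,\tilde B)$ with $(b,B)$ would fail.
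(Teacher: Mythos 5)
Your proposal is correct and follows essentially the same route as the paper's proof: uniform bounds and the uniform local Lipschitz property give subsequential locally uniform limits via Arzel\`a--Ascoli, the inclusion $\tilde b\le b$, $B\le\tilde B$ comes from convergence of the value functions, and the reverse identification is obtained by passing to the limit in the integral equations \eqref{eq:integral.eq} and invoking the maximality in Proposition~\ref{prop:boundary.integral.equation}, with the density of $\Pi_u$ (Corollary~\ref{cor:properties.of.Pi}) neutralizing the discontinuous indicators. The only cosmetic difference is that you obtain convergence of the laws of the $\Pi$-processes via the clock/time-change representation, whereas the paper's Lemma~\ref{lem:passing.to.limit.int.eqns} gets convergence in probability from an SDE stability result; both hinge on the same uniform volatility bounds.
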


We defer the proof of boundary convergence to Appendix \ref{app:bdy.convergence}. The idea is to establish a unique limit point of the boundary sequence. For any subsequence of $(b^i_{n}), (B^i_{n})$, we argue that we may extract a further subsequence $(b^i_{n'}), (B^i_{n'})$ that
converges locally uniformly to some $\overline{b}^i$ and $\overline{B}^i$.  Then, by taking limits across the equations \[V_i(t,b_{n'}^i(t);\mu_{n'})=g_i(b_{n'}^i(t)), \ \ \  V_i(t,B_{n'}^i(t);\mu_{n'})=g_i(B_{n'}^i(t)),\] we can obtain $\overline{b}^i\leq b^i$, $B^i\leq \overline{B}^i$. The reverse direction (i.e. $b^i\leq \overline{b}^i$ and $\overline{B}^i\leq B^i$) uses that the pair $(b^i,B^i)$ is the maximal continuous solution to the integral equations \eqref{eq:integral.eq} in Proposition \ref{prop:boundary.integral.equation}.


\begin{lemma}[Continuity of the stopping times in $\mu$]\label{lem:conv.stopping.times}
Let Assumption \ref{ass:g} hold and recall that $\tau^{i, \ast}_{\mu_n}, \tau^{i, \ast}_{\mu}$ denote the smallest optimal stopping time for the single agent problem \eqref{eq:single-agent-problem} associated with the input measures $\mu_n$ and $\mu$, respectively. If $\mu_n\Rightarrow \mu$, then $\tau^{i, \ast}_{\mu_n}|_{\theta = j}$ converges to $\tau^{i, \ast}_{\mu}|_{\theta = j}$ in probability, for $j = 0, 1$.
\end{lemma}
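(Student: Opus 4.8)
The plan is to exploit the explicit representation of the smallest optimal stopping time in terms of the free boundaries, namely
\[
\tau^{i,*}_{\mu} = \inf\{ s \ge 0 : \Pi^i_s \notin (b^i(s), B^i(s)) \},
\]
together with the conditional Gaussian structure of the log-likelihood ratio process from Lemma~\ref{lem:properties.of.L}, and reduce the convergence statement to the locally uniform convergence of the boundaries (Lemma~\ref{lem:conv.bdys}) and a regularity property of the first hitting time of a moving boundary by a time-changed Brownian motion. It is cleaner to argue in the $L$-coordinate. Conditioned on $\theta = j$, the process $L$ driven by the \emph{same} Brownian motion $\bW^i$ but with the volatility $\|\Delta \bh^i_{\mu_n}\|$ can be written, after the time change $\alpha_n(t) = \int_0^t \|\Delta \bh^i_{\mu_n}(s)\|^2 ds$, as $L^n_t = L_0 \pm \tfrac12 \alpha_n(t) + \widetilde W^i_{\alpha_n(t)}$ for a \emph{single} Brownian motion $\widetilde W^i$ (independent of $n$). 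Since $\|\Delta \bh^i_{\mu_n}(t)\| \to \|\Delta \bh^i_\mu(t)\|$ pointwise (Lemma~\ref{lem:cont.val.funct}) with a uniform bound $\mathfrak{h}_i \le \|\Delta \bh^i_{\mu_n}\| \le \mathfrak{H}_i$, bounded convergence gives $\alpha_n \to \alpha$ uniformly on $[0,T]$; thus the trajectories $L^n$ converge uniformly on $[0,T]$ to $L$ almost surely along the common sample path.

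The key steps, in order: (1) Pass to the log-likelihood picture and record that $\tau^{i,*}_{\mu_n}|_{\theta=j}$ equals the first exit time of $L^n$ from the strip $(\beta^i_n(t), \mathcal{B}^i_n(t))$, where $\beta^i_n = S^{-1}\circ b^i_n$ and $\mathcal{B}^i_n = S^{-1}\circ B^i_n$; by Lemma~\ref{lem:conv.bdys} and continuity of $S^{-1}$ these boundaries converge locally uniformly on $[0,T)$, and by Proposition~\ref{prop:bounds.on.bdys} they are uniformly bounded and bounded away from each other. (2) Show the upper-semicontinuity direction: for any $\varepsilon > 0$, once $L^n$ is within $\varepsilon$ of $L$ uniformly and the boundaries are within $\varepsilon$ uniformly on $[0, T - \varepsilon]$, any exit of $L$ strictly before $T - \varepsilon$ forces an exit of $L^n$ at a nearby time, giving $\limsup_n \tau^{i,*}_{\mu_n} \le \tau^{i,*}_\mu$ on $\{\tau^{i,*}_\mu < T\}$; the event $\{\tau^{i,*}_\mu = T\}$ is handled by noting the contribution to convergence in probability is controlled once we also have the reverse bound. (3) Show the lower-semicontinuity direction, which is the delicate one: we need $\liminf_n \tau^{i,*}_{\mu_n} \ge \tau^{i,*}_\mu$ a.s., i.e.\ the limiting exit time is not strictly smaller. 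This can fail only if $L$ touches a boundary without crossing it on a set of positive probability, so the crux is to prove that \emph{almost surely, $\tau^{i,*}_\mu$ is a point of strict crossing}, equivalently that with probability one $L$ does not have a local extremum exactly on the boundary at the hitting time. Here I would invoke the fact that $L$ is (conditionally) a time-changed Brownian motion with a nonsingular clock ($\|\Delta \bh^i_\mu\| \ge \mathfrak{h}_i$) and the boundaries are Lipschitz on compacts (Proposition~\ref{prop:bdy.unif.local.lip}): a standard argument — comparing $L_s - \mathcal{B}^i(s)$ to a Brownian motion with Lipschitz drift and using the law of the iterated logarithm, or a regularity-of-the-boundary-point argument as in Peskir–Shiryaev — shows that $L$ immediately enters the stopping region strictly after $\tau^{i,*}_\mu$, so the exit time is a.s.\ continuous along the uniformly converging sequence of paths and boundaries. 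Combining (2) and (3) yields $\tau^{i,*}_{\mu_n}|_{\theta=j} \to \tau^{i,*}_\mu|_{\theta=j}$ almost surely on the common probability space, which in particular gives convergence in probability.

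The main obstacle is step (3): guaranteeing that the limiting stopping time is a genuine crossing time and not a mere touching time, uniformly enough that the uniform convergence of paths and boundaries transfers to convergence of hitting times. This is exactly the classical subtlety that hitting times are only lower-semicontinuous functionals of the path in general; the resolution uses the non-degeneracy of the volatility together with the Lipschitz regularity of the free boundaries to rule out tangential contact, via a local comparison with Brownian motion and a law-of-the-iterated-logarithm / probabilistic regularity argument. A minor additional technical point is the behavior near the terminal time $T$ and near $t = 0$ (where boundary convergence is only locally uniform on $[0,T)$), which is dealt with by an $\varepsilon$-truncation and the observation that $b^i, B^i$ stay uniformly inside $(0,1)$ up to $T$.
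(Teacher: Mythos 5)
Your skeleton is the same as the paper's: pass to the log-likelihood coordinate, note that $m^i=S^{-1}(b^i)$, $M^i=S^{-1}(B^i)$ inherit uniform local Lipschitz regularity and locally uniform convergence from Proposition \ref{prop:bounds.on.bdys}, Proposition \ref{prop:bdy.unif.local.lip} and Lemma \ref{lem:conv.bdys}, use the conditionally Gaussian structure of $L$ given $\theta=j$, and reduce to a hitting-time convergence statement in which tangential touching of the boundary is the only obstruction, ruled out via nondegenerate volatility plus Lipschitz boundaries. However, two steps are flawed as written. First, your coupling claim is false: writing $L^n_t=L_0\pm\tfrac12\alpha_n(t)+\widetilde W_{\alpha_n(t)}$ with a \emph{single} Brownian motion $\widetilde W$ independent of $n$ is not available, because the Dambis--Dubins--Schwarz Brownian motion associated with $\int_0^t\Delta\mathbf{h}_{\mu_n}^\top(s)\,d\mathbf{W}_s$ depends on the integrand, hence on $n$ (already for $d=1$ with $\sigma_1\equiv1$, $\sigma_2\equiv2$ the two DDS motions are $W_u$ and $2W_{u/4}$). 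Consequently your claimed a.s.\ uniform convergence of the paths, and hence your a.s.\ convergence of the stopping times, is not justified. The correct coupling is simply to drive all $L^n$ by the same $\mathbf{W}$ (as in the model), where Doob's inequality and the It\^o isometry give $\mathbb{E}\bigl[\sup_{t\le T}|L^n_t-L_t|^2\bigr]\to0$; this yields only convergence in probability of the sup, which still suffices for the lemma but forces either a subsequence argument or a quantitative estimate. Second, you have the two semicontinuity directions interchanged: $\liminf_n\tau^{i,*}_{\mu_n}\ge\tau^{i,*}_{\mu}$ follows from uniform closeness of paths and boundaries together with closedness of the stopping region and needs no crossing property, whereas the direction threatened by touching-without-crossing is $\limsup_n\tau^{i,*}_{\mu_n}\le\tau^{i,*}_{\mu}$; in particular the justification in your step (2) (``any exit of $L$ forces an exit of $L^n$ at a nearby time'') is false without the strict-crossing property you only establish in step (3). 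The crossing idea itself (LIL/regularity of the boundary point, using $\|\Delta\mathbf{h}_\mu\|\ge\mathfrak{h}$ and Lipschitz boundaries) is sound and is indeed how the paper handles probabilistic regularity elsewhere (Proposition \ref{prop:smooth.fit}).

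For comparison, the paper avoids a.s.\ path convergence altogether: it proves a self-contained quantitative result (Lemma \ref{lem:conv.stop.time} in Appendix \ref{app:conv.stopping.times}) for Gaussian processes with pointwise-converging bounded coefficients and locally uniformly converging boundaries that are monotone or uniformly locally Lipschitz. There, the tangential-touch issue is controlled by bounding the probability that, after the hitting time, the drifted and time-changed Brownian motion fails to penetrate the boundary by a margin $\eta$ within time $\epsilon/2$, using the explicit law of the running maximum of a Brownian motion with drift (the drift being absorbed via the Lipschitz constant of the boundary and the bounds $\mathfrak{h}\le\|\Delta\mathbf{h}_{\mu_n}\|\le\mathfrak{H}$). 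If you replace your DDS coupling by the common-$\mathbf{W}$ $L^2$ estimate and either argue along subsequences or adopt such a quantitative penetration bound, your argument closes and coincides in substance with the paper's.
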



\begin{proof}
For a fixed agent $i$ and an arbitrary input measure $\mu$, observe that their associated posterior probability process $\Pi^i$ hits their optimal stopping boundaries $b^i$ or $B^i$ if and only if $L^i=S^{-1}(\Pi^i)$ from \eqref{eq:defn-L} hits $m^i:=S^{-1}(b^i)$ or $M^i:=S^{-1}(B^i)$. Moreover, since $b^i$ and $B^i$ are bounded away from $0$ and $1$ (see Proposition \ref{prop:bounds.on.bdys}), and $S^{-1}(\cdot)$ is locally Lipschitz, the boundaries $m^i$ and $M^i$ inherit the uniform (in $\mu$) local Lipschitz property of $b^i$ and $B^i$ from Proposition \ref{prop:bdy.unif.local.lip}.
	
Let $b_n^i$ and $B_n^i$ be the boundaries associated with $\mu_n$ and define the corresponding transformed boundaries $m_n^i=S^{-1}(b^i_n)$ and $M^i_n=S^{-1}(B^i_n)$ analogously to the definition of $m^i$ and $M^i$ above. Similarly let $L^{n,i}$ and $L^i$ be the log-likelihood ratio processes whose dynamics in \eqref{eq:L.dynamics} are induced by $\mu_n$ and $\mu$ for the agent $i$. Define the hitting times to each of the boundaries up to time $T$:
\[\tau_{m^i_n}^i:=\inf\{t\geq0:m_n^i(t)-L^{n,i}_t\geq 0\}\wedge T,\]
\[\tau_{M^i_n}^i:=\inf\{t\geq0:L^{n,i}_t-M^i_n(t)\geq 0\}\wedge T.\]
By Lemma \ref{lem:conv.bdys}, $b_n^i\to b^i$ and $B_n^i\to B^i$ locally uniformly, and by our transformation we also have $m_n^i\to m^i$ and $M_n^i\to M^i$ locally uniformly on $[0,T)$. We can now write the conditional hitting times $\tau_{m^i_n,j}^i:=\tau_{m^i_n}^i|_{\theta=j}$, $\tau_{M^i_n,j}^i:=\tau_{M^i_n}^i|_{\theta=j}$, and express the (smallest) conditional optimal stopping time for the input measure $\mu_n$,  $\tau^{i,*}_{\mu_n,j}:=\tau^{i,*}_{\mu_n}|_{\theta=j}$, as:
\[\tau^{i,*}_{\mu_n,j}=\tau^i_{m^i_n,j}\wedge \tau^i_{M^i_n,j}.\]
In Lemma \ref{lem:properties.of.L}, we have already observed that $L^{n,i}$ and $L^i$ are conditionally Gaussian, and in Appendix \ref{app:conv.stopping.times} we establish a result on convergence of hitting times for Gaussian processes. In particular, since the coefficients of our Gaussian process converge (see \eqref{eq:cond.dyn.L.1}-\eqref{eq:cond.dyn.L.0} and Lemma \ref{lem:cont.val.funct}) and the initial condition $L_0^{n,i}=L_0^i=S^{-1}(\pi)$ is fixed, by Lemma \ref{lem:conv.stop.time} and the continuous mapping theorem we have that $\tau^{i,*}_{\mu_n,j}\to \tau^{i,*}_{\mu,j}$ in probability. 
\end{proof}

We now collect these results in order to prove our main theorem.

\begin{proof}[Proof of Theorem~\ref{thm:existence}] 
We formally verify the conditions of Schauder-Tychonoff Fixed Point Theorem for $\Phi$ beginning with the compactness and convexity of the underlying space. Let $\mathcal{M}([0,T])$ be the locally convex Hausdorff topological vector space of finite signed measures on the Borel $\sigma$-field of $[0,T]$. The space of probability measures $\mathcal{P}([0,T])$ is a compact and convex subset of $\mathcal{M}([0,T])$ when it is equipped with the weak*-topology\footnote{This coincides with the usual topology of weak convergence on $\cP([0,T])$.}. Since the product space $\mathcal{P}([0,T])^2$ inherits compactness and convexity in the parent product space (which is also a locally convex Hausdorff topological vector space), the remaining requirement of Schauder's Theorem is continuity.

We introduce the notation $\mathbb{P}^\theta$ to represent the conditional measure $\mathbb{P}(\cdot|\theta)$. It is clear from Definition \ref{def:MFE} that the measure associated with conditional law in \eqref{eq:fixed.pt.mapping} can be written by disintegrating in the agent space and representing the conditional law of the optimal stopping time for a given agent $i$ as the pushforward of $\mathbb{P}^\theta$ by $\tau^{i,*}_\mu$. We denote this measure by $\nu_\mu$ which is defined formally below for all Borel-measurable $A\subseteq[0,T]$:	\[\nu_\mu(A):=\int_{I}\int_{\Omega}\mathds{1}_{\tau^{i,*}_\mu(\omega)\in A}\mathbb{P}^\theta(d\omega)\lambda(di).\] 
	Suppose that $\mu_n\Rightarrow \mu$ and let $f\in C_b([0,T])$. Then,
	\begin{align*}
		\int f d\nu_{\mu_n}=\int_{I}\int_{\Omega}f(\tau^{i,*}_{\mu_n}(\omega))\mathbb{P}^\theta&(d\omega)\lambda(di)\\
		&\xrightarrow{n\to\infty} \int_{I}\int_{\Omega}f(\tau^{i,*}_\mu(\omega))\mathbb{P}^\theta(d\omega)\lambda(di)=\int f d\nu_{\mu},
	\end{align*}
	where the convergence follows by the bounded convergence theorem and the weak convergence of the conditional stopping times $\tau_{\mu_n}^{i,*}|_{\theta}$ for all $i$ (implied by Lemma \ref{lem:conv.stopping.times}). This gives the convergence we require and so, the mapping \eqref{eq:fixed.pt.mapping} is continuous.

Taken together, we have established that our mapping \eqref{eq:fixed.pt.mapping} has at least one fixed point. In view of Definition \ref{def:MFE} this gives the existence of a mean field equilibrium to the sequential testing game and completes the proof of Theorem \ref{thm:existence}.
\end{proof}

\section{Preemption Games with the Classic Loss}\label{sec:classic}

Our existence result (Theorem \ref{thm:existence}) does not extend directly to games involving the classic loss function. The penalty $g(\pi)=a_1\pi\wedge a_2(1-\pi)$ contains a kink which creates technical difficulties that are not addressed by the techniques we use to establish boundary regularity (Proposition \ref{prop:bdy.unif.local.lip}), a critical ingredient in our proof. However, we can get around this limitation in the special case where the volatility $\|\Delta\bh_\mu(t)\|$ is monotone decreasing for all input measures $\mu$. This can be used to prove monotonicity of the optimal stopping boundaries $b,B$, which in turn can be substituted for the aforementioned regularity requirements. 


\renewcommand{\theassumption}{C} 
\begin{assumption}[Classic Loss]\label{ass:classic}\mbox{}
	\begin{enumerate}
		\item[(C1)] The penalty function is $g_i(\pi)=a^i_1\pi\wedge a^i_2(1-\pi)$ where $a^i_1,a^i_2>0$.
		\item[(C2)] The volatility $\|\Delta\bh^i_\mu(t)\|$ is monotone decreasing for all $\mu\in\mathcal{P}([0,T])^2$.
	\end{enumerate}
\end{assumption}
\renewcommand{\theassumption}{\arabic{section}.\arabic{assumption}}

An example where this condition holds is the preemption game version (i.e. $\lambda_1<0$) of Example \ref{ex:h}. In general we can loosely think of (C2) as enforcing a preemption style game since it says that the signal gets weaker as time progresses so agents trade off the benefits of outlasting other agents and observing the process for a longer time with the cost of a weakening signal. If, as in the $\lambda_1<0$ case of Example \ref{ex:h}, the exit of other agents further weakens the signal (i.e. $\mu_1\leq \mu_2$ in first stochastic order implies $\|\Delta\bh^i_{\mu_1}(t)\|\leq \|\Delta\bh^i_{\mu_2}(t)\|$ for all $t$) then the benefit of ``preempting" other agents is even more pronounced and reinforced by the nature of the interaction.

As in Section \ref{sec:single.agent}, we will suppress the index $i$ when discussing the representative agent problem. We begin our analysis in this section by noting that most of the results in Section \ref{sec:single.agent} have analogues in the classic problem with monotone decreasing volatility. We report the main conclusions for the value function here and also obtain monotonicity in time, Lipschitz regularity in $\pi$, and a useful estimate on the second spatial derivative.

\begin{proposition}\label{prop:mon.and.lip.V}
	Under Assumption \ref{ass:classic},
	\begin{enumerate}
		\item[(i)]  $V(t,\pi)$ is jointly continuous and $\tau_\mathscr{D}$ from \eqref{eqn:smallest.stop.time} is the smallest optimal stopping time.
		\item[(ii)] 
		$V(t,\pi)$ is concave in $\pi$ and monotone in the volatility.
		\item[(iii)] $t\mapsto V(t,\pi)$ is increasing on $[0,T)$ for all $\pi\in(0,1)$.
		\item[(iv)] $\pi\mapsto V(t,\pi)$ is Lipschitz on $[0,1]$ for all $t\in[0,T)$.
		\item[(v)] $V$ is $C^{1,2}$ in $\mathscr{C}$ and satisfies:
		\[\partial_tV(t,\pi)+\mathcal{L}V(t,\pi)=-c \ \ \ \ \forall (t,\pi)\in \mathscr{C}.\]
		\item[(vi)] $\partial_{\pi\pi}V\leq -\frac{2c}{\|\Delta \bh_\mu (t)\|^2\pi^2(1-\pi)^2}\leq -\frac{8c}{\mathfrak{H}^2} <0$  for all $(t,\pi)\in\mathscr{C}$ and $\partial_{\pi\pi}V=0$ otherwise.
	\end{enumerate}
\end{proposition}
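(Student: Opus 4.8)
The plan is to lift most of the single-agent analysis of Section~\ref{sec:single.agent} to the classic loss, the only genuinely new input being the time-monotonicity in (iii), where Assumption~(C2) enters; item (vi) is then a corollary of (iii) and (v). I would first dispose of (i), (ii) and (v) by rerunning the arguments of Section~\ref{sec:single.agent} essentially verbatim. The key observation is that the classic payoff $g(\pi)=a_1\pi\wedge a_2(1-\pi)$, being a minimum of two affine functions, is concave and Lipschitz on $[0,1]$ with $g(0)=g(1)=0$, so that $g\circ S$ is Lipschitz on $\mathbb{R}$ (recall $S$ is $\tfrac14$-Lipschitz). Consequently: for (i), standard optimal stopping theory (\cite[Prop.~4.7]{touzi2012optimal}, \cite[Thm.~2.4 and Cor.~2.9]{peskir2006optimal}) gives joint continuity of the reparametrized value function, hence of $V$, and identifies $\tau_{\mathscr{D}}$ in \eqref{eqn:smallest.stop.time} as the smallest optimal stopping time; for (ii), concavity in $\pi$ and monotonicity in the volatility follow exactly as in Proposition~\ref{prop:value.func}, since those proofs only use concavity of the payoff; for (v), the interior parabolic regularity and the equation $\partial_t V+\mathcal{L}V=-c$ on $\mathscr{C}$ follow as in Proposition~\ref{prop:C1.interior.cont.reg.}, the kink of $g$ being harmless in the interior of $\mathscr{C}$.

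For (iv) I would argue directly from (ii). Since $g\ge 0$ forces $V\ge 0$, while $V\le g$ always and $V(t,0)=V(t,1)=0$, and since $g(\pi)=a_1\pi$ for $\pi$ near $0$ and $g(\pi)=a_2(1-\pi)$ for $\pi$ near $1$, the one-sided derivatives of the concave function $\pi\mapsto V(t,\pi)$ at the endpoints satisfy $\partial_\pi^{+}V(t,0)\in[0,a_1]$ and $\partial_\pi^{-}V(t,1)\in[-a_2,0]$. Concavity makes $\partial_\pi V(t,\cdot)$ nonincreasing on $(0,1)$, so $|\partial_\pi V(t,\pi)|\le\max(a_1,a_2)$ there, uniformly in $t$, which is the asserted Lipschitz bound.

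The crux is (iii). After the substitution $\sigma=\tau-t$ one writes
\[
V(t,\pi)=\inf_{\sigma\in[0,T-t]}\mathbb{E}\big[c\sigma+g(\widetilde\Pi^{t,\pi}_{\sigma})\big],
\]
where $\widetilde\Pi^{t,\pi}_{\sigma}:=\Pi^{t,\pi}_{t+\sigma}$ is, as a process in $\sigma$, a diffusion started at $\pi$ with deterministic volatility $s\mapsto\|\Delta\bh_\mu(t+s)\|$. Fix $t<t'$. First, shrinking the horizon from $T-t$ to $T-t'$ can only raise the infimum. Second, by Assumption~(C2) the map $t\mapsto\|\Delta\bh_\mu(t)\|$ is decreasing, so $\|\Delta\bh_\mu(t'+\cdot)\|\le\|\Delta\bh_\mu(t+\cdot)\|$ pointwise on $[0,T-t']$, and the monotonicity of the value in the volatility from (ii) shows that replacing the larger profile $\|\Delta\bh_\mu(t+\cdot)\|$ by the smaller one $\|\Delta\bh_\mu(t'+\cdot)\|$ on the common horizon $[0,T-t']$ can again only raise the value. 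Chaining the two comparisons yields $V(t,\pi)\le V(t',\pi)$. The main obstacle is exactly to phrase this comparison without sign errors — ensuring that both the horizon effect and the volatility effect push the value upward as $t$ increases — and, at a more structural level, to confirm that the needed pieces of Section~\ref{sec:single.agent} (continuity, volatility monotonicity, the interior PDE, and an analogue of Proposition~\ref{prop:cont.region.nonempty} locating the continuation region) really do survive the loss of the $C^3$ hypothesis on $g$; this is what Assumption~(C2) is designed to compensate for.

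Finally, (vi) follows by combining (iii) and (v). On $\mathscr{C}$, rearranging the PDE of (v) gives
\[
\partial_{\pi\pi}V(t,\pi)=-\frac{2\big(c+\partial_t V(t,\pi)\big)}{\|\Delta\bh_\mu(t)\|^2\,\pi^2(1-\pi)^2},
\]
and $\partial_t V\ge 0$ by (iii), so $\partial_{\pi\pi}V\le -2c/(\|\Delta\bh_\mu(t)\|^2\pi^2(1-\pi)^2)$; using $\|\Delta\bh_\mu(t)\|\le\mathfrak{H}$ and $\pi^2(1-\pi)^2\le\tfrac14$ gives the second bound $-8c/\mathfrak{H}^2$. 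On $\mathscr{C}^c=\mathscr{D}$ we have $V=g$; since the only non-smooth point of $g$, namely $a_2/(a_1+a_2)$, lies in the interior of $\mathscr{C}$ (by the analogue of Proposition~\ref{prop:cont.region.nonempty}), $V$ agrees on each connected component of a time-slice of $\mathscr{D}$ with one of the two affine pieces of $g$, whence $\partial_{\pi\pi}V=0$ there.
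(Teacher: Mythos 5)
Your proposal is correct and follows essentially the paper's route: (i), (ii), (v) by rerunning the Section~\ref{sec:single.agent} arguments (which only need concavity and Lipschitz continuity of $g\circ S$), (iv) from concavity together with $0\le V\le g$ giving the constant $\max\{a_1,a_2\}$, (vi) by rearranging the PDE and using $\partial_t V\ge 0$, and your horizon-shrinkage plus volatility-comparison chain for (iii) is exactly the content of the result the paper cites, namely \cite[Proposition 4.2]{ekstrom2015bayesian}, so you have merely made that argument explicit via part (ii). The one point to watch is the ``analogue of Proposition~\ref{prop:cont.region.nonempty}'' you invoke in (vi) to place the kink $a_2/(a_1+a_2)$ inside $\mathscr{C}$: for the classic loss this cannot come from the $\mathscr{U}$-argument (since $\mathcal{A}g\equiv 0$ away from the kink) but instead from the estimate $V(t,a_2/(a_1+a_2))<g(a_2/(a_1+a_2))$ of \cite[Lemma 4.5]{ekstrom2015bayesian}, as in Proposition~\ref{prop:bdy.char.classic}; that fact is independent of (vi), so your use of it is legitimate and no circularity arises.
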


\begin{proof}
	 (i),(ii) and (v) follow from the same arguments as in Section \ref{sec:value_fun}.
	 
	(iii) This is a special case of \cite[Proposition 4.2]{ekstrom2015bayesian} using (in their notation) $\sigma(t,\pi)=\|\Delta \bh_\mu (t)\|\pi(1-\pi)$ for the diffusion coefficient\footnote{Note that the results of \cite[Section 4]{ekstrom2015bayesian} holds for both the finite and infinite horizon problems. By \cite[Remark 2.1]{ekstrom2015bayesian} the arguments extend to all constants $a_1,a_2>0$.}.

	(iv) Since $V(t,\cdot)$ is concave, it is locally Lipschitz on $(0,1)$. As $0\leq V\leq a_1\pi\wedge a_2(1-\pi)$ this can be strengthened to globally Lipschitz on $[0,1]$ with constant $\max\{a_1,a_2\}$.
	
	(vi) follows by rearranging (v) and noting from (iii) that we must have $\partial_t V\geq 0$. The bounds are then trivial.
\end{proof}

Additionally, the continuation region for the classic problem can still be characterized in terms of two time-dependent boundaries in a manner similar to Proposition \ref{prop:cont.region.nonempty}(iii) and Proposition \ref{prop:bounds.on.bdys}. We also report the monotonicity properties of the boundaries that will take the place of regularity in the analysis to follow.


\begin{proposition}\label{prop:bdy.char.classic}
	Under Assumption \ref{ass:classic} there exist boundaries $b(\cdot)<\frac{a_2}{a_1+a_2}<B(\cdot)$ such that:
	\[\mathscr{C}=\{(t,\pi)\in[0,T)\times(0,1):b(t)<\pi<B(t)\}.\]
	The lower boundary $b:[0,T)\to (0,a_2/(a_1+a_2)]$ is continuous and increasing. The upper boundary $B:[0,T)\to [a_2/(a_1+a_2),1)$ is continuous and decreasing.
	Moreover, there exist functions $\underline{b},\overline{b},\underline{B},\overline{B}$ and constants $b_*,B^*$ independent of $\mu$ such that
	\begin{enumerate}
		\item[(i)] For all $t\in[0,T)$: \[0< b_*\leq \underline{b}(t)\leq b(t)\leq \overline{b}(t)<\frac{a_2}{a_1+a_2}<\underline{B}(t)\leq B(t)\leq \overline{B}(t)\leq B^*<1,\] 
		\item[(ii)]$\underline{b}(T)=\frac{a_2}{a_1+a_2}=\overline{B}(T)$ and so $b(T)=\frac{a_2}{a_1+a_2}=B(T)$.
	\end{enumerate}
\end{proposition}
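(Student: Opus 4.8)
The plan is to mirror the structure of Section \ref{sec:single.agent} but exploit the monotonicity afforded by Assumption (C2) in place of the $C^3$ smoothness of $g$. First I would establish the two-boundary characterization of $\mathscr{C}$. The analogue of Proposition \ref{prop:cont.region.nonempty}(iii) goes through with the reference point $1/2$ replaced by the ``indifference point" $\pi_0 := a_2/(a_1+a_2)$ where the two linear pieces of $g$ meet: indeed $g$ is concave with $g(0)=g(1)=0$, so $V\ge 0$, $V\le g$, and $V(t,0)=V(t,1)=0$, allowing me to define $\underline{\mathscr{D}}_t$, $\overline{\mathscr{D}}_t$, $b(t)=\sup\underline{\mathscr{D}}_t$, $B(t)=\inf\overline{\mathscr{D}}_t$ exactly as before. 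That $(t,\pi_0)\in\mathscr{C}$ for every $t<T$ follows because $g$ has a concave kink at $\pi_0$ and stopping there is strictly suboptimal against waiting an instant (this is classical; alternatively it follows from Proposition \ref{prop:mon.and.lip.V}(vi) once we know $\mathscr C$ is nonempty near $\pi_0$, which holds since $\tau_\mathscr{D}$ is not optimal to be $t$ at the kink). The absence of isolated pockets below $b$ or above $B$ follows from the same Dynkin-formula argument as in Proposition \ref{prop:cont.region.nonempty}: on the region $\{0<\pi<b(t)\}$ the function $g$ is \emph{linear}, hence $\mathcal{L}g = 0 \ge -c$ there, and the supermartingale comparison gives $V\ge g$, contradicting membership in $\mathscr{C}$; symmetrically above $B$.

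Next I would prove monotonicity of the boundaries. The key input is Proposition \ref{prop:mon.and.lip.V}(iii): $t\mapsto V(t,\pi)$ is increasing. Fix $\pi \le \pi_0$ with $(t,\pi)\in\mathscr{D}$, i.e.\ $V(t,\pi)=g(\pi)$; since $V\le g$ always and $V(s,\pi)\ge V(t,\pi)=g(\pi)$ for $s\le t$, we get $V(s,\pi)=g(\pi)$, so $(s,\pi)\in\mathscr{D}$. Hence $\underline{\mathscr{D}}_s \supseteq \underline{\mathscr{D}}_t$ for $s\le t$, which gives $b(s)\ge b(t)$, i.e.\ $b$ is increasing; the identical argument with $\overline{\mathscr{D}}$ shows $B$ is decreasing. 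Continuity of $b$ and $B$ then follows from monotonicity together with the closedness of $\mathscr{D}$ in the usual way: a monotone function has only jump discontinuities, and a jump would force a horizontal segment of $\partial\mathscr{C}$ inside $\mathscr D$ that contradicts $\mathscr D$ being closed while $\mathscr C$ is open — more precisely one argues as in the standard optimal-stopping literature (e.g.\ using that $V$ is continuous and that at a jump point the stopping region would fail to be closed from one side). For the terminal values in (ii), at $t=T$ the problem degenerates: $V(T,\pi)=g(\pi)$, so $\mathscr{C}_T=\emptyset$, forcing $b(T^-)$ and $B(T^-)$ to both equal $\pi_0$ by monotonicity and the squeeze $b\le\pi_0\le B$; I would phrase this as $\underline b(T)=\pi_0=\overline B(T)$ once the bounding functions are in hand.

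Finally, for the uniform bounds in (i) I would follow Proposition \ref{prop:bounds.on.bdys}. The outer bounds $0<b_*\le b(t)$ and $B(t)\le B^*<1$ come from comparison with the infinite-horizon, constant-volatility problem using volatility $\mathfrak H$: by Proposition \ref{prop:mon.and.lip.V}(ii), $V\ge \underline V \ge \underline V_\infty$, so $\mathscr C \subseteq \mathscr C_\infty$, and the latter has constant boundaries $0<b_*<B^*<1$ (Appendix \ref{sec:inf.horizon}). For the inner bounds I would use monotonicity in $t$: since $b$ is increasing and $B$ decreasing on $[0,T)$, and since near $t=T$ both approach $\pi_0$, the functions $\underline b,\overline b$ (resp.\ $\underline B,\overline B$) can be taken as $\mu$-independent envelopes — e.g.\ $\overline b(t)$ built from the constant-volatility-$\mathfrak h$ problem on a shifted horizon, which pinches to $\pi_0$ as $t\uparrow T$, and $\underline b(t):=b(0)\wedge(\text{that problem's lower boundary})$; the crucial point is only that these envelopes do not depend on $\mu$, which holds because the comparison problems depend on $\mu$ only through the range $[\mathfrak h,\mathfrak H]$ of the volatility (Remark \ref{rmk:dist.pi.mu}) and, for (C2), through its monotonicity.

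The main obstacle I anticipate is not the monotonicity or the pocket-exclusion — those are routine given Proposition \ref{prop:mon.and.lip.V} — but rather producing the \emph{$\mu$-uniform} inner envelopes $\underline b,\overline b,\underline B,\overline B$ with the correct terminal pinching at $\pi_0$. Because $g$ has a kink at $\pi_0$ rather than a strict curvature bound there, one cannot invoke an Assumption-(G3)-type estimate to bound the boundaries away from $\pi_0$ uniformly in $t$; instead the separation must degenerate as $t\uparrow T$, and one has to exhibit explicit comparison stopping problems (finite-horizon, constant volatility $\mathfrak h$ or $\mathfrak H$, with the classic loss) whose boundaries provide the envelopes and whose solutions are known to be monotone and to converge to $\pi_0$ at the horizon. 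This bookkeeping — checking that the comparison inequalities $\underline V \le V \le \overline V$ translate into the claimed nesting of continuation regions, uniformly over all admissible $\mu$ — is where the care is needed.
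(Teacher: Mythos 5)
Your plan is essentially the paper's route: the two boundaries enclosing the kink $\pi_0=a_2/(a_1+a_2)$, monotonicity/continuity of $b,B$ coming from the time-monotonicity of $V$ under (C2), and the $\mu$-uniform envelopes obtained by the volatility comparison of Proposition \ref{prop:mon.and.lip.V}(ii) with the constant-volatility classic problems. The paper differs only in how it discharges the sub-steps: the two-boundary structure is read off from concavity of $V(t,\cdot)$ (so $V-g$ is piecewise concave, vanishing at $0,1$, hence monotone on each side of $\pi_0$), rather than your pocket-exclusion Dynkin argument on the linear pieces (which also works); monotonicity and continuity of the boundaries, the strict inequality $V(t,\pi_0)<g(\pi_0)$, and the outer constants $b_*,B^*$ and terminal values are quoted from \cite{ekstrom2015bayesian} and \cite[Eqs.\ (21.2.36)--(21.2.37)]{peskir2006optimal} rather than reproved.

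Two slips to fix. First, your monotonicity argument has the time direction flipped: Proposition \ref{prop:mon.and.lip.V}(iii) gives $V(s,\pi)\ge V(t,\pi)=g(\pi)$ for $s\ge t$, so the stopping-region slices grow in $t$ and $b$ is increasing, $B$ decreasing; as written ($s\le t$) the inequality is false and the conclusion drawn from it is mislabeled. Second, the envelopes need no shifted horizon and must not involve $b(0)$, which depends on $\mu$: simply take $\underline b,\overline B$ to be the boundaries of the same-horizon problem with constant volatility $\mathfrak H$ and $\overline b,\underline B$ those with constant volatility $\mathfrak h$; the nesting of continuation regions then follows directly from $\underline V\le V\le \overline V$, the outer constants from \cite[Eq.\ (21.2.36)]{peskir2006optimal}, and the pinching $\underline b(T)=\pi_0=\overline B(T)$ from \cite[Eq.\ (21.2.37)]{peskir2006optimal} (emptiness of the $T$-slice alone does not force the limits to be $\pi_0$).
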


\begin{proof}
	We begin by establishing the existence of the boundaries. By the argument in \cite[Lemma 4.5]{ekstrom2015bayesian}, since $\|\Delta \bh_\mu (t)\|\geq \mathfrak{h}$ the value function satisfies
	\[V(t,a_2/(a_1+a_2))<g(a_2/(a_1+a_2))\]
	for all $t<T$. The claim is then a consequence of the concavity of $V(t,\pi)$ in $\pi$ since $V-g$ is piecewise concave and non-positive everywhere with $(V-g)(t,0)=(V-g)(t,1)=0$. Hence, $(V-g)(t,\cdot)$ must be decreasing on $[0,a_2/(a_1+a_2))$ and increasing on $(a_2/(a_1+a_2),1]$. It follows that for every $t$ there exists a $b(t), B(t)$ such that $V-g=0$ on $(b(t),B(t))^c$. Then, the claimed monotonicity and continuity follow from a special case of \cite[Proposition 4.6 and Theorem 4.10]{ekstrom2015bayesian} using (in their notation) $\sigma(t,\pi)=\|\Delta \bh_\mu (t)\|\pi(1-\pi)$ for the diffusion coefficient; also see \cite[Remark 4.11]{ekstrom2015bayesian}.
	
	For (i) it suffices to note that the upper and lower bounds in Proposition \ref{prop:value.func}(ii) on the value function and the definition of the continuation region in terms of $V$ (see \eqref{eqn:stop.region}) imply a corresponding upper and lower bound on the continuation region with respect to the usual set ordering. In view of the preceding characterization in terms of boundary functions $b(t),B(t)$ we can associate $\underline{b}(t),\overline{B}(t)$ with the formulation using $\|\Delta \bh_\mu (t)\|\equiv \mathfrak{H}$ and $\overline{b}(t),\underline{B}(t)$ with $\|\Delta \bh_\mu (t)\|\equiv \mathfrak{h}$. The upper and lower constant bounds, $b_*$ and $B^*$, then follow from the bounds on the classic problem in finite time (see \cite[Eq. (21.2.36)]{peskir2006optimal}). The terminal value in (ii) for the upper and lower bounds follows from \cite[Eq. (21.2.37)]{peskir2006optimal}.
\end{proof}

%
%

We remark here that the remaining key results of Section \ref{sec:single.agent} have analogues under Assumption \ref{ass:classic}. The representation of the value function as the solution to a free boundary problem and the associated smooth fit condition in Proposition \ref{prop:PDE} follows by appealing once more to \cite{ekstrom2015bayesian} (in particular, Proposition 4.9 and Remark 4.11 therein). Similarly, the integral equations in Proposition \ref{prop:boundary.integral.equation} for the boundaries follow from \cite[Theorem 5.1]{ekstrom2015bayesian} under Assumption \ref{ass:classic}. For the classic loss, $\mathcal{L}g$ is well defined away from $a_2/(a_1+a_2)$ and equal to $0$. Consequently, the last terms in the integral equations of Proposition \ref{prop:boundary.integral.equation} vanish in the classic setting.

With this, we have the preliminaries necessary for an adaptation of the proof in Section \ref{sec:main.proof.existence} of Theorem \ref{thm:existence}. The structure of the proof of existence (i.e. the application of Schauder's Theorem) remains the same for the classic loss, and the arguments and conclusion of Lemma \ref{lem:cont.val.funct} extend directly to the setting of Assumption \ref{ass:classic}. However, the existing boundary convergence result (Lemma \ref{lem:conv.bdys}) relies on regularity and so an alternative proof must be provided. We state this result formally here and defer the proof to Appendix~\ref{app:bdy.convergence.classic}.

\begin{lemma}[Convergence of the classic boundaries]\label{lem:conv.bdys.classic}
	Under Assumption \ref{ass:classic}, if $\mu_n\Rightarrow \mu$ then the associated boundaries for the classic penalty are such that $B_n(t)\to B(t)$ and $b_n(t)\to b(t)$ locally uniformly on $[0,T)$.
\end{lemma}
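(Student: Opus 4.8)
The plan is to bypass the boundary-regularity machinery behind Lemma~\ref{lem:conv.bdys} and instead exploit the monotonicity of the value function in the volatility together with a sandwiching argument. The first step is to observe that $\mu_n\Rightarrow\mu$ upgrades, at the level of the regularized occupation functions, to \emph{uniform} convergence: since $\mu^j_n[0,\cdot]\to\mu^j[0,\cdot]$ Lebesgue-a.e., dominated convergence in \eqref{eq:F.def} gives $F^j_{\mu_n}\to F^j_\mu$ pointwise on $[0,T]$, and because the $F^j_{\mu_n}$ are increasing while the limit $F^j_\mu=\mu^j[0,\cdot]\ast\varphi$ is continuous, P\'olya's theorem promotes this to uniform convergence on $[0,T]$. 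Composing with $\bh$, which is uniformly continuous on the compact set $[0,T]\times\{j\}\times[0,1]$, one obtains $\eta_n:=\|\Delta\bh_{\mu_n}\|\to\eta:=\|\Delta\bh_\mu\|$ uniformly on $[0,T]$. Throughout I write $b(\cdot;\sigma),B(\cdot;\sigma)$ and $\mathscr{C}(\sigma)$ for the boundaries and continuation region when the volatility of $\Pi$ equals a given $\sigma$, so that $b(\cdot)=b(\cdot;\eta)$ and $B(\cdot)=B(\cdot;\eta)$.

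With uniform control of the volatilities I can sandwich. Fix a small $\epsilon>0$; for $n$ large $\eta-\epsilon\le\eta_n\le\eta+\epsilon$ pointwise, so Proposition~\ref{prop:mon.and.lip.V}(ii) gives $V(\cdot;\eta+\epsilon)\le V(\cdot;\eta_n)\le V(\cdot;\eta-\epsilon)$ and hence the nesting $\mathscr{C}(\eta-\epsilon)\subseteq\mathscr{C}(\eta_n)\subseteq\mathscr{C}(\eta+\epsilon)$. Since each region has the two-boundary form of Proposition~\ref{prop:bdy.char.classic}, this reads, for every $t\in[0,T)$ and all large $n$, $b(t;\eta+\epsilon)\le b_n(t)\le b(t;\eta-\epsilon)$ and $B(t;\eta-\epsilon)\le B_n(t)\le B(t;\eta+\epsilon)$. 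As $\epsilon\downarrow0$ the outer boundaries converge monotonically, $b(\cdot;\eta-\epsilon)\downarrow b^-\ge b(\cdot;\eta)$ and $b(\cdot;\eta+\epsilon)\uparrow b^+\le b(\cdot;\eta)$, so it remains to prove $b^-=b^+=b(\cdot;\eta)$. Suppose $b^-(t_0)>b(t_0;\eta)$ for some $t_0$; then any $\pi_0\in(b(t_0;\eta),b^-(t_0))$ satisfies $\pi_0<b(t_0;\eta-\epsilon)<a_2/(a_1+a_2)$ for every $\epsilon$, so $(t_0,\pi_0)\in\mathscr{C}(\eta)$, giving $V(t_0,\pi_0;\eta)<g(\pi_0)$, while $\pi_0<b(t_0;\eta-\epsilon)$ forces $V(t_0,\pi_0;\eta-\epsilon)=g(\pi_0)$ for all $\epsilon>0$. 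Letting $\epsilon\downarrow0$ and using the continuity of the value function in the volatility yields $g(\pi_0)=V(t_0,\pi_0;\eta)<g(\pi_0)$, a contradiction. The same argument handles $b^+$ and, symmetrically, the upper boundary, so $b_n(t)\to b(t)$ and $B_n(t)\to B(t)$ for every $t\in[0,T)$.

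Finally I promote this pointwise convergence to local uniformity by a second application of P\'olya's theorem: under Assumption~\ref{ass:classic} each $b_n$ is increasing and each $B_n$ is decreasing (Proposition~\ref{prop:bdy.char.classic}), while the limits $b,B$ are continuous on $[0,T)$, so the convergence is locally uniform, as claimed. The only genuinely non-elementary input is the continuity of the value function in the volatility, which underlies Lemma~\ref{lem:cont.val.funct} (proved in Appendix~\ref{app:cont.val.funct}) and whose proof extends to the classic loss without change; everything else is soft. I expect the main conceptual point to be exactly the role reversal advertised in Section~\ref{sec:classic}: monotonicity — of $F^j$ in time and of $b_n,B_n$ in time — is what makes the two P\'olya-type upgrades run, taking the place of the boundary regularity used in Lemma~\ref{lem:conv.bdys}. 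The only minor care needed is the degenerate case in which $\mathscr{C}(\eta\pm\epsilon)$ could be empty; this is harmless, since for $\epsilon$ small $\eta-\epsilon$ is bounded below by a positive constant so that $\mathscr{C}(\eta-\epsilon)$ is non-empty on $[0,T)$, and in any event one could instead mirror the proof of Lemma~\ref{lem:conv.bdys} using Helly selection in place of Arzel\`a--Ascoli together with the maximality of $(b,B)$ among continuous solutions of \eqref{eq:integral.eq}, at the price of separately verifying that the Helly limit of the monotone boundaries is continuous.
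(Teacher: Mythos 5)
Your setup is fine: the upgrade of $\mu_n\Rightarrow\mu$ to uniform convergence of the volatilities, the sandwich $b(t;\eta+\epsilon)\le b_n(t)\le b(t;\eta-\epsilon)$, $B(t;\eta-\epsilon)\le B_n(t)\le B(t;\eta+\epsilon)$ via Proposition~\ref{prop:mon.and.lip.V}(ii), the argument that $b(\cdot;\eta-\epsilon)\downarrow b$ and $B(\cdot;\eta-\epsilon)\uparrow B$, and the final P\'olya upgrade from pointwise to locally uniform convergence are all correct. The gap is the sentence ``the same argument handles $b^+$'': it does not, and this is precisely the hard half of the squeeze. To conclude you need $b(\cdot;\eta+\epsilon)\uparrow b$ and $B(\cdot;\eta+\epsilon)\downarrow B$ as $\epsilon\downarrow 0$; otherwise you only obtain $b^+\le\liminf_n b_n\le\limsup_n b_n\le b$, which is not convergence to $b$. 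Running your contradiction symmetrically, if $b^+(t_0)<b(t_0)$ and $\pi_0\in(b^+(t_0),b(t_0))$, you get $V(t_0,\pi_0;\eta+\epsilon)<g(\pi_0)$ for every $\epsilon$ while $V(t_0,\pi_0;\eta)=g(\pi_0)$; letting $\epsilon\downarrow 0$ and using continuity in the volatility only gives $V(t_0,\pi_0;\eta)\le g(\pi_0)$ — the strict inequality does not survive the limit, so there is no contradiction. Pointwise continuity of $V$ in the volatility cannot rule out continuation regions $\mathscr{C}(\eta+\epsilon)$ that ``overshoot'' and collapse only in the limit. This asymmetry is exactly the one the paper flags for Lemma~\ref{lem:conv.bdys}, where the easy inclusion follows from value-function convergence but the reverse inclusion requires the maximality of $(b,B)$ among continuous solutions of \eqref{eq:integral.eq}.

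The paper's proof of this lemma supplies the missing non-degeneracy quantitatively: under Assumption~\ref{ass:classic}, $\partial_{\pi\pi}V\le-8c/\mathfrak{H}^2$ on $\mathscr{C}$ (Proposition~\ref{prop:mon.and.lip.V}(vi)) together with smooth fit makes $\partial_\pi(V-g)(t,\cdot)$ strictly monotone with a uniformly Lipschitz inverse, and the boundary is recovered as the value of that inverse at $0$, with constants independent of the input measure. Your argument can in fact be repaired with the same ingredient: integrating the bound from the boundary gives $g(\pi_0)-V(t_0,\pi_0;\sigma)\ge \frac{4c}{(\mathfrak{H}+1)^2}\bigl(\pi_0-b(t_0;\sigma)\bigr)^2$ for $\pi_0\in\bigl(b(t_0;\sigma),\tfrac{a_2}{a_1+a_2}\bigr)$ and any admissible volatility $\sigma\le\mathfrak{H}+1$, so in the $b^+$ scenario $g(\pi_0)-V(t_0,\pi_0;\eta+\epsilon)$ stays bounded away from $0$ uniformly in $\epsilon$, and the limit then does contradict $V(t_0,\pi_0;\eta)=g(\pi_0)$ (similarly for $B^+$). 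As written, however, that input is absent, and your fallback suggestion (Helly selection plus maximality of the integral-equation solution) is a different, un-executed argument which itself still requires verifying continuity of the selected limit.
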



Finally, we arrive at the promised extension of our existence result to preemption games with the classic loss.

\begin{theorem}
	Under Assumption \ref{ass:classic}, there exists a mean field equilibrium for the sequential testing game.
\end{theorem}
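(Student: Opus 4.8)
The plan is to mirror the proof of Theorem~\ref{thm:existence} at the structural level: equip $\mathcal{P}([0,T])^2$ with the weak* topology, in which it is a compact convex subset of the product of two copies of the locally convex Hausdorff space of finite signed measures, and apply the Schauder--Tychonoff theorem \cite[Corollary 17.56]{guide2006infinite} to the map $\Phi$ of \eqref{eq:fixed.pt.mapping}. Compactness and convexity are unaffected by the change of loss function, so the whole task is again to prove that $\Phi$ is weak*-continuous. The single-agent input needed for this is already assembled: under Assumption~\ref{ass:classic}, Propositions~\ref{prop:mon.and.lip.V} and \ref{prop:bdy.char.classic} (with the remark following the latter) provide joint continuity and concavity of $V$, a continuation region that is a strip between two uniformly bounded boundaries $b(\cdot)<a_2/(a_1+a_2)<B(\cdot)$ which are continuous and monotone, the free-boundary problem with smooth fit, and the fact that $(b,B)$ is the maximal continuous solution of the integral equations of Proposition~\ref{prop:boundary.integral.equation} (with the $\mathcal{L}g$ terms now vanishing).

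The continuity of $\Phi$ would then be obtained from three steps, exactly as in Section~\ref{sec:main.proof.existence}. First, continuity of the value function in $\mu$: the proof of Lemma~\ref{lem:cont.val.funct} uses only the uniform boundedness and the convergence of the volatilities $\|\Delta\bh^i_{\mu_n}\|$ together with the Lipschitz continuity of $g_i\circ S$, all of which remain valid for the classic loss, so $V_i(\cdot\,;\mu_n)\to V_i(\cdot\,;\mu)$ pointwise whenever $\mu_n\Rightarrow\mu$. Second, convergence of the boundaries: this is Lemma~\ref{lem:conv.bdys.classic}, whose proof (Appendix~\ref{app:bdy.convergence.classic}) replaces the uniform Lipschitz estimate of Lemma~\ref{lem:conv.bdys} by a Helly-type selection argument resting on the uniform monotonicity and the uniform bounds of Proposition~\ref{prop:bdy.char.classic}, while still pinning down the limit through the maximality characterization of $(b,B)$. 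Third, convergence of the stopping times: one reruns the argument of Lemma~\ref{lem:conv.stopping.times}, now with the transformed boundaries $S^{-1}(b^i), S^{-1}(B^i)$ being uniformly bounded, continuous and monotone (rather than Lipschitz), which still converge locally uniformly by the second step; since by Lemma~\ref{lem:properties.of.L} the log-likelihood ratio processes $L^{n,i}|_{\theta=j}$ and $L^i|_{\theta=j}$ are conditionally Gaussian with a common fixed initial condition and converging coefficients, the Gaussian hitting-time result of Appendix~\ref{app:conv.stopping.times} (Lemma~\ref{lem:conv.stop.time}) and the continuous mapping theorem then give $\tau^{i,*}_{\mu_n}|_{\theta=j}\to\tau^{i,*}_{\mu}|_{\theta=j}$ in probability, for $j=0,1$ and every $i$.

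Given these three facts, the conclusion is immediate: the disintegration-and-bounded-convergence computation at the end of the proof of Theorem~\ref{thm:existence} shows that $\int f\,d\nu_{\mu_n}\to\int f\,d\nu_{\mu}$ for every $f\in C_b([0,T])$, so $\Phi$ is weak*-continuous, and Schauder--Tychonoff produces a fixed point, which by Definition~\ref{def:MFE} is a mean field equilibrium of the sequential testing game.

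The hard part will be the third step: ensuring that the first-exit times still converge when the limiting boundaries are only monotone and continuous, so that the Lipschitz regularity exploited in Lemma~\ref{lem:conv.stopping.times} is unavailable, and when the two boundaries pinch together at $t=T$, where $b(T)=B(T)=a_2/(a_1+a_2)$. The key observations are that, after the deterministic time change of Lemma~\ref{lem:properties.of.L}, a conditionally Gaussian path behaves like a Brownian motion with drift and hence crosses any continuous curve transversally almost surely, so that the exit time is an almost surely continuous functional of the boundary under local uniform convergence; and that the degeneracy at $T$ is harmless, since exit exactly at time $T$ occurs only on the event that the path remains strictly inside the open strip on $[0,T)$, on which both $\tau^{i,*}_{\mu_n}$ and $\tau^{i,*}_{\mu}$ equal $T$. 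Turning these observations into the precise statement of Lemma~\ref{lem:conv.stop.time} is where the real effort lies; everything else is a faithful repetition of Section~\ref{sec:main.proof.existence}.
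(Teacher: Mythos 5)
Your proposal is correct and follows essentially the same route as the paper: reuse the Schauder--Tychonoff argument with compactness and convexity unchanged, carry over Lemma \ref{lem:cont.val.funct}, substitute Lemma \ref{lem:conv.bdys.classic} for the boundary convergence, and feed the resulting monotone continuous boundaries into the Gaussian hitting-time result to rerun Lemma \ref{lem:conv.stopping.times}. The only point worth flagging is that the ``real effort'' you anticipate in the third step is already absorbed by the paper: Lemma \ref{lem:conv.stop.time} is stated and proved with hypotheses explicitly allowing boundaries that are merely monotone and continuous (the ``increasing/decreasing and continuous \emph{or} uniformly locally Lipschitz'' alternative), so no transversality or extension argument is needed there, and the pinching of $b$ and $B$ at $t=T$ is harmless since all hitting times are capped at $T$ (incidentally, the proof of Lemma \ref{lem:conv.bdys.classic} proceeds via concavity and inverse functions of $\partial_\pi V$ rather than the Helly-type selection you describe, but since you invoke that lemma as given this does not affect your argument).
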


\begin{proof}
	 To obtain the existence of an equilibrium (Theorem \ref{thm:existence}) under Assumption \ref{ass:classic} it suffices to ensure that the continuity of \eqref{eq:fixed.pt.mapping} still holds. The only missing ingredient from the proof employed in Section \ref{sec:main.proof.existence} is the conclusion of Lemma \ref{lem:conv.stopping.times}. However, this is straightforward to verify. First, the application of Lemma \ref{lem:conv.bdys} can be substituted with Lemma \ref{lem:conv.bdys.classic}. Then, by Proposition \ref{prop:bdy.char.classic} the boundaries are continuous and monotone. These properties are inherited by  $m=S^{-1}(b)$, $M=S^{-1}(B)$ which, in turn, allows us to apply the same consistency result as before (Lemma \ref{lem:conv.stop.time}) for the hitting times of Gaussian processes. It is then easy to check that the remaining arguments of Lemma \ref{lem:conv.stopping.times} carry over to the setting of Assumption \ref{ass:classic}.
\end{proof}

\section{Numerical Examples}\label{sec:numerical}

In this section we will illustrate the solution to several sequential testing games that arise from the parametric form of the signal in Example \ref{ex:h},
\begin{equation}\label{eqn:h.numeric.example}
	\mathbf{h}(t,j,F^j_\mu(t))=(j-1/2)\left(\lambda_0+\lambda_1F^j_\mu(t)\right),
\end{equation}
which leads to the volatility
 \begin{equation}\norm{\Delta \bh_\mu(t)}= \lambda_0+\lambda_1 (F^1_\mu(t) + F^0_\mu(t)).
 \end{equation}
Here we will restrict our attention to the homogeneous agent setting for simplicity and so the index $i$ is suppressed. 
We will primarily focus on the cross entropy loss (Example \ref{ex:g-cross-entropy}) due to its prevalence in machine learning applications but we also include a brief discussion on the classic loss.

As noted in Example \ref{ex:h}, this class of problems is rich enough to allow for the investigation of both \textit{preemption} and \textit{war of attrition} games. The parameter $\lambda_0$ plays the role of a baseline signal and $\lambda_1$ governs the strength of the interaction with the agent population. Since we are interested in investigating the effects of this interaction, this is the parameter we will modulate in \eqref{eqn:h.numeric.example}. Thus, we will fix the prior $\pi=1/2$ and the global parameters $\lambda_0=1$, $c=0.1$, and $T=5$. We noted in Example \ref{ex:h} that for our assumptions to be satisfied we require $\lambda_1>-\lambda_0$, so this will be the feasible range we investigate. Figure \ref{fig:pi.process.and.equil.bdys} (left panel) visualizes the trajectory of the posterior probability process when $\lambda_1=0$ for reference.

Since the state space for the representative agent problem is low dimensional, the solution remains amenable to conventional numerical techniques. For our purposes, we solve the sequential testing game by first solving a discrete time approximation of the agent problem using dynamic programming and then performing a fixed point interaction in the space of input measures until convergence.

\subsection{Cross entropy loss}

\begin{figure}[h]
	\begin{center}
		\includegraphics[width=0.49\linewidth]{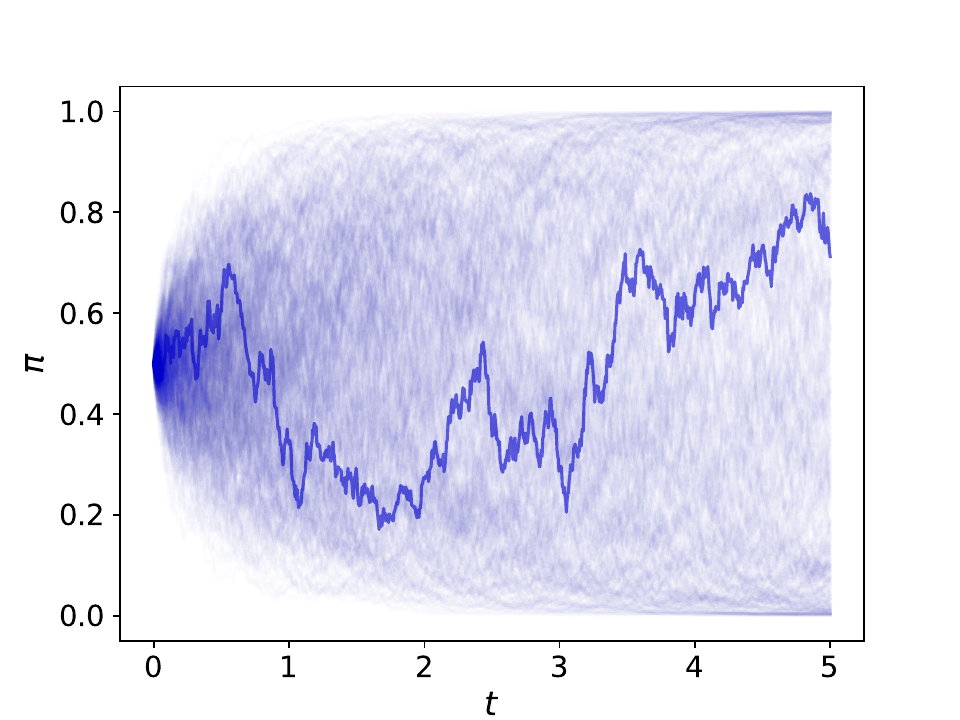}
		\includegraphics[width=0.49\linewidth]{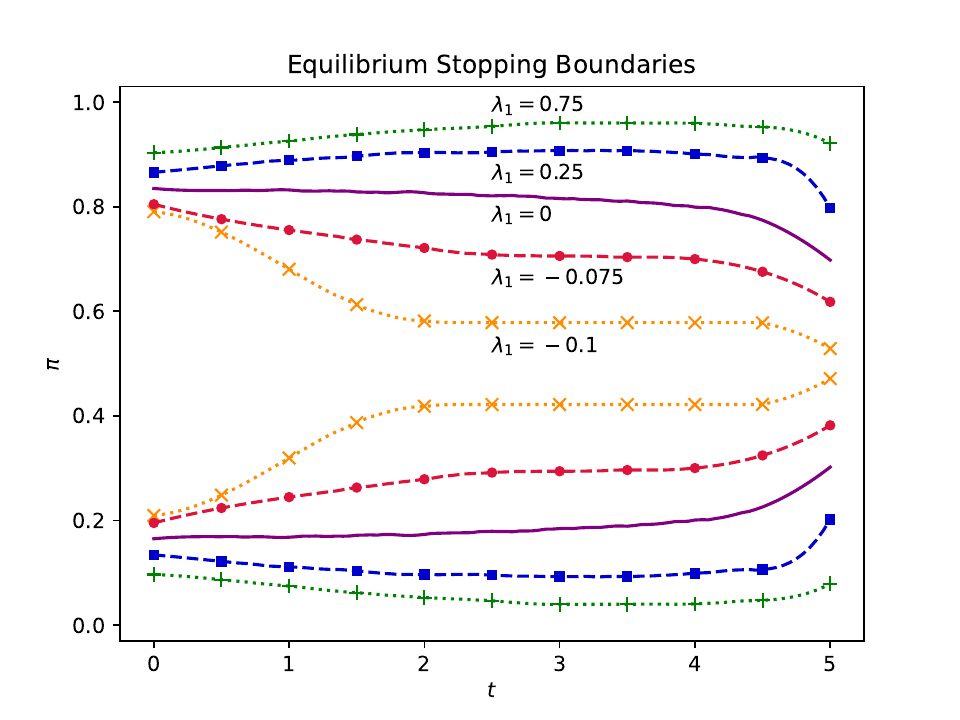}
	\end{center}
	\caption{$\Pi$ process trajectories in the non-interactive setting $(\lambda_1=0)$ (left panel) and equilibrium stopping boundaries (right panel).}
	\label{fig:pi.process.and.equil.bdys}
\end{figure}

Figure \ref{fig:pi.process.and.equil.bdys} (right panel) illustrates the optimal stopping boundaries for different values of $\lambda_1$ in equilibrium when the cross entropy loss is used. We observe that as $\lambda_1$ decreases the continuation region gets smaller. We also see that the optimal stopping boundaries are decreasing in time when $\lambda_1<0$. Indeed, this should be the case since this implies that the game is one of preemption and the volatility is decreasing in time. It is not hard to see that the proof of time monotonicity in Proposition \ref{prop:mon.and.lip.V}(iii) for the classic problem generalizes readily to the cross entropy loss (see the arguments of \cite[Proposition 4.2]{ekstrom2015bayesian}).

Figure \ref{fig:stop.time.and.values} (left panel) shows the corresponding stopping time distributions arising in equilibrium. Since we have enforced $\pi=1/2$ and the boundaries are symmetric, the conditional stopping time distributions are the same for each $\theta$ and coincide with the stopping time CDF. Interestingly, we observe an ordering on the stopping time distributions. In particular, as $\lambda_1$ decreases the stopping times appear to decrease in (first) stochastic order. Note that this is not immediate since a decrease in $\lambda_1$ decreases \textit{both} the continuation region size and the volatility (holding the input measure constant). These have conflicting effects as unilaterally decreasing the continuation region size makes the first exit time smaller, while unilaterally decreasing the volatility makes the first exit time larger. Evidently, here the former effect dominates in equilibrium.

\begin{figure}[h]
	\begin{center}
		\includegraphics[width=0.49\linewidth]{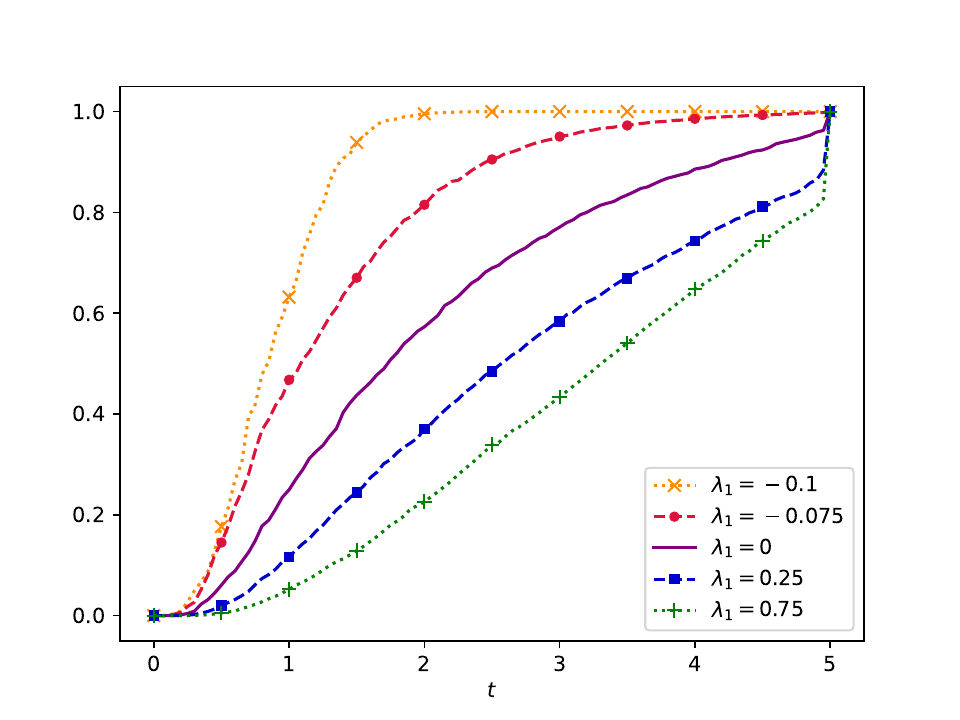}
		\includegraphics[width=0.49\linewidth]{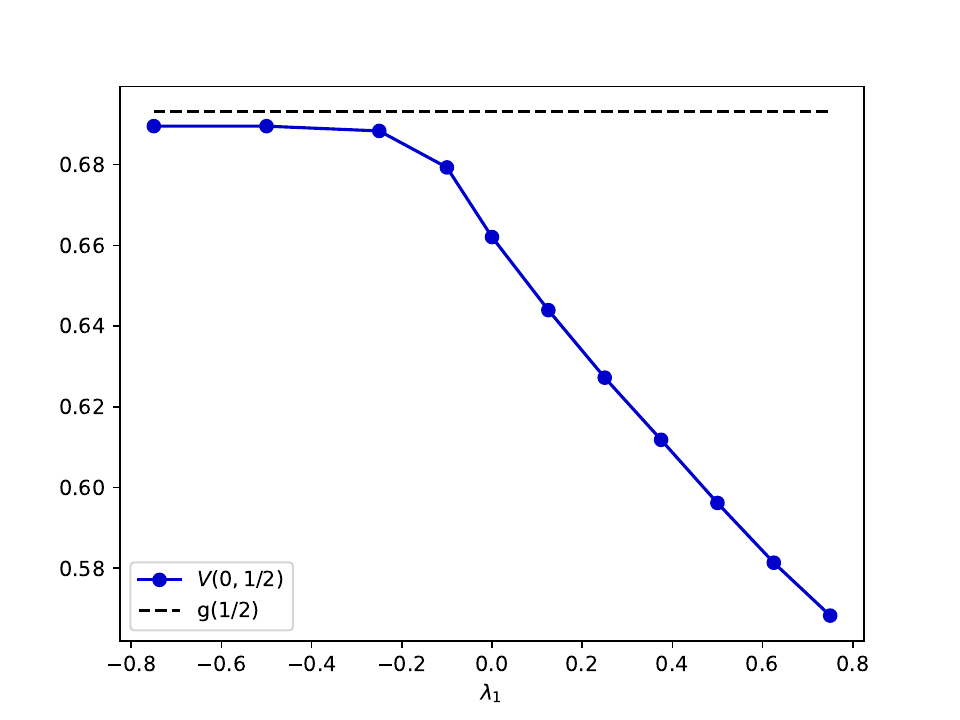}
	\end{center}
	\caption{Equilibrium stopping time CDFs (left panel) and problem values (right panel).}
	\label{fig:stop.time.and.values}
\end{figure}

As the continuation region and the value function are linked, the monotonicity of the continuation regions is also reflected in Figure \ref{fig:stop.time.and.values} (right panel). Here the value function is clearly seen to decrease as $\lambda_1$ increases. However, we see that there appears to be a difference in how the problem value behaves for preemption and war of attrition games. For positive values of $\lambda_1$, when $\lambda_1$ increases we see a relatively rapid decline in the problem value. On the other hand, for negative values of $\lambda_1$, we see a slow rate of increase in the problem value and a trend toward $g(1/2)$ as $\lambda_1\downarrow - \lambda_0$.

It is worth noting that we can a priori expect an ordering of the continuation regions across the two regimes $\lambda_1>0$ (war of attrition) and $\lambda_1<0$ (preemption). In equilibrium, the value function for $\lambda_1>0$ must be smaller (and hence continuation region is larger) than when $\lambda_1<0$. This follows as a consequence of Proposition \ref{prop:value.func} since we have a clear ordering of the volatility coefficients. On the other hand, if we are comparing two preemption or war of attrition games the ordering of the value function with respect to $\lambda_1$ is no longer immediate. This is since the equilibrium measure comes into the problem with the same sign and while the interaction parameter can be ordered there is no guarantee in advance that the induced equilibrium measures will have the same ordering. Nonetheless, we do observe, at least numerically, this ordering here.

\subsection{Classic loss}

We now briefly touch on the classic problem. In Figure \ref{fig:classic.prob} we illustrate the (numerical) solution in the preemption regime of Section \ref{sec:classic}. We use the same global parameters as in the previous figures, but choose the interaction parameters $\lambda_1\in\{-0.5,-0.25,0\}$. For the classic penalty function $g(\pi)=a_1\pi\wedge a_2(1-\pi)$, we choose the coefficients $a_1=3$ and $a_2=1.5$.

\begin{figure}[h]
	\begin{center}
		\includegraphics[width=0.49\linewidth]{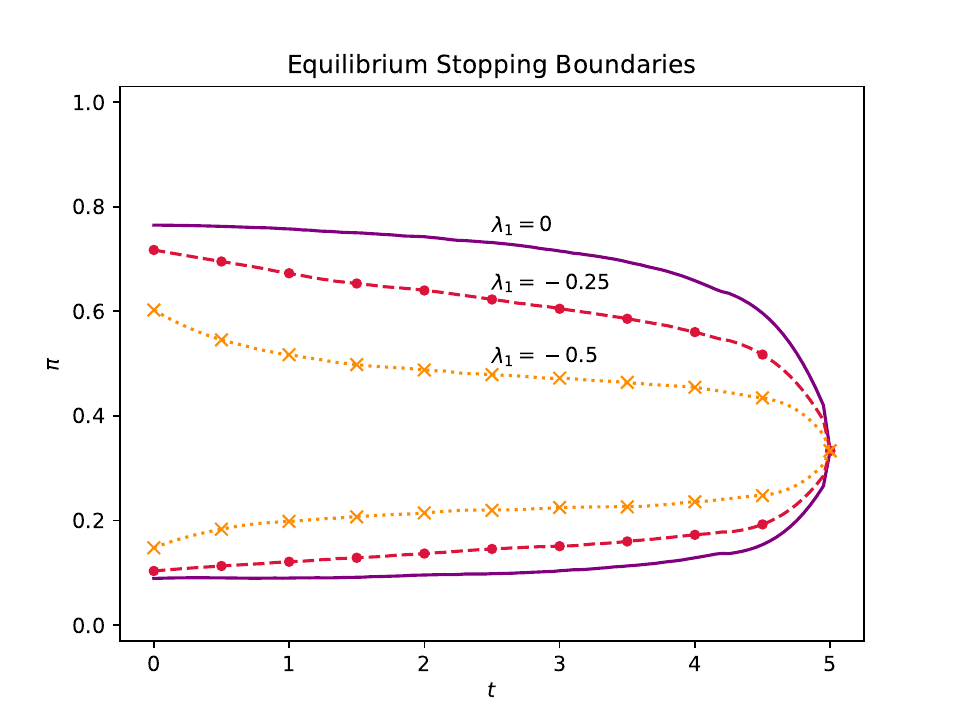}
		\includegraphics[width=0.49\linewidth]{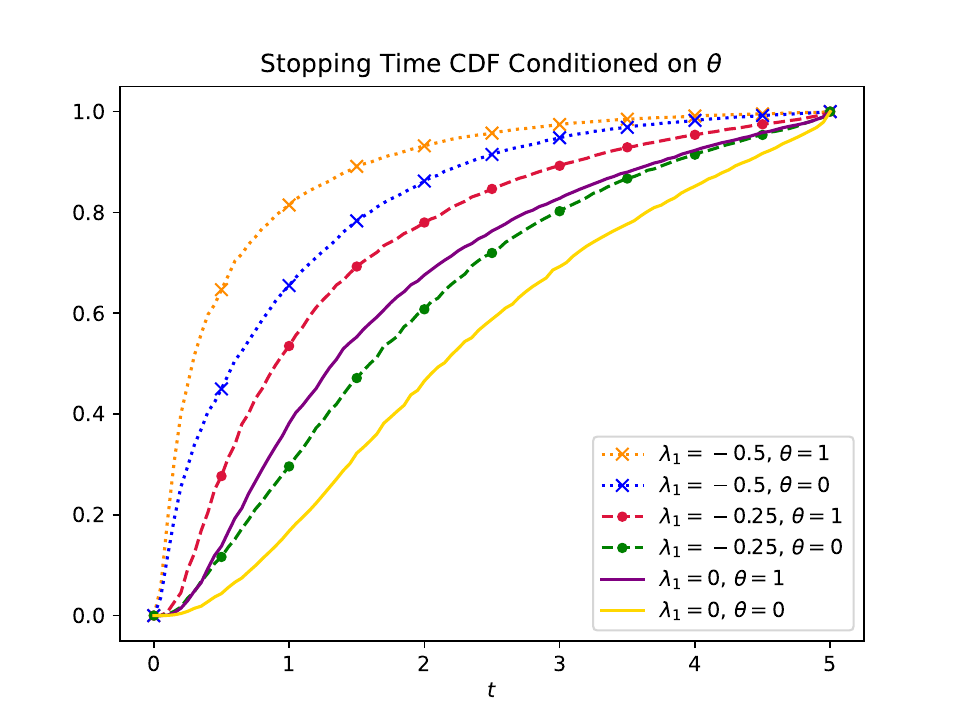}
	\end{center}
	\caption{Asymmetric classic problem continuation regions (left panel) and conditional stopping time distributions (right panel) in equilibrium. In the latter plot, the line styles distinguish between the choices of $\lambda_1$.}
	\label{fig:classic.prob}
\end{figure}


Figure \ref{fig:classic.prob} (left panel) illustrates the continuation region corresponding to these parameters and Figure \ref{fig:classic.prob} (right panel) visualizes the conditional stopping time distributions. A distinct feature here is the asymmetry of the penalty which we do not presently handle in the soft-classification setting. We see that the resulting continuation region is also asymmetric and that the conditional stopping time distribution (for prior $\pi=1/2$) is now different under each state of nature. We note again the apparent monotonicity of the stopping time distributions in $\lambda_1$ which also appeared in the cross entropy analysis.

\section{Conclusion}\label{sec:mfg.conclusion}

In this paper we have formulated a tractable mean field game of optimal stopping related to the classic Bayesian sequential testing of the drift of a Brownian motion. Our setting can be thought of as a simple example of interaction through information sharing. At a high level, this work contributes to the literature on optimal stopping mean field games by introducing both a common unobserved noise and agent learning into the game design. These additional elements require that we preserve the information structure available to each agent in the analysis. Our approach allows us to characterize the solution, establish the existence of a strong mean field equilibrium under suitable conditions, and illustrate the equilibria that arise. Based on our investigation, we believe that there are several potential avenues for future research which we describe below.

\begin{enumerate}
	\item[(i)] While we have solved for an equilibrium with a continuum of agents, it is desirable to study the associated $N$-player game (see \cite{nutz2020convergence} which conducts such a study for a different optimal stopping game). Establishing the existence of $\epsilon$-Nash equilibria and the consistency of the $N$-player formulation in our context appears to be a challenging open problem due to the presence of information sharing.
	\item[(ii)] An extension to multiple hypothesis testing problems (c.f. \cite{ekstrom2022multi,zhitlukhin2011bayesian}) is also natural, and will likely require new mathematical techniques to deal with the higher dimensional state processes. Moreover, potential generalizations of the existing setting to different structures for the signal process may also be of interest (see \cite{peskir2000sequential} which studies a sequential testing problem for the intensity of a Poisson process).
	\item[(iii)] There are also possible extensions to filtering problems with a continuous state space for the common noise. A famous related problem of this type is the quickest detection problem for Brownian motions (c.f. \cite{shiryaev1965some,shiryaev1963optimum}). In this formulation an agent tries to detect a common disorder time, $\theta$,
	typically
	representing the occurrence of some financial event, using the private signal $X_t=\int_0^t\mathds{1}_{\theta\leq s}ds+W_t$. 
	Embedding this in a game setting introduces some new challenges since the common noise is no longer binary and consequently, the filtering equations become much more complicated. Establishing the existence, and possible uniqueness, of Nash equilibria here would be an important result that lays the groundwork for more complicated unobserved common noise structures in filtering games with stopping.
	\item[(iv)] More broadly, applications of similar mean field game designs to concrete examples of interaction in financial markets with filtering, stopping, and unobserved common noises (e.g. indicators of economic health) is an important research direction that may be of interest to central banks and regulators.
\end{enumerate}

\pagebreak

\begin{appendix}

\section{Conditional Exact Law of Large Numbers}\label{sec:CELLN}




The notion of essential pairwise independence and the exact law of large numbers in the framework of a Fubini extension were introduced in \cite{sun2006exact}, generalized to the conditional setting in \cite{qiao2016conditional} and brought to the attention of the mean field game and mathematical finance community by \cite{nutz2018mean}. We include the results in the conditional setting
here for the readers' convenience.

\begin{definition}[{\cite[Definition 2]{qiao2016conditional}}\label{def:essPairInd}]
A family $X=(X_{i})_{i\in I}$ of random variables defined on a probability space $(\Omega,\cF,\bbP)$ is said to be
essentially pairwise conditionally independent given a $\sigma$-field $\mathcal{G}\subseteq\mathcal{F}$, if for $\lambda$-almost all $i\in I$, $X_{i}$ is conditionally independent of $X_{j}$ given $\mathcal{G}$ for $\lambda$-almost all $j\in I$.
\end{definition}

It is shown in \cite[Proposition 2.1]{sun2006exact} that the usual product $(I\times \Omega,\mathcal{I}\otimes\mathcal{F},\lambda\otimes\mathbb{P})$ is not large enough to support any nontrivial essentially pairwise independent family. One needs to define $X$ on a so-called \emph{rich Fubini extension} of the product space, denoted by $(I\times \Omega,\cI\boxtimes\cF,\lambda\boxtimes\bbP)$; we refer to \cite{sun2006exact} for its formal definition and existence. The name ``Fubini" comes form the fact that any integrable function in this extended space satisfies the Fubini theorem, which forms the basis for the exact law of large numbers. The following conditional version of it is taken from \cite[Corollary~2]{qiao2016conditional}.

\begin{proposition}[Conditional Exact Law of Large Numbers]\label{prop:CELLN}

Let $\cG\subseteq \cF$ be a countably generated $\sigma$-field and let $X$ be a real-valued integrable function on the rich Fubini extension $(I\times \Omega,\cI\boxtimes\cF,\lambda\boxtimes\bbP)$. If $X(i,\cdot)$, $i\in I$ 
are essentially pairwise conditionally independent given $\mathcal{G}$, then $\int X(\cdot,\omega)\,d\lambda=\int \bbE^{\lambda\boxtimes\bbP}[X|\mathcal{I}\otimes\mathcal{G}](\cdot,\omega)\,d\lambda$ for $\mathbb{P}$-almost all $\omega\in\Omega$. 
\end{proposition}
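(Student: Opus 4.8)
The plan is to prove this by running the classical ``vanishing conditional variance'' argument behind the exact law of large numbers, carried out conditionally on $\cG$. First I would reduce to bounded $X$: with $X_M:=(X\wedge M)\vee(-M)$, each slice $X_M(i,\cdot)$ is a measurable function of $X(i,\cdot)$, so the family $(X_M(i,\cdot))_{i\in I}$ remains essentially pairwise conditionally independent given $\cG$, and $X_M\to X$ in $L^1(\lambda\boxtimes\bbP)$. Using the Fubini property of the rich extension, $\int_I X_M(\cdot,\omega)\,d\lambda\to\int_I X(\cdot,\omega)\,d\lambda$ in $L^1(\bbP)$; and since conditional expectation is an $L^1$-contraction that commutes with the slice integral, the same convergence holds for $\int_I\bbE^{\lambda\boxtimes\bbP}[X_M\mid\cI\otimes\cG](\cdot,\omega)\,d\lambda$. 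Hence it suffices to establish the identity for bounded $X$.

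For bounded $X$, I would set $Z(i,\omega):=X(i,\omega)-\bbE^{\lambda\boxtimes\bbP}[X\mid\cI\otimes\cG](i,\omega)$. Since $\cG$ is countably generated one may fix a regular conditional probability for it, and a direct computation gives the disintegration identity $\bbE^{\lambda\boxtimes\bbP}[X\mid\cI\otimes\cG](i,\cdot)=\bbE^{\bbP}[X(i,\cdot)\mid\cG]$ for $\lambda$-a.a.\ $i$; in particular $\bbE^{\bbP}[Z(i,\cdot)\mid\cG]=0$ for $\lambda$-a.a.\ $i$. Moreover $Z(i,\cdot)$ is a measurable function of $X(i,\cdot)$ together with the $\cG$-measurable variable $\bbE^{\lambda\boxtimes\bbP}[X\mid\cI\otimes\cG](i,\cdot)$, so essential pairwise conditional independence is inherited by $(Z(i,\cdot))_{i\in I}$, whence $\bbE^{\bbP}[Z(i,\cdot)Z(j,\cdot)\mid\cG]=\bbE^{\bbP}[Z(i,\cdot)\mid\cG]\,\bbE^{\bbP}[Z(j,\cdot)\mid\cG]=0$ for $\lambda$-a.a.\ $i$ and $\lambda$-a.a.\ $j$. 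Expanding the square and applying the Fubini theorem in the rich extension to the integrand $(i,j,\omega)\mapsto Z(i,\omega)Z(j,\omega)$, together with the interchange of $\bbE^{\bbP}[\cdot\mid\cG]$ and the $\lambda\otimes\lambda$-integral,
\[
\bbE^{\bbP}\!\left[\Big(\int_I Z(\cdot,\omega)\,d\lambda\Big)^{2}\,\Big|\,\cG\right]=\int_I\!\int_I\bbE^{\bbP}\!\big[Z(i,\cdot)Z(j,\cdot)\,\big|\,\cG\big]\,\lambda(di)\,\lambda(dj)=0 .
\]
Taking $\bbE^{\bbP}$ of both sides forces $\int_I Z(\cdot,\omega)\,d\lambda=0$ for $\bbP$-a.a.\ $\omega$, which is exactly the claimed identity for bounded $X$; combined with the reduction above, this completes the proof.

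The hard part will be the rigorous use of Fubini in the rich extension for the \emph{double-indexed} integrand $Z(i,\omega)Z(j,\omega)$: one must know that it is measurable and integrable on the appropriate product and that every iterated integral, including the interchange with $\bbE^{\bbP}[\cdot\mid\cG]$, is legitimate. This is precisely the extra structure that a \emph{rich} Fubini extension supplies over an ordinary product space --- recall the ordinary product cannot even support a nontrivial essentially pairwise independent family --- and I would import it from the constructions in \cite{sun2006exact,qiao2016conditional}. A secondary technical point is the disintegration identity $\bbE^{\lambda\boxtimes\bbP}[X\mid\cI\otimes\cG](i,\cdot)=\bbE^{\bbP}[X(i,\cdot)\mid\cG]$, whose verification is where the countable-generation hypothesis on $\cG$, via the associated regular conditional probability, is actually used.
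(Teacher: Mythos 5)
The paper offers no proof of this proposition: it is quoted directly from \cite[Corollary 2]{qiao2016conditional} (just as the notion of essential pairwise conditional independence is quoted from that reference), so there is no internal argument to compare against. Your sketch is a genuine self-contained route, namely the classical ``second moment / vanishing variance'' proof of the exact law of large numbers carried out conditionally on $\cG$, and in outline it is sound: truncation reduces to bounded $X$, conditional independence is inherited by $Z(i,\cdot)=X(i,\cdot)-\bbE[X(i,\cdot)\,|\,\cG]$ (adjoining $\cG$-measurable variables preserves conditional independence given $\cG$), and the vanishing of the conditional second moment of $\int_I Z\,d\lambda$ gives the claim. What the citation buys the paper is precisely the technical infrastructure you defer: the disintegration identity and the legitimacy of interchanging $\bbE[\cdot\,|\,\cG]$ with the $\lambda$-integral, which rest on a jointly measurable version of $(i,\omega)\mapsto\bbE^{\bbP}[X(i,\cdot)\,|\,\cG](\omega)$ built from a regular conditional probability — this is where countable generation of $\cG$ is genuinely used, as you correctly flag.

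Two points in your write-up deserve tightening. First, the phrase ``applying the Fubini theorem in the rich extension to the integrand $(i,j,\omega)\mapsto Z(i,\omega)Z(j,\omega)$'' is not a literal application of anything you have: the rich Fubini extension is a space over $I\times\Omega$, not over $(I\times I)\times\Omega$, so the doubly indexed interchange must be obtained by iterating the single-index Fubini property — first apply it (and its conditional form) to $(i,\omega)\mapsto Z(i,\omega)g(\omega)$ with the $\cF$-measurable bounded factor $g(\omega):=\int_I Z(j,\omega)\,\lambda(dj)$, then, for $\lambda$-a.e.\ fixed $i$, to $(j,\omega)\mapsto Z(i,\omega)Z(j,\omega)$. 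Second, the conditional covariance $\bbE[Z(i,\cdot)Z(j,\cdot)\,|\,\cG]$ vanishes only off the diagonal $i=j$ (on the diagonal it is a conditional variance), so concluding that the iterated integral is zero uses that the diagonal is $\lambda\otimes\lambda$-null, i.e.\ the atomlessness of $(I,\cI,\lambda)$; this hypothesis is in force in the paper but should be invoked explicitly. With those repairs your argument goes through and constitutes an acceptable alternative to simply importing \cite[Corollary 2]{qiao2016conditional}.
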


%
%
%
%
%

\section{Infinite Horizon Problem}\label{sec:inf.horizon}


In this appendix, we collect some results for the infinite horizon problem with constant volatility. That is, we suppose $T=\infty$ and $\Delta\bh_\mu(t)$ is constant.

\begin{proposition} \cite[Proposition 2.1]{irle2004solving}
	Under Assumption \ref{ass:g} the value function $V(x)$ is time-homogeneous, $|V(x)|<\infty$ for all $x\in(0,1)$, and the first exit time
	$\tau^*=\inf\{t\geq0:\Pi_t^{0,\pi}\not\in \mathscr{C}\}$
	from the continuation region $\mathscr{C}:=\{\pi\in(0,1):V(\pi)<g(\pi)\}$
	is an optimal stopping time.
\end{proposition}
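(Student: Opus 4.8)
The plan is to handle the first two claims directly and then derive optimality of $\tau^*$ from a supermartingale argument, after recording the infinite-horizon analogues of the regularity results of Section~\ref{sec:value_fun}.

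\emph{Time-homogeneity and finiteness.} Writing $\sigma:=\norm{\Delta\bh_\mu}$ for the (constant) volatility, L\'{e}vy's characterization lets one express the posterior dynamics as the autonomous SDE $d\Pi_t=\sigma\,\Pi_t(1-\Pi_t)\,dW_t$ for a scalar Brownian motion $W$, so $\Pi$ is a time-homogeneous strong Markov process on $(0,1)$; since the running cost $c$ and the penalty $g$ carry no explicit time dependence, the problem started from $\pi$ at any epoch $s$ has the same value as the one started from $\pi$ at $s=0$, which gives time-homogeneity. For finiteness, $c\tau+g(\Pi_\tau)\ge 0$ (because $c>0$ and $g\ge 0$ by (G1)) gives $V\ge 0$, while taking $\tau\equiv 0$ gives $V(\pi)\le g(\pi)$; since $g$ is continuous on $[0,1]$ it is bounded, so $0\le V\le\norm{g}_\infty<\infty$.

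\emph{Optimality of $\tau^*$.} By the arguments of Section~\ref{sec:value_fun} (or directly by \cite[Proposition~2.1]{irle2004solving}), $V$ is continuous, $\mathscr{C}=\{V<g\}$ is indeed the continuation set, $V=g$ on $\mathscr{D}=\mathscr{C}^c$, and $V\in C^2(\mathscr{C})$ with $\tfrac12\sigma^2\pi^2(1-\pi)^2V''=-c$ on $\mathscr{C}$. I would then set $M_t:=V(\Pi_{t\wedge\tau^*})+c(t\wedge\tau^*)$. On $[0,\tau^*)$ the path of $\Pi$ stays in $\mathscr{C}$, so It\^{o}'s formula and the identity above (after the usual localization) exhibit $M$ as a continuous local martingale on $[0,\infty)$; being nonnegative it is a supermartingale, so $\mathbb{E}[M_t]\le M_0=V(\pi)$ for all $t$. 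A nonnegative supermartingale converges a.s.\ to a finite limit $M_\infty$, but on $\{\tau^*=\infty\}$ one has $M_t\ge c\,t\to\infty$; hence $\mathbb{P}(\tau^*=\infty)=0$. On $\{\tau^*<\infty\}$ the path of $M$ is constant after $\tau^*$, so $M_\infty=V(\Pi_{\tau^*})+c\tau^*=g(\Pi_{\tau^*})+c\tau^*$ because $\Pi_{\tau^*}\in\partial\mathscr{C}\subset\mathscr{D}$. Fatou's lemma then gives $\mathbb{E}[c\tau^*+g(\Pi_{\tau^*})]=\mathbb{E}[M_\infty]\le\liminf_t\mathbb{E}[M_t]\le V(\pi)$, while the reverse inequality is immediate from the definition of $V$ as an infimum; hence $\tau^*$ is optimal.

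\emph{Expected main obstacle.} The substantive input is the structural/regularity statement used at the start of the second step -- that $V\in C^2(\mathscr{C})$ solves the variational equation there, with $V=g$ off $\mathscr{C}$ and $\mathscr{C}=\{V<g\}$ -- which is the time-homogeneous counterpart of Propositions~\ref{prop:C1.interior.cont.reg.} and~\ref{prop:PDE} and can be obtained by the same regularity arguments (now for a stationary one-dimensional diffusion), or simply quoted from \cite[Proposition~2.1]{irle2004solving}. I would emphasize that, because the supermartingale property follows from nonnegativity alone, no a priori estimate separating $\mathscr{C}$ from the absorbing endpoints $\{0,1\}$ is needed at this stage; one only uses that $\Pi$ does not leave $(0,1)$ in finite time (Corollary~\ref{cor:properties.of.Pi}), so that $M$ is a bona fide local martingale.
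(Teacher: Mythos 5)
Your argument is correct in substance, but it is worth noting that it is genuinely different from what the paper does: the paper offers no proof at all for this statement and simply imports it from \cite{irle2004solving} (the citation attached to the proposition), whereas you supply a self-contained verification argument. Your route — time-homogeneity and $0\le V\le\norm{g}_\infty$ directly, then the process $M_t=V(\Pi_{t\wedge\tau^*})+c(t\wedge\tau^*)$, nonnegative supermartingale convergence to rule out $\{\tau^*=\infty\}$, and Fatou to close the loop — is the standard verification scheme and all of its steps go through here (in particular $\Pi_{\tau^*}\in\mathscr{C}^c$ where $V=g$, and $M_t\ge ct$ on $\{\tau^*=\infty\}$). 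What your approach buys is transparency about where the real content lies, namely the regularity package you invoke at the start of the second step ($\mathscr{C}$ open, $V=g$ on $\mathscr{C}^c$, $V\in C^2(\mathscr{C})$ with $\tfrac12\sigma^2\pi^2(1-\pi)^2V''=-c$); what the paper's citation buys is avoiding having to reprove exactly that. Two caveats. First, that regularity input cannot literally be ``quoted from \cite[Proposition 2.1]{irle2004solving}'' without circularity, since that proposition is the statement being proved; the honest version of your plan is the one you also mention, namely redoing the stationary analogues of Proposition~\ref{prop:C1.interior.cont.reg.} and Proposition~\ref{prop:PDE}, which is routine for a one-dimensional time-homogeneous diffusion. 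Second, a small technical imprecision: the localizing times (exit times from compacts exhausting $\mathscr{C}$) increase to $\tau^*$, not to $\infty$, so the phrase ``$M$ is a continuous local martingale on $[0,\infty)$'' is not literally what the localization yields when $\tau^*<\infty$. This is harmless: since $M\ge 0$ and $M_{t\wedge\tau_n}$ is a bounded martingale for each $n$, conditional Fatou (or dominated convergence, using $0\le V\le\norm{g}_\infty$ and $c(t\wedge\tau^*)\le ct$) gives $\mathbb{E}[M_t\,|\,\mathcal{F}_s]\le M_s$ directly, which is all your subsequent argument uses; together with Corollary~\ref{cor:properties.of.Pi} guaranteeing $\Pi$ stays in $(0,1)$, the rest of your proof stands.
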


The following gives a boundary characterization for the continuation region of the infinite horizon problem.

\begin{proposition}\label{prop:inf.horizon.bdy}
Under Assumption \ref{ass:g} there exist boundaries $0< b^*< 1/2<B^*< 1$ for $t\in[0,T)$ such that
	$\mathscr{C}=\{\pi\in(0,1): b^*<\pi<B^*\}$. 
\end{proposition}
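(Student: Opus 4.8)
The plan is to establish the existence of the two constant boundaries by combining the known structure of the finite-horizon problem (Proposition~\ref{prop:cont.region.nonempty} and Proposition~\ref{prop:bounds.on.bdys}) with the time-homogeneity of the infinite-horizon value function. Since $V$ is time-homogeneous, the continuation region $\mathscr{C}$ is a subset of $(0,1)$ rather than a subset of $[0,T)\times(0,1)$, so it suffices to show that $\mathscr{C}$ is an open interval of the form $(b^*,B^*)$ with $0<b^*<1/2<B^*<1$. First I would record that under Assumption~\ref{ass:g} we still have $g\geq 0$ and hence $V\geq 0$, while always $V\leq g$; since $g(0)=g(1)=0$ this forces $V(0)=g(0)=0$ and $V(1)=g(1)=0$, so the endpoints $0$ and $1$ belong to the stopping region. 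This already localizes $\mathscr{C}$ strictly inside $(0,1)$.

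Next I would reuse the a-priori set $\mathscr{U}:=\{\pi\in(0,1):\mathcal{A}g(\pi)<-c/\|\Delta\bh_\mu\|^2\}=\{\pi\in(0,1):\mathcal{L}g(\pi)<-c\}$ (now a subset of $(0,1)$ because the volatility is constant). As in the proof of Proposition~\ref{prop:cont.region.nonempty}(i), a Dynkin-formula argument on the first exit time from a small neighbourhood shows $\mathscr{U}\subseteq\mathscr{C}$; and as in part (ii), if $\mathcal{L}g\geq -c$ on a region then no point of that region lies in $\mathscr{C}$, so $\mathscr{C}\cap\mathscr{U}^c$ can contain no point from which $\Pi$ stays in $\mathscr{C}^c\cup(\text{that region})$ — the same ``no isolated pockets'' argument applies verbatim. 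Assumption (G3) gives $\mathcal{A}g(1/2)<-c/\mathfrak{h}^2\leq -c/\|\Delta\bh_\mu\|^2$, so $1/2\in\mathscr{U}\subseteq\mathscr{C}$, and the quasiconvexity of $\mathcal{A}g$ (Remark~\ref{rmk:consequences.of.assumptions}) makes $\mathscr{U}$ a subinterval of $(0,1)$ containing $1/2$. Defining $b^*:=\sup\{\pi\leq 1/2:V(\pi)=g(\pi)\}$ and $B^*:=\inf\{\pi\geq 1/2:V(\pi)=g(\pi)\}$, continuity of $V$ and $g$ gives $b^*,B^*\in\mathscr{D}$, and the pocket-elimination argument (identical to Proposition~\ref{prop:cont.region.nonempty}(iii), with $O:=\mathscr{C}\cap\{0<\pi<b^*\}$ and the corresponding set above $B^*$) shows $\mathscr{C}=(b^*,B^*)$.

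Finally, the strict inequalities $0<b^*$ and $B^*<1$ follow either directly from $\{0,1\}\subseteq\mathscr{D}$ together with the bounds on $\mathscr{U}$, or — if one wants uniform constants as invoked in the proof of Proposition~\ref{prop:bounds.on.bdys} — from comparing with an even larger continuation region; in any case continuity of $V$ near the endpoints (where $V=g=0$ to first order) rules out $b^*=0$ or $B^*=1$. The inequalities $b^*<1/2<B^*$ are immediate from $1/2\in\mathscr{C}$ and the choice of $b^*,B^*$ as suprema/infima of stopping points on either side of $1/2$. The main obstacle I anticipate is not conceptual but rather ensuring that the Dynkin/pocket arguments transfer cleanly to the infinite-horizon, time-homogeneous setting: one must check integrability and the admissibility of the relevant exit times without a finite horizon to truncate them, which is handled by Corollary~\ref{cor:properties.of.Pi} (the process does not leave $(0,1)$ in finite time and the generator $\mathcal{L}g$ is bounded by Remark~\ref{rmk:consequences.of.assumptions}, so all the expectations appearing are finite) and by the fact that $\mathscr{C}$, being trapped between two non-trivial constants, yields exit times with the needed integrability.
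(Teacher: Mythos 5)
Your construction of the interval structure is fine and essentially the route the paper takes: the paper proves this by citing \cite[Proposition 3.1]{irle2004solving} together with ``an identical argument to Proposition~\ref{prop:cont.region.nonempty}'', which is exactly your Dynkin-formula argument for $\sU\subseteq\sC$, the quasiconvexity of $\cA g$, Assumption (G3) to place $1/2$ in $\sC$, and the pocket-elimination step; your remarks on localization and boundedness of $\cL g$ adequately cover the transfer to the infinite horizon.

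The genuine gap is the strictness $0<b^*$ and $B^*<1$, which is the part of the proposition that actually matters downstream (it supplies the $\mu$-independent outer bounds in Proposition~\ref{prop:bounds.on.bdys}) and which the paper handles by borrowing the arguments of \cite[Proposition 4.8]{ekstrom2015bayesian}. Your justification does not work. Knowing $\{0,1\}\subseteq\mathscr{D}$ and that $\sU$ is bounded away from the endpoints only tells you that $\sC$ is an interval $(b^*,B^*)$ with $b^*\ge 0$; it does not preclude $b^*=0$, because the pocket argument eliminates continuation components \emph{below} $b^*$ and is vacuous if there are none. Likewise ``continuity of $V$ near the endpoints, where $V=g=0$ to first order'' is not an argument: $V(0)=g(0)=0$ is perfectly compatible with $V<g$ on $(0,\epsilon)$ for every $\epsilon$, and for the cross-entropy loss $g'(0^+)=+\infty$, so the ``first order'' claim is not even well posed. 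Your alternative suggestion of ``comparing with an even larger continuation region'' risks circularity, since in this paper the finite-horizon bounds $b_*\le b(t)$, $B(t)\le B^*$ of Proposition~\ref{prop:bounds.on.bdys} are themselves deduced \emph{from} this infinite-horizon proposition. What is needed is a quantitative argument near the endpoints; for example, in the time-homogeneous constant-volatility setting one can argue that if $b^*=0$ then $\tfrac{\eta^2}{2}\pi^2(1-\pi)^2V''(\pi)=-c$ on $(0,B^*)$ forces $V(\pi)\sim \tfrac{2c}{\eta^2}\log\pi\to-\infty$ as $\pi\downarrow 0$, contradicting $V\ge 0$ (and symmetrically at $1$), or one can reproduce the estimate of \cite[Proposition 4.8]{ekstrom2015bayesian} as the paper does. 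Without some such step your proof establishes only $0\le b^*<1/2<B^*\le 1$.
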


\begin{proof}
	The fact that there are boundaries satisfying $0\leq b^*<1/2<B^*\leq 1$ follows from \cite[Proposition 3.1]{irle2004solving} and an identical argument to Proposition \ref{prop:cont.region.nonempty}. To see that the inequalities are all strict (i.e. $0<b^*$ and $B^*<1$) we can directly borrow the arguments of \cite[Proposition 4.8]{ekstrom2015bayesian}. 
\end{proof}

\section{Time Changed Problem}\label{sec:time-change}


We have seen in Section \ref{sec:post.prob.and.log.likelihood} that the space-dependence in the diffusion coefficient of $\Pi_t$ can be removed by the space change \eqref{eq:defn-L}. We can further remove the time-dependence by a time change, and formulate an equivalent time (and space) changed version of our optimal stopping problem. The main motivation for this formulation is that many auxiliary results that are critical to the proof of Proposition \ref{prop:bdy.unif.local.lip} are easier to obtain for the time changed problem than for the original problem. 

To simplify the presentation, we will borrow the notional convention of Proposition \ref{prop:value.func}(ii) by writing $\eta_\mu(t)=\|\Delta\bh_\mu(t)\|$ in this section.

\subsection{Time changed log likelihood process}

Consider the clock $\alpha_\mu(t):=\int_0^t \eta^2_\mu(s)ds$. It is strictly increasing since $\eta_\mu(t) \geq \mathfrak{h}>0$. Define the time change as $\zeta_\mu(t):=\alpha^{-1}_\mu(t)$. We have by the Inverse Function Theorem that
\begin{equation}\label{eq:time.change.der}\zeta'_\mu(u)=\frac{1}{\eta^2_\mu(\zeta_\mu(u))}.
\end{equation}
From our assumptions on $\mathbf{h}_\mu$, it is easy to check that $\zeta_\mu$ and $\zeta_\mu^{-1}=\alpha_\mu$ are both twice continuously differentiable, and have bounded first and second derivatives. Moreover, these bounds can be made independent of $\mu$.

Let $\widehat{L}_t:=L_{\zeta_\mu(t)}$ be the time changed log likelihood process. One can show that $\widehat{L}$ has the dynamics:
\begin{equation}\label{eq:L.hat.dynamics} 
	d\widehat{L}_{t}=a(\widehat{L}_t) dt+d\widehat{W}_t,
\end{equation}
where 
\begin{equation}\label{eq:a}
	a(l):=\frac{e^l-1}{2(e^l+1)}
\end{equation}
and $\widehat{W}_t:=\int_0^{\zeta(t)} \Delta \bh_\mu^\top (s) d\overline{\bW}_s$ is a Brownian motion with respect to the time changed filtration $\widehat \bbF = (\cF^{\overline{\bW}}_{\zeta_\mu(t)})_{t\in [0, \zeta^{-1}_\mu(T)]}$ by L\'evy's Characterization.
%
The infinitesimal generator of $\widehat{L}$, given by
\[\widehat{\mathcal{L}}:=a(\cdot)\partial_l+\frac{1}{2}\partial_{ll},\]
is related to $\cA$ (defined in \eqref{eq:operatorA}) and the sigmoid function $S$ (see \eqref{eq:defn-L}) via 
\begin{align}
	\widehat{\cL}(f\circ S)(l) &= a(l) f'(S(l)) S'(l) + \frac{1}{2}\left[f''(S(l))(S'(l))^2 + f'(S(l))S''(l)\right] \nonumber\\ 
	& = (\cA f)(S(l)), \quad f\in C^2(0,1). \label{eqn:relate.generator.A}
\end{align}
In the derivation of \eqref{eqn:relate.generator.A}, we have used the relations $S'(l) = S(l)(1-S(l))$ and $a(l)S'(l)+S''(l)/2 = 0$.

In addition to being time-homogenous, $\widehat{L}_t$ 
admits a simple \textit{differentiable flow}.

%

\begin{lemma}
	The flow $l\mapsto \widehat{L}_t^{u,l}(\omega)$ for $\omega\in \Omega$ is continuously differentiable. The derivative is given by
	\begin{equation}\label{eq:flow}
		\partial \widehat{L}_t^{u,l}=\exp(\int_u^ta'(\widehat{L}_s^{u,l})ds),
	\end{equation}
	and satisfies $1\leq \partial \widehat{L}_t^{u,l}\leq \exp((t-u)/4)$ for all $l\in\mathbb{R}$.
\end{lemma}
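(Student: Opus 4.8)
The plan is to prove the flow is $C^1$ and derive the ODE it satisfies by differentiating the SDE \eqref{eq:L.hat.dynamics} with respect to the initial condition, then read off the bounds from the sign and size of $a'$. Since $a(l) = \tfrac{1}{2}(e^l-1)/(e^l+1) = \tfrac{1}{2}\tanh(l/2)$ is smooth with bounded derivatives (in particular $a'(l) = \tfrac{1}{4}\operatorname{sech}^2(l/2) = \tfrac{1}{2}\cdot\tfrac{e^l}{(e^l+1)^2} \in (0, 1/4]$), the drift coefficient of $\widehat L$ is globally Lipschitz and $C^1$, so the classical theory of stochastic flows applies.

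\medskip

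\textbf{Step 1: Differentiability of the flow.} First I would invoke the standard result on differentiability of solutions of SDEs with respect to the initial condition (see e.g. \cite[Chapter V, Theorem 39]{protter2005stochastic} or \cite[Chapter 4]{kunita1990stochastic}): since $a\in C^1(\mathbb{R})$ with $a'$ bounded, the map $l\mapsto \widehat L^{u,l}_t$ is almost surely continuously differentiable, and the derivative process $Z^{u,l}_t := \partial_l \widehat L^{u,l}_t$ satisfies the linear (pathwise) ODE obtained by formally differentiating \eqref{eq:L.hat.dynamics}:
\begin{equation*}
dZ^{u,l}_t = a'(\widehat L^{u,l}_t)\, Z^{u,l}_t\, dt, \qquad Z^{u,l}_u = 1.
\end{equation*}
Note there is no stochastic integral term because the diffusion coefficient in \eqref{eq:L.hat.dynamics} is the constant $1$, so its derivative in $l$ vanishes; this is precisely what makes the flow pathwise differentiable with an explicit solution.

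\medskip

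\textbf{Step 2: Solving the linear ODE.} Since $t\mapsto a'(\widehat L^{u,l}_t(\omega))$ is continuous (indeed $\widehat L$ has continuous paths and $a'$ is continuous), the linear ODE above has the unique solution
\begin{equation*}
Z^{u,l}_t = \exp\left(\int_u^t a'(\widehat L^{u,l}_s)\, ds\right),
\end{equation*}
which is exactly \eqref{eq:flow}.

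\medskip

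\textbf{Step 3: The bounds.} Finally I would bound the exponent. A direct computation gives
\begin{equation*}
a'(l) = \frac{d}{dl}\,\frac{e^l-1}{2(e^l+1)} = \frac{e^l}{(e^l+1)^2} = \frac{1}{(e^{l/2}+e^{-l/2})^2} \in \left(0, \tfrac14\right],
\end{equation*}
with the maximum $\tfrac14$ attained at $l=0$ (by AM--GM, $e^{l/2}+e^{-l/2}\ge 2$). Hence for every $\omega$ and every $l$,
\begin{equation*}
0 < \int_u^t a'(\widehat L^{u,l}_s)\, ds \le \frac{t-u}{4},
\end{equation*}
so $1 \le \partial\widehat L^{u,l}_t = \exp\big(\int_u^t a'(\widehat L^{u,l}_s)ds\big) \le \exp\big((t-u)/4\big)$, as claimed.

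\medskip

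\textbf{Main obstacle.} There is no serious obstacle here; the statement is essentially a corollary of classical stochastic flow theory combined with an elementary estimate on $a'$. The only point requiring a little care is justifying the pathwise differentiability and the absence of a martingale term in the derivative equation — this is where one must cite the appropriate flow theorem and note that the unit diffusion coefficient trivializes the usual variational SDE into an ODE. All else is a routine computation with the sigmoid-derived function $a$.
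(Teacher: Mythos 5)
Your proof is correct and follows essentially the same route as the paper's: both invoke the flow-differentiability result from Protter (the paper cites \cite[Theorem V.49]{protter2005stochastic}), observe that the derivative process satisfies the pathwise linear equation $\partial \widehat{L}^{u,l}_t=1+\int_u^t a'(\widehat{L}^{u,l}_s)\partial \widehat{L}^{u,l}_s\,ds$, which is an ODE because the diffusion coefficient is constant, solve it to get the exponential formula, and conclude with the bound $0<a'(l)=e^l/(1+e^l)^2\le 1/4$.
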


\begin{proof} 
	By \cite[Theorem V.49]{protter2005stochastic}, $\partial \widehat{L}_t^{u,l}$ is well-defined, continuous, and satisfies the equation
	\[\partial \widehat{L}^{u,l}_t=1+\int_u^ta'(\widehat{L}_s^{u,l})\partial \widehat{L}^{u,l}_sds.\]
	The solution is given by \eqref{eq:flow}. The bound on $\partial \widehat{L}_t^{u,l}$ follows from the fact that $0<a'(l)=e^l/(1+e^l)^2\leq 1/4$. 
\end{proof}

\subsection{Time and space changed value function}

Define the time changed value function by
\begin{equation}\label{eq:value.function.hat.L}
	\widehat{V}(u,l) = \widehat{V}(u,l;\mu)=\inf_{\widehat{\tau}\in\mathcal{T}_{u,\widehat{T}}^{\widehat{W}}}\bbE\left[c\int_u^{\widehat{\tau}}\frac{1}{\eta^2_\mu(\zeta_\mu(s))}ds+\widehat{g}(\widehat{L}_{\widehat{\tau}}^{u,l})\right],
\end{equation}
where $\widehat{g}:=g\circ S$, $\widehat T=\widehat{T}_\mu:=\zeta^{-1}_\mu(T)$,\footnote{We will suppress the $\mu$-dependence unless it is necessary to be explicit.}  and $\cT^{\widehat{W}}_{u,\widehat T}$ is the set of $\mathbb{F}^{\widehat{W}}$-stopping times taking values in $[u, \widehat T]$. 
It is not hard to see that this is related to $V$.

\begin{lemma} For all $(t,\pi)\in[0,T)\times(0,1)$
\begin{equation}\label{eq:relating.value.functions}
	V(t,\pi;\mu)=\widehat{V}(\zeta^{-1}_\mu(t),S^{-1}(\pi);\mu).
\end{equation}
\end{lemma}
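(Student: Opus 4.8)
The plan is to verify the identity $V(t,\pi;\mu)=\widehat{V}(\zeta^{-1}_\mu(t),S^{-1}(\pi);\mu)$ by exhibiting a bijection between the stopping rules admissible in the two formulations that preserves the objective value. First I would recall that, by the optimality of $\tau_{\mathscr{D}}$ (Proposition~\ref{prop:cont.region.nonempty} and the surrounding discussion), the value function $V(t,\pi;\mu)$ can equivalently be written as an infimum over $\mathbb{F}^{\Pi}$-stopping times (equivalently $\mathbb{F}^{\overline{\bW}}$-stopping times, by Remark~\ref{rmk:filtration}), and similarly $\widehat V(u,l;\mu)$ is an infimum over $\mathbb{F}^{\widehat W}$-stopping times. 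Since $\widehat W_t = \int_0^{\zeta_\mu(t)}\Delta\bh_\mu^\top(s)\,d\overline{\bW}_s$, the filtration $\mathbb{F}^{\widehat W}$ is exactly the time-changed filtration $(\mathcal{F}^{\overline{\bW}}_{\zeta_\mu(t)})_t$, so the map $\tau \mapsto \widehat\tau := \zeta^{-1}_\mu(\tau)$ (equivalently $\alpha_\mu(\tau)$) is a bijection from $\mathbb{F}^{\overline{\bW}}$-stopping times valued in $[t,T]$ onto $\mathbb{F}^{\widehat W}$-stopping times valued in $[\zeta^{-1}_\mu(t),\widehat T]$, with inverse $\widehat\tau\mapsto\zeta_\mu(\widehat\tau)$.

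Next I would match the objective functionals under this correspondence. Start from a stopping time $\tau$ with $\tau^{t,\pi}\in[t,T]$ and set $\widehat\tau = \zeta^{-1}_\mu(\tau)$, $u = \zeta^{-1}_\mu(t)$, $l = S^{-1}(\pi)$. For the running cost, perform the substitution $s = \zeta_\mu(r)$, $ds = \zeta'_\mu(r)\,dr = \eta_\mu^{-2}(\zeta_\mu(r))\,dr$ (using \eqref{eq:time.change.der}), which gives
\[
\tau - t = \int_t^{\tau} ds = \int_{u}^{\widehat\tau}\frac{1}{\eta_\mu^2(\zeta_\mu(r))}\,dr,
\]
so $c(\tau-t)$ becomes precisely the running-cost term in \eqref{eq:value.function.hat.L}. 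For the terminal cost, recall $\widehat L_r = L_{\zeta_\mu(r)}$ and $L = S^{-1}(\Pi)$ from \eqref{eq:defn-L}, so $\Pi_\tau = S(L_\tau) = S(\widehat L_{\zeta^{-1}_\mu(\tau)}) = S(\widehat L_{\widehat\tau})$; since the flows start matched ($\Pi_t^{t,\pi}=\pi$ corresponds to $\widehat L_u^{u,l}=l$ with $S(l)=\pi$), we get $g(\Pi_\tau^{t,\pi}) = g\circ S(\widehat L_{\widehat\tau}^{u,l}) = \widehat g(\widehat L_{\widehat\tau}^{u,l})$. Taking expectations, the two integrands agree pathwise, hence
\[
\mathbb{E}\!\left[c(\tau-t)+g(\Pi_\tau^{t,\pi})\right] = \mathbb{E}\!\left[c\int_{u}^{\widehat\tau}\frac{1}{\eta_\mu^2(\zeta_\mu(s))}\,ds + \widehat g(\widehat L_{\widehat\tau}^{u,l})\right].
\]
Taking the infimum over the corresponding sets of stopping times on both sides, and using that $\tau\mapsto\widehat\tau$ is a bijection between these sets, yields \eqref{eq:relating.value.functions}.

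I do not expect a serious obstacle here; this is a routine change-of-variables argument. The only points requiring a small amount of care are (i) confirming the filtration identification $\mathbb{F}^{\widehat W} = (\mathcal{F}^{\overline{\bW}}_{\zeta_\mu(t)})_t$ so that the time change genuinely induces a bijection on stopping times (this follows from L\'evy's characterization together with the strict monotonicity and bi-measurability of $\alpha_\mu$ and $\zeta_\mu$, which are $C^2$ with bounded derivatives by the remark following \eqref{eq:time.change.der}), and (ii) checking that $\widehat T_\mu = \zeta^{-1}_\mu(T)$ is finite so the time-changed horizon is well-posed, which holds because $\eta_\mu \le \mathfrak{H}$ forces $\alpha_\mu(T)\ge \mathfrak{h}^2 T>0$ and $\alpha_\mu$ is finite. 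With these in hand the identity is immediate from the pathwise matching above.
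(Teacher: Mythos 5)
Your proposal is correct and follows essentially the same route as the paper: a pathwise matching of running and terminal costs under the space change $S^{-1}$ and time change $\zeta_\mu$, together with the time-change bijection of stopping times, then taking infima. The only small imprecision is the claim that $\bbF^{\widehat W}$ \emph{equals} the time-changed filtration $(\cF^{\overline{\bW}}_{\zeta_\mu(t)})_t$ — for $d>1$ it may be strictly smaller and is only sandwiched between $\bbF^{\widehat L}$ and $\widehat\bbF$ — but the equivalence of values over these filtrations is exactly what Remark~\ref{rmk:Vhat-filtration} (which you invoke, and which the paper's own proof also relies on) provides, so the argument stands.
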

\begin{proof}
	From \eqref{eqn:value.function.L} we have that $V(t,\pi;\mu)=\tilde{V}(t,S^{-1}(\pi);\mu)$. Letting $u=\zeta^{-1}_\mu(t)$, $l=S^{-1}(\pi)$, and observing that $\tilde{V}$ (resp. $\widehat{V}$) can be posed over $\mathbb{F}^{L}$ (resp. $\mathbb{F}^{\widehat{L}})$) stopping times (see e.g. Remark \ref{rmk:Vhat-filtration} below) we obtain:
	\begin{align*}
		\tilde V(t,l;\mu)&=\inf_{\tau\in \cT_{t,T}^{L}}\mathbb{E}\left[c(\tau-t)+\widehat{g}(L_\tau^{t,l})\right]\\
		&=\inf_{\tau\in \cT_{t,T}^{L}}\mathbb{E}\left[c(\zeta_\mu\circ\zeta^{-1}_\mu(\tau)-\zeta_\mu\circ\zeta^{-1}_\mu(t))+\widehat{g}(L_{\zeta_\mu\circ\zeta^{-1}_\mu(\tau)}^{\zeta_\mu\circ\zeta^{-1}_\mu(t),l})\right]\\
		&=\inf_{\tau\in \cT_{t,T}^{L}}\mathbb{E}\left[c(\zeta_\mu\circ\zeta^{-1}_\mu(\tau)-\zeta_\mu\circ\zeta^{-1}_\mu(t))+\widehat{g}(\widehat{L}_{\zeta^{-1}_\mu(\tau)}^{\zeta^{-1}_\mu(t),l})\right]\\
		&=\inf_{\widehat{\tau}\in \cT_{u,\widehat{T}}^{\widehat{L}}}\mathbb{E}\left[c(\zeta_\mu(\widehat{\tau})-\zeta_\mu(u))+\widehat{g}(\widehat{L}_{\widehat{\tau}}^{u,l})\right]\\
		&=\inf_{\widehat{\tau}\in \cT_{u,\widehat{T}}^{\widehat{L}}}\mathbb{E}\left[c\int_u^{\widehat{\tau}}\frac{1}{\eta^2_\mu(\zeta_\mu(s))}ds+\widehat{g}(\widehat{L}_{\widehat{\tau}}^{u,l})\right]\\
		&=\widehat{V}(u,l;\mu)
	\end{align*}
where we have used $\widehat{L}_u:=L_{\zeta_\mu(u)}$, $\mathcal{F}_u^{\widehat{L}}=\mathcal{F}_{\zeta_\mu(u)}^L$, and \eqref{eq:time.change.der}. The conclusion then follows.
\end{proof}

\begin{remark}\label{rmk:Vhat-filtration}
	We have seen in Section~\ref{sec:value_fun} that whether we pose the original problem over $\bbF^{\overline{\bW}}$-stopping times or $\bbF^\Pi$-stopping times does not change the optimal value. The set of $\bbF^{\overline{\bW}}$-stopping times (resp.\ $\bbF^\Pi$-stopping times) is in one-to-one correspondence with the set of $\widehat \bbF$-stopping times (resp.\ $\bbF^{\widehat L}$-stopping times) for the time changed problem. Since the filtration $\bbF^{\widehat W}$ is sandwiched between $\bbF^{\widehat L}$ and $\widehat \bbF$, we can define $\widehat V$ using $\bbF^{\widehat{W}}$-stopping times as in \eqref{eq:value.function.hat.L} without affecting the equivalence \eqref{eq:relating.value.functions}. Moreover, since $\widehat{V}$ only depends on the distributional property of $\widehat{L}$ and $\widehat{W}$, the Brownian motion $\widehat{W}$ in the problem definition can be taken to be a generic Brownian motion independent of $\mu$.
\end{remark}

The equivalence between $\widehat{V}$ and $V$ readily implies the following.
\begin{corollary}\label{cor:inherited.properties.hat.V.problem} The following hold under Assumption \ref{ass:g}.
	\begin{itemize}
		\item[(i)] $\widehat{V}$ is (jointly) continuous on $[0, \widehat{T}]\times \bbR$;
		\item[(ii)] The continuation region corresponding to $\widehat{V}$ is characterized by two stopping boundaries 
		\[\gamma:=S^{-1}\circ b\circ\zeta_\mu \quad \text{and} \quad \Gamma:=S^{-1}\circ B\circ\zeta_\mu;\] 
		\item[(iii)] The smallest optimal stopping time for $\widehat{V}(u,l)$ is given by
		\begin{equation}\label{eq:opt.stopping.time.L.hat}
			\widehat{\tau}^*(u,l)=\inf\{s\geq u:L^{u,l}_s\not\in(\gamma(s),\Gamma(s))\}\wedge \widehat{T};
		\end{equation}
		\item[(iv)] There are constants $\gamma_*$, $\gamma^*$, $\Gamma_*$ and $\Gamma^*$ (independent of $\mu$) such that
		\begin{equation}\label{eq:L.hat.bdy.bounds}
			-\infty<\gamma_*<\gamma(t)<\gamma^*<0<\Gamma_*<\Gamma(t)<\Gamma^*<\infty.
		\end{equation}
		\item[(v)] $\widehat{V}$ is $C^{1,2}$ in $\widehat\sC:=\{(u,l)\in [0, \widehat{T}) \times \bbR: \gamma(u)<l<\Gamma(u)\}$, and satisfies:
		\begin{equation}
			\partial_u\widehat{V}(u,l)+\widehat{\mathcal{L}}\widehat{V}(u,l)=\frac{-c}{\eta_\mu(\zeta_\mu(u))^2}, \quad (u,l)\in \widehat\sC.
		\end{equation}
	\end{itemize}
\end{corollary}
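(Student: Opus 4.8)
The plan is to deduce all five assertions from the single identity \eqref{eq:relating.value.functions}, $\widehat V(u,l;\mu)=V(\zeta_\mu(u),S(l);\mu)$, by transporting the corresponding properties of $V$ through the change of variables $\Psi_\mu(u,l):=(\zeta_\mu(u),S(l))$. The first thing I would record is that, under the standing assumptions on $\bh_\mu$, the clock $\zeta_\mu$ is a strictly increasing $C^2$-bijection of $[0,\widehat T]$ onto $[0,T]$ with bounded derivatives (as is $\zeta_\mu^{-1}=\alpha_\mu$), while $S$ is a smooth increasing bijection of $\mathbb R$ onto $(0,1)$; hence $\Psi_\mu$ is a $C^2$-diffeomorphism of $[0,\widehat T]\times\mathbb R$ onto $[0,T]\times(0,1)$ and $\widehat V=V\circ\Psi_\mu$. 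Everything then reduces to pulling back facts about $V$ already established in Section~\ref{sec:single.agent}.

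For (i), I would note that $\widehat V=V\circ\Psi_\mu$ is the composition of the jointly continuous $V$ (continuity on $[0,T]\times(0,1)$ was recorded in Section~\ref{sec:value_fun}) with a homeomorphism, hence jointly continuous on $[0,\widehat T]\times\mathbb R$, with continuity up to $u=\widehat T$ coming from $\widehat V(\widehat T,l)=V(T,S(l))=\widehat g(l)$. For (ii), a point $(u,l)$ lies in the continuation region $\{\widehat V<\widehat g\}$ iff $(\zeta_\mu(u),S(l))\in\mathscr C$; Proposition~\ref{prop:cont.region.nonempty}(iii) turns this into $b(\zeta_\mu(u))<S(l)<B(\zeta_\mu(u))$, and applying the increasing bijection $S^{-1}$ gives $\gamma(u)<l<\Gamma(u)$ with $\gamma=S^{-1}\circ b\circ\zeta_\mu$ and $\Gamma=S^{-1}\circ B\circ\zeta_\mu$.

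For (iv), I would push the $\mu$-uniform bounds $0<b_*\le b\le b^*<\tfrac12<B_*\le B\le B^*<1$ of Proposition~\ref{prop:bounds.on.bdys} through $S^{-1}$: since $\zeta_\mu$ maps $[0,\widehat T)$ onto $[0,T)$, those bounds apply to $b(\zeta_\mu(t))$ and $B(\zeta_\mu(t))$, and the choice $\gamma_*=S^{-1}(b_*)$, $\gamma^*=S^{-1}(b^*)$, $\Gamma_*=S^{-1}(B_*)$, $\Gamma^*=S^{-1}(B^*)$ yields \eqref{eq:L.hat.bdy.bounds}, with $\gamma^*<0<\Gamma_*$ because $S^{-1}(\tfrac12)=0$; all four constants are finite and $\mu$-independent. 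For (iii), I would first rewrite the smallest optimal stopping time $\tau_{\mathscr D}$ of \eqref{eqn:smallest.stop.time} in the log-likelihood coordinate: using $\Pi=S(L)$ (monotone), the shape of $\mathscr D$ from Proposition~\ref{prop:cont.region.nonempty}(iii), and Corollary~\ref{cor:properties.of.Pi} ($\Pi$ does not leave $(0,1)$ in finite time), $\tau_{\mathscr D}$ equals the first exit time of $L$ from $(S^{-1}\!\circ b,\,S^{-1}\!\circ B)$ capped at $T$. Since $\zeta_\mu$ is a continuous strictly increasing bijection it maps first-exit times to first-exit times and commutes with $\wedge$; combined with the cost-preserving correspondence between $\cT^L$- and $\cT^{\widehat L}$-stopping times established in the proof of \eqref{eq:relating.value.functions}, this identifies the smallest optimal stopping time for $\widehat V$ with the first exit of $\widehat L$ from $(\gamma,\Gamma)$ capped at $\widehat T$, which is \eqref{eq:opt.stopping.time.L.hat}.

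For (v), $C^{1,2}$ regularity of $\widehat V$ on $\widehat\sC=\Psi_\mu^{-1}(\mathscr C)$ is inherited from that of $V$ on $\mathscr C$ (Proposition~\ref{prop:C1.interior.cont.reg.}) because $\Psi_\mu$ is a $C^2$-diffeomorphism, and the PDE then follows from a chain-rule computation. I would compute $\partial_u\widehat V(u,l)=\partial_tV(\zeta_\mu(u),S(l))\,\zeta_\mu'(u)=\eta_\mu^{-2}(\zeta_\mu(u))\,\partial_tV(\zeta_\mu(u),S(l))$ via \eqref{eq:time.change.der}, and, for fixed $u$, apply \eqref{eqn:relate.generator.A} to $f=V(\zeta_\mu(u),\cdot)$ to obtain $\widehat{\mathcal L}\widehat V(u,l)=(\cA V(\zeta_\mu(u),\cdot))(S(l))=\eta_\mu^{-2}(\zeta_\mu(u))\,\mathcal LV(\zeta_\mu(u),S(l))$, using $\mathcal L=\eta_\mu^2\,\cA$; adding the two and invoking $\partial_tV+\mathcal LV=-c$ on $\mathscr C$ gives the claim. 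None of this is deep: the only step requiring genuine care is this last matching of the time-changed generator $\widehat{\mathcal L}$ with $\mathcal L$ through \eqref{eqn:relate.generator.A} and \eqref{eq:time.change.der}, and, in (iii), verifying that first-exit times transform correctly under $\zeta_\mu$; the rest is bookkeeping.
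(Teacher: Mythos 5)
Your proposal is correct and follows essentially the same route as the paper: every item is obtained by transporting the corresponding property of $V$ (continuity, the boundary characterization of $\mathscr{C}$, the $\mu$-uniform bounds of Proposition~\ref{prop:bounds.on.bdys}, and the $C^{1,2}$ regularity with the PDE from Proposition~\ref{prop:C1.interior.cont.reg.}) through the identity \eqref{eq:relating.value.functions} and the change of variables $(u,l)\mapsto(\zeta_\mu(u),S(l))$. Your explicit chain-rule matching of $\widehat{\mathcal{L}}$ with $\mathcal{L}$ via \eqref{eqn:relate.generator.A} and \eqref{eq:time.change.der}, and the exit-time bookkeeping in (iii), merely spell out steps the paper leaves implicit.
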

\begin{proof} (i) follows from \eqref{eq:relating.value.functions} and the continuity of $V$. (ii) follows since $V(\zeta_\mu(u),S(l))<g(S(l))$ if and only if $\widehat{V}(u,l)<\widehat{g}(l)$. By the characterization of the continuation region for $V$, the former inequality says $b(\zeta_\mu(u))<S(l)<B(\zeta_\mu(u))$. (iii) follows immediately from (ii). (iv) follows from (ii) and Proposition \ref{prop:bounds.on.bdys}. Finally, (v) is a direct consequence of Proposition \ref{prop:C1.interior.cont.reg.} and \eqref{eq:relating.value.functions}. It can also be shown directly by the same arguments leading to Proposition \ref{prop:C1.interior.cont.reg.}.
\end{proof}

\subsection{Probabilistic representation of derivatives}\label{sec:prob.representation.subsection}

In this section we establish probabilistic representations and estimates on the derivatives of the time changed value function. These results are drawn primarily from the recent work \cite{de2019lipschitz}. Henceforth, in this appendix we will define $\widehat{\tau}^*=\widehat{\tau}^*(u,l)\in \mathcal{T}_{u,\widehat{T}}^{\widehat{W}}$ as in Corollary \ref{cor:inherited.properties.hat.V.problem} to be the smallest optimal stopping time for our problem with initial data\footnote{To avoid ambiguity, when $\widehat{\tau}^*$ appears under an expectation (and in particular, when it appears alone) we will make explicit the conditioning on the initial data $(t,u)$ by writing $\mathbb{E}_{t,u}[\cdot]$. In these instances, the superscript on $\widehat{L}^{u,l}$ is dropped.} $(u,l)\in[0,\widehat{T})\times \mathbb{R}$.

\begin{lemma}\label{lem:prob.representation} Under Assumption \ref{ass:g}, the derivatives of $\widehat{V}$ satisfy the following, wherever they exist:
	\begin{enumerate}
		\item[(i)] $\partial_l\widehat{V}(u,l)=\bbE_{u,l}\left[\widehat{g}'(\widehat{L}_{\widehat{\tau}^*})\partial\widehat{L}_{\widehat{\tau}^*}\right]$;
		\item[(ii)] $\partial_u\widehat{V}(u,l)\geq\bbE_{u,l}\left[\int_u^{\widehat{\tau}^*} -\frac{2\eta_\mu'(\zeta_\mu(s))}{\eta_\mu(\zeta_\mu(s))^5}ds\right]$;
		\item[(iii)] There exists a constant $K>0$ (independent of $\mu$) such that
		\[\partial_u\widehat{V}(u,l)\leq\bbE_{u,l}\left[\int_u^{\widehat{\tau}^*} -\frac{2\eta_\mu'(\zeta_\mu(s))}{\eta_\mu(\zeta_\mu(s))^5}ds\right]+K\mathbb{P}(\widehat{\tau}^*=U).\]
		
	\end{enumerate}
\end{lemma}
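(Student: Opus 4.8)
The plan is to prove all three parts by the same perturbation device, in the spirit of \cite{de2019lipschitz}: at a point $(u,l)$ where the relevant partial derivative of $\widehat V$ exists, bound a one-sided difference quotient by freezing, as a (generally sub-optimal) competitor, the optimal rule of a neighbouring problem, and then pass to the limit; for part (i) the two one-sided bounds coincide and force equality. Write $\phi_\mu(v):=\eta_\mu(\zeta_\mu(v))^{-2}$ for the running-cost density, so $\widehat V(u,l)=\bbE_{u,l}[c\int_u^{\widehat\tau^*}\phi_\mu(s)\,ds+\widehat g(\widehat L_{\widehat\tau^*})]$ with $\widehat g=g\circ S$. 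By \eqref{eq:time.change.der} and the chain rule $\phi_\mu\in C^1$ with $\phi_\mu'(v)=-2\eta_\mu'(\zeta_\mu(v))/\eta_\mu(\zeta_\mu(v))^5$, and the uniform-in-$\mu$ regularity of $\zeta_\mu$ from Appendix \ref{sec:time-change} together with Remark \ref{rmk:dist.pi.mu} makes $\phi_\mu$ and $\phi_\mu'$ bounded with a modulus of continuity independent of $\mu$. I will also use $\widehat g\in C^3$ with $\|\widehat g'\|_\infty\le M$ (Remark \ref{rmk:consequences.of.assumptions}), $1\le\partial\widehat L^{u,l}_t\le e^{\widehat T/4}$ from \eqref{eq:flow}, and $\widehat{\mathcal L}\widehat g=\mathcal A g\circ S$ from \eqref{eqn:relate.generator.A}, which is bounded by Remark \ref{rmk:consequences.of.assumptions}.

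For (i), I would fix $(u,l)$ where $\partial_l\widehat V$ exists and set $\widehat\tau^*=\widehat\tau^*(u,l)$. Since $\widehat\tau^*$ is a path functional of $\widehat L^{u,l}$, it is an admissible $\bbF^{\widehat W}$-stopping time from any starting value, and the running cost $c\int_u^{\widehat\tau^*}\phi_\mu$ is a deterministic functional of it, hence the same when the initial value is $l$ or $l\pm\epsilon$. Using $\widehat\tau^*$ as a competitor at $(u,l\pm\epsilon)$ and subtracting the exact identity at $(u,l)$ gives, via the chain rule and the differentiable flow \eqref{eq:flow},
\[\widehat V(u,l\pm\epsilon)-\widehat V(u,l)\ \le\ \pm\,\bbE_{u,l}\Big[\int_l^{l\pm\epsilon}\widehat g'(\widehat L^{u,x}_{\widehat\tau^*})\,\partial\widehat L^{u,x}_{\widehat\tau^*}\,dx\Big].\]
Dividing by $\epsilon$ and using that $x\mapsto\widehat g'(\widehat L^{u,x}_{\widehat\tau^*})\partial\widehat L^{u,x}_{\widehat\tau^*}$ is continuous and bounded by $Me^{\widehat T/4}$, dominated convergence gives $\pm\partial_l\widehat V(u,l)\le\pm\bbE_{u,l}[\widehat g'(\widehat L_{\widehat\tau^*})\partial\widehat L_{\widehat\tau^*}]$, and the two inequalities yield (i).

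For (ii), I would work with the \emph{backward} increment, which is exactly what sidesteps a horizon truncation: moving from $(u,l)$ to $(u-\epsilon,l)$ lengthens the horizon, so the optimal rule at $(u,l)$, of duration $\theta:=\widehat\tau^*(u,l)-u\le\widehat T-u$, is admissible at $(u-\epsilon,l)$, and by time-homogeneity of $\widehat L$ it drives the same path, so only the running cost changes:
\[\widehat V(u,l)-\widehat V(u-\epsilon,l)\ \ge\ c\,\bbE_{u,l}\Big[\int_0^{\theta}\big(\phi_\mu(u+r)-\phi_\mu(u-\epsilon+r)\big)\,dr\Big].\]
Here $\theta$ does not depend on $\epsilon$ and $\theta\le\widehat T$, and $\epsilon^{-1}\int_{-\epsilon}^0\phi_\mu'(u+r+x)\,dx\to\phi_\mu'(u+r)$ uniformly in $r$; so dominated convergence gives $\liminf_{\epsilon\downarrow0}\epsilon^{-1}(\widehat V(u,l)-\widehat V(u-\epsilon,l))\ge c\,\bbE_{u,l}[\int_0^\theta\phi_\mu'(u+r)\,dr]=\bbE_{u,l}[\int_u^{\widehat\tau^*}(-2c\,\eta_\mu'(\zeta_\mu(s))/\eta_\mu(\zeta_\mu(s))^5)\,ds]$. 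Since the left side is $\partial_u\widehat V(u,l)$ at a point of differentiability, (ii) follows; the case $u=0$ is handled by first extending the problem to $u\in(-\delta,\widehat T)$ as in the proof of Proposition \ref{prop:bdy.unif.local.lip}.

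For (iii), I would use the \emph{forward} increment $(u,l)\mapsto(u+\epsilon,l)$; now the horizon shrinks, so the optimal duration $\theta=\widehat\tau^*(u,l)-u$ from $(u,l)$ must be truncated to $\theta\wedge(\widehat T-u-\epsilon)$ to be admissible. Using that competitor and subtracting the identity at $(u,l)$, one obtains $\widehat V(u+\epsilon,l)-\widehat V(u,l)\le A_1+A_2+B$, where $A_1=c\,\bbE_{u,l}[\int_0^{\theta\wedge(\widehat T-u-\epsilon)}(\phi_\mu(u+\epsilon+r)-\phi_\mu(u+r))\,dr]$ has $A_1/\epsilon\to c\,\bbE_{u,l}[\int_0^\theta\phi_\mu'(u+r)\,dr]$ by dominated convergence (using $\theta\wedge(\widehat T-u-\epsilon)\uparrow\theta$), the running-cost tail $A_2=-c\,\bbE_{u,l}[\mathds{1}_{\{\theta>\widehat T-u-\epsilon\}}\int_{\widehat T-u-\epsilon}^\theta\phi_\mu(u+r)\,dr]$ has $|A_2|\le c\mathfrak h^{-2}\epsilon\,\mathbb P(\theta>\widehat T-u-\epsilon)$ since $0\le\phi_\mu\le\mathfrak h^{-2}$, and the terminal-payoff tail is $B=-\bbE_{u,l}[\mathds{1}_{\{\theta>\widehat T-u-\epsilon\}}(\widehat g(\widehat L_{u+\theta})-\widehat g(\widehat L_{\widehat T-\epsilon}))]$. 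The hard part is $B$: the Lipschitz bound on $\widehat g$ controls $\widehat g(\widehat L_{u+\theta})-\widehat g(\widehat L_{\widehat T-\epsilon})$ only by $O(\sqrt\epsilon)$, which is destroyed by dividing by $\epsilon$. Instead, I would apply It\^o's formula to $\widehat g\in C^3$ along $d\widehat L_t=a(\widehat L_t)\,dt+d\widehat W_t$ over $[\widehat T-\epsilon,u+\theta]$: the drift term $\widehat{\mathcal L}\widehat g=\mathcal A g\circ S$ is bounded (Remark \ref{rmk:consequences.of.assumptions}), the interval has length $\le\epsilon$ on $\{\theta>\widehat T-u-\epsilon\}$, and the stochastic integral has zero mean (by $\|\widehat g'\|_\infty\le M$ and optional sampling, since $\{\theta>\widehat T-u-\epsilon\}=\{\widehat\tau^*>\widehat T-\epsilon\}$ is measurable at time $\widehat T-\epsilon$), so $|B|\le\|\mathcal A g\|_\infty\epsilon\,\mathbb P(\theta>\widehat T-u-\epsilon)$. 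Collecting the three terms, dividing by $\epsilon$, and using $\mathbb P(\theta>\widehat T-u-\epsilon)\downarrow\mathbb P(\widehat\tau^*=\widehat T)$ gives (iii) with $K:=c\mathfrak h^{-2}+\|\mathcal A g\|_\infty$, independent of $\mu$. This It\^o estimate of $B$ is the one genuinely delicate step, and it is where the boundedness of $\mathcal A g$ built into Assumption \ref{ass:g} is essential.
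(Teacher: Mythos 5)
Your proposal is correct, but it takes a genuinely different route from the paper: the paper's own proof is essentially a citation — after recasting $\widehat V$ via the Markov property and time-homogeneity in the form \eqref{eqn:equiv.form.hat.V}, it invokes \cite[Theorem 3.1]{de2019lipschitz} for (i) and (iii) and \cite[Corollary 3.3]{de2019lipschitz} for (ii), together with $|\eta_\mu^{-2}|\le \mathfrak{h}^{-2}$ and the boundedness of $\widehat{\mathcal{L}}\widehat{g}$. You instead re-derive all three estimates from scratch by the perturbation-of-initial-data device (which is in effect the argument underlying the cited results): the same optimal stopping time used as competitor from shifted starting points, the differentiable flow \eqref{eq:flow} for (i), backward and forward time increments for (ii) and (iii), and, for the terminal tail term $B$ in (iii), an It\^o expansion over an interval of length at most $\epsilon$ with the stochastic integral killed by optional sampling — correctly using that $\{\widehat{\tau}^*>\widehat{T}-\epsilon\}\in\cF_{\widehat{T}-\epsilon}$ and that $\widehat{\mathcal{L}}\widehat{g}=\cA g\circ S$ is bounded. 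Your version buys a self-contained proof that makes transparent why $K$ is independent of $\mu$ (it involves only $c$, $\mathfrak{h}$ and $\sup|\cA g|$), at the cost of length; the paper buys brevity by outsourcing exactly these steps. One discrepancy worth noting: since the running cost is $c\int \eta_\mu^{-2}(\zeta_\mu(s))\,ds$, your perturbation yields the factor $c$ multiplying the integral terms in (ii)--(iii), i.e.\ the integrand $-2c\,\eta_\mu'(\zeta_\mu(s))/\eta_\mu(\zeta_\mu(s))^5$; the statement as printed omits this $c$ (and writes $\bbP(\widehat{\tau}^*=U)$ where $U$ should be read as the horizon $\widehat{T}$, notation inherited from the cited paper). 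Your form is the correct one, and the difference is immaterial for the lemma's only use, Corollary \ref{cor:bd.time.der.hat.V}, where all constants are absorbed into $C$.
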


\begin{proof}
	The Markov property and time-homogeneity of the state process $\widehat{L}$ allow us to rewrite our time changed value function as
	\begin{equation}\label{eqn:equiv.form.hat.V}
		\widehat{V}(u,l)= - \sup_{\widehat{\tau}\in\mathcal{T}_{0,\widehat{T}-u}^{\widehat{W}}}\bbE\left[-c\int_0^{\widehat{\tau}} \frac{1}{\eta^2_\mu(\zeta_\mu(u+s))}ds-\widehat{g}(\widehat{L}_{\widehat{\tau}}^{0,l})\right],
	\end{equation}
	which falls exactly into the setting of \cite{de2019lipschitz}.
	We then apply \cite[Theorem 3.1]{de2019lipschitz} to get (i) and (iii), and \cite[Corollary 3.3]{de2019lipschitz} to get (ii). (iii) additionally makes use of the bound $|\eta^{-2}_\mu|\leq \mathfrak{h}^{-2}$ and the boundedness of $\widehat{\mathcal{L}}\widehat{g}$ which holds by \eqref{eqn:relate.generator.A} and Remark \ref{rmk:consequences.of.assumptions}. 
\end{proof}

From our assumptions on $\mathbf{h}_\mu$ it is clear that $\eta_\mu(\cdot)$ is continuously differentiable with
first derivative bounded by some  $K>0$ (independent of $\mu$) on $[0,T]$. By the bound $\left|-2\eta_\mu'\eta_\mu^{-5}\right|\leq 2K\mathfrak{h}^{-5}$ and Markov's inequality we obtain the following useful corollary.

\begin{corollary}\label{cor:bd.time.der.hat.V}
	There exists a constant $C>0$ (independent of $\mu$) such that
	\[|\partial_u \widehat{V}(u,l)|\leq \frac{C}{\widehat{T}-u}\mathbb{E}_{u,l}\left[\widehat{\tau}^*-u\right].\]
\end{corollary}

With these results in hand, we turn to studying the defining function of the continuation region, $\widehat{V}-\widehat{g}$. By Assumption \ref{ass:g}, $V$ and $g$ are symmetric about $\pi=1/2$ which readily gives
\begin{equation}\label{eqn:der.vanishes}
	\partial_l\widehat{V}(u,0)-\widehat{g}'(0)=0 \ \ \forall u\in[0,\widehat{T}).
\end{equation}
Moreover, by \eqref{eqn:relate.generator.A} we obtain
\[\partial_\pi\mathcal{A}g(S(l))S'(l)=\partial_l\widehat{\mathcal{L}}\widehat{g}(l).\]
Since $S'(l)>0$ for all $l\in\mathbb{R}$ the (strict) signs of $\partial_\pi\mathcal{A}g$ and $\partial_l\widehat{L}\widehat{g}$ can be compared. It follows from Assumption \ref{ass:g} and $S(0)=1/2$ that

\begin{equation}\label{eqn:sign.der.generator} \partial_l\widehat{\mathcal{L}}\widehat{g}(l)<0, \ \   l\in(-\infty,0) \ \ \mathrm{and} \ \ \partial_l\widehat{\mathcal{L}}\widehat{g}(l)>0, \ \  l\in(0,\infty).
\end{equation}
The next result uses these observations to determine the sign of $\partial_l(\widehat{V}-\widehat{g})$.

\begin{lemma}\label{lem:sign.der.hat.V}
	For each $u\in[0,\widehat{T})$ it holds that $\partial_l(\widehat{V}-\widehat{g})(u,\cdot)<0$ on $(\gamma(u),0)$ and $\partial_l (\widehat{V}-\widehat{g})(u,\cdot)>0$ on $(0,\Gamma(u))$.
\end{lemma}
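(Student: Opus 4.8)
The plan is to prove the second inequality — $\partial_l(\widehat V-\widehat g)(u,\cdot)>0$ on $(0,\Gamma(u))$ — by a parabolic maximum principle, and then obtain the first from symmetry. Indeed, Assumption~\ref{ass:g} gives $V(t,\pi)=V(t,1-\pi)$ and $g(\pi)=g(1-\pi)$, hence $\widehat V(u,l)=\widehat V(u,-l)$, $\widehat g(l)=\widehat g(-l)$, and $\gamma=-\Gamma$; consequently $v:=\partial_l(\widehat V-\widehat g)$ is odd in $l$, and $v>0$ on $\{0<l<\Gamma(u)\}$ is equivalent to $v<0$ on $\{\gamma(u)<l<0\}$.

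First I would write down the equation solved by $v$ on the right half $\widehat\sC^+:=\{(u,l):u\in[0,\widehat T),\ 0<l<\Gamma(u)\}$ of the continuation region. Since the coefficient $a$ in \eqref{eq:a} is smooth and the source $-c/\eta_\mu^2(\zeta_\mu(u))$ in Corollary~\ref{cor:inherited.properties.hat.V.problem}(v) does not depend on $l$, interior parabolic estimates upgrade the $C^{1,2}$ regularity of $\widehat V$ in $\widehat\sC$ to local smoothness in $l$; together with $\widehat g\in C^3$ this justifies differentiating the PDE of Corollary~\ref{cor:inherited.properties.hat.V.problem}(v) in $l$, giving $v\in C^{1,2}(\widehat\sC^+)$ and
\[
\partial_u v+a(l)\,\partial_l v+\tfrac12\partial_{ll}v+a'(l)\,v=-\partial_l\big(\widehat{\mathcal{L}}\widehat{g}\big)(l),\qquad(u,l)\in\widehat\sC^+,
\]
whose right-hand side is strictly negative on $(0,\infty)$ by \eqref{eqn:sign.der.generator}.

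Next I would pin down the values of $v$ on the parabolic boundary of $\widehat\sC^+$, namely $\{l=0\}\cup\{l=\Gamma(u)\}\cup\{u=\widehat T\}$. On $\{l=0\}$ we have $v\equiv0$ by \eqref{eqn:der.vanishes}; on the free boundary $\{l=\Gamma(u)\}$ we have $v\equiv0$ by the smooth-fit property of Proposition~\ref{prop:PDE}, which also supplies the continuity of $v$ up to that boundary. The delicate slice is $\{u=\widehat T\}$: because $\Gamma$ stays bounded away from $0$ as $u\uparrow\widehat T$ (the continuation region retains a strip around $1/2$, Proposition~\ref{prop:bounds.on.bdys}), the mere decay of $\widehat V-\widehat g$ there does not control its $l$-derivative. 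Here I would instead invoke the probabilistic representation of Lemma~\ref{lem:prob.representation}(i): as $u\uparrow\widehat T$ one has $\widehat\tau^*\to\widehat T$, hence $\widehat L^{u,l}_{\widehat\tau^*}\to l$ and $\partial\widehat L^{u,l}_{\widehat\tau^*}\to1$, uniformly in $l$ on compacts (using the flow bound $1\le\partial\widehat L\le e^{(\cdot)/4}$ and the path continuity of $\widehat W$), so dominated convergence yields $\partial_l\widehat V(u,l)\to\widehat g'(l)$, i.e.\ $v(u,\cdot)\to0$. Thus $v$ extends continuously to $\overline{\widehat\sC^+}$ and vanishes on its entire parabolic boundary.

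Finally I would run the maximum principle. The zeroth-order coefficient $a'(l)=e^l/(1+e^l)^2\in(0,\tfrac14]$ is positive, so I would first substitute $v=e^{-\lambda u}\widetilde v$ with $\lambda\ge\tfrac14$, after which $\widetilde v$ solves the same type of equation with zeroth-order coefficient $a'-\lambda\le0$, strictly negative source, and zero parabolic-boundary data. A standard minimum-principle argument — at a putative non-positive interior minimum all first/second derivative terms have the favourable sign and the zeroth-order term is $\ge0$, contradicting the strictly negative source — then forces $\widetilde v>0$, hence $v>0$, throughout $\widehat\sC^+$. By the symmetry noted above this gives $\partial_l(\widehat V-\widehat g)(u,\cdot)<0$ on $(\gamma(u),0)$, completing the proof. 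The main obstacle is the terminal slice $\{u=\widehat T\}$: since the continuation region does not close up there, the boundary value $v(\widehat T,\cdot)=0$ cannot be read off from continuity of $\widehat V$ alone, and the representation of $\partial_l\widehat V$ in Lemma~\ref{lem:prob.representation}(i) is precisely what makes it available.
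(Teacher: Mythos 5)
Your overall strategy (differentiate the free-boundary PDE and run a parabolic minimum principle on the right half of the continuation region, then use symmetry) is internally coherent, and several ingredients are fine: the equation for $v=\partial_l(\widehat V-\widehat g)$, the vanishing of $v$ on $\{l=0\}$ via \eqref{eqn:der.vanishes}, the treatment of the terminal slice through the representation of Lemma~\ref{lem:prob.representation}(i), and the exponential tilting to kill the positive zeroth-order coefficient. However, there is a genuine gap at the lateral free boundary: you set $v=0$ on $\{l=\Gamma(u)\}$ by invoking the smooth-fit property of Proposition~\ref{prop:PDE}. In this paper smooth fit is \emph{downstream} of the present lemma: Proposition~\ref{prop:smooth.fit} is deduced from the local Lipschitz regularity of the boundaries (Proposition~\ref{prop:bdy.unif.local.lip}), and the proof of that regularity result begins precisely by quoting Lemma~\ref{lem:sign.der.hat.V} (together with Proposition~\ref{prop:der.bd.for.bdy.reg}) to get the strict sign of $\partial_\pi(V-g)$ inside the continuation region. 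So your argument is circular as it stands. Nor can the boundary datum be recovered cheaply: at this stage of the development the boundaries are not even known to be continuous, and the only available information near $\{l=\Gamma(u)\}$ is $\widehat V-\widehat g\le 0$ with equality on the boundary, which controls $\int_l^{\Gamma(u)}\partial_l(\widehat V-\widehat g)$ in an averaged sense but does not give the pointwise (or liminf) nonnegativity of $v$ up to the boundary that your minimum principle needs. Unless you supply an independent proof of smooth fit (or of probabilistic regularity of the boundary) that does not pass through Lemma~\ref{lem:sign.der.hat.V}, the argument cannot be closed.

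For comparison, the paper avoids the boundary issue entirely by staying probabilistic: starting from $\partial_l\widehat V(u,l)=\bbE_{u,l}[\widehat g'(\widehat L_{\widehat\tau^*})\partial\widehat L_{\widehat\tau^*}]$ (Lemma~\ref{lem:prob.representation}(i), which needs no smooth fit), Dynkin's formula gives $\partial_l(\widehat V-\widehat g)(u,l)=\bbE_{u,l}[\int_u^{\widehat\tau^*}e^{\int_u^v a'(\widehat L_r)dr}\,\partial_l\widehat{\mathcal L}\widehat g(\widehat L_v)\,dv]$, and the strong Markov property at the hitting time of level $0$ (where the integrand changes sign and where $v$ vanishes by symmetry, or at $\widehat T$ where $\widehat V=\widehat g$) localizes the integral to $\{\widehat L<0\}$, where \eqref{eqn:sign.der.generator} gives the strict sign. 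In effect the paper's Feynman--Kac/strong-Markov argument is the stochastic counterpart of your maximum principle, but it only ever evaluates the value function's derivative strictly inside the continuation region, which is exactly what breaks the circularity your PDE version runs into.
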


\begin{proof}
	Fix $l\in(\gamma(u),0)$. We use the representation of the derivative from Lemma \ref{lem:prob.representation}(i) and Dynkin's formula to get for $\widehat{\tau}^*=\widehat{\tau}^*(u,l)$:
	\begin{align}
		\partial_l(\widehat{V}-\widehat{g})(u,l)&=\mathbb{E}_{u,l}\left[\widehat{g}'(\widehat{L}_{\widehat{\tau}^*})\partial\widehat{L}_{\widehat{\tau}^*}-\widehat{g}'(l)\right] \nonumber \\
		&=\mathbb{E}_{u,l}\left[\int_u^{\widehat{\tau}^*} e^{\int_u^va'(\widehat{L}_r)dr}\left(a'\widehat{g}'+\widehat{\mathcal{L}}(\widehat{g}')\right)(\widehat{L}_v)dv\right] \nonumber \\
		&=\mathbb{E}_{u,l}\left[\int_u^{\widehat{\tau}^*} e^{\int_u^va'(\widehat{L}_r)dr}\partial_l\widehat{\mathcal{L}}\widehat{g}(\widehat{L}_v)dv\right]. \label{eqn:diff.hatVg.derivative.rep}
	\end{align}
	The last equality can be verified by differentiating. Letting \[\tau_{0}(u,l):=\inf\{v\geq u:\widehat{L}^{u,l}_v\geq 0\}\wedge \widehat{T}\] we argue as in \cite[Theorem 4.3]{de2019lipschitz} (see Equations (4.16)-(4.18) of the proof) by the Tower and Strong Markov properties that
	\begin{align*}\partial_l(\widehat{V}-\widehat{g})(u,l)&=\mathbb{E}_{u,l}\bigg[\int_u^{\tau_0\wedge\widehat{\tau}^*} e^{\int_u^va'(\widehat{L}_r)dr}\partial_l\widehat{\mathcal{L}}\widehat{g}(\widehat{L}_v)dv\\
		&  +\mathds{1}_{\tau_0<\widehat{\tau}^*}e^{\int_u^{\tau_0}a'(\widehat{L}_r)dr}\mathbb{E}_{\tau_0,\widehat{L}_{\tau_0}}\bigg[\int_{\tau_0}^{\widehat{\tau}^*} e^{\int_{\tau_{0}}^va'(\widehat{L}_r)dr}\partial_l\widehat{\mathcal{L}}\widehat{g}(\widehat{L}_v)dv\bigg]\bigg]\\
		&=\mathbb{E}_{u,l}\bigg[\int_u^{\tau_0\wedge \widehat{\tau}^*}e^{\int_u^va'(\widehat{L}_r)dr}\partial_l\widehat{\mathcal{L}}\widehat{g}(\widehat{L}_v)dv\\
		&\quad \quad \quad \quad \quad \quad \quad \quad  \quad +\mathds{1}_{\tau_0<\widehat{\tau}^*}e^{\int_u^{\tau_0}a'(\widehat{L}_r)dr}\partial_l(\widehat{V}-\widehat{g})(\tau_0,\widehat{L}_{\tau_0})\bigg].
	\end{align*}
	Note that at $\tau_0$ either $\widehat{L}_{\tau_0}^{u,l}=0$ so $\partial_l(\widehat{V}-\widehat{g})(\tau_0,0)=0$ by \eqref{eqn:der.vanishes} or $\tau_0=\widehat{T}$ in which case $\widehat{V}(\widehat{T},\cdot)=\widehat{g}(\cdot)$ and so we still have $\partial_l(\widehat{V}-\widehat{g})(\widehat{T},\widehat{L}_{\tau_0}^{u,l})=0$. We conclude that
	\[\partial_l(\widehat{V}-\widehat{g})(u,l)=\mathbb{E}_{u,l}\left[\int_u^{\tau_0\wedge \widehat{\tau}^*}e^{\int_u^va'(\widehat{L}_r)dr}\partial_l\widehat{\mathcal{L}}\widehat{g}(\widehat{L}_v)dv\right].\]
	As $a'\geq0$ we have by definition of $\tau_0$ and \eqref{eqn:sign.der.generator} that:
	\[\partial_l(\widehat{V}-\widehat{g})(u,l)\leq\mathbb{E}_{u,l}\left[\int_u^{\tau_0\wedge \widehat{\tau}^*}\partial_l\widehat{\mathcal{L}}\widehat{g}(\widehat{L}_v)dv\right]<0.\]
	Note in the above we use that the continuation region is open and contains $l$, so there is an $\epsilon>0$ such that the $\epsilon$-ball $B_\epsilon((u,l))$ is contained in the continuation region. As $l<0$ we can conclude $\tau_0\wedge \widehat{\tau}^*-u>0$ and so the inequality is strict. The same argument gives the analogous result for the derivative on $(0,\Gamma(u))$.
\end{proof}

For the last result of this subsection, we show that there are points in the continuation region for which Lemma \ref{lem:sign.der.hat.V} can be strengthened to obtain an explicit bound on the derivative. In particular, the bound depends only on the original times $t,T$ and not at all on $\mu$.

\begin{lemma}\label{lem:strict.bd.mu.indep.der.Vhat}
	For any $\underline{l},\overline{l}$ satisfying $\gamma^*<\underline{l}<0$ and $0<\overline{l}<\Gamma_*$ there exist constants $\kappa>0$ and $\Delta>0$ (independent of $\mu$) such that 
	for all $t\in[0,T)$ (equivalently, all $u=\zeta_\mu^{-1}(t)\in[0,\widehat{T})$):
	\[\partial_l(\widehat{V}-\widehat{g})(\zeta^{-1}_\mu(t),\underline{l})\leq -\kappa\mathbb{E}_{0,\underline{l}}\left[\tau_{\underline{l}\pm\Delta}\wedge(T-t)\right]<0\]
	and
	\[\partial_l(\widehat{V}-\widehat{g})(\zeta^{-1}_\mu(t),\overline{l})\geq \kappa\mathbb{E}_{0,\overline{l}}\left[\tau_{\overline{l}\pm\Delta}\wedge(T-t)\right]>0,\] 
	where $\tau_{x\pm\Delta}$ is the first exit time of $\widehat{L}$ from the interval $(x-\Delta,x+\Delta)$.
\end{lemma}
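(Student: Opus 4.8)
The plan is to upgrade the proof of Lemma~\ref{lem:sign.der.hat.V} into a quantitative bound; I treat the point $\underline l$, the point $\overline l$ being handled by the mirror-image argument. Recall from that proof that for $u=\zeta_\mu^{-1}(t)\in[0,\widehat T)$ and $l=\underline l\in(\gamma(u),0)$ one has
\[
\partial_l(\widehat V-\widehat g)(u,\underline l)=\mathbb{E}_{u,\underline l}\!\left[\int_u^{\tau_0\wedge\widehat\tau^*} e^{\int_u^v a'(\widehat L_r)\,dr}\,\partial_l\widehat{\mathcal L}\widehat g(\widehat L_v)\,dv\right],\qquad \tau_0:=\inf\{v\ge u:\widehat L_v^{u,\underline l}\ge0\}\wedge\widehat T,
\]
and that the integrand is nonpositive on $[u,\tau_0\wedge\widehat\tau^*)$, since there $\widehat L_v<0$ and $\partial_l\widehat{\mathcal L}\widehat g<0$ on $(-\infty,0)$ by \eqref{eqn:sign.der.generator}. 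The goal is to bound this from above by $-\kappa\,\mathbb{E}_{0,\underline l}[\tau_{\underline l\pm\Delta}\wedge(T-t)]$ with $\kappa,\Delta$ free of $\mu$.

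First I would pin down the geometry. Because $\gamma^*<\underline l<0$ and $\gamma(u)<\gamma^*$ for every $u$ by Corollary~\ref{cor:inherited.properties.hat.V.problem}(iv), I choose $\Delta>0$ small enough that $\underline l+\Delta<0$ and $\underline l-\Delta>\gamma^*$; then $(\underline l-\Delta,\underline l+\Delta)$ lies in $(-\infty,0)$ and inside the continuation region $(\gamma(u),\Gamma(u))$ for every $u\in[0,\widehat T)$. Since the drift $a$ of \eqref{eq:a} and $\widehat g=g\circ S$ — hence $\partial_l\widehat{\mathcal L}\widehat g$ — do not depend on $\mu$, and $\partial_l\widehat{\mathcal L}\widehat g$ is continuous and strictly negative on the compact set $[\underline l-\Delta,\underline l+\Delta]$, there is $\kappa_0>0$ with $\partial_l\widehat{\mathcal L}\widehat g\le-\kappa_0$ there; both $\Delta$ and $\kappa_0$ depend only on $\underline l$, on the fixed function $\partial_l\widehat{\mathcal L}\widehat g$, and on the $\mu$-free constant $\gamma^*$.

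Next I would localize the stochastic integral and undo the time change. Let $\tau':=\inf\{v\ge u:\widehat L_v^{u,\underline l}\notin(\underline l-\Delta,\underline l+\Delta)\}$; before $\tau'$ the process stays in that small interval, which sits below $0$ and inside the continuation region, so neither hitting $0$ nor leaving $(\gamma(u),\Gamma(u))$ can occur earlier, giving $\tau'\wedge\widehat T\le\tau_0\wedge\widehat\tau^*$. As the integrand above is $\le0$ on $[u,\tau_0\wedge\widehat\tau^*)$, replacing the upper limit by $\tau'\wedge\widehat T$ only increases the integral, and on $[u,\tau'\wedge\widehat T]$ one has $e^{\int_u^v a'(\widehat L_r)dr}\ge1$ (as $a'\ge0$) together with $\partial_l\widehat{\mathcal L}\widehat g(\widehat L_v)\le-\kappa_0$, whence $\partial_l(\widehat V-\widehat g)(u,\underline l)\le-\kappa_0\,\mathbb{E}_{u,\underline l}[\tau'\wedge\widehat T-u]$. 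By time-homogeneity of $\widehat L$ this equals $-\kappa_0\,\mathbb{E}_{0,\underline l}[\tau_{\underline l\pm\Delta}\wedge(\widehat T-u)]$, and since $\widehat T-u=\alpha_\mu(T)-\alpha_\mu(t)=\int_t^T\eta_\mu^2(s)\,ds\ge\mathfrak h^2(T-t)$, the elementary inequality $x\wedge(ca)\ge(c\wedge1)(x\wedge a)$ for $c>0$ gives $\mathbb{E}_{0,\underline l}[\tau_{\underline l\pm\Delta}\wedge(\widehat T-u)]\ge(\mathfrak h^2\wedge1)\,\mathbb{E}_{0,\underline l}[\tau_{\underline l\pm\Delta}\wedge(T-t)]$. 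Taking $\kappa:=\kappa_0(\mathfrak h^2\wedge1)$, still $\mu$-independent, yields the first inequality, which is strict because $\tau_{\underline l\pm\Delta}>0$ a.s. and $T-t>0$. The bound at $\overline l$ is identical, using instead $\partial_l\widehat{\mathcal L}\widehat g\ge\kappa_0>0$ on $[\overline l-\Delta,\overline l+\Delta]\subset(0,\infty)$, the constraints $0<\overline l-\Delta$ and $\overline l+\Delta<\Gamma_*$, and $\tau_0=\inf\{v\ge u:\widehat L_v^{u,\overline l}\le0\}\wedge\widehat T$.

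The main point requiring care is the localization step: one must check that $\tau'\wedge\widehat T\le\tau_0\wedge\widehat\tau^*$ so that the already-signed integrand from Lemma~\ref{lem:sign.der.hat.V} can legitimately be replaced by the constant $-\kappa_0$ over a nondegenerate random interval, and then verify that $\Delta$, $\kappa_0$ and the resulting $\kappa$ are all uniform in $\mu$ — which rests on the $\mu$-independence of $a$, of $\partial_l\widehat{\mathcal L}\widehat g$, of the boundary bounds in Corollary~\ref{cor:inherited.properties.hat.V.problem}(iv), and of $\mathfrak h$. No new probabilistic input beyond Lemma~\ref{lem:sign.der.hat.V} is needed; the work here is bookkeeping.
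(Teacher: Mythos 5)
Your proposal is correct and follows essentially the same route as the paper: starting from the representation in Lemma~\ref{lem:sign.der.hat.V}, shrinking the integration interval to the exit time from a $\mu$-independent strip $(\underline{l}-\Delta,\underline{l}+\Delta)$ contained in the continuation region (using the uniform bounds $\gamma^*$, $\Gamma_*$), bounding $-\partial_l\widehat{\mathcal{L}}\widehat{g}$ below by a positive constant on that compact strip, and then invoking time-homogeneity of $\widehat{L}$ together with $\widehat{T}-u\ge\mathfrak{h}^2(T-t)$ and an adjustment of $\kappa$. The only cosmetic differences are that you retain the exponential factor a bit longer and make the final absorption of the factor $\mathfrak{h}^2\wedge 1$ into $\kappa$ explicit, which the paper handles with the phrase ``by modifying $\kappa$ if necessary.''
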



\begin{proof}
	Without loss of generality suppose that we are treating the case of $\underline{l}$. 
	Arguing exactly as in Lemma \ref{lem:sign.der.hat.V} we get for \[\tau_{0}(u,\underline{l}):=\inf\{v\geq u:\widehat{L}^{u,\underline{l}}_v\geq 0\}\wedge \widehat{T}\] that
	\[\partial_l(\widehat{V}-\widehat{g})(u,\underline{l})\leq\mathbb{E}_{u,\underline{l}}\left[\int_u^{\tau_0\wedge \widehat{\tau}^*}\partial_l\widehat{\mathcal{L}}\widehat{g}(\widehat{L}_v)dv\right].\]
	Choose $\Delta>0$ to satisfy $\gamma^*<\underline{l}-\Delta<\underline{l}<\underline{l}+\Delta<0$  
	so that the spatial values $\underline{l} \pm \Delta$ 
	are in the continuation region for all times $u$. We can then define the first exit time from the strip $(\underline{l}-\Delta,\underline{l}+\Delta)$ as
	\[\tau_{\underline{l}\pm \Delta}(u,\underline{l}):=\inf\{v\geq u:\widehat{L}_v^{u,\underline{l}}\not\in (\underline{l}-\Delta,\underline{l}+\Delta)\}.\]
	It is clear that $\tau_{\underline{l}\pm \Delta}\wedge \widehat{T}\leq \tau_0\wedge\widehat{\tau}^*$ and so
	\[\partial_l(\widehat{V}-\widehat{g})(u,\underline{l})\leq\mathbb{E}_{u,\underline{l}}\left[\int_u^{\tau_{\underline{l}\pm \Delta}\wedge \widehat{T}}\partial_l\widehat{\mathcal{L}}\widehat{g}(\widehat{L}_v)dv\right]\]
	as $\partial_l\widehat{\mathcal{L}}\widehat{g}<0$ on $(\gamma(u),0)$.
	Letting 
	\[\kappa:=\inf_{l\in [\underline{l}-\Delta,\underline{l}+\Delta]}-\partial_l\widehat{\mathcal{L}}\widehat{g}(l)>0\]
	(where the strict inequality holds by continuity of $\partial_l\widehat{\mathcal{L}}\widehat{g}$ and our assumption $\partial_l\widehat{\mathcal{L}}\widehat{g}<0$ on $(\gamma(u),0)\supset [\underline{l}-\Delta,\underline{l}+\Delta]$) we have
	\begin{align*}
		\partial_l(\widehat{V}-\widehat{g})(u,\underline{l})&\leq-\kappa\mathbb{E}_{u,\underline{l}}\left[\tau_{\underline{l}\pm \Delta}\wedge \widehat{T}-u\right]\\
		&=-\kappa\mathbb{E}_{u,\underline{l}}\left[(\tau_{\underline{l}\pm \Delta}(u,\underline{l})-u)\wedge(\widehat{T}-u)\right]<0.
	\end{align*}
	Then, we note that as $\widehat{L}$ is time homogenous \[\tau_{\underline{l}\pm \Delta}(u,\underline{l})-u\overset{d}{=}\inf\{v\geq0:\widehat{L}^{0,\underline{l}}_v\not\in(\underline{l}-\Delta,\underline{l}+\Delta)\}.\]
	and consequently,
	\begin{equation}\label{eqn:intermediate.bd.lem.der.hat.V}
		\partial_l(\widehat{V}-\widehat{g})(u,\underline{l})\leq -\kappa\mathbb{E}_{0,\underline{l}}\left[\tau_{\underline{l}\pm \Delta}\wedge(\widehat{T}-u)\right]<0.
	\end{equation}
	If we make the time change explicit so that $u=\zeta^{-1}_\mu(t)$ and $\widehat{T}=\zeta^{-1}_\mu(T)$ for $t<T$ we can obtain 
	\[\widehat{T}-u=\int_t^T(\zeta^{-1}_\mu)'(u)du\geq \mathfrak{h}^2(T-t)>0,\] 
	which in view of \eqref{eqn:intermediate.bd.lem.der.hat.V} (by modifying $\kappa$ if necessary) completes the proof.
\end{proof}

%

\subsection{An estimate for boundary regularity}

In this subsection we collect all of the preceding results in order to prove a key estimate on the derivatives of the value function arising in the proof of Proposition \ref{prop:bdy.unif.local.lip}. Recall that in the proof we defined the function $J=V-g$ where the spatial domain was restricted to $\pi\geq 1/2$. We used $J$ to define the (upper) $\delta$-level boundary $B_\delta(t)$ which solves $J(t,B_\delta(t))=\delta$ for $\delta<0$, and converges to $B(t)$ as $\delta\uparrow 0$.

Let us revisit the relationship between the derivatives (where they exist) of our original and time-and-space changed problems. We have	\[J(t,\pi)=V(t,\pi)-g(\pi)=\widehat{V}(\zeta_\mu^{-1}(t),S^{-1}(\pi))-\widehat{g}(S^{-1}(\pi)).\]
Consequently,
\begin{equation}\label{eqn:J.der.pi}
	\partial_\pi J(t,\pi)=\partial_\pi (V-g)(t,\pi)=\partial_l(\widehat{V}-\widehat{g})(\zeta_\mu^{-1}(t),S^{-1}(\pi))(S^{-1})'(\pi)
\end{equation}
and
\begin{equation}\label{eqn:J.der.t}
	\partial_tJ(t,\pi)=\partial_t (V-g)(t,\pi)=\partial_u(\widehat{V}-\widehat{g})(\zeta^{-1}_\mu(t),S^{-1}(\pi))(\zeta^{-1}_\mu)'(t).
\end{equation}
We will make use of this relationship in the proof of the following proposition. Note that these equalities also allow us to relate the signs of the derivatives through Lemma \ref{lem:sign.der.hat.V} since $(S^{-1})'(\pi)>0$ and $(\zeta^{-1}_\mu)'(t)>0$.


\begin{proposition}\label{prop:der.bd.for.bdy.reg}
	Consider the setting from the proof of Proposition \ref{prop:bdy.unif.local.lip}. Fix $\epsilon>0$, $0\leq t_0,t_1<T$, and non-empty $I_\epsilon:=[t_0+\epsilon/2,t_1-\epsilon/2]\subset[0,T)$. There exists a $K_\epsilon>0$ independent of $\mu$ and $\delta$ such that for any $t$ in $I_\epsilon$:
	\begin{equation}\label{eqn:bound.der.Bdelta}\left|\frac{\partial_t J(t,B_\delta(t))}{\partial_\pi J(t,B_\delta(t)}\right|\leq K_\epsilon.
	\end{equation}
\end{proposition}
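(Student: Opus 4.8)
The plan is to transfer the quotient to the time‑ and space‑changed picture of Appendix~\ref{sec:time-change}, where the state process $\widehat L$ is time‑homogeneous, and then to compare probabilistic representations of numerator and denominator. Using \eqref{eqn:J.der.pi}--\eqref{eqn:J.der.t} with $u:=\zeta_\mu^{-1}(t)$ and $l:=S^{-1}(B_\delta(t))$, and noting that $\widehat g=g\circ S$ is independent of time (so $\partial_u(\widehat V-\widehat g)=\partial_u\widehat V$),
\[
\left|\frac{\partial_t J(t,B_\delta(t))}{\partial_\pi J(t,B_\delta(t))}\right|=\frac{|\partial_u\widehat V(u,l)|\,(\zeta_\mu^{-1})'(t)}{|\partial_l(\widehat V-\widehat g)(u,l)|\,(S^{-1})'(B_\delta(t))}.
\]
The outer factors are uniformly controlled: $(\zeta_\mu^{-1})'(t)=\|\Delta\bh_\mu(t)\|^2\le\mathfrak H^2$ and $(S^{-1})'(B_\delta(t))=(B_\delta(t)(1-B_\delta(t)))^{-1}\ge4$. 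Moreover, from the construction in the proof of Proposition~\ref{prop:bdy.unif.local.lip} one has $B_\delta(t)\in(\overline\pi,B(t))$ for a fixed $\overline\pi\in(1/2,B_*)$, so $l\in(\bar\ell,\Gamma(u))$ with $\bar\ell:=S^{-1}(\overline\pi)>0$ fixed and $\Gamma(u)\le\Gamma^*$ uniformly (Corollary~\ref{cor:inherited.properties.hat.V.problem}). Finally, by Corollary~\ref{cor:bd.time.der.hat.V}, $|\partial_u\widehat V(u,l)|\le\frac{C}{\widehat T-u}\,\mathbb E_{u,l}[\widehat\tau^*-u]$, and for $t\in I_\epsilon$ one has $t\le t_1-\tfrac{\epsilon}{2}<T-\tfrac{\epsilon}{2}$, hence $\widehat T-u=\int_t^T\|\Delta\bh_\mu(s)\|^2\,ds\ge\mathfrak h^2(T-t)>\tfrac{\mathfrak h^2\epsilon}{2}$. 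Thus everything reduces to proving $\partial_l(\widehat V-\widehat g)(u,l)\ge c_\epsilon\,\mathbb E_{u,l}[\widehat\tau^*-u]$ for a constant $c_\epsilon>0$ depending only on $\epsilon$ (and on $\mathfrak h,\mathfrak H,T$, $g$).

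For this, I would start from the representation obtained inside the proof of Lemma~\ref{lem:sign.der.hat.V} (upper region): with $\tau_0:=\inf\{v\ge u:\widehat L_v^{u,l}\le0\}\wedge\widehat T$,
\[
\partial_l(\widehat V-\widehat g)(u,l)=\mathbb E_{u,l}\!\left[\int_u^{\tau_0\wedge\widehat\tau^*}e^{\int_u^v a'(\widehat L_r)\,dr}\,\partial_l\widehat{\mathcal L}\widehat g(\widehat L_v)\,dv\right]\ge c_0\,\mathbb E_{u,l}\!\left[\int_u^{\tau_0\wedge\widehat\tau^*}\mathds 1_{\{\widehat L_v\ge\bar\ell\}}\,dv\right],
\]
using $e^{\int a'}\ge1$, the sign information \eqref{eqn:sign.der.generator}, and $c_0:=\inf_{y\in[\bar\ell,\Gamma^*]}\partial_l\widehat{\mathcal L}\widehat g(y)>0$ — a constant depending only on $g\circ S$ and $\widehat{\mathcal L}$, positive because $\partial_l\widehat{\mathcal L}\widehat g$ is continuous and strictly positive on the compact $[\bar\ell,\Gamma^*]\subset(0,\infty)$. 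Introducing $\rho:=\inf\{v\ge u:\widehat L_v\le\bar\ell\text{ or }\widehat L_v\ge\Gamma(v)\}\wedge\widehat T$, one checks $\rho\le\tau_0\wedge\widehat\tau^*$, so the right‑hand occupation time dominates $\mathbb E_{u,l}[\rho-u]$; while a strong Markov splitting of $\widehat L$ at $\rho$ gives $\mathbb E_{u,l}[\widehat\tau^*-u]\le\mathbb E_{u,l}[\rho-u]+\mathfrak H^2T\cdot\mathbb P_{u,l}(\widehat L_\rho=\bar\ell)$. Hence it remains to bound $\mathbb P_{u,l}(\widehat L_\rho=\bar\ell)$ by a uniform multiple of $\mathbb E_{u,l}[\rho-u]$ — equivalently, to show the denominator does not decay faster than the numerator as $l$ nears the free boundary $\Gamma(u)$ or the level $\bar\ell$.

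This last point is the main obstacle, and I would resolve it by separating two regimes as in the pathwise analysis of \cite[Theorem~4.3]{de2019lipschitz}. When $l$ is within a fixed distance of $\Gamma(u)$, a gambler's‑ruin estimate for $\widehat L$ (whose drift $a$ is bounded by $1/2$ in absolute value and whose diffusion coefficient is $1$) shows both $\mathbb P_{u,l}(\widehat L_\rho=\bar\ell)$ and $\mathbb E_{u,l}[\rho-u]$, hence also $\mathbb E_{u,l}[\widehat\tau^*-u]$, to be of order $\Gamma(u)-l$, so the ratio stays bounded; when instead $l$ remains in a fixed compact subinterval of $(\bar\ell,\Gamma_*)$, the point $(u,l)$ is uniformly interior to $\widehat\sC$ and Lemma~\ref{lem:strict.bd.mu.indep.der.Vhat} — applied with $l$ ranging over that compact set and the resulting lower bounds minimized — yields $\partial_l(\widehat V-\widehat g)(u,l)\ge c'_\epsilon>0$, while $\mathbb E_{u,l}[\widehat\tau^*-u]\le\widehat T\le\mathfrak H^2T$. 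Arranging the two regimes to overlap, and shrinking the auxiliary symmetric strip toward $\Gamma(u)$ where it would otherwise protrude past the boundary (again following \cite{de2019lipschitz}), patches these into $\partial_l(\widehat V-\widehat g)(u,l)\ge c_\epsilon\,\mathbb E_{u,l}[\widehat\tau^*-u]$; combining with the numerator estimate and the outer factors gives \eqref{eqn:bound.der.Bdelta} with $K_\epsilon:=\mathfrak H^2 C/(2\mathfrak h^2\epsilon\, c_\epsilon)$.
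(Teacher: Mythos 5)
The first half of your argument is sound and essentially the paper's: the transfer via \eqref{eqn:J.der.pi}--\eqref{eqn:J.der.t}, the uniform bounds on $(\zeta_\mu^{-1})'$, $(S^{-1})'$ and $\widehat T-u$, the numerator estimate from Corollary~\ref{cor:bd.time.der.hat.V}, the representation of $\partial_l(\widehat V-\widehat g)$ cut at $\tau_0$ (the upper-region analogue of Lemma~\ref{lem:sign.der.hat.V}) yielding $\partial_l(\widehat V-\widehat g)(u,l)\ge c_0\,\mathbb{E}_{u,l}[\rho-u]$, and the splitting $\mathbb{E}_{u,l}[\widehat\tau^*-u]\le\mathbb{E}_{u,l}[\rho-u]+\mathfrak H^2 T\,\mathbb{P}_{u,l}(\widehat L_\rho=\bar\ell)$ are all correct. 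The problem is the step you yourself identify as the main obstacle.

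Your near-boundary regime is justified by gambler's-ruin estimates asserting that $\mathbb{P}_{u,l}(\widehat L_\rho=\bar\ell)$, $\mathbb{E}_{u,l}[\rho-u]$ and $\mathbb{E}_{u,l}[\widehat\tau^*-u]$ are all of order $\Gamma(u)-l$. Such two-sided estimates require quantitative control of $\Gamma(v)$ for $v>u$ relative to $\Gamma(u)$, and at this stage the only available information is $\Gamma_*\le\Gamma\le\Gamma^*$: no continuity, monotonicity or modulus is known (that is exactly what Proposition~\ref{prop:bdy.unif.local.lip} is meant to produce from the present estimate), and the bound must be uniform over all $\mu$, hence over all admissible boundaries. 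If $\Gamma(\cdot)$ were to rise right after $u$, the starting point $l$ close to $\Gamma(u)$ is deep inside the enlarged region and neither $\mathbb{P}_{u,l}(\widehat L_\rho=\bar\ell)$ nor $\mathbb{E}_{u,l}[\widehat\tau^*-u]$ is of order $\Gamma(u)-l$; if it were to fall, $\mathbb{E}_{u,l}[\rho-u]$ need not be bounded below by a multiple of $\Gamma(u)-l$. So this regime is circular as argued. Secondary issues: your two regimes need not cover $(\bar\ell,\Gamma(u))$, since regime 2 is confined to compacts of $(\bar\ell,\Gamma_*)$ while $\Gamma(u)$ may exceed $\Gamma_*$ by more than any fixed distance; and ``minimizing'' the constants of Lemma~\ref{lem:strict.bd.mu.indep.der.Vhat} over a compact set is not automatic from the lemma as stated (one must rerun its proof with a uniform $\Delta$ and $\kappa$). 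The paper's device avoids all reference to the future behaviour of $\Gamma$: it truncates in time at $u_1$ (exit from the box $(u_0,u_1)\times(\overline l,\infty)$) and retains the term $\partial_l(\widehat V-\widehat g)(\tau_{\overline l},\widehat L_{\tau_{\overline l}})$ in the strong-Markov decomposition of \eqref{eqn:diff.hatVg.derivative.rep}, charging exits through the level $\overline l$ to Lemma~\ref{lem:strict.bd.mu.indep.der.Vhat} and exits through time $u_1$ to the margin $\epsilon'$. Your skeleton could in fact be repaired without gambler's ruin by lowering the reference level (say to $\bar\ell/2$, so that $l-\bar\ell/2$ is bounded below for all $l_\delta$) and bounding $\mathbb{P}_{u,l}(\widehat L_\rho=\bar\ell/2)\le C\,\mathbb{E}_{u,l}[\rho-u]$ by optional sampling applied to $(\widehat L_v-l)^2$; but as written the near-boundary step does not go through.
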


\begin{proof}
	Our approach is to adapt the arguments of \cite[Theorem 4.3]{de2019lipschitz} to get an estimate that is independent of $\mu$. For brevity we do not reproduce the arguments surrounding \cite[Equations (4.16)-(4.24)]{de2019lipschitz} in their entirety as they apply directly to our setting with only minor modifications. The remainder of this proof prepares the initial estimates necessary for these arguments to carry through.
	
	Define $\widehat{J}:=\widehat{V}-\widehat{g}$ and restrict to the domain $[0,\widehat{T})\times[0,\infty)$ (to mirror the restriction of $J$ to $\pi\geq 1/2$ in Proposition \ref{prop:bdy.unif.local.lip}). Using our representation in terms of the derivatives of $\widehat{J}$ from \eqref{eqn:J.der.pi}-\eqref{eqn:J.der.t} we have
	\[\left|\frac{\partial_t J(t,B_\delta(t))}{\partial_\pi J(t,B_\delta(t)}\right|=\left|\frac{\partial_u \widehat{J}(\zeta^{-1}_\mu(t),S^{-1}(B_\delta(t)))(\zeta_\mu^{-1})'(t)}{\partial_l \widehat{J}(\zeta^{-1}_\mu(t),S^{-1}(B_\delta(t)))(S^{-1})'(B_\delta(t))}\right|=:E_1, \ \ \ t\in I_\epsilon.\]
	To be concise we will use $l_\delta:=S^{-1}(B_\delta(t))$ and $u:=\zeta_\mu^{-1}(t)$ where we do not need the functions to be explicit. Using that $\widehat{g}$ does not depend on time and Corollary \ref{cor:bd.time.der.hat.V} we have
	\begin{align*}
		E_1&\leq \frac{C}{\widehat{T}-u}\frac{\mathbb{E}_{u,l_\delta}\left[\widehat{\tau}^*-u\right](\zeta^{-1}_\mu)'(t)}{\left|\partial_l \widehat{J}(u,l_\delta)(S^{-1})'(B_\delta(t))\right|}\\
		&=\frac{C}{\zeta^{-1}_\mu(T)-\zeta^{-1}_\mu(t)}\frac{\mathbb{E}_{u,l_\delta}\left[\widehat{\tau}^*-u\right]}{\partial_l \widehat{J}(u,l_\delta)}\frac{(\zeta^{-1}_\mu)'(t)}{(S^{-1})'(B_\delta(t))}.
	\end{align*}
	The last equality follows since the denominator is strictly positive at these points. Now note that
	\[\zeta^{-1}_\mu(T)-\zeta^{-1}_\mu(t)=\int_t^T(\zeta^{-1}_\mu)'(s)ds\geq \mathfrak{h}^2(T-t)\geq \mathfrak{h}^2(T-t_1),\]
	$(S^{-1})'\geq 4$, and $(\zeta^{-1}_\mu)'\leq\mathfrak{H}^2$. Consequently,
	\begin{align*}
		E_1&\leq \frac{C\mathfrak{H}^2}{4\mathfrak{h}^2(T-t_1)}\frac{\mathbb{E}_{u,l_\delta}\left[\widehat{\tau}^*-u\right]}{\partial_l \widehat{J}(u,l_\delta)}.
	\end{align*}
	Using the representation for the derivative in the denominator from \eqref{eqn:diff.hatVg.derivative.rep} in the proof of Lemma \ref{lem:sign.der.hat.V} we find
	\begin{equation}\label{eqn:intermediate.bd.E1}
		E_1\leq\frac{C\mathfrak{H}^2}{4\mathfrak{h}^2(T-t_1)}\frac{\mathbb{E}_{u,l_\delta}\left[\widehat{\tau}^*-u\right]}{\mathbb{E}_{u,l_\delta}\left[\int_{u}^{\widehat{\tau}^*} e^{\int_{u}^va'(\widehat{L}_r)dr}\partial_l\widehat{\mathcal{L}}\widehat{g}(\widehat{L}_v)dv\right]}.
	\end{equation}
	
	We will now obtain a useful representation of the denominator in \eqref{eqn:intermediate.bd.E1} for which an additional estimate can be derived. First, we will define an analogue of $I_\epsilon$ for the time changed problem. Let $u_0:=\zeta_\mu^{-1}(t_0), \ u_1:=\zeta_\mu^{-1}(t_1)$ and $u_0^{\epsilon}:=\zeta_\mu^{-1}(t_0+\frac{\epsilon}{2}), \ u_1^{\epsilon}:=\zeta_\mu^{-1}(t_1-\frac{\epsilon}{2})$.
	Observe that the gaps between the interior points $u_0^{\epsilon},u_1^{\epsilon}$ and the endpoints $u_0,u_1$ can be bounded independently of $\mu$:
	\[u_0^\epsilon-u_0=\zeta_\mu^{-1}(t_0+\epsilon/2)-\zeta_\mu^{-1}(t_0)\geq \mathfrak{h}^2\epsilon/2,\]
	and similarly, $u_1-u_1^\epsilon\geq \mathfrak{h}^2\epsilon/2$. So, for $\epsilon':=\mathfrak{h}^2\epsilon$ we can define a set $\widehat{I}_{\epsilon'}:=\left[u_0+\frac{\epsilon'}{2},u_1-\frac{\epsilon'}{2}\right]\subset (u_0,u_1)$ such that any $t\in I_\epsilon$ maps to a $u\in \widehat{I}_{\epsilon'}$.
	
	In the proof of Proposition \ref{prop:bdy.unif.local.lip} we introduced a $1/2<\overline{\pi}<B_*$ (independent of $\mu$) which satisfies $\overline{\pi}< B_\delta(t)$ for all $t\in I_\epsilon$. Let us similarly choose here a $0<\overline{l}\leq S^{-1}(\overline{\pi})$ so that $\overline{l}<l_\delta$. With this, define
	\[\tau_{\overline{l}}:=\inf\{v\geq u:(v,L^{u,l_\delta}_v)\not\in (u_0,u_1)\times(\overline{l},\infty)\}.\]
	Arguing using iterated expectations and the strong Markov property as in \cite[Theorem 4.3]{de2019lipschitz} (see Equations (4.16)-(4.18) of the proof) we get for $\widehat{\tau}^*=\widehat{\tau}^*(u,l_\delta)$:
	\begin{align}
		\mathbb{E}_{u,l_\delta}&\left[\int_u^{\widehat{\tau}^*} e^{\int_u^va'(\widehat{L}_r)dr}\partial_l\widehat{\mathcal{L}}\widehat{g}(\widehat{L}_v)dv\right] \nonumber \\
		&\quad =\mathbb{E}_{u,l_\delta}\left[\int_u^{\tau_{\overline{l}}\wedge \widehat{\tau}^*}e^{\int_u^va'(\widehat{L}_r)dr}\partial_l\widehat{\mathcal{L}}\widehat{g}(\widehat{L}_v)dv\right] \nonumber \\
		&\quad\quad\quad+\mathbb{E}_{u,l_\delta}\left[\mathds{1}_{\tau_{\overline{l}}<\widehat{\tau}^*}e^{\int_u^{\tau_{\overline{l}}}a'(L_r)dr}\partial_l\widehat{J}(\tau_{\overline{l}},L_{\tau_{\overline{l}}})\right]=:E_2. \label{eqn:estimate.1}
	\end{align}
	
	Using this representation for the denominator in \eqref{eqn:intermediate.bd.E1}, take $\Gamma^*$ as in \eqref{eq:L.hat.bdy.bounds} to be the upper bound on the (time-changed) optimal stopping boundaries. Note that the optimal stopping time for the problem in terms of $\widehat{L}$ must arise before $\widehat{L}$ hits $\Gamma^*$. As $a'\geq0$ (and $\mathds{1}_{\tau_{\overline{l}}<\widehat{\tau}^*}\partial_l\widehat{J}(\tau_{\overline{l}},L^{u,l_\delta}_{\tau_{\overline{l}}})\geq0$) if we let $\alpha_{\overline{l}}:=\min_{l\in [\overline{l},\Gamma^*]}\partial_l\widehat{L}\widehat{g}(l)>0$ we have
	\begin{equation}\label{eqn:estimate.2}E_2\geq \mathbb{E}_{u,l_\delta}\left[\alpha_{\overline{l}} (\tau_{\overline{l}}\wedge \widehat{\tau}^*-u)+\mathds{1}_{\tau_{\overline{l}}<\widehat{\tau}^*}\partial_l\widehat{J}(\tau_{\overline{l}},L_{\tau_{\overline{l}}})\right]>0.
	\end{equation}
	This gives us an estimate for the denominator in our bound of $E_1$ from \eqref{eqn:intermediate.bd.E1}. Similarly, turning to the numerator in the same bound we can argue that we have:
	\begin{equation}\label{eqn:estimate.3}\mathbb{E}_{u,l_\delta}\left[\widehat{\tau}^*-u\right]=\mathbb{E}_{u,l_\delta}\left[ \tau_{\overline{l}}\wedge \widehat{\tau}^*-u+\mathds{1}_{\tau_{\overline{l}}<\widehat{\tau}^*}\mathbb{E}_{\tau_{\overline{l}},L_{\tau_{\overline{l}}}}\left[\widehat{\tau}^*-\tau_{\overline{l}}\right]\right]. 
	\end{equation}
	Lastly, using $\Delta>0$ and $\kappa>0$ given by Lemma \ref{lem:strict.bd.mu.indep.der.Vhat}
	\begin{align}
		\inf_{u\in [u_0,u_1]}\partial_l \widehat{J}(u,\overline{l})&\geq\inf_{t\in [t_0,t_1]}\kappa\mathbb{E}_{0,\overline{l}}\left[\tau_{\overline{l}\pm \Delta}\wedge(T-t)\right] \nonumber \\
		&\geq \kappa\mathbb{E}_{0,\overline{l}}\left[\tau_{\overline{l}\pm \Delta}\wedge(T-t_1)\right]>0. \label{eqn:estimate.4}
	\end{align} Now let
	$j_{\overline{l}}^*:=\kappa\mathbb{E}_{0,\overline{l}}\left[\tau_{\overline{l}\pm \Delta}\wedge(T-t_1)\right]$
	be this lower bound. It will play a role in the arguments that we borrow from \cite[Theorem 4.3]{de2019lipschitz} when manipulating the estimate in \eqref{eqn:estimate.2}.
	
	Using \eqref{eqn:estimate.1}-\eqref{eqn:estimate.4}, we are finally ready to invoke the proof of \cite[Theorem 4.3]{de2019lipschitz}. Observe that these equations give:
	\begin{equation}
		E_1\leq \frac{C\mathfrak{H}^2}{4\mathfrak{h}^2(T-t_1)}\frac{\mathbb{E}_{u,l_\delta}\left[ \tau_{\overline{l}}\wedge \widehat{\tau}^*-u+\mathds{1}_{\tau_{\overline{l}}<\widehat{\tau}^*}\mathbb{E}_{\tau_{\overline{l}},L_{\tau_{\overline{l}}}}\left[\widehat{\tau}^*-\tau_{\overline{l}}\right]\right]}{\mathbb{E}_{u,l_\delta}\left[\alpha_{\overline{l}} (\tau_{\overline{l}}\wedge \widehat{\tau}^*-u)+\mathds{1}_{\tau_{\overline{l}}<\widehat{\tau}^*}\partial_l\widehat{J}(\tau_{\overline{l}},L_{\tau_{\overline{l}}})\right]}
	\end{equation}
	which, in view of \cite[Equation (4.19)]{de2019lipschitz}, is (up to a constant) precisely the form of the second inequality in \cite[Equation (4.21)]{de2019lipschitz}. Hence, arguing as they do in \cite[Equation (4.21)-(4.23)]{de2019lipschitz} we obtain for any $u\in \widehat{I}_{\epsilon'}$ (and therefore, any $t\in I_\epsilon$):
	\begin{align*}&\frac{\mathbb{E}_{u,l_\delta}\left[ \tau_{\overline{l}}\wedge \widehat{\tau}^*-u+\mathds{1}_{\tau_{\overline{l}}<\widehat{\tau}^*}\mathbb{E}_{\tau_{\overline{l}},L_{\tau_{\overline{l}}}}\left[\widehat{\tau}^*-\tau_{\overline{l}}\right]\right]}{\mathbb{E}_{u,l_\delta}\left[\alpha_{\overline{l}} (\tau_{\overline{l}}\wedge \widehat{\tau}^*-u)+\mathds{1}_{\tau_{\overline{l}}<\widehat{\tau}^*}\partial_l\widehat{J}(\tau_{\overline{l}},L_{\tau_{\overline{l}}})\right]}\\
		&\quad \quad \quad \quad \quad \quad \quad \quad \leq\left(\alpha_{\overline{l}}^{-1}\vee \widehat{T}\right)\left(1+(j^*_{\overline{l}})^{-1}+2(\alpha_{\overline{l}}\epsilon')^{-1}\right)\\
		& \quad \quad \quad \quad \quad \quad \quad \quad \leq \left(\alpha_{\overline{l}}^{-1}\vee (\mathfrak{H}^2T)\right)\left(1+(j^*_{\overline{l}})^{-1}+2(\alpha_{\overline{l}}\mathfrak{h}^2\epsilon)^{-1}\right).
	\end{align*}
	Thus, we arrive at our desired bound:
	\[E_1\leq\frac{C\mathfrak{H}^2\left(\alpha_{\overline{l}}^{-1}\vee (\mathfrak{H}^2T)\right)}{4\mathfrak{h}^2(T-t_1)}\left(1+(j^*_{\overline{l}})^{-1}+2(\alpha_{\overline{l}}\mathfrak{h}^2\epsilon)^{-1}\right), \ \ \ t\in I_\epsilon.\]
	Notice that $\alpha_{\overline{l}}$ and $j^*_{\overline{l}}$ do not depend in any way on the choice of $\mu$ and so, the right hand side depends only on values that are common to the problem for all input measures. This completes the proof.
\end{proof}

\subsection{Smooth fit condition}
A consequence of the boundary regularity from Proposition \ref{prop:bdy.unif.local.lip} (which in view of Corollary \ref{cor:inherited.properties.hat.V.problem}(ii) is also inherited by the transformed boundaries $\gamma$, $\Gamma$) is the \textit{smooth fit} condition for the spatial derivative across the boundaries. That is, for any $u\in[0,\widehat{T})$ we have the mapping $l\mapsto\partial_l\widehat{V}(u,l)$ is $C^1$ (see \cite[Remark 4.5]{de2019lipschitz} for a related example and justification). We state this result formally here and also extend it to the original value function $V$.

\begin{proposition}\label{prop:smooth.fit}
	For any fixed times $u\in[0,\hat{T})$ and $t\in[0,T)$, the value functions $\widehat{V}(u,l)$ and $V(t,\pi)$ are continuously differentiable in their spatial variables $l$ and $\pi$, respectively.
\end{proposition}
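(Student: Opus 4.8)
The plan is to prove smooth fit first for the time-and-space changed value function $\widehat V$ and then transfer the conclusion to $V$ through the diffeomorphism relating them. By Corollary \ref{cor:inherited.properties.hat.V.problem}(v), $\widehat V$ is $C^{1,2}$ (in particular $C^1$ in $l$) throughout the \emph{open} continuation region $\widehat\sC=\{(u,l):\gamma(u)<l<\Gamma(u)\}$, while on the stopping region $\widehat V$ coincides with $\widehat g=g\circ S$, which is $C^3(\mathbb{R})$ since $g\in C^3(0,1)$ by Assumption \ref{ass:g} and $S$ is smooth. Hence the only points where $C^1$-regularity in $l$ has to be checked are the free-boundary curves $\{l=\gamma(u)\}$ and $\{l=\Gamma(u)\}$; I will treat the upper boundary, the lower one being symmetric. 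Fix $u\in[0,\widehat T)$ and set $l^*:=\Gamma(u)$.

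First I would establish existence of $\partial_l\widehat V(u,l^*)$ by a sandwich argument. For $h>0$ the point $(u,l^*+h)$ is in the stopping region, so $\widehat V(u,l^*+h)-\widehat V(u,l^*)=\widehat g(l^*+h)-\widehat g(l^*)$, giving $\partial_l^+\widehat V(u,l^*)=\widehat g'(l^*)$. For the left derivative, the inequality $\widehat V\le\widehat g$ with equality at $(u,l^*)$ yields $\widehat V(u,l^*)-\widehat V(u,l^*-h)\ge\widehat g(l^*)-\widehat g(l^*-h)$, hence $\liminf_{h\downarrow0}h^{-1}(\widehat V(u,l^*)-\widehat V(u,l^*-h))\ge\widehat g'(l^*)$. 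Conversely, using the smallest optimal stopping time $\widehat\tau^*(u,l^*-h)$ from \eqref{eq:opt.stopping.time.L.hat} as an admissible (suboptimal) rule from the starting point $(u,l^*)$ — legitimate since the running cost in \eqref{eq:value.function.hat.L} is a deterministic function of the stopping time and $\widehat\tau^*(u,l^*-h)\in\mathcal T^{\widehat W}_{u,\widehat T}$, so the cost terms cancel in the difference — and writing $\widehat g(\widehat L^{u,l^*}_{\widehat\tau^*})-\widehat g(\widehat L^{u,l^*-h}_{\widehat\tau^*})=\int_{l^*-h}^{l^*}\widehat g'(\widehat L^{u,y}_{\widehat\tau^*})\,\partial\widehat L^{u,y}_{\widehat\tau^*}\,dy$ via the differentiable flow \eqref{eq:flow} (bounded between $1$ and $e^{(\widehat T-u)/4}$) and the boundedness of $\widehat g'$ from Remark \ref{rmk:consequences.of.assumptions}, one gets $\limsup_{h\downarrow0}h^{-1}(\widehat V(u,l^*)-\widehat V(u,l^*-h))\le\widehat g'(l^*)$, where the limit is identified using $\widehat\tau^*(u,l^*-h)\to u$ a.s.\ as $h\downarrow0$. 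Thus $\partial_l^-\widehat V(u,l^*)=\widehat g'(l^*)=\partial_l^+\widehat V(u,l^*)$.

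It then remains to show $l\mapsto\partial_l\widehat V(u,l)$ is continuous across $l^*$. On the stopping side this is immediate as $\partial_l\widehat V=\widehat g'$ there. On the continuation side I would invoke the probabilistic representation $\partial_l\widehat V(u,l)=\mathbb{E}_{u,l}[\widehat g'(\widehat L_{\widehat\tau^*})\,\partial\widehat L_{\widehat\tau^*}]$ of Lemma \ref{lem:prob.representation}(i): as $l\uparrow\Gamma(u)$ one has $\widehat\tau^*(u,l)\to u$ a.s., so $\widehat L^{u,l}_{\widehat\tau^*}\to l^*$ and $\partial\widehat L^{u,l}_{\widehat\tau^*}\to1$, and dominated convergence (using $|\partial\widehat L|\le e^{(\widehat T-u)/4}$ and $\|\widehat g'\|_\infty<\infty$) gives $\partial_l\widehat V(u,l)\to\widehat g'(l^*)$, matching the boundary value. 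This is exactly the reasoning of \cite[Remark 4.5]{de2019lipschitz}. Finally, $V(t,\pi)=\widehat V(\zeta_\mu^{-1}(t),S^{-1}(\pi))$ by \eqref{eq:relating.value.functions}, and for fixed $t$ the map $\pi\mapsto S^{-1}(\pi)$ is a $C^\infty$ diffeomorphism of $(0,1)$ onto $\mathbb{R}$, so the chain rule gives $\pi\mapsto V(t,\pi)\in C^1(0,1)$; near $\pi=0,1$ Proposition \ref{prop:bounds.on.bdys} places a neighborhood in the stopping region where $V(t,\cdot)=g(\cdot)$, so $V$ inherits the endpoint regularity of $g$ there.

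The main obstacle I expect is the continuity of $\partial_l\widehat V$ up to the free boundary — concretely, proving $\widehat\tau^*(u,l)\to u$ as $l$ approaches $\gamma(u)$ or $\Gamma(u)$, which amounts to the probabilistic regularity of every boundary point for the complement of the continuation region. This is precisely where the uniform Lipschitz regularity of $\gamma,\Gamma$ (Proposition \ref{prop:bdy.unif.local.lip} together with Corollary \ref{cor:inherited.properties.hat.V.problem}(ii)) is essential, and the rigorous version of the argument parallels \cite[Theorem 4.3 and Remark 4.5]{de2019lipschitz}.
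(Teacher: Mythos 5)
Your proof is correct, but it follows a genuinely different route from the paper's. The paper's own argument is very short: it verifies that the boundary points $\gamma(u),\Gamma(u)$ are \emph{probabilistically regular} for the interior of the stopping region (this is where the local Lipschitz property of the boundaries from Proposition \ref{prop:bdy.unif.local.lip}, transferred via Corollary \ref{cor:inherited.properties.hat.V.problem}(ii), enters, combined with standard fine properties of Brownian motion with bounded drift), and then invokes the global $C^1$-regularity theorem \cite[Theorem 10]{de2020global} to conclude smooth fit for $\widehat V$, passing to $V$ exactly as you do via \eqref{eq:relating.value.functions}. You instead prove smooth fit by hand: the one-sided sandwich bounds at the boundary, the cancellation of the (state-independent) running cost when reusing $\widehat\tau^*(u,l^*-h)$ as a suboptimal rule from $(u,l^*)$, the flow representation $\widehat g(\widehat L^{u,l^*}_{\widehat\tau^*})-\widehat g(\widehat L^{u,l^*-h}_{\widehat\tau^*})=\int_{l^*-h}^{l^*}\widehat g'(\widehat L^{u,y}_{\widehat\tau^*})\,\partial\widehat L^{u,y}_{\widehat\tau^*}\,dy$ with the bounds of \eqref{eq:flow}, and then continuity of $\partial_l\widehat V$ up to the boundary via Lemma \ref{lem:prob.representation}(i) and dominated convergence. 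Both arguments ultimately rest on the same pillar, which you correctly isolate as the main obstacle: the convergence $\widehat\tau^*(u,l)\to u$ as $l$ tends to the boundary, i.e.\ probabilistic regularity of the boundary points, which follows from the Lipschitz boundaries exactly as in the paper (and as in \cite{de2019lipschitz}). What your approach buys is self-containedness — it avoids importing \cite[Theorem 10]{de2020global} at the cost of a longer verification using ingredients (the differentiable flow, Lemma \ref{lem:prob.representation}(i)) already available in Appendix \ref{sec:time-change}; the paper's approach buys brevity by outsourcing the analytic conclusion to the cited global regularity result. Your closing remark on the endpoints $\pi\in\{0,1\}$ (value equals $g$ on a neighborhood by Proposition \ref{prop:bounds.on.bdys}) is a reasonable and slightly more careful treatment than the paper's, which only addresses $\pi\in(0,1)$ through the chain rule.
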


\begin{proof}
	We treat $\widehat{V}(u,l)$ as the result for $V(t,\pi)$ then follows from \eqref{eq:relating.value.functions}.  To begin, for any $u\in [0,\widehat{T})$ we say that the boundary points $\Gamma(u),\gamma(u)$ are probabilistically regular for the interior of the stopping region:
	\[\mathrm{int}\widehat{\mathscr{D}}:=\{(u,l)\in(0,\widehat{T})\times \mathbb{R}: l\in (-\infty,\gamma(u))\cup(\Gamma(u),\infty)\}\] 
	if $\mathbb{P}(\sigma_{\mathrm{int}\widehat{\mathscr{D}}}(u,\Gamma(u))=0)=1$ and $\mathbb{P}(\sigma_{\mathrm{int}\widehat{\mathscr{D}}}(u,\gamma(u))=0)=1$
	for
	\[\sigma_{\mathrm{int}\widehat{\mathscr{D}}}(u,l):=\inf\left\{v >u:(v,L_v^{u,l})\in\mathrm{int}\widehat{\mathscr{D}}\right\}.\]
	By the local Lipschitz property of the boundaries $\Gamma, \gamma$ and standard properties of Brownian motion, we obtain
	\[\sigma_{\mathrm{int}\widehat{\mathscr{D}}}(u,\Gamma(u))=0, \ \ \ \sigma_{\mathrm{int}\widehat{\mathscr{D}}}(u,\gamma(u))=0 \ \ \ a.s.\]
	for all $u\in[0,\hat{T})$ and so the boundary points are probabilistically regular. With this, the claim in the proposition follows from \cite[Theorem 10]{de2020global}. 
\end{proof}
\section{Continuity of the Value Function in the Volatility}\label{app:cont.val.funct}

The proof techniques we employ rely crucially on the idea of volatility times which correspond to the quadratic variation of continuous local martingales. In particular, the work of Janson and Tysk \cite{janson2003volatility} shows that for a \textit{given} Brownian motion $W$, there exists a unique stopping time solution to
\[\xi(t)=\int_0^t\|\Delta\bh_\mu(s)\|^2W_{\xi(s)}^2(1-W_{\xi(s)})^2ds\]
such that $\widetilde{\Pi}_t=W_{\xi(t)}$ is a solution of 
\[d\widetilde{\Pi}_s=\|\Delta\bh_\mu(s)\|\widetilde{\Pi}_s(1-\widetilde{\Pi}_s)d\widetilde{W}_s\]
for \textit{some} Brownian motion $\widetilde{W}$. Moreover, this $\xi(t)$ is the quadratic variation of $\widetilde{\Pi}$. This statement is to be compared to the well known result that a continuous local martingale can be represented as the time change of \textit{some} Brownian motion. Working with the same Brownian motion and multiple volatility times is remarkably useful for arguments of the type undertaken in this appendix. 
For a formal statement of their result see \cite[Theorem 1]{janson2003volatility} or \cite[Theorem 3.2]{ekstrom2004properties}.

In view of this, we cast our problem \eqref{eqn:value.function} in a way that is reminiscent of the setup in \cite{ekstrom2004properties} and exploits the structure just introduced. As in Section \ref{sec:single.agent}, we suppress the (arbitrary) index $i\in I$ to simplify the exposition.  Recall from Section \ref{sec:value_fun} that in our problem we may equivalently work with $\mathbb{F}^\Pi$ stopping times, so by Levy's characterization our value function can be equivalently written as:

\begin{equation}
	V(t,\pi;\mu)=\inf_{\tau\in\mathcal{T}_t^\Pi}\mathbb{E}\left[c(\tau-t)+g(\Pi_\tau^{t,\pi})\right]
\end{equation}
where
\begin{equation}
	d\Pi_s^{t,\pi}=\|\Delta\bh_\mu(s)\|\Pi_s^{t,\pi}(1-\Pi_s^{t,\pi})d\widetilde{W}_s, \ \ \ \Pi_t^{t,\pi}=\pi,
\end{equation}
and $\widetilde{W}$ is an \textit{arbitrary} Brownian motion. In the remainder of this section we will maintain this simplification and treat the posterior probability process as driven by a \textit{one-dimensional} Brownian motion.

%
%

We are ready to prove the required continuity in the volatility.

\begin{proposition}\label{prop:val.func.cont.vol}
	If $\|\Delta\bh_{\mu_{n}}\|\to \|\Delta\bh_{\mu}\|$ pointwise as $n\to\infty$ then 
	\[\lim_{n\to\infty} V(t,\pi;\mu_n)=V(t,\pi;\mu), \ \ \ \forall (t,\pi)\in [0,T)\times (0,1).\]
\end{proposition}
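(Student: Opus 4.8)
The plan is to exploit the volatility-time representation just introduced so that the whole sequence of optimal stopping problems, indexed by $\mu_n$, is realized on one probability space carrying a single Brownian motion $W$ with $W_0=\pi$. Writing $\eta_n(s):=\|\Delta\bh_{\mu_n}(s)\|$ and $\eta(s):=\|\Delta\bh_\mu(s)\|$, both valued in $[\mathfrak{h},\mathfrak{H}]$, I would let $\xi_n$ and $\xi$ be the Janson--Tysk volatility clocks on $[t,T]$ attached to $\eta_n$ and $\eta$, normalized by $\xi_n(t)=\xi(t)=0$, so that $\Pi^{(n),t,\pi}_s=W_{\xi_n(s)}$ and $\Pi^{t,\pi}_s=W_{\xi(s)}$, with both processes staying in $(0,1)$ for $s\le T$ (Corollary~\ref{cor:properties.of.Pi}) and both clocks valued in $[0,\tfrac1{16}\mathfrak{H}^2T]$. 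Since the problem may be posed over $\bbF^\Pi$-stopping times, the time change $\tau\mapsto\rho:=\xi_n(\tau)$ gives a bijection between $\bbF^\Pi$-stopping times $\tau\le T$ and $\bbF^W$-stopping times $\rho\le\xi_n(T)$, with $\cF^\Pi_\tau=\cF^W_\rho$, under which
\[V(t,\pi;\mu_n)=\inf\Big\{\bbE\big[c\,(A_n(\rho)-t)+g(W_\rho)\big]\colon \rho\text{ an }\bbF^W\text{-stopping time},\ \rho\le\xi_n(T)\Big\},\]
where $A_n:=\xi_n^{-1}$, extended by $T$ past $\xi_n(T)$; likewise for $V(t,\pi;\mu)$ with $A:=\xi^{-1}$ and constraint $\rho\le\xi(T)$. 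Since $A_n(\xi_n(T))=T$, the integrands are bounded by $cT+\|g\|_\infty$ uniformly in $n$ and $\rho$.

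The first real step is to prove that the clocks are stable: $\xi_n\to\xi$ uniformly on $[t,T]$ almost surely. As $0\le\xi_n'\le\tfrac1{16}\mathfrak{H}^2$, the family $\{\xi_n\}$ is equi-Lipschitz and uniformly bounded, so by Arzel\`a--Ascoli any subsequence has a further subsequence converging uniformly to some continuous $\bar\xi$; passing to the limit in $\xi_n(s)=\int_t^s\eta_n^2(r)\,W_{\xi_n(r)}^2(1-W_{\xi_n(r)})^2\,dr$ — using the assumed pointwise convergence $\eta_n^2\to\eta^2$, the uniform continuity of $W$ on $[0,\tfrac1{16}\mathfrak{H}^2T]$ together with $\xi_n\to\bar\xi$ uniformly, and dominated convergence — shows $\bar\xi$ solves the volatility-clock equation for $\eta$, whence $\bar\xi=\xi$ by the uniqueness in the Janson--Tysk theorem. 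So the full sequence converges uniformly, and therefore $\Pi^{(n),t,\pi}\to\Pi^{t,\pi}$ uniformly on $[t,T]$ a.s., $A_n\to A$ locally uniformly a.s., and $\xi_n(T)\to\xi(T)$, $A_n(\xi(T))\to T$ a.s.

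It then remains to pass the limit through the two infima. For $\limsup_n V(t,\pi;\mu_n)\le V(t,\pi;\mu)$ I would take an $\varepsilon$-optimal $\bbF^\Pi$-stopping time $\tau^\varepsilon\le T$ for the $\mu$-problem and, by a standard perturbation (stopping a little earlier; harmless because $g\circ S$ is Lipschitz with a $\mu$-independent constant and $L$ has uniformly bounded drift and variance), assume $\tau^\varepsilon\le T-\delta$ for some $\delta>0$. Then $\rho^\varepsilon:=\xi(\tau^\varepsilon)\le\xi(T-\delta)<\xi(T)$, the truncation $\rho^\varepsilon\wedge\xi_n(T)$ is admissible for the $\mu_n$-problem and equals $\rho^\varepsilon$ for all large $n$ (a.s., since $\xi_n(T)\to\xi(T)>\xi(T-\delta)$), and $A_n(\rho^\varepsilon)\to A(\rho^\varepsilon)=\tau^\varepsilon$ a.s.; bounded convergence finishes this side. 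For $\liminf_n V(t,\pi;\mu_n)\ge V(t,\pi;\mu)$ I would take $\varepsilon$-optimal $\bbF^W$-stopping times $\rho_n\le\xi_n(T)$ for the $\mu_n$-problems and transfer them to the $\mu$-problem through the admissible real-time rule $\sigma_n:=A(\rho_n\wedge\xi(T))\wedge T$, which satisfies $\Pi^{t,\pi}_{\sigma_n}=W_{\rho_n\wedge\xi(T)}$. This bounds $V(t,\pi;\mu)-V(t,\pi;\mu_n)-\varepsilon$ by $\bbE\big[c\,(A(\rho_n\wedge\xi(T))-A_n(\rho_n))+g(W_{\rho_n\wedge\xi(T)})-g(W_{\rho_n})\big]$, which tends to $0$ by bounded convergence: on $\{\rho_n\le\xi(T)\}$ the clock-value gap is $\le\|A_n-A\|_\infty\to0$ a.s. and the $g$-gap vanishes, while on $\{\rho_n>\xi(T)\}\subseteq\{\xi_n(T)>\xi(T)\}$ one has $\rho_n\to\xi(T)$ and $A_n(\rho_n)\to T$ a.s. (squeezed between $\xi(T)$ and $\xi_n(T)\to\xi(T)$, with $A_n(\xi(T))\to T$), so continuity of $W$ kills both gaps there. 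Sending $\varepsilon\downarrow0$ and combining the two bounds yields the claim.

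I expect the genuine difficulty to be the bookkeeping around the random, $\mu$-dependent horizon constraint $\rho\le\xi_n(T)$ versus $\rho\le\xi(T)$: the admissible sets for the $n$-problems and for the limit problem really do differ, so near-optimal controls must be truncated and the resulting pathwise discrepancies controlled using $\xi_n(T)\to\xi(T)$ a.s. and the uniform continuity of $W$. By comparison the clock-stability step is fairly routine.
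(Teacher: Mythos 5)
Your proposal is correct and follows essentially the same route as the paper's proof: both realize all the problems on one Brownian motion via the Janson--Tysk volatility clocks, transfer ($\varepsilon$-)optimal stopping times between the $\mu_n$- and $\mu$-problems by composing with the clocks and their inverses (your $A_n(\rho^\varepsilon\wedge\xi_n(T))$ and $A(\rho_n\wedge\xi(T))$ are precisely the paper's $\tau_n^\epsilon$ and $\tilde\tau_n$), and conclude by bounded convergence. The only point worth flagging is the clock-stability step: the paper simply cites \cite[Lemma 9]{janson2003volatility} for $\xi_n\to\xi$, whereas your Arzel\`a--Ascoli argument identifies the pathwise subsequential limit by invoking the Janson--Tysk uniqueness of stopping-time solutions, which does not literally apply to an $\omega$-wise subsequential limit; this is easily repaired either by citing that lemma or by a pathwise separation-of-variables uniqueness argument for the limit equation, valid because the limiting clock stays in the region where $W_{\xi}\in(0,1)$ up to time $T$.
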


\begin{proof}
	Without loss of generality, we will show the result for $t=0$. Let $W$ be a Brownian motion with $W_0=\pi$ and let $\xi$ and $\xi_n$ be the stopping time solutions to:
	\[\xi(t)=\int_0^t\|\Delta\bh_{\mu}(s)\|^2W_{\xi(s)}^2(1-W_{\xi(s)})^2ds,\]  \[\xi_n(t)=\int_0^t\|\Delta\bh_{\mu_n}(s)\|^2W_{\xi_n(s)}^2(1-W_{\xi_n(s)})^2ds.\]
	which by \cite[Theorem 1]{janson2003volatility} exist and are almost surely unique. Let $\mathbb{F}^W$, $\mathbb{F}^\Pi$, $\mathbb{F}^{\Pi^n}$ be the completed filtrations generated by $W$, $\Pi=(\Pi_t)_{t\geq0}:=(W_{\xi(t)})_{t\geq0}$, and $\Pi^n=(\Pi^n_t)_{t\geq0}:=(W_{\xi_n(t)})_{t\geq0}$, respectively. As we have noted, these processes can be interpreted as the posterior probability processes from our main discussions. 
	
	We now gather some properties for later use. By \cite[Lemma 9]{janson2003volatility} (which applies to our setting of a compact state space) we have $\xi_n(t)\to \xi(t)$ almost surely for every $t$, and so as they are monotone and continuous we get the stronger result that the convergence holds for all $t$ almost surely. Moreover, from the representation as the Quadratic Variation (see also \cite[Theorem 1]{janson2003volatility}), $\xi(t)$ and $\xi_n(t)$ are continuous and increasing. Since $\|\Delta\bh_{\mu}(t)\|,\|\Delta\bh_{\mu_n}(t)\|\geq \mathfrak{h}>0$ for all $t\in[0,T]$ and all $n$ we have $\|\Delta\bh_{\cdot}(t)\|\pi(1-\pi)>0$ for all $(t,\pi)$ such that $\pi\in(0,1)$. This, in turn, implies that $\xi(t)$ is strictly increasing so long as $W_{\xi(t)}\in (0,1)$. In particular, if $\xi$ is constant on some interval $[t,t+\epsilon)$ it must be constant on $[t,T]$ as both $0$ and $1$ are absorbing states for our process $\Pi$. On such an interval we must have $\Pi_t=W_{\xi(t)}\in\{0,1\}$ and $g(\Pi_t)=0$. 
	However, by Corollary \ref{cor:properties.of.Pi}, $\Pi$ and $\Pi^n$ do not hit 0 or 1 in finite time almost surely, so $\xi$ and $\xi_n$ are strictly increasing. 
	
	We begin by showing $\limsup_{n\to\infty} V(0,\pi;\mu_n)\leq V(0,\pi;\mu)$. Let $\tau^*$ be an optimal stopping time for the problem in terms of $\Pi$ over $\mathbb{F}^{\Pi}$-stopping times. Consider also the stopping times
	\[\tau^\epsilon:=\inf\{s\geq0: V(s,\Pi_s;\mu)\geq g(\Pi_s)-\epsilon\}.\]
	By the continuity of $V$ and $g$ we have that $\tau^\epsilon<\tau^*$ (unless $\tau^*=0)$ and by \cite{fakeev1970optimal} we have $\tau^\epsilon$ is $\epsilon$-optimal in the sense that
	\begin{equation}\label{eq:eps.opt.lower.bd}V(0,\pi;\mu)\geq \mathbb{E}_\pi\left[c\tau^\epsilon+g(\Pi_{\tau^\epsilon})\right]-\epsilon.
	\end{equation}
	Defining
	\begin{align*}
		\tau^\epsilon_n&:=\inf\{s:\xi_n(s)\geq \xi(\tau^\epsilon)\wedge \xi_n(T)\}\\
		&=\inf\{s:\xi_n(s)\geq \xi(\tau^\epsilon)\}\wedge \inf\{s:\xi_n(s)\geq \xi_n(T)\}
	\end{align*}
	we have by \cite[Lemma 3.5]{ekstrom2004properties} that $\tau_n^\epsilon\leq T$ is a $\mathbb{F}^{\Pi^n}$ stopping time. At the same time, it is straightforward to verify the following standalone fact: Suppose $f,f_n:[0,T]\to[0,\infty)$ are continuous functions with $f(0)=f_n(0)=0$, $f_n$ increasing, $f$ strictly increasing, and
	\[t_n:=\inf\{s\in[0,T]:f_n(s)\geq f(t)\}\]
	for any fixed $t\in[0,T]$. If $f_n\to f$ pointwise,then $t_n\to t$ as $n\to\infty$.
	
	By applying this result to our expression for $\tau_n^\epsilon$ we obtain that $\tau^\epsilon_n\to \tau^\epsilon$ almost surely as $n\to\infty$. It follows that
	\[0\leq \xi(\tau^\epsilon)-\xi_n(\tau^\epsilon_n)=(\xi(\tau^\epsilon)-\xi_n(T))\mathds{1}_{\xi(\tau^\epsilon)>\xi_n(T)}\leq(\xi(T)-\xi_n(T))\mathds{1}_{\xi(T)>\xi_n(T)}\]
	and so by taking limits across the above we have $\xi_n(\tau^\epsilon_n)\to\xi(\tau^\epsilon)$ a.s. As a result, $g(W_{\xi_n(\tau_n^\epsilon)})\to g(W_{\xi(\tau^\epsilon)})$ almost surely. 
	Note that
	\[0\leq \tau^\epsilon_n+g(W_{\xi_n(\tau_n^\epsilon)})\leq T+C\]
	for some $C>0$ playing the role of the upper bound on $g$ (as from Assumption \ref{ass:g} it is a concave function on $[0,1]$ with $g(0)=0=g(1)$).
	The dominated convergence theorem and \eqref{eq:eps.opt.lower.bd} then gives
	\[V(0,\pi;\mu)\geq\limsup_{n\to\infty} V(0,\pi;\mu_n)-\epsilon.\]
	Since $\epsilon>0$ was arbitrary our first claim follows.
	
	To complete the proof we will now show $\liminf_{n\to\infty}V(0,\pi;\mu_n)\geq V(0,\pi;\mu)$. First, let $\tau^*_n$ be a sequence of optimal $\mathbb{F}^{\Pi^n}$-stopping times associated with $V(0,\pi;\mu_n)$. Define 
	$\tilde{\tau}_n:=\inf\{s:\xi(s)\geq \xi_n(\tau^*_n)\wedge \xi(T)\}$. Again by \cite[Lemma 3.5]{ekstrom2004properties} we have that that $\tilde{\tau_n}\leq T$ is a $\mathbb{F}^{\Pi}$ stopping time for each $n$. Then
	\begin{align*}
		0\leq\xi_n(\tau^*_n)-\xi(\tilde{\tau}_n)&=\left(\xi_n(\tau^*_n)-\xi(T)\right)\mathds{1}_{\xi_n(\tau^*_n)>\xi(T)}\leq \left(\xi_n(T)-\xi(T)\right)\mathds{1}_{\xi_n(T)>\xi(T)}
	\end{align*}
	and so as $n\to\infty$, $\xi_n(\tau^*_n)-\xi(\tilde{\tau}_n)\to 0$ a.s. since $\xi_n(T)\to\xi(T)$ a.s. 
	Moreover, it can be seen that $\tau^*_n-\tilde{\tau}_n\to 0$ a.s. Writing $\xi^{-1}(y):=\inf\{x:\xi(x)\geq y\}$, for any $\epsilon>0$ we have
	\begin{align}\label{eq:conv.stop.times.pf.Lem.4.1}
		|\tilde{\tau}_n-\tau^*_n|&= |\xi^{-1}(\xi_n(\tau^*_n)\wedge \xi(T))-\tau^*_n|\nonumber \\
		&\leq |\xi^{-1}(\xi_n(\tau^*_n))-\tau^*_n)|\mathds{1}_{\tau_n^*\leq T-\epsilon}+(T-\tau^*_n)\mathds{1}_{\tau_n^*> T-\epsilon}\nonumber \\
		&\leq \sup_{0\leq t\leq T-\epsilon}|\xi^{-1}(\xi_n(t))-t|+\epsilon.
	\end{align}

	Let $t\in[0,T-\epsilon]$ be fixed. Observe that if there is a $\delta>0$ such that $\limsup_{n\to\infty}\xi^{-1}(\xi_n(t))>t+\delta$ then $\xi(t+\delta)\leq \xi_n(t)$ for sufficiently large $n$. By the strict increase of $\xi$ we must have $\xi(t)<\xi(t+\delta)\leq \xi_n(t)$ which contradicts the convergence of $\xi_n$ to $\xi$. It follows that $\limsup_{n\to\infty}\xi^{-1}(\xi_n(t))\leq t$. A similar argument gives $\liminf_{n\to\infty}\xi^{-1}(\xi_n(t))\geq t$ so $\lim_{n\to\infty}\xi^{-1}(\xi_n(t))=t$. Since $t$ was arbitrary in $[0,T-\epsilon]$ this holds for all such $t$. As the $\xi^{-1}(\xi_n(t))$ are increasing functions for each $n$ that have a continuous limit, the convergence is uniform on $[0,T-\epsilon]$. We conclude $\sup_{0\leq t\leq T-\epsilon}|\xi^{-1}(\xi_n(t))-t|\to 0$ and since $\epsilon$ was arbitrary in \eqref{eq:conv.stop.times.pf.Lem.4.1}, that $|\tilde{\tau}_n-\tau^*_n|\to 0$ a.s.
	
	Note now that $\xi(t),\xi_n(t)\leq \bar{\xi}(t)$ where $\bar{\xi}$ is defined as the stopping time solution to
	\begin{equation}\label{eqn:upper.bd.time.change}
		\bar{\xi}(t)=\int_0^t\mathfrak{H}^2W_{\bar{\xi}(s)}^2(1-W_{\bar{\xi}(s)})^2ds\end{equation}
	by \cite[Lemma 3.3.]{ekstrom2004properties}. Then, by monotonicity $\xi(t),\xi_n(t)\leq \bar{\xi}(T)$. By the H\"older continuity of the Brownian path on $[0,\bar{\xi}(T)]$ and the uniform continuity of $g$ on $[0,1]$, it is not hard to see that:
	\begin{align*}
		|\tilde{\tau}_n+g(W_{\xi(\tilde{\tau}_n)})-\tau^*_n-g(W_{\xi_n(\tau^*_n)})|\xrightarrow{n\to\infty} 0 \ \ a.s.
	\end{align*}
	Once again, since $g$ is bounded and the stopping times satisfy $\tilde{\tau}_n,\tau^*_n\leq T$ we have by the dominated convergence theorem:
	\[V(0,\pi;\mu)-V(0,\pi;\mu_n)\leq E\left[\tilde{\tau}_n+g(W_{\xi(\tilde{\tau}_n)})\right]-E\left[\tau^*_n+g(W_{\xi_n(\tau^*_n)})\right]\xrightarrow{n\to\infty}0.\]
	That is, $\liminf_{n\to\infty}V_n(0,\pi)\geq V(0,\pi)$ as required. Taken together with the previous claim, the proof is complete.
\end{proof}

\section{Continuity of the Boundaries in the Input Measure}

\subsection{Soft-classification problem}\label{app:bdy.convergence}


Before tackling the main proof we provide a preliminary technical result that allows us to pass to the limit across the integral equations \eqref{eq:integral.eq} of Proposition \ref{prop:boundary.integral.equation}. In what follows, we suppress the index $i\in I$. To make explicit the dependence on $\mu$ of the equations, we write $\mathcal{L} = \|\Delta\bh_{\mu}(\cdot)\|^2\mathcal{A}$ and the posterior probability process as $\Pi^{t,\pi,\mu}$.

\begin{lemma}\label{lem:passing.to.limit.int.eqns}
	Suppose $\mu_{n}\Rightarrow \mu$ and  $b_{n}\to \overline{b}$, $B_{n}\to \overline{B}$ pointwise. If $(\pi_{n}(t),\pi(t))=(b_{n}(t),\overline{b}(t))$ or $(\pi_{n}(t),\pi(t))=(B_{n}(t),\overline{B}(t))$ then:
	\begin{equation}\label{eq:conv.int.eq.1}\mathbb{E}\left[g( \Pi_{T}^{t,\pi_{n}(t),\mu_{n}})\right]\to \mathbb{E}\left[g( \Pi_{T}^{t,\pi(t),\mu})\right],
	\end{equation}
	\begin{align}\label{eq:conv.int.eq.2}\int_t^{T}\mathbb{P}&\left(\Pi_u^{t,\pi_{n}(t),\mu_{n}}\in(b_{n}(u),B_{n}(u))\right)du \nonumber\\ 
		&\quad \quad \quad \quad \quad \quad \quad \quad \quad \quad \to\int_t^{T}\mathbb{P}\left(\Pi_u^{t,\pi(t),\mu}\in(\overline{b}(u),\overline{B}(u))\right)du,
	\end{align}
	and
	\begin{align} \label{eq:conv.int.eq.3} \int_t^{T}&\mathbb{E}\left[\|\Delta\bh_{\mu_{n}}(u)\|^2\mathcal{A}g(\Pi_u^{t,\pi_{n}(t),\mu_{n}})\mathds{1}_{\left\{\Pi_u^{t,\pi_{n}(t),\mu_{n}}\in(0,b_{n}(u))\cup(B_{n}(u),1)\right\}}\right]du \nonumber \\
		&\rightarrow\int_t^{T}\mathbb{E}\left[\|\Delta\bh_{\mu}(u)\|^2\mathcal{A}g(\Pi_u^{t,\pi(t),\mu})\mathds{1}_{\left\{\Pi_u^{t,\pi(t),\mu}\in(0,\overline{b}(u))\cup(\overline{B}(u),1)\right\}}\right]du. 
	\end{align}
	
\end{lemma}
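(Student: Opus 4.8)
The plan is to prove the three convergences by combining (i) the continuous dependence of the solution of the SDE \eqref{eq:Pi} on the volatility coefficient $\|\Delta\bh_\mu\|$, (ii) the local uniform convergence of the boundaries, and (iii) the fact that the limiting process $\Pi^{t,\pi(t),\mu}_u$ admits a density on $(0,1)$ for each $u>t$ (Corollary \ref{cor:properties.of.Pi}), which kills the contribution of the boundary of the indicator sets. First I would set up a single probability space carrying the driving Brownian motion and, following the device in Appendix \ref{app:cont.val.funct}, realize all the processes $\Pi^{t,\pi_n(t),\mu_n}$ and $\Pi^{t,\pi(t),\mu}$ on it via the volatility time change of Janson--Tysk \cite{janson2003volatility}; since $\|\Delta\bh_{\mu_n}\|\to\|\Delta\bh_\mu\|$ pointwise (Lemma \ref{lem:cont.val.funct}) with the uniform bounds $\mathfrak h\le\|\Delta\bh_{\mu_n}\|\le\mathfrak H$, and $\pi_n(t)\to\pi(t)$, one gets $\Pi^{t,\pi_n(t),\mu_n}_u\to\Pi^{t,\pi(t),\mu}_u$ almost surely, simultaneously for all $u\in[t,T]$. (Alternatively one can run a standard Gronwall/stability estimate for the SDE directly, using that the coefficient $\pi\mapsto\|\Delta\bh_\mu(s)\|\pi(1-\pi)$ is globally Lipschitz uniformly in $s$; the time-change route is cleaner because it already appears in the paper.)

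Granting this almost sure convergence, \eqref{eq:conv.int.eq.1} is immediate: $g$ is continuous and bounded on $[0,1]$ by Assumption \ref{ass:g}, so $g(\Pi^{t,\pi_n(t),\mu_n}_T)\to g(\Pi^{t,\pi(t),\mu}_T)$ a.s. and bounded convergence applies. For \eqref{eq:conv.int.eq.2}, fix $u\in(t,T]$ and note
\[
\mathds 1_{\{\Pi^{t,\pi_n(t),\mu_n}_u\in(b_n(u),B_n(u))\}}\longrightarrow \mathds 1_{\{\Pi^{t,\pi(t),\mu}_u\in(\overline b(u),\overline B(u))\}}\quad\text{a.s.}
\]
on the event $\{\Pi^{t,\pi(t),\mu}_u\notin\{\overline b(u),\overline B(u)\}\}$, using $b_n(u)\to\overline b(u)$, $B_n(u)\to\overline B(u)$ together with the a.s.\ convergence of the processes; and the exceptional event has probability zero because $\Pi^{t,\pi(t),\mu}_u$ has a density by Corollary \ref{cor:properties.of.Pi}. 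Hence the integrand $\mathbb P(\Pi^{t,\pi_n(t),\mu_n}_u\in(b_n(u),B_n(u)))$ converges to $\mathbb P(\Pi^{t,\pi(t),\mu}_u\in(\overline b(u),\overline B(u)))$ for a.e.\ $u$; it is bounded by $1$, so dominated convergence in $u$ gives \eqref{eq:conv.int.eq.2}. Equation \eqref{eq:conv.int.eq.3} follows by the same recipe: the integrand is $\|\Delta\bh_{\mu_n}(u)\|^2\,\mathcal Ag(\Pi^{t,\pi_n(t),\mu_n}_u)\,\mathds 1_{\{\Pi^{t,\pi_n(t),\mu_n}_u\in(0,b_n(u))\cup(B_n(u),1)\}}$, and $\|\Delta\bh_{\mu_n}(u)\|^2\to\|\Delta\bh_\mu(u)\|^2$ pointwise with $\mathcal Ag$ continuous and bounded on $(0,1)$ (Remark \ref{rmk:consequences.of.assumptions}); the indicator converges a.s.\ off the null set $\{\Pi^{t,\pi(t),\mu}_u\in\{\overline b(u),\overline B(u)\}\}$ exactly as before, so the integrand converges a.e.\ in $(u,\omega)$ and is dominated by $\mathfrak H^2\|\mathcal Ag\|_\infty$, and a double application of dominated convergence finishes it.

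The only genuinely delicate point — the one I would flag as the main obstacle — is the handling of the indicator discontinuity near the boundary values, i.e.\ justifying that $\{\Pi^{t,\pi(t),\mu}_u=\overline b(u)\}$ and $\{\Pi^{t,\pi(t),\mu}_u=\overline B(u)\}$ are null for a.e.\ $u$. This is where Corollary \ref{cor:properties.of.Pi} is essential, and one must also make sure the starting point is genuinely in $(0,1)$ so that the density exists for every $u>t$; since $\pi_n(t)=b_n(t)$ or $B_n(t)$ and the boundaries are uniformly bounded away from $0$ and $1$ (Proposition \ref{prop:bounds.on.bdys}), the limits $\overline b(t),\overline B(t)$ lie in $(0,1)$ as well, so there is no issue at $u=t$ either (and the single time $u=t$ is Lebesgue-null anyway). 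A secondary technical care is that, to realize all processes on one space via the time change, one needs the starting points to be common or to converge; the latter holds, and one can absorb the shift in starting point into the stability estimate without trouble. Everything else is routine continuity/boundedness bookkeeping.
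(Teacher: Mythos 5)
Your proposal is correct and takes essentially the same route as the paper: establish convergence of the posterior processes (the paper cites a stability theorem of Shevchenko for convergence in probability, whereas you couple the processes on one space via a Gronwall/time-change argument to get a.s.\ convergence, which is an equally valid way to supply the same ingredient), then use the density of $\Pi_u^{t,\pi(t),\mu}$ from Corollary \ref{cor:properties.of.Pi} to neutralize the indicator discontinuities at $\overline b(u),\overline B(u)$, and finish with the continuous mapping theorem and bounded/dominated convergence in $\omega$ and then in $u$. The delicate points you flag (boundary null sets, starting points in $(0,1)$, uniform bounds $\mathfrak h\le\|\Delta\bh_{\mu_n}\|\le\mathfrak H$ and boundedness of $\mathcal A g$) are exactly those the paper's proof relies on, so there is no gap.
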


\begin{proof}
	By Lemma \ref{lem:cont.val.funct} we have $\|\Delta\bh_{\mu_{n}}\|\to \|\Delta\bh_{\mu}\|$ pointwise. Applying \cite[Theorem 3.1]{shevchenko2015convergence} on our compact state space of $[0,1]$ we have that $\Pi_u^{t,\pi_{n}(t),\mu_{n}}\to \Pi_u^{t,\pi(t),\mu}$ in probability. Moreover, by Corollary \ref{cor:properties.of.Pi} for any $u>t$, $\Pi_u^{t,\pi_{n}(t),\mu_{n}}$ and $\Pi_u^{t,\pi(t),\mu}$ admit a density on $(0,1)$. Since $g$ is bounded on $[0,1]$ the convergence in \eqref{eq:conv.int.eq.1} follows by the continuous mapping and bounded convergence theorems. Write
	\[G_0(\Pi,b,B):=\mathds{1}_{(-\infty,0)}(\Pi-B)-\mathds{1}_{(-\infty,0)}(\Pi-b)\]
	and
	\[G_1(\Pi,\eta,b,B):=\eta^2\mathcal{A}g(\Pi)\left(\mathds{1}_{(-\infty,0)}(\Pi-b)+\mathds{1}_{(-\infty,0)}(B-\Pi)\right)\]
	so that these functions correspond to the integrands of \eqref{eq:conv.int.eq.2} and \eqref{eq:conv.int.eq.3}. We have that $G_0$ and $G_1$ are continuous almost everywhere and are bounded over the range of our inputs. Thus, the continuous mapping and bounded convergence theorems again give for $u>t$: 
	\[\mathbb{E}\left[G_0(\Pi_u^{t,\pi_{n}(t),\mu_{n}},b_{n}(u),B_{n}(u))\right]\to \mathbb{E}\left[G_0(\Pi_u^{t,\pi(t),\mu},\overline{b}(u),\overline{B}(u))\right],\]
	\begin{align*}\mathbb{E}&\left[G_1(\Pi_u^{t,\pi_{n}(t),\mu_{n}},\|\Delta\bh_{\mu_{n}}(u)\|,b_{n}(u),B_{n}(u))\right]\\
		&\quad \quad \quad \quad \quad \quad \quad \quad \quad \quad \to \mathbb{E}\left[G_1(\Pi_u^{t,\pi(t),\mu},\|\Delta\bh_{\mu}(u)\|,\overline{b}(u),\overline{B}(u))\right].
	\end{align*}
	Another application of the bounded convergence theorem then gives the desired convergence in \eqref{eq:conv.int.eq.2} and \eqref{eq:conv.int.eq.3}.
\end{proof}


We are now ready to prove the Proposition.

\begin{proof}[Proof of Lemma \ref{lem:conv.bdys}]
	
	From Proposition \ref{prop:bdy.unif.local.lip} we have that the boundaries $(b_n)_{n\geq0}$, $b$, $(B_n)_{n\geq0}$, and $B$ are uniformly locally Lipschitz. Since the boundaries are also uniformly bounded away from $(0,1)$ (see Proposition \ref{prop:bounds.on.bdys}), a diagonalization argument gives that for any subsequence of $(b_{n})$ and $(B_{n})$ we can always extract further subsequences $(b_{n'})$, $(B_{n'})$ that converge locally uniformly to locally Lipschitz limits $\overline{b}$ and $\overline{B}$ on $[0,T)$.
	
	As $V(t,\cdot;\mu_{n'}),V(t,\cdot;\mu)$ are concave and uniformly bounded on $(0,1)$ we have that the pointwise convergence $V(t,\cdot;\mu_{n'})\to V(t,\cdot;\mu)$ from Lemma \ref{lem:cont.val.funct} can be extended to local uniform convergence on $(0,1)$ for each $t$ (see \cite[Theorem 10.8]{rockafellar2015convex}). Fix $t$ and take $b_*,B^*$ from Proposition \ref{prop:bounds.on.bdys} so that $b_*\leq b_{n'},b,B_{n'},B\leq B^*$. On $[b_*,B^*]\subset(0,1)$, we have that $V(t,\cdot;\mu_{n'})\to V(t,\cdot;\mu)$ uniformly. Moreover, by assumption
	\[V(t,b_{n'}(t);\mu_{n'})=g(b_{n'}(t)), \ \ \ V(t,B_{n'}(t);\mu_{n'})=g(B_{n'}(t)).\]
	By the convergence of the boundaries for $t\in[0,T)$, and the continuity and uniform convergence of $V(t,\cdot;\mu_n),V(t,\cdot;\mu)$ on $[b_*,B^*]$ we have by taking limits that
	\[V(t,\bar{b}(t);\mu)=g(\bar{b}(t)), \ \ \ V(t,\bar{B}(t);\mu)=g(\bar{B}(t)).\]
	Thus, since $t$ was arbitrary we conclude that $\bar{b}\leq b$, $B\leq \bar{B}$ by the definition of the continuation region for $V$.
	
	It remains to show the reverse inequality. Recall that the boundaries $b,b_{n'},B$ and $B_{n'}$ satisfy the integral equations from Proposition  \ref{prop:boundary.integral.equation}. We obtain for $b$:
	
	\begin{align*}g(b(t))&=\mathbb{E}\left[g( \Pi_{T}^{t,b(t),\mu})\right]+c\int_t^{T}\mathbb{P}\left(\Pi_u^{t,b(t),\mu}\in(b(u),B(u))\right)du\\
	&\quad \quad  -\int_t^{T}\mathbb{E}\left[\|\Delta\bh_{\mu}(u)\|^2\mathcal{A}g(\Pi_u^{t,b(t),\mu})\mathds{1}_{\left\{\Pi_u^{t,b(t),\mu}\in(0,b(u))\cup(B(u),1)\right\}}\right]du,
	\end{align*}
	and similarly for $b_{n'},B$, and $B_{n'}$. Taking limits across the integral equations we have:
	
	\begin{align*}
		&g(\overline{b}(t))=\\
		& \lim_{n'\to\infty}\bigg(\mathbb{E}\left[g( \Pi_{T}^{t,b_{n'}(t),\mu_{n'}})\right]+c\int_t^{T}\mathbb{P}\left(\Pi_u^{t,b_{n'}(t),\mu_{n'}}\in(b_{n'}(u),B_{n'}(u))\right)du\\
		& -\int_t^{T}\mathbb{E}\left[\|\Delta\bh_{\mu_{n'}}(u)\|^2\mathcal{A}g(\Pi_u^{t,b_{n'}(t),\mu_{n'}})\mathds{1}_{\left\{\Pi_u^{t,b_{n'}(t),\mu_{n'}}\in(0,b_{n'}(u))\cup(B_{n'}(u),1)\right\}}\right]du\bigg),\\
		&g(\overline{B}(t))=\\
		& \lim_{n'\to\infty}\bigg(\mathbb{E}\left[g( \Pi_{T}^{t,B_{n'}(t),\mu_{n'}})\right]+c\int_t^{T}\mathbb{P}\left(\Pi_u^{t,B_{n'}(t),\mu_{n'}}\in(b_{n'}(u),B_{n'}(u))\right)du\\
		& -\int_t^{T}\mathbb{E}\left[\|\Delta\bh_{\mu_{n'}}(u)\|^2\mathcal{A}g(\Pi_u^{t,B_{n'}(t),\mu_{n'}})\mathds{1}_{\left\{\Pi_u^{t,B_{n'}(t),\mu_{n'}}\in(0,b_{n'}(u))\cup(B_{n'}(u),1)\right\}}\right]du\bigg).
	\end{align*}
	
	By Lemma \ref{lem:passing.to.limit.int.eqns} the limit can be passed through on the right hand side. Hence, the integral equations of Proposition  \ref{prop:boundary.integral.equation} for input measure $\mu$ that are satisfied by $b$ and $B$ are also satisfied by the continuous functions $\overline{b}$ and $\overline{B}$. Moreover, we have already shown that $\overline{b}\leq b$  and $\overline{B}\geq B$. At the same time, Proposition  \ref{prop:boundary.integral.equation} gives that $b$ and $B$ are the unique maximal continuous solutions to these integral equations. We conclude $\overline{b}=b$ and $B=\overline{B}$. Since the sub-sequence was arbitrary the limit of $(b_n)_{n\geq0}$ and $(B_n)_{n\geq0}$ exists and coincides with $b$ and $B$. Moreover, by the uniform bounds and uniform local Lipschitz constants, the convergence is locally uniform on $[0,T)$.
\end{proof}

\subsection{Classic problem} \label{app:bdy.convergence.classic}

We organize this appendix by first proving a collection of short preliminary lemmas.

%

\begin{lemma}
	If $V(t,\pi;\mu_n)\to V(t,\pi;\mu)$ then $\partial_\pi V(t,\pi;\mu_n)\to \partial_\pi V(t,\pi;\mu)$ for all $(t,\pi)\in[0,T)\times(0,1)$. Moreover, the convergence is locally uniform in $\pi\in(0,1)$.
\end{lemma}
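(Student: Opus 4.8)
The statement is a standard "convexity upgrades pointwise convergence to $C^1$ convergence" result, and I would prove it by exploiting concavity of $V(t,\cdot;\mu)$ in $\pi$ (Proposition~\ref{prop:value.func}(i), or Proposition~\ref{prop:mon.and.lip.V}(ii) in the classic setting). The key classical fact is: if a sequence of finite concave functions on an open interval converges pointwise, then it converges locally uniformly, and moreover their derivatives converge at every point of differentiability of the limit — and here the limit $V(t,\cdot;\mu)$ is actually $C^1$ on $(0,1)$ (it is $C^{1,2}$ in $\mathscr C$ by Proposition~\ref{prop:C1.interior.cont.reg.}, and $C^1$ globally across the boundaries by the smooth-fit Proposition~\ref{prop:PDE}; in the classic case it equals $g$ off $\mathscr C$ away from the kink point, which lies strictly between the boundaries, so $V(t,\cdot;\mu)$ is $C^1$ on $(0,1)$ as well). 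The relevant reference is already cited in the paper: \cite[Theorem 10.8 and Theorem 25.7]{rockafellar2015convex}.

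**Steps, in order.** First, fix $t\in[0,T)$. Record that each $V(t,\cdot;\mu_n)$ and $V(t,\cdot;\mu)$ is concave and finite on $(0,1)$, uniformly bounded (by $0$ below and $g$ above, hence by a constant independent of $n$ on any compact subinterval). Second, invoke \cite[Theorem 10.8]{rockafellar2015convex}: pointwise convergence $V(t,\cdot;\mu_n)\to V(t,\cdot;\mu)$ of finite concave functions on the open interval $(0,1)$ implies uniform convergence on every compact subset of $(0,1)$ — this is the local uniform convergence of the values themselves (already used in Appendix~\ref{app:bdy.convergence}). Third, apply \cite[Theorem 25.7]{rockafellar2015convex}: under this convergence, for every $\pi$ at which $V(t,\cdot;\mu)$ is differentiable one has $\partial_\pi V(t,\pi;\mu_n)\to \partial_\pi V(t,\pi;\mu)$; since $V(t,\cdot;\mu)$ is $C^1$ on $(0,1)$ (smooth fit, Proposition~\ref{prop:PDE}, together with Proposition~\ref{prop:C1.interior.cont.reg.}), this holds at every $\pi\in(0,1)$, giving the pointwise derivative convergence. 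Fourth, upgrade to local uniform convergence of the derivatives: the functions $\partial_\pi V(t,\cdot;\mu_n)$ are monotone decreasing in $\pi$ (derivatives of concave functions), they converge pointwise to the continuous function $\partial_\pi V(t,\cdot;\mu)$, and a monotone sequence converging pointwise to a continuous limit converges locally uniformly (Dini-type argument on a compact interval). This yields the second assertion.

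**Main obstacle.** The only genuinely delicate point is justifying that the limiting function $\pi\mapsto V(t,\pi;\mu)$ is differentiable at \emph{every} $\pi\in(0,1)$, including across the free boundaries $b(t),B(t)$ — without this, Theorem~25.7 only gives convergence of one-sided derivatives off a countable set. This is exactly what the smooth-fit condition provides, so I would state clearly that the argument relies on Proposition~\ref{prop:PDE} (resp.\ Proposition~\ref{prop:mon.and.lip.V}(iv)–(v) and the analogue of Proposition~\ref{prop:PDE} noted after Proposition~\ref{prop:bdy.char.classic} for the classic loss). A minor secondary point is that everything above is for fixed $t$; the statement is pointwise in $t$ so no further uniformity in $t$ is needed, but I would remark that the constants in the local uniform bounds can be chosen independent of $t$ using Proposition~\ref{prop:bounds.on.bdys} if a $t$-uniform version were ever wanted.
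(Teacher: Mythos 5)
Your proposal is correct and follows essentially the same route as the paper: concavity plus standard convex-analysis results (convergence of derivatives at differentiability points of the limit), the smooth-fit $C^1$ regularity of $V(t,\cdot;\mu)$ to get pointwise convergence on all of $(0,1)$, and monotonicity of the derivatives with continuity of the limit to upgrade to local uniform convergence. Your added care about the kink of the classic loss lying strictly inside the continuation region is a useful explicit justification of the $C^1$ claim that the paper leaves implicit.
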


\begin{proof}
	Since $V(t,\cdot;\mu),V(t,\cdot;\mu_n)$ are concave, we have by standard results in convex analysis that the convergence of the derivatives holds at all points of differentiability of $V(t,\cdot;\mu)$ in $(0,1)$ for fixed $t\in[0,T)$. Since $V(t,\cdot;\mu)\in C^1(0,1)$ we have the convergence is pointwise on $(0,1)$. Since $\partial_\pi V(t,\cdot;\mu_n)$ and $\partial_\pi V(t,\cdot;\mu)$ are necessarily monotone (by concavity) and continuous, we have that the convergence is locally uniform in $\pi$.
\end{proof}

\begin{lemma}\label{lem:bound.in.terms.of.der} For fixed $\mu\in\mathcal{P}([0,T])^2$, the boundaries can be equivalently represented by:
	\[B(t)=\inf\left\{\pi\geq \frac{a_2}{a_1+a_2}: \partial_\pi(V-g)(t,\pi;\mu)=0\right\}\]
	\[b(t)=\sup\left\{\pi\leq \frac{a_2}{a_1+a_2}: \partial_\pi(V-g)(t,\pi;\mu)=0\right\}\]
\end{lemma}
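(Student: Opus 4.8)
The plan is to establish each of the two identities by showing that, on the half-line where the relevant inequality $\pi\ge \pi^\ast$ (resp.\ $\pi\le\pi^\ast$) holds, with $\pi^\ast:=a_2/(a_1+a_2)$, the zero set of $\partial_\pi(V-g)(t,\cdot)$ is \emph{exactly} $[B(t),1]$ (resp.\ $[0,b(t)]$). The key structural facts to be invoked are all available under Assumption~\ref{ass:classic}: from Proposition~\ref{prop:bdy.char.classic}, $b(t)<\pi^\ast<B(t)$ and $\mathscr{C}=\{(t,\pi):b(t)<\pi<B(t)\}$; from the classic analogue of Proposition~\ref{prop:PDE} (the smooth fit discussed after Proposition~\ref{prop:bdy.char.classic}), $\pi\mapsto V(t,\pi)$ is $C^1[0,1]$; and from Proposition~\ref{prop:mon.and.lip.V}(v)--(vi), $\partial_{\pi\pi}V(t,\pi)<0$ for $(t,\pi)\in\mathscr{C}$ while $V=g$ (hence $\partial_\pi(V-g)\equiv 0$) on $\mathscr{D}$. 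The decisive point is that on each of the two pieces $[\pi^\ast,1]$ and $[0,\pi^\ast]$ the classic penalty $g$ is \emph{affine}, so that $\partial_{\pi\pi}(V-g)=\partial_{\pi\pi}V$ there.

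For the upper boundary I would argue as follows. On $[\pi^\ast,1]$ we have $g(\pi)=a_2(1-\pi)$, so $\partial_\pi(V-g)(t,\cdot)$ is well defined and, by the $C^1$ property of $V$, continuous on $[\pi^\ast,1]$. For $\pi\in[B(t),1]$ the point $(t,\pi)$ lies in $\mathscr{D}$, so $V(t,\pi)=g(\pi)$ and $\partial_\pi(V-g)(t,\pi)=0$; hence the zero set contains $[B(t),1]$ and $\inf\{\pi\ge\pi^\ast:\partial_\pi(V-g)(t,\pi)=0\}\le B(t)$. Conversely, since $\pi^\ast\in(b(t),B(t))$, the whole interval $[\pi^\ast,B(t))$ is contained in $\mathscr{C}$, so by Proposition~\ref{prop:mon.and.lip.V}(vi) $\partial_{\pi\pi}(V-g)(t,\cdot)=\partial_{\pi\pi}V(t,\cdot)<0$ on $[\pi^\ast,B(t))$; combined with continuity this makes $\pi\mapsto\partial_\pi(V-g)(t,\pi)$ strictly decreasing on $[\pi^\ast,B(t)]$. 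Smooth fit at $B(t)$ gives $\partial_\pi V(t,B(t))=g'(B(t))=-a_2$, i.e.\ $\partial_\pi(V-g)(t,B(t))=0$; strict decrease then forces $\partial_\pi(V-g)(t,\pi)>0$ for every $\pi\in[\pi^\ast,B(t))$, so no such $\pi$ is in the zero set. Therefore $\{\pi\ge\pi^\ast:\partial_\pi(V-g)(t,\pi)=0\}=[B(t),1]$, whose infimum is $B(t)$.

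The lower boundary is handled by the mirror-image argument on $[0,\pi^\ast]$, where $g(\pi)=a_1\pi$: one has $\partial_\pi(V-g)(t,\cdot)\equiv 0$ on $[0,b(t)]\subset\mathscr{D}$, $\partial_{\pi\pi}(V-g)(t,\cdot)<0$ on $(b(t),\pi^\ast]\subset\mathscr{C}$, and smooth fit gives $\partial_\pi(V-g)(t,b(t))=0$; strict decrease of $\partial_\pi(V-g)(t,\cdot)$ on $[b(t),\pi^\ast]$ then yields $\partial_\pi(V-g)(t,\pi)<0$ for $\pi\in(b(t),\pi^\ast]$, so the zero set meets $[0,\pi^\ast]$ exactly in $[0,b(t)]$, whose supremum is $b(t)$. (Non-emptiness of $[B(t),1]$ and $[0,b(t)]$, needed for the inf/sup to be attained, is immediate from $0<b(t)<B(t)<1$ in Proposition~\ref{prop:bdy.char.classic}.) There is no serious obstacle in this lemma; the only thing to watch is that $V-g$ fails to be differentiable at the kink $\pi=\pi^\ast$ itself, which is why the whole argument must be phrased on the two closed subintervals $[\pi^\ast,1]$ and $[0,\pi^\ast]$ on which $g$ is smooth, and why the sign of $\partial_{\pi\pi}V$ and the smooth-fit relation are invoked on the appropriate half-open subsets of $\mathscr{C}$ rather than on its closure.
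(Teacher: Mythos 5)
Your proposal is correct and follows essentially the same route as the paper's proof: both rely on $V=g$ (hence $\partial_\pi(V-g)=0$) for $\pi\ge B(t)$, the bound $\partial_{\pi\pi}(V-g)=\partial_{\pi\pi}V\le -8c/\mathfrak{H}^2<0$ on the continuation region from Proposition \ref{prop:mon.and.lip.V}(vi), and smooth fit at the boundary to force $\partial_\pi(V-g)(t,\cdot)>0$ strictly on $[a_2/(a_1+a_2),B(t))$. The only cosmetic difference is that the paper carries out the monotonicity step by integrating $\partial_{\pi\pi}V$ up to $B(t)-\epsilon$ and letting $\epsilon\downarrow 0$, which yields the quantitative lower bound $\partial_\pi(V-g)(t,\pi_0)\ge \frac{8c}{\mathfrak{H}^2}|B(t)-\pi_0|$ reused later, whereas you argue the strict sign qualitatively via strict decrease plus continuity.
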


\begin{proof}
	We treat the first case and the second is analogous. We have that $V-g$ is concave in $\pi$ and increasing on $\left[\frac{a_2}{a_1+a_2},1\right]$ (see the proof of Proposition \ref{prop:bdy.char.classic}). Moreover  if $\pi\geq B(t)$, $\partial_\pi(V-g)(t,\pi;\mu)=0$ by the smooth fit condition (see Proposition \ref{prop:PDE} which holds in the classical sense). Then, by Proposition \ref{prop:mon.and.lip.V}(vi) we have $\partial_{\pi\pi}(V-g)=\partial_{\pi\pi}V\leq \frac{-8c}{\mathfrak{H}^2}<0$ on $\left(\frac{a_2}{a_1+a_2},B(t)\right)$. Since for fixed $t$, $(V-g)(t,\pi;\mu)$ is $C^1$ everywhere and twice differentiable except at $\pi=B(t)$ we have that for any $\epsilon>0$ and $\frac{a_2}{a_1+a_2}\leq \pi_0\leq B(t)-\epsilon$ that
	\begin{align*}
		\partial_\pi(V-g)(t,\pi_0;\mu)&=\partial_\pi(V-g)(t,B(t)-\epsilon;\mu)-\int_{\pi_0}^{B(t)-\epsilon}\partial_{\pi\pi}V(t,u;\mu)du\\
		&\geq \partial_\pi(V-g)(t,B(t)-\epsilon;\mu)+ \frac{8c}{\mathfrak{H}^2}|B(t)-\epsilon-\pi_0|.
	\end{align*}
	Taking $\epsilon\downarrow 0$ and applying continuity gives:
	\[\partial_\pi(V-g)(t,\pi_0;\mu)\geq \frac{8c}{\mathfrak{H}^2}|B(t)-\pi_0|>0.\]
	Hence, if $\frac{a_2}{a_1+a_2}\leq \pi<B(t)$ we have $\partial_\pi(V-g)(t,\pi;\mu)>0$ and the result holds.
\end{proof}

\begin{lemma} For fixed $\mu\in\mathcal{P}([0,T])^2$
	\begin{enumerate}
		\item[(i)] $\partial_\pi(V-g)$ is strictly decreasing on $\left(\frac{a_2}{a_1+a_2},B(t)\right)$ and $\left(b(t),\frac{a_2}{a_1+a_2}\right)$. \item[(ii)] $|\partial_\pi(V-g)|\leq L$ for some constant $L>0$.
		\item[(iii)] On the domain $\left(b(t),\frac{a_2}{a_1+a_2}\right)$ or $\left(\frac{a_2}{a_1+a_2},B(t)\right)$ and fixed $t\in[0,T)$, the function $\partial_\pi(V-g)(t,\cdot;\mu)$ admits a strictly decreasing and Lipschitz inverse function $Q_t(\cdot)$ with Lipshitz constant independent of $\mu$.
	\end{enumerate}
	
\end{lemma}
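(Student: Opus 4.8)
The plan is to prove the three claims about $\partial_\pi(V-g)(t,\cdot;\mu)$ on the upper half $\left(\frac{a_2}{a_1+a_2},B(t)\right)$, with the lower half being entirely symmetric. Throughout, $\mu$ and $t$ are fixed, and I will repeatedly invoke Proposition~\ref{prop:mon.and.lip.V}(vi), which gives the two-sided bound $\partial_{\pi\pi}V \le -\tfrac{8c}{\mathfrak{H}^2}<0$ on $\mathscr{C}$; note that on $\left(\frac{a_2}{a_1+a_2},B(t)\right)$ we have $\partial_{\pi\pi}(V-g)=\partial_{\pi\pi}V$ because $g(\pi)=a_2(1-\pi)$ is affine there, so $g''=0$.

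\textit{Part (i).} Strict monotonicity of $\partial_\pi(V-g)(t,\cdot;\mu)$ on $\left(\frac{a_2}{a_1+a_2},B(t)\right)$ is immediate: on the open continuation interval $V$ is $C^{1,2}$ by Proposition~\ref{prop:mon.and.lip.V}(v), so $\partial_\pi(V-g)$ is differentiable with derivative $\partial_{\pi\pi}V \le -\tfrac{8c}{\mathfrak{H}^2}<0$, hence strictly decreasing. The argument on $\left(b(t),\frac{a_2}{a_1+a_2}\right)$ is the mirror image, using that $g(\pi)=a_1\pi$ is affine there.

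\textit{Part (ii).} For the uniform bound on $|\partial_\pi(V-g)|$, I would use Proposition~\ref{prop:mon.and.lip.V}(iv): $\pi\mapsto V(t,\pi)$ is Lipschitz on $[0,1]$ with constant $\max\{a_1,a_2\}$ (independent of $\mu$), and $g(\pi)=a_1\pi\wedge a_2(1-\pi)$ is Lipschitz with constant $\max\{a_1,a_2\}$ as well. Hence $\partial_\pi(V-g)$, wherever it exists, is bounded in absolute value by $L:=2\max\{a_1,a_2\}$, a constant independent of $\mu$.

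\textit{Part (iii), the main obstacle.} This is the substantive claim. By Part (i), $\partial_\pi(V-g)(t,\cdot;\mu)$ is continuous (by $C^{1,2}$ regularity of $V$ on $\mathscr{C}$) and strictly decreasing on the open interval $\left(\frac{a_2}{a_1+a_2},B(t)\right)$, so it is a bijection onto its image and admits a continuous, strictly decreasing inverse $Q_t$ defined on that image. The point requiring care is the \emph{Lipschitz} property of $Q_t$ with a constant independent of $\mu$. For this I would argue: if $x,y$ lie in the image with $x=\partial_\pi(V-g)(t,\pi_x)$ and $y=\partial_\pi(V-g)(t,\pi_y)$, $\pi_x<\pi_y$, then by the fundamental theorem of calculus applied to the $C^1$ (in $\pi$) function $\partial_\pi(V-g)$,
\[
y-x = \int_{\pi_x}^{\pi_y}\partial_{\pi\pi}V(t,u;\mu)\,du \le -\frac{8c}{\mathfrak{H}^2}(\pi_y-\pi_x),
\]
so $|\pi_y-\pi_x| \le \frac{\mathfrak{H}^2}{8c}|y-x|$, i.e.\ $|Q_t(y)-Q_t(x)|\le \frac{\mathfrak{H}^2}{8c}|x-y|$. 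Since $\mathfrak{H}=2\sqrt{d}\,\|\bh\|_\infty$ and $c$ are fixed data of the problem, the Lipschitz constant $\frac{\mathfrak{H}^2}{8c}$ is indeed independent of $\mu$, completing the proof; the lower-interval case is symmetric. The only delicate bookkeeping is making sure the integral identity is legitimate up to but not including the endpoints $\frac{a_2}{a_1+a_2}$ and $B(t)$ (where second-order regularity may fail), which is handled exactly as in the proof of Lemma~\ref{lem:bound.in.terms.of.der} by working on $\left[\frac{a_2}{a_1+a_2}+\epsilon, B(t)-\epsilon\right]$ and letting $\epsilon\downarrow 0$ using continuity of $\partial_\pi(V-g)$.
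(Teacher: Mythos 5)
Your proposal is correct and follows essentially the same route as the paper: parts (i) and (ii) are handled identically via Proposition~\ref{prop:mon.and.lip.V}(v),(vi) and the Lipschitz bounds on $V$ and $g$, and part (iii) rests on the same uniform estimate $\partial_{\pi\pi}(V-g)\leq -8c/\mathfrak{H}^2$ on the continuation intervals. The only cosmetic difference is that the paper bounds $Q_t'$ directly through the inverse function theorem, whereas you integrate $\partial_{\pi\pi}V$ via the fundamental theorem of calculus, yielding the same $\mu$-independent constant $\mathfrak{H}^2/(8c)$.
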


\begin{proof}
	The first claim is an immediate consequence of Proposition \ref{prop:mon.and.lip.V}(vi). The second claim follows since $V$ and $g$ are both Lipschitz (see Proposition \ref{prop:mon.and.lip.V}). We now turn to the third claim. Since $\partial_\pi(V-g)$ is $C^1$ and strictly decreasing on these intervals, the inverse function theorem says:
	\[0\geq Q'_t(x)=\frac{1}{\partial_{\pi\pi}(V-g)(t,Q_t(x);\mu)}\geq-\frac{\mathfrak{H}^2}{8c} \]
	where we have used Proposition \ref{prop:mon.and.lip.V}(vi).
\end{proof}

\begin{lemma}
	For any sequence $V(t,\pi;\mu_n)$ restricted to $\pi\in\left(b_n(t),\frac{a_2}{a_1+a_2}\right)$ or $\pi\in \left(\frac{a_2}{a_1+a_2},B_n(t)\right)$ (where $b_n,B_n$ are the corresponding boundaries) the associated inverse functions $Q_t^n(\cdot)$ have a common domain $\left(-\alpha_t,0\right)$ or $\left(0,\alpha_t\right)$, respectively, for some $\alpha_t>0$ depending on $t$. Moreover, if the pointwise convergence $\partial_\pi V(t,\cdot;\mu_n)\to \partial_\pi V(t,\cdot;\mu)$ holds, the associated inverses satisfy $Q^n_t(\cdot)\to Q(\cdot)$ pointwise on $(0,\alpha_t)$ (resp. $(-\alpha_t,0)$).
\end{lemma}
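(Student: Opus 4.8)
The plan is to exploit the uniform lower bound $\partial_{\pi\pi}V\le -8c/\mathfrak{H}^2$ from Proposition~\ref{prop:mon.and.lip.V}(vi) to produce a common interval on which every $\partial_\pi(V-g)(t,\cdot;\mu_n)$ takes values, and then use the pointwise convergence of the derivatives together with the continuity of the inverses to pass to the limit. First I would fix $t\in[0,T)$ and, without loss of generality, treat the upper side $\pi\in(\tfrac{a_2}{a_1+a_2},B_n(t))$. On this interval the function $x\mapsto\partial_\pi(V-g)(t,\cdot;\mu_n)$ is strictly decreasing (previous lemma, part (i)), equals $0$ at $B_n(t)$ by smooth fit, and equals some strictly positive number $\alpha_t^n:=\partial_\pi(V-g)(t,\tfrac{a_2}{a_1+a_2};\mu_n)$ at the left endpoint. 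Hence the range of $\partial_\pi(V-g)(t,\cdot;\mu_n)$ on $(\tfrac{a_2}{a_1+a_2},B_n(t))$ is exactly $(0,\alpha_t^n)$, and the inverse $Q_t^n$ is defined precisely on $(0,\alpha_t^n)$.

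The next step is to show that the $\alpha_t^n$ are bounded below by a common $\alpha_t>0$. Using the integral estimate from the proof of the previous lemma,
\[
\alpha_t^n=\partial_\pi(V-g)\Bigl(t,\tfrac{a_2}{a_1+a_2};\mu_n\Bigr)\ge \frac{8c}{\mathfrak{H}^2}\,\Bigl(B_n(t)-\tfrac{a_2}{a_1+a_2}\Bigr)\ge \frac{8c}{\mathfrak{H}^2}\,\Bigl(\underline{B}(t)-\tfrac{a_2}{a_1+a_2}\Bigr)=:\alpha_t>0,
\]
where $\underline{B}$ is the $\mu$-independent inner bound on the upper boundary from Proposition~\ref{prop:bdy.char.classic}(i); the analogous computation with $\overline{b}$ handles the lower side. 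Consequently every $Q_t^n$ (and $Q_t$, the inverse associated with $\mu$) is defined on the common interval $(0,\alpha_t)$, which gives the first assertion. Note all these inverses are moreover uniformly Lipschitz on $(0,\alpha_t)$ with constant $\mathfrak{H}^2/(8c)$, by the inverse-function-theorem bound in the previous lemma.

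For the convergence $Q_t^n\to Q_t$ pointwise on $(0,\alpha_t)$, fix $x\in(0,\alpha_t)$ and set $\pi_n:=Q_t^n(x)$, so $\partial_\pi(V-g)(t,\pi_n;\mu_n)=x$ and $\pi_n\in(\tfrac{a_2}{a_1+a_2},B_n(t))\subseteq[\tfrac{a_2}{a_1+a_2},B^*]$, a compact subset of $(0,1)$ independent of $n$. Take any subsequence along which $\pi_{n'}\to\pi_\infty$ for some $\pi_\infty\in[\tfrac{a_2}{a_1+a_2},B^*]$. By the hypothesis $\partial_\pi V(t,\cdot;\mu_{n'})\to\partial_\pi V(t,\cdot;\mu)$ \emph{locally uniformly} in $\pi$ (which we may invoke, since it is exactly the conclusion of the first preliminary lemma of this appendix) and the continuity of $\partial_\pi(V-g)(t,\cdot;\mu)$, we get $\partial_\pi(V-g)(t,\pi_\infty;\mu)=\lim_{n'}\partial_\pi(V-g)(t,\pi_{n'};\mu_{n'})=x>0$. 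By Lemma~\ref{lem:bound.in.terms.of.der}, $\partial_\pi(V-g)(t,\cdot;\mu)>0$ on $(\tfrac{a_2}{a_1+a_2},B(t))$, vanishes on $[B(t),1)$, and is strictly decreasing on $(\tfrac{a_2}{a_1+a_2},B(t))$; hence the equation $\partial_\pi(V-g)(t,\,\cdot\,;\mu)=x$ has the unique solution $\pi_\infty=Q_t(x)$. Since every subsequential limit equals $Q_t(x)$ and the $\pi_n$ lie in a compact set, $\pi_n\to Q_t(x)$, i.e.\ $Q_t^n(x)\to Q_t(x)$. The lower-side statement is identical with signs reversed. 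The main obstacle here is the bookkeeping needed to guarantee the \emph{common} domain $(0,\alpha_t)$ uniformly in $n$ — this is where the $\mu$-independent boundary bounds of Proposition~\ref{prop:bdy.char.classic} and the uniform concavity bound of Proposition~\ref{prop:mon.and.lip.V}(vi) are essential — whereas the pointwise convergence itself is a routine compactness-plus-uniqueness argument once local uniform convergence of the derivatives is in hand.
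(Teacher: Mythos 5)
Your proof is correct, and its first half (the common domain $(0,\alpha_t)$) is essentially identical to the paper's: both obtain $\alpha_t^n \ge \frac{8c}{\mathfrak{H}^2}\bigl(\underline{B}(t)-\frac{a_2}{a_1+a_2}\bigr)=:\alpha_t>0$ from the quantitative smooth-fit estimate in the proof of Lemma~\ref{lem:bound.in.terms.of.der} together with the $\mu$-independent inner bound $\underline{B}\le B_n$ of Proposition~\ref{prop:bdy.char.classic}, and then use strict monotonicity plus continuity of $\partial_\pi(V-g)(t,\cdot;\mu_n)$ down to $0$ at $B_n(t)$. Where you diverge is the convergence $Q_t^n\to Q_t$: the paper runs a direct $\epsilon$--$\delta$ sandwich argument, evaluating the converging derivatives only at the two \emph{fixed} points $Q_t(y\pm\delta)$ and using monotonicity of $Q_t^n$ to trap $Q_t^n(y)$ between them, so it needs nothing beyond the pointwise convergence stated in the hypothesis; you instead argue by compactness of $[\frac{a_2}{a_1+a_2},B^*]$, extraction of a subsequential limit $\pi_\infty$, and identification $\pi_\infty=Q_t(x)$ via strict monotonicity of the limit derivative, which requires evaluating along \emph{moving} points and hence locally uniform convergence. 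That upgrade is legitimate — it is exactly the monotone-plus-continuous-limit argument in the first preliminary lemma of the appendix (note, though, that the hypothesis of that lemma is convergence of the value functions, not of the derivatives; in the application this is supplied by Lemma~\ref{lem:cont.val.funct}, or you can upgrade directly from your stated hypothesis since the derivatives are monotone and the limit is continuous). Your route buys a slightly more structural "uniqueness of subsequential limits" viewpoint at the price of this extra ingredient, while the paper's sandwich is marginally more elementary. Two small points to tidy: you should explicitly rule out the subsequential limit sitting at the left endpoint $\frac{a_2}{a_1+a_2}$ (where $g$ has its kink); this follows from the same estimate, since the right derivative there is at least $\alpha_t>x$, so strict monotonicity on the closed interval $[\frac{a_2}{a_1+a_2},B(t)]$ still gives uniqueness. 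Neither issue affects the validity of the argument.
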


\begin{proof}
	We will treat the upper interval and the lower interval is analogous. From the proof of Lemma \ref{lem:bound.in.terms.of.der} we have that 
	\begin{align*}
		\partial_\pi(V-g)\left(t,\frac{a_2}{a_1+a_2};\mu_n\right)&\geq\frac{8c}{\mathfrak{H}^2}\left(B_n(t)-\frac{a_2}{a_1+a_2}\right)>0.
	\end{align*} By Proposition \ref{prop:bdy.char.classic} we have that there is a $\underline{B}(t)$ such that $\frac{a_2}{a_1+a_2}<\underline{B}\leq B_n$. Hence, we have
	\[\partial_\pi(V-g)\left(t,\frac{a_2}{a_1+a_2};\mu_n\right)\geq\frac{8c}{\mathfrak{H}^2}\left(\underline{B}(t)-\frac{a_2}{a_1+a_2}\right)=:\alpha_t>0.\]
	Since $\partial_\pi(V-g)(t,\cdot;\mu_n)$ is decreasing to $0$, by continuity it must take on every value in $\left(0,\alpha_t\right)$ on $\left(\frac{a_2}{a_1+a_2},B_n(t)\right)$. It follows that for any $n$ there is a common domain for the inverse functions $Q^n_t$.
	
	It remains to show the convergence. Without loss of generality we treat the first case of $(0,\alpha_t)$. This proof is a modification of standard arguments. Let $\epsilon>0$ and fix $y\in(0,\alpha_t)$. Since $Q_t$ is continuous by our above analysis we can choose $\delta>0$ sufficiently small so that $0<y-\delta<y+\delta<\alpha_t$ and:
	\[\left|Q_t(y+\delta)-Q_t(y)\right|<\epsilon, \ \ \ \left|Q_t(y-\delta)-Q_t(y)\right|<\epsilon\]
	Since $\partial_\pi V(t,\cdot;\mu_n)\to \partial_\pi V(t,\cdot;\mu)$ on $\left(\frac{a_2}{a_1+a_2},1\right)$ and $Q_t(y\pm\delta)\in\left(\frac{a_2}{a_1+a_2},1\right)$ we have that there exists a sufficiently large $N$ such that for all $n>N$:
	\[\left|\partial_\pi V(t,Q_t(y+\delta);\mu_n)-(y+\delta)\right|<\delta,\] \[\left|\partial_\pi V(t,Q_t(y-\delta);\mu_n)-(y-\delta)\right|<\delta.\]
	As a consequence
	\[\partial_\pi V(t,Q_t(y+\delta);\mu_n)>y \ \ \ \mathrm{and} \ \ \ \partial_\pi V(t,Q_t(y-\delta);\mu_n)<y\]
	Since $Q_t^n$ is monotonic by our above analysis, the above inequalities tell us that $Q_t^n(y)$ must be contained in the interval defined by $Q_t(y+\delta)$ and $Q_t(y-\delta)$. However, the first set of inequalities derived above implies that these values are in turn contained in  $[Q_t(y)-\epsilon,Q_t(y)+\epsilon]$
	which then implies $|Q_t^n(y)-Q_t(y)|<\epsilon$, $\forall n>N$
	as required.
\end{proof}

With this we are prepared to tackle the proof.

\begin{proof}[Proof of Lemma \ref{lem:conv.bdys.classic}]
	We treat the case for the upper boundary $B$ and the lower boundary is similar. Let $t\in[0,T)$ and $Q_t^n, Q_t$ be the inverses of $\partial_\pi V(t,\cdot;\mu_n)$ and $\partial_\pi V(t,\cdot;\mu)$ as per the above analysis. We define $Q_t^n(0):=\lim_{x\downarrow0}Q_t^n(x)$, $Q_t^n(\alpha_t):=\lim_{x\uparrow\alpha_t}Q_t^n(x)$ and $Q_t(0):=\lim_{x\downarrow0}Q_t(x)$, $Q_t(\alpha_t):=\lim_{x\uparrow\alpha_t}Q_t(x)$. We also note that $B(t)=Q_t(0), B_n(t)=Q_t^n(0)$. By continuity in the volatility (Lemma \ref{lem:cont.val.funct}), the above Lemmas give us that $Q_t^n(x)\to Q_t(x)$ for $x\in(0,\alpha_t)$. Since we have shown that $Q_t,Q^n_t$ are uniformly Lipschitz on $(0,\alpha_t)$, their extensions are also uniformly Lipschitz on $[0,\alpha_t]$. To see this simply take any sufficiently small $\epsilon_0,\epsilon_1>0$ and note that by the Lipschitz property on $(0,\alpha_t)$ we have for any $y\in(0,\alpha_t)$:
	\[|Q_t(\epsilon_0)-Q_t(y)|\leq \frac{\mathfrak{H}^2}{8c}|\epsilon_0-y|,\]
	\[|Q_t(\alpha_t-\epsilon_1)-Q_t(y)|\leq \frac{\mathfrak{H}^2}{8c}|\alpha_t-\epsilon_1-y|,\]
	\[|Q_t(\alpha_t-\epsilon_1)-Q_t(\epsilon_0)|\leq \frac{\mathfrak{H}^2}{8c}|\alpha_t-\epsilon_1-\epsilon_0|.\]
	Taking $\epsilon_0,\epsilon_1\downarrow0$ shows the claim. They are also uniformly bounded by definition. Then, by the Arz\'ela Ascoli Theorem, for any subsequence $(n')$ we can extract a further subsequence $(n'')$ for which $(Q_t^{n''})$ converges uniformly to some Lipschitz continuous limit function $Q^*_t$ on $[0,\alpha_t]$. But then, since $Q_t^{n''}\to Q_t$ on $(0,\alpha_t)$, by continuity $Q_t\equiv Q_t^*$. Since the subsequence was arbitrary we have the convergence of the functions on $[0,\alpha_t]$. So, $B_n(t)=Q_t^n(0)\to Q_t(0)=B(t)$ as $n\to\infty$. Since $t\in[0,T)$ was arbitrary we have pointwise convergence. Additionally, since all the boundaries are continuous and monotone we have that the convergence is, in fact, locally uniform.
\end{proof}

\section{Continuity of Hitting Times for Gaussian Processes}\label{app:conv.stopping.times}

This appendix is a self-contained analysis of the convergence of stopping times for Gaussian processes. We make use of this result in our proof of optimal stopping time convergence for our sequential testing problem (see Lemma \ref{lem:conv.stopping.times}). 

Consider a sequence of time dependent boundaries $(\Gamma_n(t))_{n\geq1}, (\gamma_n(t))_{n\geq1}$ on $[0,T]$ converging on $[0,T)$ to limiting boundaries $\Gamma_\infty(t)$ and $\gamma_\infty(t)$, respectively. Additionally, consider a sequence of Gaussian processes $(X^n)_{n\geq1}$ whose drift and diffusion coefficients converge pointwise to those of the Gaussian process $X^\infty$. Define the stopping times given by the first hitting time to each of the boundaries by:
\[\tau_\gamma^n:=\inf\{t\geq0:\gamma_n(t)-X^n_t\geq0\}\wedge T\]
\[\tau_\Gamma^n:=\inf\{t\geq0:X^n_t-\Gamma_n(t)\geq0\}\wedge T\]
for all $n$ (including $n=\infty$). With this we are ready to state the result.

\begin{lemma}\label{lem:conv.stop.time}
	Let $(X^n)_{n\geq1}$, and $X^\infty$ be Gaussian processes of the form:
	\[X^n_t=X^n_0+\int_0^t\mu_n(s)ds+\int_0^t\sigma_n(s)^\top d\mathbf{W}_s\]
	for a $d$-dimensional Brownian Motion $\mathbf{W}$ and continuous functions $\mu_n:\mathbb{R}_+\to \mathbb{R}$ , $\sigma_n:\mathbb{R}_+\to\mathbb{R}^d$ satisfying the bounds $\kappa\leq \|\sigma_n(\cdot)\|\leq K$, $|\mu_n(\cdot)|\leq K$, for some $\kappa, K>0$ and all $n$ (including $n=\infty$).
	Suppose
	\begin{enumerate}
		\item[(i)] $\gamma_n(t)\to \gamma_\infty(t)$ and $\Gamma_n(t)\to \Gamma_\infty(t)$ locally uniformly on $[0,T)$.
		\item[(ii)] The initial conditions and coefficients satisfy $X_0^n\to X_0^\infty$, $\mu_n(t)\to \mu_\infty(t)$, and  $\sigma_n(t)\to\sigma_\infty(t)$ for all $t\in[0,T)$.
		\item[(iii)] The boundaries $(\gamma_n(t))_{n\geq1}, \gamma_\infty(t)$ are increasing in time and continuous \textit{OR} uniformly locally Lipschitz on $[0,T)$.
		\item[(iv)] The boundaries $(\Gamma_n(t))_{n\geq1}, \Gamma_\infty(t)$ are decreasing in time and continuous \textit{OR} uniformly locally Lipschitz on $[0,T)$.
	\end{enumerate}
	Then we have the convergence in probability of the stopping times $\tau^n_\gamma\rightarrow \tau^\infty_\gamma$  and $\tau^n_\Gamma\rightarrow \tau^\infty_\Gamma$ as $n\to\infty$.
\end{lemma}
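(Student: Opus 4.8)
## Proof Proposal for Lemma \ref{lem:conv.stop.time}

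\textbf{Overall strategy.} I would handle the two families $\tau_\gamma^n$ and $\tau_\Gamma^n$ separately but by symmetric arguments, so it suffices to treat, say, the hitting times $\tau_\gamma^n$ of the lower boundary from above. The plan is to first upgrade the convergence of the coefficients to a uniform-on-$[0,T]$ convergence of the sample paths, and then use a sandwiching argument on the hitting times. Concretely, since $X^n_t = X^n_0 + \int_0^t \mu_n(s)\,ds + \int_0^t \sigma_n(s)^\top d\mathbf{W}_s$ with $\mu_n \to \mu_\infty$, $\sigma_n \to \sigma_\infty$ pointwise and all coefficients uniformly bounded, dominated convergence gives $\int_0^t \mu_n \to \int_0^t \mu_\infty$ pointwise; and for the stochastic integral, $\mathbb{E}\big[\sup_{t\le T}|\int_0^t(\sigma_n-\sigma_\infty)^\top d\mathbf{W}_s|^2\big] \le 4\int_0^T \|\sigma_n(s)-\sigma_\infty(s)\|^2\,ds \to 0$ by Doob's inequality and dominated convergence. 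Since the drift terms are deterministic and equicontinuous (bounded derivatives), their pointwise convergence is in fact uniform on $[0,T]$. Hence $\sup_{t\le T}|X^n_t - X^\infty_t| \to 0$ in probability (indeed in $L^2$ for the martingale part, and surely for the drift part).

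\textbf{Sandwiching the hitting times.} Fix $\varepsilon>0$ small and work on the high-probability event where $\sup_{t\le T}|X^n_t - X^\infty_t| < \delta$ for a $\delta$ to be chosen. On this event, and using the local uniform convergence $\gamma_n \to \gamma_\infty$ (so $\sup_{t \le T-\varepsilon}|\gamma_n(t)-\gamma_\infty(t)| < \delta'$ for $n$ large), the process $X^n$ crossing $\gamma_n$ is trapped between $X^\infty$ crossing shifted copies of $\gamma_\infty$. The key structural input is hypothesis (iii): either the $\gamma_n$ are increasing and continuous, or uniformly locally Lipschitz. In either case one shows that $X^\infty$ \emph{strictly} crosses $\gamma_\infty$ at $\tau_\gamma^\infty$ in the sense that for every $\eta>0$ there is a later time at which $X^\infty$ is strictly below $\gamma_\infty$ by a positive amount — this is the standard fact that a nondegenerate diffusion (here Gaussian with $\|\sigma_\infty\|\ge \kappa>0$) immediately overshoots any one-sided boundary it touches, i.e.\ the boundary point is regular. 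Combined with continuity of paths, this rules out "grazing" and yields: for any $\eta>0$, $\mathbb{P}(|\tau_\gamma^n - \tau_\gamma^\infty| > \eta) \to 0$. The increasing/continuous case and the Lipschitz case are treated by slightly different elementary estimates (in the monotone case one exploits that $\gamma_\infty(s)\le\gamma_\infty(t)$ for $s\le t$; in the Lipschitz case one absorbs the time-variation of the boundary into the drift by passing to $Y^n_t := X^n_t - \gamma_n(t)$, which is still a continuous semimartingale with bounded drift and diffusion bounded below by $\kappa$).

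\textbf{Main obstacle.} The delicate point — and the one I would spend the most care on — is the boundary regularity / no-grazing step: showing that $\tau_\gamma^\infty$ is a point of \emph{strict} crossing almost surely, so that a small perturbation of the path and of the boundary cannot change the hitting time by more than $o(1)$. This requires that (a) on $\{\tau_\gamma^\infty < T\}$ the process $X^\infty$ does not merely touch $\gamma_\infty$ tangentially, which follows from the nondegeneracy $\|\sigma_\infty\|\ge\kappa$ and the law of the iterated logarithm / regularity of one-dimensional diffusions for the boundary point, handled after transforming to $Y^\infty_t = X^\infty_t - \gamma_\infty(t)$; and (b) on $\{\tau_\gamma^\infty = T\}$ a matching statement holds near the endpoint, where the local uniform (not uniform) convergence of the boundaries on $[0,T)$ forces a separate small-probability argument: we choose $\varepsilon$ first so that $\mathbb{P}(\tau_\gamma^\infty \in (T-\varepsilon, T])$ and $\mathbb{P}(\tau_\gamma^n \in (T-\varepsilon,T])$ are both small uniformly in $n$ (using again the uniform path convergence and the $\kappa$-nondegeneracy to control how much mass the hitting time can place near $T$), and only then send $n\to\infty$ on the complementary event. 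Once these two obstacles are dispatched, assembling the $\varepsilon$--$\delta$ bookkeeping into the claimed convergence in probability is routine, and the argument for $\tau_\Gamma^n$ is identical with inequalities reversed.
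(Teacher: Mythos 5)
Your proposal is correct in its essentials and follows the same skeleton as the paper's proof: first upgrade pointwise convergence of the coefficients to control of $\sup_{t\le T}|X^n_t-X^\infty_t|$ via standard SDE/Doob estimates, then use hypotheses (iii)--(iv) -- monotonicity or the uniform local Lipschitz bound -- to absorb the boundary's time variation into a bounded drift, so that the non-degeneracy $\|\sigma_\cdot\|\ge\kappa$ forces a genuine crossing rather than grazing. Where you genuinely differ is in how the no-grazing step is made rigorous. The paper proves a \emph{quantitative} overshoot estimate, uniform in $n$ and over all stopping times $\tau$: by the strong Markov property and a time change, the probability that $\max_{t\in[\tau,\tau+\epsilon/2]}\bigl(-K_\epsilon\int_\tau^t\|\sigma(s)\|^2ds+\int_\tau^t\sigma(s)^\top d\mathbf{W}_s\bigr)<\eta$ is bounded using the explicit law of the running maximum of a drifted Brownian motion, and this estimate is applied both to $X^\infty$ after $\tau^\infty_\gamma$ (to exclude $\tau^n_\gamma\gg\tau^\infty_\gamma$) and, symmetrically, to $X^n$ after $\tau^n_\gamma$ (to exclude $\tau^n_\gamma\ll\tau^\infty_\gamma$). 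You instead work only with the limit process: almost-sure immediate strict crossing at $\tau^\infty_\gamma$ (boundary-point regularity via the LIL after subtracting the boundary), a sandwich between hitting times of shifted boundaries $\gamma_\infty\pm\delta$, and a compactness/continuity argument giving a strictly positive margin above the boundary on $[0,\tau^\infty_\gamma-\epsilon]$, pushing all $n$-dependence into the uniform path convergence. Both routes work; the paper's yields explicit constants uniform in $n$ without invoking a.s. regularity at a stopping time, while yours is softer and somewhat more elementary.

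One point needs repair. Your endpoint device -- ``choose $\varepsilon$ so that $\mathbb{P}(\tau^\infty_\gamma\in(T-\varepsilon,T])$ and $\mathbb{P}(\tau^n_\gamma\in(T-\varepsilon,T])$ are small uniformly in $n$'' -- is not achievable in general: the stopping times are capped at $T$, so they may equal $T$ with probability bounded away from zero (e.g.\ when no crossing occurs before the horizon). The fix is easy and is implicitly what the paper does by restricting every estimate to events of the form $\{\tau<T-\epsilon\}$: on $\{\tau^\infty_\gamma>T-\epsilon\}$ the event $\{\tau^n_\gamma>\tau^\infty_\gamma+\epsilon\}$ is empty since $\tau^n_\gamma\le T$, so near the horizon only the direction $\tau^n_\gamma<\tau^\infty_\gamma-\epsilon$ requires an argument, and that argument uses path and boundary closeness only on $[0,T-\epsilon]$, where the locally uniform convergence of the boundaries is available. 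With that adjustment your outline goes through.
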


\begin{proof}
	The proof for both types of stopping times proceeds in a symmetric fashion. So, without loss of generality, we restrict our attention to $\tau^n_\gamma$, $\tau^\infty_\gamma$.
	We note that many of the initial arguments in the proof are adaptations of the arguments employed in \cite{shevchenko2015convergence}.
	
	We aim to show that for any small $\epsilon,\delta>0$, we can find an $N\geq0$ such that if $n\geq N$ then
	\[\mathbb{P}(|\tau_\gamma^n-\tau_\gamma^\infty|>\epsilon)<\delta\]
	from which the convergence in probability follows.
	
	Begin by fixing $\epsilon,\delta>0$. Let $\eta>0$ be a constant to be chosen later
	and define the stopping time
	\[\theta^n:=\inf\left\{t\geq0:|X^n_t-X^\infty_t|\geq \frac{\eta}{2}\right\}\wedge T.\]
	We have that:
	\begin{align*}
		\mathbb{P}(|\tau_\gamma^n-&\tau_\gamma^\infty|>\epsilon)\\
		&\leq \mathbb{P}\left(|\tau_\gamma^n-\tau_\gamma^\infty|>\epsilon,  \theta^n>T-\frac{\epsilon}{2}\right)+ \mathbb{P}\left(\theta^n\leq T-\frac{\epsilon}{2}\right)\\
		&\leq \mathbb{P}\left(|\tau_\gamma^n-\tau_\gamma^\infty|>\epsilon,  \theta^n>T-\frac{\epsilon}{2}\right)+ \mathbb{P}\left( \sup_{t\in[0,T]}|X^n_t-X^\infty_t|\geq \frac{\eta}{2}\right).
	\end{align*}
	Assumption (ii) in the theorem statement and standard stochastic differential equation estimates show that $\mathbb{E}\left[\sup_{t\in[0,T]}|X_t^n-X_t^\infty|^2\right]$ converges to 0. 
	Hence, we can find an $N_1\geq0$ such that for all $n\geq N_1$ we have that
	\[\mathbb{P}\left( \sup_{t\in[0,T]}|X^n_t-X^\infty_t|\geq \eta\right)<\frac{\delta}{2}.\]
	As a consequence for $n\geq N_1$ we have by the above that:
	\begin{equation}\label{eqn:first.main.est.Gaussian}
		\mathbb{P}(|\tau_\gamma^n-\tau_\gamma^\infty|>\epsilon)<\mathbb{P}\left(|\tau_\gamma^n-\tau_\gamma^\infty|>\epsilon,  \theta^n>T-\frac{\epsilon}{2}\right)+ \frac{\delta}{2}.
	\end{equation}
	We now proceed to further bound the first term on the right hand side above. We have trivially that
	\begin{align}\label{eqn:inter.bds.Gaussian}
		\mathbb{P}&\left(|\tau_\gamma^n-\tau_\gamma^\infty|>\epsilon,  \theta^n>T-\frac{\epsilon}{2}\right)\\\nonumber
		&\quad \quad \leq \mathbb{P}\left(\tau_\gamma^\infty+\epsilon<\tau_\gamma^n,  \theta^n>T-\frac{\epsilon}{2}\right) + \mathbb{P}\left(\tau_\gamma^n+\epsilon<\tau_\gamma^\infty,  \theta^n>T-\frac{\epsilon}{2}\right).
	\end{align}
	We will now treat each of these terms on the right hand side starting with \[\mathbb{P}\left(\tau_\gamma^\infty+\epsilon<\tau_\gamma^n,  \theta^n>T-\frac{\epsilon}{2}\right).\]
	First note that
	\[\mathbb{P}\left(\tau_\gamma^\infty+\epsilon<\tau_\gamma^n,  \theta^n>T-\frac{\epsilon}{2}\right)\leq \mathbb{P}\left(\tau_\gamma^\infty<T-\epsilon,  \tau_\gamma^\infty+\frac{\epsilon}{2}<\tau_\gamma^n,\theta^n>T-\frac{\epsilon}{2}\right).\]
	Now if the three conditions: 
	\begin{equation}\label{eqn:three.cond}
		\tau_\gamma^\infty<T-\epsilon, \ \ \  \tau_\gamma^\infty+\frac{\epsilon}{2}<\tau_\gamma^n, \ \ \  \mathrm{and} \ \ \  \theta^n>T-\frac{\epsilon}{2}
	\end{equation} hold we have on $\left[\tau^\infty_\gamma,\tau^\infty_\gamma+\frac{\epsilon}{2}\right]$ that
	$|X^n_t-X^\infty_t|\leq \frac{\eta}{2}.$
	Moreover, for both $(\Gamma_n)_{n\geq 1}$ and $(\gamma_n)_{n\geq 1}$ by our assumptions we can find an $N_2\geq 0$ such that if $n\geq N_2$ then
	\[\sup_{t\in\left[0,T-\frac{\epsilon}{2}\right]}|\gamma_n(t)-\gamma_\infty(t)|<\frac{\eta}{2}.\] 
	As a consequence for $n\geq N_1\vee N_2$ we have
	\[\sup_{t\in\left[\tau_\gamma^\infty,\tau_\gamma^\infty+\frac{\epsilon}{2}\right]}|\gamma_n(t)-\gamma_\infty(t)|<\frac{\eta}{2}.\] 
	For the case where the boundaries are uniformly locally Lipschitz we also have that there is a local Lipschitz constant $L_{\epsilon/2}>0$ (holding on $[0,T-\frac{\epsilon}{2}]$) such that for all $n$ (including $n=\infty$):
	\[|\gamma_n(t)-\gamma_n(s)|\leq L_{\epsilon/2}|t-s|,  \ \ \ \forall (t,s)\in\left[0,T-\frac{\epsilon}{2}\right],\] 
	and in particular, (as $\tau^\infty_\gamma+\frac{\epsilon}{2}<T-\frac{\epsilon}{2}$) this holds for all $ (t,s)\in\left[\tau_{\gamma}^\infty,\tau_{\gamma}^\infty+\frac{\epsilon}{2}\right]\subset \left[0,T-\frac{\epsilon}{2}\right]$.
	
	\medskip
	
	\noindent\textit{\textbf{Case 1 (Monotone Boundaries):}} Let us start with the case where the $\gamma_n$ are increasing in $t$. Since under our three conditions in \eqref{eqn:three.cond}, $\tau^n_\gamma\in (\tau_\gamma^\infty+\frac{\epsilon}{2},T]$ we have (by the path continuity of the Gaussian process with continuous coefficients and the definition of $\tau^n_\gamma$):
	\[\max_{t\in \left[\tau^\infty_\gamma,\tau^\infty_\gamma+\frac{\epsilon}{2}\right]}(\gamma_n(t)-X^n_t)<0.\]
	Hence,
	\begin{align*}
		0&>\max_{t\in \left[\tau^\infty_\gamma,\tau^\infty_\gamma+\frac{\epsilon}{2}\right]}(\gamma_n(t)-X^n_t)\\
		&=\max_{t\in \left[\tau^\infty_\gamma,\tau^\infty_\gamma+\frac{\epsilon}{2}\right]}(\gamma_\infty(t)-X^\infty_t+\gamma_n(t)-\gamma_\infty(t)+X^\infty_t-X^n_t)\\
		&\geq \max_{t\in \left[\tau^\infty_\gamma,\tau^\infty_\gamma+\frac{\epsilon}{2}\right]}(\gamma_\infty(t)-X^\infty_t)-\eta\\
		&= \max_{t\in \left[\tau^\infty_\gamma,\tau^\infty_\gamma+\frac{\epsilon}{2}\right]}\left(\gamma_\infty(t)-X^\infty_{\tau^\infty_\gamma}-\int_{\tau_\gamma^\infty}^t\mu_\infty(s)ds-\int_{\tau_\gamma^\infty}^t\sigma_\infty(s)^\top d\mathbf{W}_s\right)-\eta\\
		&\geq \max_{t\in \left[\tau^\infty_\gamma,\tau^\infty_\gamma+\frac{\epsilon}{2}\right]}\left(\gamma_\infty(t)-X^\infty_{\tau^\infty_\gamma}-\int_{\tau_\gamma^\infty}^t\frac{K}{\kappa^2}\|\sigma_\infty(s)\|^2ds-\int_{\tau_\gamma^\infty}^t\sigma_\infty(s)^\top d\mathbf{W}_s\right)-\eta\\
		&=\max_{t\in \left[\tau^\infty_\gamma,\tau^\infty_\gamma+\frac{\epsilon}{2}\right]}\left(\gamma_\infty(t)-\gamma_\infty(\tau^\infty_\gamma)-\int_{\tau_\gamma^\infty}^t\frac{K}{\kappa^2}\|\sigma_\infty(s)\|^2ds-\int_{\tau_\gamma^\infty}^t\sigma_\infty(s)^\top d\mathbf{W}_s\right)-\eta\\
		&\geq \max_{t\in \left[\tau^\infty_\gamma,\tau^\infty_\gamma+\frac{\epsilon}{2}\right]}\left(-\int_{\tau_\gamma^\infty}^t\frac{K}{\kappa^2}\|\sigma_\infty(s)\|^2ds-\int_{\tau_\gamma^\infty}^t\sigma_\infty(s)^\top d\mathbf{W}_s\right)-\eta
	\end{align*}
	where in the second last line we have used path continuity to get $X^\infty_{\tau_\gamma^\infty}=\gamma_\infty(\tau_\gamma^\infty)$. The last line then follows by our assumption that says $\gamma_\infty(t)$ is increasing.
	Taken together, by rearranging we get that if $\tau_\gamma^\infty<T-\epsilon,  \tau_\gamma^\infty+\frac{\epsilon}{2}<\tau_\gamma^n$ and $\theta^n>T-\frac{\epsilon}{2}$ then for $n\geq N_1\vee N_2$:
	\[\max_{t\in \left[\tau^\infty_\gamma,\tau^\infty_\gamma+\frac{\epsilon}{2}\right]}\left(-\frac{K}{\kappa^2}\int_{\tau_\gamma^\infty}^t\|\sigma_\infty(s)\|^2ds-\int_{\tau_\gamma^\infty}^t\sigma_\infty(s)^\top d\mathbf{W}_s\right)<\eta. \] 
	Hence we arrive at the bound:
	\begin{align*}
		\mathbb{P}&\left(\tau_\gamma^\infty<T-\epsilon,  \tau_\gamma^\infty+\frac{\epsilon}{2}<\tau_\gamma^n,\theta^n>T-\frac{\epsilon}{2}\right) \\
		&\leq  \mathbb{P}\left(\max_{t\in \left[\tau^\infty_\gamma,\tau^\infty_\gamma+\frac{\epsilon}{2}\right]}\left(-\frac{K}{\kappa^2}\int_{\tau_\gamma^\infty}^t\|\sigma_\infty(s)\|^2ds-\int_{\tau_\gamma^\infty}^t\sigma_\infty(s)^\top d\mathbf{W}_s\right)<\eta, \tau_\gamma^\infty<T-\epsilon\right).
	\end{align*}
	
	\medskip
	
	\noindent\textit{\textbf{Case 2 (Uniformly Locally Lipschitz Boundaries):}} When instead the uniform local Lipschitz condition holds for the $\gamma_n$ we repeat the arguments to get
	
	\begin{align*}
		0&>\max_{t\in \left[\tau^\infty_\gamma,\tau^\infty_\gamma+\frac{\epsilon}{2}\right]}\left(\gamma_\infty(t)-\gamma_\infty(\tau^\infty_\gamma)-\int_{\tau_\gamma^\infty}^t\mu_\infty(s)ds-\int_{\tau_\gamma^\infty}^t\sigma_\infty(s)^\top d\mathbf{W}_s\right)-\eta\\
		&\geq \max_{t\in \left[\tau^\infty_\gamma,\tau^\infty_\gamma+\frac{\epsilon}{2}\right]}\left(-L_{\epsilon/2}(t-\tau_\gamma^\infty)-\int_{\tau_\gamma^\infty}^t\mu_\infty(s)ds-\int_{\tau_\gamma^\infty}^t\sigma_\infty(s)^\top d\mathbf{W}_s\right)-\eta\\
		&= \max_{t\in \left[\tau^\infty_\gamma,\tau^\infty_\gamma+\frac{\epsilon}{2}\right]}\left(-\int_{\tau_\gamma^\infty}^t(L_{\epsilon/2}+\mu_\infty(s))ds-\int_{\tau_\gamma^\infty}^t\sigma_\infty(s)^\top d\mathbf{W}_s\right)-\eta\\
		&= \max_{t\in \left[\tau^\infty_\gamma,\tau^\infty_\gamma+\frac{\epsilon}{2}\right]}\left(-\int_{\tau_\gamma^\infty}^t\frac{(L_{\epsilon/2}+\mu_\infty(s))}{\|\sigma_\infty(s)\|^2}\|\sigma_\infty(s)\|^2ds-\int_{\tau_\gamma^\infty}^t\sigma_\infty(s)^\top d\mathbf{W}_s\right)-\eta\\
		&\geq \max_{t\in \left[\tau^\infty_\gamma,\tau^\infty_\gamma+\frac{\epsilon}{2}\right]}\left(-\int_{\tau_\gamma^\infty}^t\frac{(L_{\epsilon/2}+K)}{\kappa^2}\|\sigma_\infty(s)\|^2ds-\int_{\tau_\gamma^\infty}^t\sigma_\infty(s)^\top d\mathbf{W}_s\right)-\eta
	\end{align*}
	Once again by rearranging we get that if $\tau_\gamma^\infty<T-\epsilon,  \tau_\gamma^\infty+\frac{\epsilon}{2}<\tau_\gamma^n$ and $\theta^n>T-\frac{\epsilon}{2}$ then for $n\geq N_1\vee N_2$:
	\[\max_{t\in \left[\tau^\infty_\gamma,\tau^\infty_\gamma+\frac{\epsilon}{2}\right]}\left(-K_\epsilon\int_{\tau_\gamma^\infty}^t\|\sigma_\infty(s)\|^2ds-\int_{\tau_\gamma^\infty}^t\sigma_\infty(s)^\top d\mathbf{W}_s\right)<\eta,\] 
	for some $K_\epsilon>0$. Hence we arrive at the bound:
	\begin{align*}
		&\mathbb{P}\left(\tau_\gamma^\infty<T-\epsilon,  \tau_\gamma^\infty+\frac{\epsilon}{2}<\tau_\gamma^n,\theta^n>T-\frac{\epsilon}{2}\right) \\
		&\leq\mathbb{P}\left(\max_{t\in \left[\tau^\infty_\gamma,\tau^\infty_\gamma+\frac{\epsilon}{2}\right]}\left(-K_\epsilon\int_{\tau_\gamma^\infty}^t\|\sigma_\infty(s)\|^2ds-\int_{\tau_\gamma^\infty}^t\sigma_\infty(s)^\top d\mathbf{W}_s\right)<\eta, \tau_\gamma^\infty<T-\epsilon\right).
	\end{align*}
	Note that the monotone case leads us to the choice $K_\epsilon=\frac{K}{\kappa^2}$ so rather than treat these cases separately, we will proceed using this full generality without further comment. 
	
	For the remaining term in \eqref{eqn:inter.bds.Gaussian}, a nearly identical argument gives us:
	\[\mathbb{P}\left(\tau_\gamma^n+\epsilon<\tau_\gamma^\infty,  \theta^n>T-\frac{\epsilon}{2}\right)\leq \mathbb{P}\left(\tau_\gamma^n<T-\epsilon,  \tau_\gamma^n+\frac{\epsilon}{2}<\tau_\gamma^\infty,\theta^n>T-\frac{\epsilon}{2}\right)\]
	and by exploiting the uniform local Lipschitz property (or monotonicity) that:
	\begin{align*}
		\mathbb{P}&\left(\tau_\gamma^n<T-\epsilon,  \tau_\gamma^n+\frac{\epsilon}{2}<\tau_\gamma^\infty,\theta^n>T-\frac{\epsilon}{2}\right)\\
		&\leq \mathbb{P}\left(\max_{t\in \left[\tau^n_\gamma,\tau^n_\gamma+\frac{\epsilon}{2}\right]}\left(-K_\epsilon\int_{\tau_\gamma^n}^t\|\sigma_n(s)\|^2ds-\int_{\tau_\gamma^n}^t\sigma_n(s)^\top d\mathbf{W}_s\right)<\eta, \tau_\gamma^n<T-\epsilon\right)
	\end{align*}
	Now, if $\eta$ can be chosen sufficiently small so that for all $n$ the above quantities are each bounded by $\frac{\delta}{4}$ then we get from \eqref{eqn:inter.bds.Gaussian} that
	\[\mathbb{P}\left(|\tau_\gamma^n-\tau^\infty_\gamma|>\epsilon, \theta^n>T-\frac{\epsilon}{2}\right)< \frac{\delta}{2}.\]
	Taking this bound together with \eqref{eqn:first.main.est.Gaussian} we will then arrive at
	\[\mathbb{P}(|\tau_\gamma^n-\tau_\gamma^\infty|>\epsilon)<\delta\]
	as required to complete the proof.
	
	This final piece of the proof will follow from the following claim which gives the result for all $n$ by our assumptions on $\sigma_n(s)$.
	
	\textit{\textbf{Claim:} For any $\delta>0$ we can find an $\eta>0$ sufficiently small so that for all $\mathbb{F}^{\mathbf{W}}$-stopping times $\tau$ and $\|\sigma(s)\|\geq\kappa>0$:}
	\[\mathbb{P}\left(\max_{t\in\left[\tau,\tau+\frac{\epsilon}{2}\right]}\left(-K_\epsilon\int_\tau^t\|\sigma(s)\|^2ds+\int_\tau^t\sigma(s)^\top d\mathbf{W}_s\right)<\eta, \tau< T-\epsilon\right)<\frac{\delta}{4}.\]
	
	Note first by the strong Markov property that the left hand side is trivially bounded by:
	\[\max_{t_0\in[0,T-\epsilon]}\mathbb{P}\left(\max_{t\in\left[t_0,t_0+\frac{\epsilon}{2}\right]}\left(-K_\epsilon\int_{t_0}^t\|\sigma(s)\|^2ds+\int_{t_0}^t\sigma(s)^\top d\mathbf{W}_s\right)<\eta\right)\]
	since the distribution of the Gaussian process under the maximum depends on $\tau$ only through the random starting point for the interval and the outer maximum will choose the worst case starting time. Now, for a given $t_0$ we have the equivalent representation:
	\[\mathbb{P}\left(\max_{u\in\left[0,\frac{\epsilon}{2}\right]}\left(-K_\epsilon\int_0^{u}\|\sigma(s+t_0)\|^2ds+\int_0^{u}\sigma(s+t_0)^\top d\mathbf{W}_s\right)<\eta\right)\]
	Now by traditional arguments we have that there exists a Brownian motion $\widehat{W}$ such that for the ``clock"
	\[\alpha_0(u)=\int_0^u\|\sigma(s+t_0)\|^2ds\]
	we have 
	\[\int_0^{u}\sigma(s+t_0)^\top d\mathbf{W}_s\overset{d}{=}\widehat{W}_{\alpha_0(u)}.\]
	As a result we can treat
	\[\mathbb{P}\left(\max_{u\in\left[0,\frac{\epsilon}{2}\right]}\left(-K_\epsilon\alpha_0(u)+\widehat{W}_{\alpha_0(u)}\right)<\eta\right).\]
	Equivalently:
	\[\mathbb{P}\left(\max_{s\in\left[0,\alpha_0(\frac{\epsilon}{2})\right]}\left(-K_\epsilon s+\widehat{W}_{s}\right)<\eta\right)\]
	since $\alpha_0(0)=0$ and $\alpha(u)$ is increasing and continuous on $[0,\epsilon/2]$. Now since $\|\sigma(s+t_0)\|>\kappa$ we have $\alpha(\epsilon/2)>\kappa\frac{\epsilon}{2}$ and
	\[\mathbb{P}\left(\max_{s\in\left[0,\alpha_0(\frac{\epsilon}{2})\right]}\left(-K_\epsilon s+\widehat{W}_{s}\right)<\eta\right)\leq \mathbb{P}\left(\max_{s\in\left[0,\kappa\frac{\epsilon}{2}\right]}\left(-K_\epsilon s+\widehat{W}_{s}\right)<\eta\right)\]
	since if the maximum over the larger interval is less than $\eta$ then so is the maximum over the smaller interval. But now this is just in terms of the probability of the maximum of a Brownian motion with drift. In particular, we have by standard results (see for example \cite{shreve2004stochastic}) that:
	
	\[\mathbb{P}\left(\max_{s\in\left[0,\kappa\frac{\epsilon}{2}\right]}\left(-K_\epsilon s+\widehat{W}_{s}\right)<\eta\right)=\Phi\left(\frac{\eta+\kappa K_{\epsilon}\epsilon/2}{\sqrt{\kappa\epsilon/2}}\right)-e^{-2K_\epsilon\eta}\Phi\left(\frac{-\eta+\kappa K_\epsilon\epsilon/2}{\sqrt{\kappa\epsilon/2}}\right)\]
	where $\Phi(\cdot)$ is the standard normal CDF. Clearly as $\eta\downarrow0$ we have that the right hand side converges to $0$. Thus, for any $\delta>0$ we can find a sufficiently small $\eta>0$ such that 
	\[\Phi\left(\frac{\eta+\kappa K_{\epsilon}\epsilon/2}{\sqrt{\kappa\epsilon/2}}\right)-e^{-2K_\epsilon\eta}\Phi\left(\frac{-\eta+\kappa K_\epsilon\epsilon/2}{\sqrt{\kappa\epsilon/2}}\right)<\frac{\delta}{4}\]
	Since this argument was independent of $t_0\in[0,T-\epsilon]$ and the particular choice of $\tau$, this proves the claim. 
	
	We now close by remarking that choosing $\eta$ at the beginning of our proof to depend on $\epsilon,\delta$ as per the above and taking $n\geq N_1\vee N_2$ gives us the proof of the lemma.
\end{proof}


\end{appendix}


\bibliographystyle{abbrv}
\bibliography{seqTest.bib}

\begin{thebibliography}{10}

\bibitem{ahuja2016wellposedness}
S.~Ahuja.
\newblock Wellposedness of mean field games with common noise under a weak
  monotonicity condition.
\newblock {\em SIAM Journal on Control and Optimization}, 54(1):30--48, 2016.

\bibitem{aid2021entry}
R.~A{\"\i}d, R.~Dumitrescu, and P.~Tankov.
\newblock The entry and exit game in the electricity markets: A mean-field game
  approach.
\newblock {\em Journal of Dynamics and Games}, 2021.

\bibitem{bagagiolo2014mean}
F.~Bagagiolo and D.~Bauso.
\newblock Mean-field games and dynamic demand management in power grids.
\newblock {\em Dynamic Games and Applications}, 4:155--176, 2014.

\bibitem{bain2008fundamentals}
A.~Bain and D.~Crisan.
\newblock {\em Fundamentals of stochastic filtering}, volume~60.
\newblock Springer Science \& Business Media, 2008.

\bibitem{bensoussan2013mean}
A.~Bensoussan, J.~Frehse, P.~Yam, et~al.
\newblock {\em Mean field games and mean field type control theory}, volume
  101.
\newblock Springer, 2013.

\bibitem{bertucci2018optimal}
C.~Bertucci.
\newblock Optimal stopping in mean field games, an obstacle problem approach.
\newblock {\em Journal de Math{\'e}matiques Pures et Appliqu{\'e}es},
  120:165--194, 2018.

\bibitem{campbell2021deep}
S.~Campbell, Y.~Chen, A.~Shrivats, and S.~Jaimungal.
\newblock Deep learning for principal-agent mean field games.
\newblock {\em arXiv preprint arXiv:2110.01127}, 2021.

\bibitem{carmona2022mean}
R.~Carmona, G.~Dayan{\i}kl{\i}, and M.~Lauri{\`e}re.
\newblock Mean field models to regulate carbon emissions in electricity
  production.
\newblock {\em Dynamic Games and Applications}, 12(3):897--928, 2022.

\bibitem{carmona2013probabilistic}
R.~Carmona and F.~Delarue.
\newblock Probabilistic analysis of mean-field games.
\newblock {\em SIAM Journal on Control and Optimization}, 51(4):2705--2734,
  2013.

\bibitem{carmona2014master}
R.~Carmona and F.~Delarue.
\newblock The master equation for large population equilibriums.
\newblock In {\em Stochastic Analysis and Applications 2014: In Honour of Terry
  Lyons}, pages 77--128. Springer, 2014.

\bibitem{carmona2015forward}
R.~Carmona and F.~Delarue.
\newblock Forward--backward stochastic differential equations and controlled
  mckean--vlasov dynamics.
\newblock 2015.

\bibitem{carmona2018probabilistic}
R.~Carmona, F.~Delarue, et~al.
\newblock {\em Probabilistic theory of mean field games with applications
  I-II}.
\newblock Springer, 2018.

\bibitem{carmona2017mean}
R.~Carmona, F.~Delarue, and D.~Lacker.
\newblock Mean field games of timing and models for bank runs.
\newblock {\em Applied Mathematics \& Optimization}, 76(1):217--260, 2017.

\bibitem{carmona2016mean}
R.~A. Carmona, F.~Delarue, and D.~Lacker.
\newblock Mean field games with common noise.
\newblock {\em Annals of Probability}, 44(6):3740--3803, 2016.

\bibitem{carmona2015mean}
R.~A. Carmona, J.~P. Fouque, and L.~H. Sun.
\newblock Mean field games and systemic risk.
\newblock {\em Communications in Mathematical Sciences}, 13(4):911--933, 2015.

\bibitem{casgrain2020mean}
P.~Casgrain and S.~Jaimungal.
\newblock Mean-field games with differing beliefs for algorithmic trading.
\newblock {\em Mathematical Finance}, 30(3):995--1034, 2020.

\bibitem{chernoff1959sequential}
H.~Chernoff.
\newblock Sequential design of experiments.
\newblock {\em The Annals of Mathematical Statistics}, 30(3):755--770, 1959.

\bibitem{cousin2011mean}
A.~Cousin, S.~Cr{\'e}pey, O.~Gu{\'e}ant, D.~Hobson, M.~Jeanblanc, J.-M. Lasry,
  J.-P. Laurent, P.-L. Lions, P.~Tankov, O.~Gu{\'e}ant, et~al.
\newblock Mean field games and applications.
\newblock {\em Paris-Princeton lectures on mathematical finance 2010}, pages
  205--266, 2011.

\bibitem{de2020global}
T.~De~Angelis and G.~Peskir.
\newblock Global c1 regularity of the value function in optimal stopping
  problems.
\newblock {\em The Annals of Applied Probability}, 30(3):1007--1031, 2020.

\bibitem{de2019lipschitz}
T.~De~Angelis and G.~Stabile.
\newblock On lipschitz continuous optimal stopping boundaries.
\newblock {\em SIAM Journal on Control and Optimization}, 57(1):402--436, 2019.

\bibitem{dumitrescu2020control}
R.~Dumitrescu, M.~Leutscher, and P.~Tankov.
\newblock Control and optimal stopping mean field games: a linear programming
  approach.
\newblock {\em arXiv preprint arXiv:2011.11533}, 2020.

\bibitem{dumitrescu2021control}
R.~Dumitrescu, M.~Leutscher, and P.~Tankov.
\newblock Control and optimal stopping mean field games: a linear programming
  approach.
\newblock {\em Electronic Journal of Probability}, 26:1--49, 2021.

\bibitem{dumitrescu2022energy}
R.~Dumitrescu, M.~Leutscher, and P.~Tankov.
\newblock Energy transition under scenario uncertainty: a mean-field game of
  stopping with common noise.
\newblock {\em Mathematics and Financial Economics}, pages 1--42, 2024.

\bibitem{dvoretzky1953sequential}
A.~Dvoretzky, J.~Kiefer, and J.~Wolfowitz.
\newblock Sequential decision problems for processes with continuous time
  parameter. testing hypotheses.
\newblock {\em The Annals of Mathematical Statistics}, 24(2):254--264, 1953.

\bibitem{ekstrom2004properties}
E.~Ekstr{\"o}m.
\newblock Properties of american option prices.
\newblock {\em Stochastic Processes and their Applications}, 114(2):265--278,
  2004.

\bibitem{ekstrom2015bayesian}
E.~Ekstr{\"o}m and J.~Vaicenavicius.
\newblock Bayesian sequential testing of the drift of a brownian motion.
\newblock {\em ESAIM: Probability and Statistics}, 19:626--648, 2015.

\bibitem{ekstrom2022multi}
E.~Ekstr{\"o}m and Y.~Wang.
\newblock Multi-dimensional sequential testing and detection.
\newblock {\em Stochastics}, 94(5):789--806, 2022.

\bibitem{elie2021mean}
R.~Elie, E.~Hubert, T.~Mastrolia, and D.~Possama{\"\i}.
\newblock Mean--field moral hazard for optimal energy demand response
  management.
\newblock {\em Mathematical Finance}, 31(1):399--473, 2021.

\bibitem{fakeev1970optimal}
A.~Fakeev.
\newblock Optimal stopping rules for stochastic processes with continuous
  parameter.
\newblock {\em Theory of Probability \& Its Applications}, 15(2):324--331,
  1970.

\bibitem{feron2020price}
O.~F{\'e}ron, P.~Tankov, and L.~Tinsi.
\newblock Price formation and optimal trading in intraday electricity markets
  with a major player.
\newblock {\em Risks}, 8(4):133, 2020.

\bibitem{firoozi2018mean}
D.~Firoozi, P.~E. Caines, and S.~Jaimungal.
\newblock Mean field game systems with common noise and markovian latent
  processes.
\newblock {\em arXiv preprint arXiv:1809.07865}, 2018.

\bibitem{gapeev2004wiener}
P.~V. Gapeev and G.~Peskir.
\newblock The wiener sequential testing problem with finite horizon.
\newblock {\em Stochastics and stochastic reports}, 76(1):59--75, 2004.

\bibitem{gapeev2011sequential}
P.~V. Gapeev and A.~N. Shiryaev.
\newblock On the sequential testing problem for some diffusion processes.
\newblock {\em Stochastics An International Journal of Probability and
  Stochastic Processes}, 83(4-6):519--535, 2011.

\bibitem{gomes2021mean}
D.~A. Gomes and J.~Sa{\'u}de.
\newblock A mean-field game approach to price formation.
\newblock {\em Dynamic Games and Applications}, 11(1):29--53, 2021.

\bibitem{guide2006infinite}
A.~H. Guide.
\newblock {\em Infinite dimensional analysis}.
\newblock Springer, 2006.

\bibitem{huang2007large}
M.~Huang, P.~E. Caines, and R.~P. Malham{\'e}.
\newblock Large-population cost-coupled {LQG} problems with nonuniform agents:
  individual-mass behavior and decentralized $\epsilon$-{N}ash equilibria.
\newblock {\em IEEE transactions on automatic control}, 52(9):1560--1571, 2007.

\bibitem{huang2006large}
M.~Huang, R.~P. Malham{\'e}, P.~E. Caines, et~al.
\newblock Large population stochastic dynamic games: closed-loop
  {M}c{K}ean-{V}lasov systems and the {N}ash certainty equivalence principle.
\newblock {\em Communications in Information \& Systems}, 6(3):221--252, 2006.

\bibitem{irle2004solving}
A.~Irle and V.~Paulsen.
\newblock Solving problems of optimal stopping with linear costs of
  observations.
\newblock {\em Sequential Analysis}, 23(3):297--316, 2004.

\bibitem{janson2003volatility}
S.~Janson, J.~Tysk, et~al.
\newblock Volatility time and properties of option prices.
\newblock {\em The Annals of Applied Probability}, 13(3):890--913, 2003.

\bibitem{karatzas1998methods}
I.~Karatzas, S.~E. Shreve, I.~Karatzas, and S.~E. Shreve.
\newblock {\em Methods of mathematical finance}, volume~39.
\newblock Springer, 1998.

\bibitem{kiefer1957some}
J.~Kiefer and L.~Weiss.
\newblock Some properties of generalized sequential probability ratio tests.
\newblock {\em The Annals of Mathematical Statistics}, pages 57--74, 1957.

\bibitem{Lasry2006a}
J.~M. Lasry and P.~L. Lions.
\newblock Jeux \`a champ moyen. i - le cas stationnaire.
\newblock {\em Comptes Rendus de l'Acad\'emie des Sciences}, 343:619--625,
  2006.

\bibitem{Lasry2006b}
J.~M. Lasry and P.~L. Lions.
\newblock Jeux \`a champ moyen. ii - horizon fini et contr\^ole optimal.
\newblock {\em Comptes Rendus de l'Acad\'emie des Sciences}, 343:679--684,
  2006.

\bibitem{Lasry2007mean}
J.-M. Lasry and P.-L. Lions.
\newblock Mean field games.
\newblock {\em Japanese journal of mathematics}, 2(1):229--260, 2007.

\bibitem{nutz2018mean}
M.~Nutz.
\newblock A mean field game of optimal stopping.
\newblock {\em SIAM Journal on Control and Optimization}, 56(2):1206--1221,
  2018.

\bibitem{nutz2020convergence}
M.~Nutz, J.~San~Martin, and X.~Tan.
\newblock Convergence to the mean field game limit: {A} case study.
\newblock {\em The Annals of Applied Probability}, 30(1):259--286, 2020.

\bibitem{nutz2022mean}
M.~Nutz and Y.~Zhang.
\newblock Mean field contest with singularity.
\newblock {\em Mathematics of Operations Research}, 2022.

\bibitem{peskir2005change}
G.~Peskir.
\newblock A change-of-variable formula with local time on curves.
\newblock {\em Journal of Theoretical Probability}, 18(3):499--535, 2005.

\bibitem{peskir2006optimal}
G.~Peskir and A.~Shiryaev.
\newblock {\em Optimal stopping and free-boundary problems}.
\newblock Springer, 2006.

\bibitem{peskir2000sequential}
G.~Peskir and A.~N. Shiryaev.
\newblock Sequential testing problems for poisson processes.
\newblock {\em Annals of Statistics}, pages 837--859, 2000.

\bibitem{protter2005stochastic}
P.~E. Protter.
\newblock Stochastic differential equations.
\newblock In {\em Stochastic integration and differential equations}, pages
  249--361. Springer, 2005.

\bibitem{qiao2016conditional}
L.~Qiao, Y.~Sun, and Z.~Zhang.
\newblock Conditional exact law of large numbers and asymmetric information
  economies with aggregate uncertainty.
\newblock {\em Economic Theory}, 62(1):43--64, 2016.

\bibitem{rockafellar2015convex}
R.~T. Rockafellar.
\newblock {\em Convex analysis}.
\newblock Princeton university press, 2015.

\bibitem{csen2016mean}
N.~{\c{S}}en and P.~E. Caines.
\newblock On mean field games and nonlinear filtering for agents with
  individual-state partial observations.
\newblock In {\em 2016 American Control Conference (ACC)}, pages 4681--4686.
  IEEE, 2016.

\bibitem{sen2019mean}
N.~Sen and P.~E. Caines.
\newblock Mean field games with partial observation.
\newblock {\em SIAM Journal on Control and Optimization}, 57(3):2064--2091,
  2019.

\bibitem{shevchenko2015convergence}
G.~Shevchenko.
\newblock Convergence of hitting times for jump-diffusion processes.
\newblock {\em Modern Stochastics: Theory and Applications}, 2(3):203--218,
  2015.

\bibitem{shiryaev1965some}
A.~Shiryaev.
\newblock Some exact formulas in a “disorder” problem.
\newblock {\em Theory of Probability \& Its Applications}, 10(2):348--354,
  1965.

\bibitem{shiryaev1963optimum}
A.~N. Shiryaev.
\newblock On optimum methods in quickest detection problems.
\newblock {\em Theory of Probability \& Its Applications}, 8(1):22--46, 1963.

\bibitem{shiryaev1967two}
A.~N. Shiryaev.
\newblock Two problems of sequential analysis.
\newblock {\em Cybernetics}, 3(2):63--69, 1967.

\bibitem{shiryaev2007optimal}
A.~N. Shiryaev.
\newblock {\em Optimal stopping rules}, volume~8.
\newblock Springer Science \& Business Media, 2007.

\bibitem{shreve2004stochastic}
S.~E. Shreve.
\newblock {\em Stochastic calculus for finance II: Continuous-time models},
  volume~11.
\newblock Springer Science \& Business Media, 2004.

\bibitem{shrivats2022mean}
A.~V. Shrivats, D.~Firoozi, and S.~Jaimungal.
\newblock A mean-field game approach to equilibrium pricing in solar renewable
  energy certificate markets.
\newblock {\em Mathematical Finance}, 32(3):779--824, 2022.

\bibitem{sun2006exact}
Y.~Sun.
\newblock The exact law of large numbers via fubini extension and
  characterization of insurable risks.
\newblock {\em Journal of Economic Theory}, 126(1):31--69, 2006.

\bibitem{talbi2023dynamic}
M.~Talbi, N.~Touzi, and J.~Zhang.
\newblock Dynamic programming equation for the mean field optimal stopping
  problem.
\newblock {\em SIAM Journal on Control and Optimization}, 61(4):2140--2164,
  2023.

\bibitem{touzi2012optimal}
N.~Touzi.
\newblock {\em Optimal stochastic control, stochastic target problems, and
  backward SDE}, volume~29.
\newblock Springer Science \& Business Media, 2012.

\bibitem{wald1945sequential}
A.~Wald.
\newblock Sequential tests of statistical hypotheses.
\newblock {\em The Annals of Mathematical Statistics}, 16(2):117--186, 1945.

\bibitem{wald1948optimum}
A.~Wald and J.~Wolfowitz.
\newblock Optimum character of the sequential probability ratio test.
\newblock {\em The Annals of Mathematical Statistics}, pages 326--339, 1948.

\bibitem{wald1950bayes}
A.~Wald and J.~Wolfowitz.
\newblock Bayes solutions of sequential decision problems.
\newblock {\em The Annals of Mathematical Statistics}, pages 82--99, 1950.

\bibitem{zhitlukhin2011bayesian}
M.~V. Zhitlukhin and A.~Shiryaev.
\newblock A bayesian sequential testing problem of three hypotheses for
  brownian motion.
\newblock {\em Statistics \& Risk Modeling}, 28(3):227--249, 2011.

\end{thebibliography}

\end{document}